\title{Fréchet algebraic deformation quantization of the Poincaré
  disk}
\author{\textbf{Svea Beiser}\thanks{E-mail:
    Svea.Beiser@physik.uni-freiburg.de}\addtocounter{footnote}{2}
  \\[0.1cm]
  Fakult{\"a}t f{\"u}r Mathematik und Physik\\
  Albert-Ludwigs-Universit{\"a}t Freiburg\\
  Physikalisches Institut\\
  Hermann Herder Stra{\ss}e 3\\
  D 79104 Freiburg\\
  Germany\\[1cm]
  \textbf{Stefan Waldmann}\thanks{E-mail:
    Stefan.Waldmann@wis.kuleuven.be}
  \\[0.1cm]
  Katholieke Universiteit Leuven \\
  Department of Mathematics \\
  Celestijnenlaan 200B bus 2400 \\
  BE-3001 Heverlee \\
  Belgium
}
\date{August 2011}
\renewcommand{\mathbb}[1]{\mathbbm{#1}}
\newcommand{\refitem}[1] {~\textit{\ref{#1}.)}}
\numberwithin{equation}{section}
\newcommand{\I}          {\mathrm{i}}
\newcommand{\E}          {\mathrm{e}}
\newcommand{\D}          {\operatorname{\mathrm{d}}}
\newcommand{\cc}[1]      {\overline{{#1}}}
\newcommand{\cl}         {\mathrm{cl}}
\newcommand{\At}[1]      {\Big|_{#1}}
\newcommand{\argument}   {\,\cdot\,}
\newcommand{\pr}         {\mathrm{pr}}
\newcommand{\tr}         {\operatorname{\mathsf{tr}}}
\newcommand{\diag}       {\operatorname{\mathrm{diag}}}
\newcommand{\SP}[1]      {\left\langle{#1}\right\rangle}
\newcommand{\spann}      {\operatorname{\mathrm{span}}}
\newcommand{\Lie}        {\operatorname{\mathscr{L}\!}}
\newcommand{\norm}[1]    {\left\|{#1}\right\|}
\newcommand{\supnorm}[1] {\left\|{#1}\right\|_\infty}
\newcommand{\tensor}[1][{}]{\mathbin{\otimes_{\scriptscriptstyle{#1}}}}
\newtheorem{lemma}{Lemma}[section]
\newtheorem{proposition}[lemma]{Proposition}
\newtheorem{theorem}[lemma]{Theorem}
\newtheorem{corollary}[lemma]{Corollary}
\newtheorem{definition}[lemma]{Definition}
\newtheorem{remark}[lemma]{Remark}
\newcommand\qedsymbol{\hbox{$\boxempty$}}
\newcommand\qed{\relax\ifmmode\boxempty\else
  {\unskip\nobreak\hfil\penalty50\hskip1em\null\nobreak\hfil\qedsymbol
  \parfillskip=\z@\finalhyphendemerits=0\endgraf}\fi}
\newenvironment{proof}[1][{}]{\par\noindent Proof{#1}. }{\qed\medskip}
\newenvironment{lemmalist}{\begin{compactenum}[\itshape i.)]}{\end{compactenum}}
\newenvironment{theoremlist}{\begin{compactenum}[\itshape i.)]}{\end{compactenum}}
\newenvironment{propositionlist}{\begin{compactenum}[\itshape i.)]}{\end{compactenum}}
\newcommand{\complete}[1]    {\widehat{#1}}
\newcommand{\length}         {\mathop{\mathrm{L}}}
\newcommand{\starwick}       {\mathbin{\star_{\scriptscriptstyle\mathrm{Wick}}}}
\newcommand{\tildewick}      {\mathbin{\widetilde{\star}_{\scriptscriptstyle\mathrm{Wick}}}}
\newcommand{\stardisk}       {\mathbin{\star_{\scriptscriptstyle\mathbb{D}_n}}}
\newcommand{\lexp}           {\operatorname{\mathrm{Lexp}}}
\newcommand{\Pochhammer}[1]  {\left(#1\right)}
\newcommand{\basis}[1]       {\mathsf{#1}}
\newcommand{\HolAntiHol}     {\mathcal{O}\cc{\mathcal{O}}}
\newcommand{\metric}         {\mathsf{g}}
\begin{document}

\maketitle

\begin{abstract}
    Starting from formal deformation quantization we use an explicit
    formula for a star product on the Poincaré disk $\mathbb{D}_n$ to
    introduce a Fréchet topology making the star product
    continuous. To this end a general construction of locally convex
    topologies on algebras with countable vector space basis is
    introduced and applied. Several examples of independent interest
    are investigated as e.g.\ group algebras over finitely generated
    groups and infinite matrices. In the case of the star product on
    $\mathbb{D}_n$ the resulting Fréchet algebra is shown to have many
    nice features: it is a strongly nuclear Köthe space, the symmetry
    group $\mathrm{SU}(1, n)$ acts smoothly by continuous
    automorphisms with an inner infinitesimal action, and evaluation
    functionals at all points of $\mathbb{D}_n$ are continuous
    positive functionals.
\end{abstract}

%
%

\noindent
\textbf{MSC:} 53D55, 53D20, 46K05, 46K10, 46H05, 81S10, 81R60

\noindent
\textbf{Keywords:} Fréchet algebras, star product, Köthe spaces,
Schauder basis, coherent states, phase space reduction.

%
%

\newpage

%
%

\tableofcontents 

%
%

\section{Introduction}
\label{sec:Introduction}

Deformation quantization as introduced in \cite{bayen.et.al:1978a}
aims at the construction of a quantum observable algebra out of the
observable algebra of a classical mechanical system, usually modeled
by the smooth functions on a Poisson manifold. The idea is to
implement the necessary noncommutativity by means of a deformation of
the commutative product in direction of the Poisson bracket with
deformation parameter $\hbar$ resulting in a \emph{star product}. Here
several different settings are possible: in formal deformation
quantization one follows the algebraic deformation program of
Gerstenhaber \cite{gerstenhaber:1964a} and uses formal power series in
$\hbar$, ignoring questions of convergence completely. In this setting
very strong existence and classification results are available,
cumulating in Kontsevich's formality theorem which guarantees the
existence and provides the full classification of formal star products
on Poisson manifolds in general \cite{kontsevich:2003a}. A gentle
introduction with a large bibliography can be found in the textbook
\cite{waldmann:2007a}.

For many reasons a formal deformation quantization is not believed to
be the end of the story: for applications in quantum theory $\hbar$
has to be treated as a positive number. But also applications of
deformation quantization in noncommutative geometry
\cite{connes:1994a}, where it provides important examples, require a
more analytic framework. Several notions of ``convergent''
deformations are available. Rieffel's notion of strict deformation
quantization \cite{rieffel:1993a, rieffel:1989a}, see also Landsman's
monograph \cite{landsman:1998a}, are located in the realm of
$C^*$-algebras. Here one tries to construct continuous fields of
$C^*$-algebras deforming a given commutative $C^*$-algebra sitting at
a particular value of the underlying space of parameters. Many
examples are known and studied in great detail. Unfortunately, a
general existence and classification as in the formal case is not yet
available and perhaps not even possible: here things usually start to
depend strongly on the example and many completely different
approaches and constructions compete.

While a $C^*$-algebraic formulation has clear motivations from quantum
physics and noncommutative geometry, it also poses several technical
difficulties. Ultimately, they originate in the fact that the passage
from smooth (or even algebraic) differential geometry to the
continuous world of $C^*$-algebras is quite a long way. Thus it might
be reasonable to find intermediate types of deformations where on one
hand the formal character of formal star products is already overcome
but one still has closer contact to the smooth structures present in
differential geometry.

In this work we want to advocate a Fréchet algebraic deformation
quantization where the formal series in $\hbar$ are investigated by
means of a suitable Fréchet topology on a certain class of functions.

To achieve this we first provide a general construction of a locally
convex topology for an algebra with a countable vector space basis,
the completion of it will then be a Fréchet algebra. Whether our
general construction works will depend on some technical details of
the underlying algebra which seem hard to pin down in
general. However, we discuss and illustrate our program in several
examples like polynomial algebras, infinite matrices and group
algebras over finitely generated groups. These examples seem to be
interesting enough to pursue a further investigation. Nevertheless, we
will turn to deformation quantization after a short glance at them.

In principle, there are other constructions of Fréchet topologies for
algebras with countable vector space basis: the construction in
\cite[Prop.~2.1]{cuntz:2005a} can be interpreted as such, see
Remark~\ref{remark:CuntzConstruction}. However, the resulting highly
recursive construction of seminorms seems to be very difficult to
handle. Nevertheless, this suggest that one should not expect any
reasonable uniqueness theorems for such Fréchet topologies unless one
specifies further properties.

In deformation quantization the first example to look at is of course
the Weyl product for the flat phase space $\mathbb{R}^{2n}$. We show
that our general construction coincides with a previous construction
\cite{beiser.roemer.waldmann:2007a} and yields a very explicit
completion of the Weyl algebra to a nuclear Fréchet algebra. The
underlying Fréchet space is a certain Köthe space and hence enjoys
many nice properties.

The main task of this work is devoted to a more nontrivial example
from deformation quantization: the Poincaré disk and its higher
dimensional cousins $\mathbb{D}_n$. This is a first example of a
symplectic manifold with a \emph{curved} Kähler structure: it is a
Hermitian symmetric space of noncompact type with symmetry group
$\mathrm{SU}(1, n)$.  Moreover, the Poincaré disk is the covering
space of the Riemann surfaces of higher genus. Thus an invariant
quantization should ultimately also induce quantizations of these
Riemann surfaces. Finally, the Poincaré disk is a first nontrivial
example of a reduced phase space, allowing to test ideas of the
quantization of phase space reduction also in a convergent framework.

The principal result is that our general construction applies to all
stages of the phase space reduction as introduced in
\cite{bordemann.brischle.emmrich.waldmann:1996a,
  bordemann.brischle.emmrich.waldmann:1996b}: first we start with a
star product $\tildewick$ obtained from a particular equivalence
transformation of the Wick star product on $\mathbb{C}^{n+1}$. The
$\mathrm{U}(1)$-invariant functions are a subalgebra in which we find
a subalgebra with countable vector space basis, essentially given by
polynomials. We obtain a completion to a nuclear Fréchet algebra which
consists of real-analytic functions of a particular type. The former
vector space basis becomes an absolute Schauder basis after
completion.  The passage to the Poincaré disk is a quotient by a
two-sided closed ideal. The resulting nuclear Fréchet algebra
$\complete{\mathcal{A}}_\hbar(\mathbb{D}_n)$ of functions on the disk
can again be characterized very explicitly as a certain Köthe space
with absolute Schauder basis consisting of representative functions
with respect to the $\mathrm{SU}(1, n)$-action on the disk.

After the construction of $\complete{\mathcal{A}}_\hbar(\mathbb{D}_n)$
we determine several further properties: first we show that the
symmetry group $\mathrm{SU}(1, n)$ acts by continuous
automorphisms. The action turns out to be smooth and the corresponding
Lie algebra action is inner via the classical momentum map. Second,
the dependence on the parameter $\hbar$ is shown to be holomorphic for
$\hbar \in \mathbb{C} \setminus \{-\frac{1}{2}, -\frac{1}{4}, \ldots,
0\}$. Third, we consider real $\hbar$ in which case the pointwise
complex conjugation is a continuous $^*$-involution of
$\complete{\mathcal{A}}_\hbar(\mathbb{D}_n)$ for which the evaluation
functionals at points of $\mathbb{D}_n$ are (continuous) positive
functionals. The corresponding GNS construction is determined
explicitly.

This last result will hopefully provide the bridge to compare our
construction to other deformation quantizations of the Poincaré
disk. A well-studied approach uses a Berezin-Toeplitz like
quantization based on coherent states. The star product is then
obtained as asymptotic expansion of a symbol calculus using the
Toeplitz operators on the (anti-)holomorphic sections of higher and
higher tensor powers of certain holomorphic line bundles. Here one
should consult in particular \cite{cahen.gutt.rawnsley:1995a,
  cahen.gutt.rawnsley:1994a, cahen.gutt.rawnsley:1993a,
  cahen.gutt.rawnsley:1990a} and
\cite{bordemann.meinrenken.schlichenmaier:1991a} for a general
background on deformation quantization of Kähler manifolds as well as
\cite{borthwick.lesniewski.upmeier:1993a}. Here our GNS representation
should be compared to the (pseudo-differential) operator approaches.
Alternatively, the algebra
$\complete{\mathcal{A}}_\hbar(\mathbb{D}_n)$ should be related
directly to the algebras constructed and studied in
\cite{bieliavsky.detournay.spindel:2009a, bieliavsky:2002a}. There,
explicit integral formulas for a star product on the Poincaré disk
where given which allow for a $C^*$-algebraic quantization. The
precise relations will be subject to a future project.

The paper is organized as follows: in
Section~\ref{sec:GeneralConstructionFirstVersion} we establish the
general construction in two versions. First we construct a countable
system of seminorms for an algebra with countable vector space basis
in terms of the structure constants.  The difficulty is that the
``seminorms'' may diverge. To avoid this a closer look at the example
will be necessary.  Then we can describe the Fréchet algebra
completion and discuss some general properties of it in
Theorem~\ref{theorem:AniceCompleted}. The second version will be a
finer topology making a given linear functional continuous as
well. Again we have to show by hand that the recursive definition will
produce finite seminorms. General properties of this second version
are then discussed in Theorem~\ref{theorem:Omega}.

In Section~\ref{sec:FirstExamplesPolynomials} we discuss several basic
examples to illustrate our general construction: first we consider
polynomials as well as Laurent polynomials and study some of the
properties of the resulting Fréchet algebras. As a second and now
noncommutative example we consider infinite matrices with finitely
many nonzero entries. Here we discuss the dependence of our general
construction on the chosen vector space basis. The third example is
the group algebra of an infinite but finitely generated group. The
canonical basis given by the group elements has to be rescaled by an
appropriate prefactor in order to make the general construction work:
here the condition of finitely many generators becomes crucial. We
take the factorial of the word length of a group element as
rescaling. Clearly, this example would be interesting to investigate
further. A last example is the Wick star product on flat
$\mathbb{C}^n$. We show that the general construction reproduces an
earlier approach \cite{beiser.roemer.waldmann:2007a} which was
actually the main motivation for the present work. Moreover, we
determine the resulting Fréchet algebra explicitly as a particular
Köthe space. Thereby we show in
Theorem~\ref{theorem:WickAlgebraSubFac} that the Weyl algebra has a
completion which is nuclear and has the monomials as absolute Schauder
basis.

Section~\ref{sec:StarProductOnThePoincareDisc} contains the main
example: first we recall some basic properties of the Poincaré disk to
fix our notation. Next, the construction of the (formal) star product
according to \cite{bordemann.brischle.emmrich.waldmann:1996b,
  bordemann.brischle.emmrich.waldmann:1996a} is recalled. We consider
$\mathrm{U}(1)$-invariant representative functions on
$\mathbb{C}^{n+1}$ with respect to the linear $\mathrm{SU}(1,
n)$-action. A particular vector space basis is chosen and we compute
the structure constants explicitly. The first version of our general
construction is shown to produce the Cartesian product topology on the
span of the basis vectors. Hence the completion is not interesting as
we cannot interpret the elements of it as functions anymore. Thus we
evoke the second version to make the evaluation functionals at all
points continuous as well. In a last step we have to divide by the
(classical) vanishing ideal of a level set of the Hamiltonian of the
$\mathrm{U}(1)$-action in order to get functions on the Poincaré
disk. As we can show the vanishing ideal to be a closed ideal also
with respect to the star product upstairs, we obtain a Fréchet algebra
$\complete{\mathcal{A}}_\hbar(\mathbb{D}_n)$ of functions on
$\mathbb{D}_n$. The main result in Theorem~\ref{theorem:DiscAlgebra}
determines the algebra $\complete{\mathcal{A}}_\hbar(\mathbb{D}_n)$ as
a certain Köthe space with absolute Schauder basis together with some
further properties.

Section~\ref{sec:FurtherProperties} contains some further results on
the algebra $\complete{\mathcal{A}}_\hbar(\mathbb{D}_n)$. We show that
the canonical action of $\mathrm{SU}(1, n)$ on $\mathbb{D}_n$ induces
a smooth action by automorphisms of
$\complete{\mathcal{A}}_\hbar(\mathbb{D}_n)$ with inner Lie algebra
action, see Theorem~\ref{theorem:SUEinsnSymmetrie}. Moreover, for the
allowed values of $\hbar$ we show that
$\complete{\mathcal{A}}_\hbar(\mathbb{D}_n)$ is a holomorphic
deformation in the sense of \cite{pflaum.schottenloher:1998a} in
Theorem~\ref{theorem:HolomorphicProduct}. Finally, we show the
positivity of the evaluation functionals at all points of
$\mathbb{D}_n$ and start investigating the corresponding GNS
representation.

Finally, Appendix~\ref{sec:SubfactorialGrowth} contains some
well-known facts about Köthe spaces which we will need at several
places.

%
%

\medskip

\noindent
\textbf{Acknowledgements:} It is a pleasure to thank Pierre Bieliavsky
for valuable discussion at different stages of this
project. S.B. thanks the Graduiertenkolleg ``Physik an
Hadronenbeschleunigern'' and the Schlieben-Lange-Programm for
financial support.

%
%

\section{The general construction}
\label{sec:GeneralConstructionFirstVersion}

The purpose of this section is to establish a general construction of
a locally convex topology consisting of countably many seminorms on an
algebra $\mathcal{A}$ with a countable vector space basis such that
the product becomes continuous. In general, the construction will
require some additional estimates in order to succeed which we will
discuss in detail.

%
%

\subsection{First version}
\label{subsec:FirstVersion}

To fix our notation we consider a complex algebra $\mathcal{A}$ with a
vector space basis $\{\basis{e}_\alpha\}_{\alpha \in I}$ where $I$ is a
countably infinite index set. The algebra multiplication will be
denoted by $\star$ and gives rise to structure constants
$C^\gamma_{\alpha\beta}$ defined by
\begin{equation}
    \label{eq:StructureConstants}
    \basis{e}_\alpha \star \basis{e}_\beta
    =
    \sum_{\gamma \in I} C^\gamma_{\alpha\beta} \basis{e}_\gamma.
\end{equation}
Note that for given $\alpha$ and $\beta$ we have only \emph{finitely}
many $\gamma$ with $C^\gamma_{\alpha\beta} \ne 0$. An element $a \in
\mathcal{A}$ can thus be written as
\begin{equation}
    \label{eq:aLinearCombination}
    a = \sum_{\gamma \in I} a_\gamma \basis{e}_\gamma
\end{equation}
with only finitely many $a_\gamma$ different from zero. In particular,
the choice of the basis gives the coefficient functionals
\begin{equation}
    \label{eq:CoefficientFuntionals}
    e^\gamma\colon \mathcal{A} \ni a 
    \; \mapsto \;
    a_\gamma \in \mathbb{C}.
\end{equation}
\begin{remark}
    \label{remark:TypeOfAlgebra}%
    For the following we will be interested in \emph{associative}
    algebras over the complex numbers or even in
    $^*$-algebras. However, the construction will work equally well
    over the real numbers and also for other types of algebras like
    Lie algebras. The associativity will not be used in the
    construction at all. The Lie case will clearly be of independent
    interest.
\end{remark}

Based on the structure constants $C^\gamma_{\alpha\beta}$ we can now
state the following (recursive) definition:
\begin{definition}
    \label{definition:TheSeminormStuff}%
    For $m \in \mathbb{N}_0$, $\ell = 0, \ldots, 2^m - 1$, and $\gamma
    \in I$ one defines recursively the maps $h_{m, \ell, \gamma}\colon
    \mathcal{A} \longrightarrow [0, +\infty]$ by setting $h_{0, 0,
      \gamma}(a) = |a_\gamma|$ and
    \begin{equation}
        \label{eq:HRecursion}
        h_{m+1, 2\ell, \gamma}(a)
        =
        \sum_{\alpha \in I}
        \left(h_{m, \ell, \alpha}(a)\right)^2
        \sum_{\beta \in I} \left|C^\gamma_{\alpha\beta}\right|
        \quad
        \textrm{and}
        \quad
        h_{m+1, 2\ell+1, \gamma}(a)
        =
        \sum_{\beta \in I}
        \left(h_{m, \ell, \beta}(a)\right)^2
        \sum_{\alpha \in I} \left|C^\gamma_{\alpha\beta}\right|.
    \end{equation}
    Moreover, we define $\norm{\argument}_{m, \ell, \gamma}\colon
    \mathcal{A} \longrightarrow [0, +\infty]$ by
    \begin{equation}
        \label{eq:Halbnormen}
        \norm{a}_{m, \ell, \gamma}
        = \sqrt[2^m]{h_{m, \ell, \gamma}(a)}.
    \end{equation}
\end{definition}
Before we proceed let us state a few simple remarks. Clearly, the
convergence (divergence) of the series is absolute as they consist of
nonnegative terms only. Moreover, the resulting values depend strongly
on the choice of the basis and, as we shall see later, even the
convergence itself does. This is an unpleasant feature but the
generality of the construction seems to have this price.

If the algebra is commutative, then the index $\ell$ does not play any
role: a simple induction shows that for given $m$ and $\gamma$, the
quantities $h_{m, \ell, \gamma}(a)$ coincide for all $\ell = 0,
\ldots, 2^m-1$ in this case.

Finally, it will be useful to assume that the series
\begin{equation}
    \label{eq:CSeriesConverge}
    C^\gamma_{\alpha, \boldsymbol{\cdot}}
    = \sum_{\beta \in I} \left|C^\gamma_{\alpha\beta}\right|
    \quad
    \textrm{and}
    \quad
    C^\gamma_{\boldsymbol{\cdot}, \beta}
    =
    \sum_{\alpha \in I} \left|C^\gamma_{\alpha\beta}\right|
\end{equation}
are \emph{finite} for all $\alpha, \beta, \gamma \in I$. Otherwise the
corresponding $h_{1, \ell, \gamma}$ will be infinite for $a \ne 0$ and
thus all the following ones as well. Note that this is already a
nontrivial convergence condition for the structure constants. Using
these constants (whether finite or not) we get the following simple
estimate
\begin{equation}
    \label{eq:mKleinermPlusEins}
    \sqrt[2^{m+1}]{C^\gamma_{\alpha, \boldsymbol{\cdot}}}
    \norm{a}_{m, \ell, \alpha}
    \le
    \norm{a}_{m+1, 2\ell, \gamma}
\end{equation}
for all $a \in \mathcal{A}$ and all $\alpha, \gamma \in
I$. Analogously we have
\begin{equation}
    \label{eq:mKleinermPlusEinsII}
    \sqrt[2^{m+1}]{C^\gamma_{\boldsymbol{\cdot}, \beta}}
    \norm{a}_{m, \ell, \beta}
    \le
    \norm{a}_{m+1, 2\ell+1, \gamma}.
\end{equation}

The following lemma explains now our interest in the quantities $h_{m,
  \ell, \gamma}$.
\begin{lemma}
    \label{lemma:AlmostSeminorms}%
    Let $m \in \mathbb{N}_0$, $\ell = 0, \ldots, 2^m-1$ and $\gamma
    \in I$. Then the maps $\norm{\argument}_{m, \ell, \gamma}$ have
    the following properties:
    \begin{lemmalist}
    \item \label{item:Homogeneous} For all $z \in \mathbb{C}$ and $a
        \in \mathcal{A}$ one has $\norm{za}_{m, \ell, \gamma} = |z|
        \norm{a}_{m, \ell, \gamma}$.
    \item \label{item:Subadditive} For all $a, b \in \mathcal{A}$ one
        has $\norm{a + b}_{m, \ell, \gamma} \le \norm{a}_{m, \ell,
          \gamma} + \norm{b}_{m, \ell, \gamma}$.
    \item \label{item:ProductContinuous} For all $a, b \in
        \mathcal{A}$ one has
        \begin{equation}
            \label{eq:ProductContinuous}
            \norm{a \star b}_{m, \ell, \gamma}
            \le
            \norm{a}_{m+1, \ell, \gamma}
            \norm{b}_{m+1, 2^m + \ell, \gamma}.
        \end{equation}
    \end{lemmalist}
\end{lemma}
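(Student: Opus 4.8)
The plan is to prove the three assertions by induction on $m$, after first re-expressing the recursion~\eqref{eq:HRecursion} directly in terms of the maps $\norm{\argument}_{m,\ell,\gamma}$. Since $h_{m,\ell,\alpha}(a) = \norm{a}_{m,\ell,\alpha}^{2^m}$ by~\eqref{eq:Halbnormen}, the recursion reads
\begin{align*}
    \norm{a}_{m+1,2\ell,\gamma}
    &= \sqrt[2^{m+1}]{\; \sum_{\alpha\in I} C^\gamma_{\alpha,\boldsymbol{\cdot}}\, \norm{a}_{m,\ell,\alpha}^{2^{m+1}} \;},
    &
    \norm{a}_{m+1,2\ell+1,\gamma}
    &= \sqrt[2^{m+1}]{\; \sum_{\beta\in I} C^\gamma_{\boldsymbol{\cdot},\beta}\, \norm{a}_{m,\ell,\beta}^{2^{m+1}} \;},
\end{align*}
so that $\norm{\argument}_{m+1,2\ell,\gamma}$ and $\norm{\argument}_{m+1,2\ell+1,\gamma}$ are weighted $\ell^{2^{m+1}}$-type expressions in the families $(\norm{\argument}_{m,\ell,\alpha})_{\alpha\in I}$, the (possibly infinite) weights being the constants from~\eqref{eq:CSeriesConverge}. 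All equalities and inequalities below are read in $[0,+\infty]$ with the usual conventions, so that a divergent right-hand side makes the corresponding assertion trivial.

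For the homogeneity the idea is to show $h_{m,\ell,\gamma}(za) = |z|^{2^m}\, h_{m,\ell,\gamma}(a)$ by induction on $m$: the case $m=0$ is immediate, and each step of~\eqref{eq:HRecursion} contributes exactly one further factor $|z|^2$; taking $2^m$-th roots then gives $\norm{za}_{m,\ell,\gamma} = |z|\,\norm{a}_{m,\ell,\gamma}$. For the triangle inequality the case $m=0$ is subadditivity of $|\argument|$ on $\mathbb{C}$, and for the step one would use that the displayed expression above is monotone in its nonnegative entries, so that from $\norm{a+b}_{m,\ell,\alpha} \le \norm{a}_{m,\ell,\alpha} + \norm{b}_{m,\ell,\alpha}$ together with Minkowski's inequality for the exponent $2^{m+1}\ge 1$ one obtains
\begin{equation*}
    \norm{a+b}_{m+1,2\ell,\gamma}
    \le \sqrt[2^{m+1}]{\; \sum_{\alpha\in I} C^\gamma_{\alpha,\boldsymbol{\cdot}}\, \big(\norm{a}_{m,\ell,\alpha} + \norm{b}_{m,\ell,\alpha}\big)^{2^{m+1}} \;}
    \le \norm{a}_{m+1,2\ell,\gamma} + \norm{b}_{m+1,2\ell,\gamma},
\end{equation*}
and the odd index is handled identically.

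The substance of the lemma is the product estimate, again by induction on $m$. From~\eqref{eq:StructureConstants} and~\eqref{eq:aLinearCombination} one has the finite sum $(a\star b)_\gamma = \sum_{\alpha,\beta} C^\gamma_{\alpha\beta}\, a_\alpha b_\beta$, and the case $m=0$ would follow from the Cauchy--Schwarz inequality after splitting $|C^\gamma_{\alpha\beta}| = \sqrt{|C^\gamma_{\alpha\beta}|}\cdot\sqrt{|C^\gamma_{\alpha\beta}|}$:
\begin{equation*}
    \norm{a\star b}_{0,0,\gamma}
    \le \sum_{\alpha,\beta} |C^\gamma_{\alpha\beta}|\, |a_\alpha|\, |b_\beta|
    \le \sqrt{\; \sum_{\alpha,\beta} |C^\gamma_{\alpha\beta}|\, |a_\alpha|^2 \;}\;\; \sqrt{\; \sum_{\alpha,\beta} |C^\gamma_{\alpha\beta}|\, |b_\beta|^2 \;}
    = \norm{a}_{1,0,\gamma}\, \norm{b}_{1,1,\gamma},
\end{equation*}
which is the claim since $2^m + \ell = 1$ here. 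For the inductive step I would fix $\ell \in \{0,\dots,2^{m+1}-1\}$ and write $\ell = 2\ell' + \epsilon$ with $\epsilon \in \{0,1\}$ and $\ell' \in \{0,\dots,2^m-1\}$. In the case $\epsilon = 0$ the recursion gives $h_{m+1,2\ell',\gamma}(a\star b) = \sum_{\alpha\in I} \big(h_{m,\ell',\alpha}(a\star b)\big)^2\, C^\gamma_{\alpha,\boldsymbol{\cdot}}$; inserting the inductive hypothesis $\norm{a\star b}_{m,\ell',\alpha} \le \norm{a}_{m+1,\ell',\alpha}\, \norm{b}_{m+1,2^m+\ell',\alpha}$ (applied with $\alpha$ in place of $\gamma$), raising to the power $2^{m+1}$, and then applying Cauchy--Schwarz to the sum over $\alpha$ with the splitting $C^\gamma_{\alpha,\boldsymbol{\cdot}} = \sqrt{C^\gamma_{\alpha,\boldsymbol{\cdot}}}\cdot\sqrt{C^\gamma_{\alpha,\boldsymbol{\cdot}}}$ should lead, after re-reading the two resulting sums through~\eqref{eq:HRecursion} once more, to
\begin{equation*}
    h_{m+1,2\ell',\gamma}(a\star b)
    \le \sqrt{\, h_{m+2,\, 2\ell',\, \gamma}(a) \,}\;\; \sqrt{\, h_{m+2,\, 2(2^m+\ell'),\, \gamma}(b) \,}.
\end{equation*}
Extracting the $2^{m+1}$-th root and using $2(2^m+\ell') = 2^{m+1}+\ell$ then yields $\norm{a\star b}_{m+1,\ell,\gamma} \le \norm{a}_{m+2,\ell,\gamma}\, \norm{b}_{m+2,2^{m+1}+\ell,\gamma}$, which is the claim at level $m+1$. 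The case $\epsilon = 1$ is entirely parallel: one uses the second half of~\eqref{eq:HRecursion} throughout and the identity $2(2^m+\ell')+1 = 2^{m+1}+\ell$.

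The only point that requires genuine care is keeping the second (``channel'') index $\ell$ consistent along the recursion: the even or odd branch of~\eqref{eq:HRecursion} chosen for $a\star b$ dictates which branch must be used for $a$ and for $b$, and the bookkeeping closes precisely because $2(2^m+\ell')+\epsilon = 2^{m+1}+(2\ell'+\epsilon)$ for $\epsilon\in\{0,1\}$. Beyond that, the argument is only the iterated use of the Cauchy--Schwarz and Minkowski inequalities, together with the elementary fact that raising an inequality between nonnegative extended reals to a positive power preserves it; the degenerate cases involving $0$ and $+\infty$ are immediate. So I expect the index bookkeeping in the product estimate to be the one place where one must be attentive.
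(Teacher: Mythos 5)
Your proposal is correct and follows essentially the same route as the paper's proof: homogeneity by observing $h_{m,\ell,\gamma}(za) = |z|^{2^m} h_{m,\ell,\gamma}(a)$, the triangle inequality by Minkowski's inequality applied inductively through the recursion, and the product estimate by Cauchy--Schwarz for $m=0$ followed by an induction that combines the inductive hypothesis with another application of Cauchy--Schwarz to the sum over the intermediate index. The only cosmetic difference is that you phrase the inductive step as going from $m$ to $m+1$ whereas the paper unfolds $m$ down to $m-1$, but the index bookkeeping $2(2^m+\ell')+\epsilon = 2^{m+1}+\ell$ is exactly the point the paper also relies on.
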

\begin{proof}
    The first part is clear. For the second, the statement is clear
    for $m = 0$ so we can proceed by induction on $m$. First we get
    \begin{align*}
        \norm{a + b}_{m+1, 2\ell, \gamma}
        &=
        \left(
            \sum\nolimits_{\alpha \in I}
            \left(\norm{a + b}_{m, \ell, \alpha}\right)^{2^{m+1}}
            C^\gamma_{\alpha, \boldsymbol{\cdot}}
        \right)^{\frac{1}{2^{m+1}}} \\
        &\le
        \left(
            \sum\nolimits_{\alpha \in I}
            \left(
                \norm{a}_{m, \ell, \alpha}
                +
                \norm{b}_{m, \ell, \alpha}
            \right)^{2^{m+1}}
            C^\gamma_{\alpha, \boldsymbol{\cdot}}
        \right)^{\frac{1}{2^{m+1}}} \\
        &\le
        \left(
            \sum\nolimits_{\alpha \in I}
            \left(
                \norm{a}_{m, \ell, \alpha}
            \right)^{2^{m+1}}
            C^\gamma_{\alpha, \boldsymbol{\cdot}}
        \right)^{\frac{1}{2^{m+1}}}
        +
        \left(
            \sum\nolimits_{\alpha \in I}
            \left(
                \norm{b}_{m, \ell, \alpha}
            \right)^{2^{m+1}}
            C^\gamma_{\alpha, \boldsymbol{\cdot}}
        \right)^{\frac{1}{2^{m+1}}} \\
        &=
        \norm{a}_{m+1, 2\ell, \gamma}
        +
        \norm{b}_{m+1, 2\ell, \gamma},
    \end{align*}
    where we used first the induction hypothesis and second the
    Minkowski inequality. The case of $\norm{\argument}_{m+1, 2\ell +
      1, \gamma}$ is analogous. For the third part we first note that
    the case $m = 0$ is a simple application of the Cauchy-Schwarz
    inequality. Then we proceed again by induction on $m$ and get for
    even $\ell$
    \begin{align*}
        \norm{a \star b}_{m, \ell, \gamma}
        &=
        \left(
            \sum\nolimits_{\alpha \in I}
            \left(
                \norm{a \star b}_{m-1, \ell/2, \alpha}
            \right)^{2^m}
            C_{\alpha, \boldsymbol{\cdot}}^\gamma
        \right)^{\frac{1}{2^m}} \\
        &\le
        \left(
            \sum\nolimits_{\alpha \in I}
            \left(
                \norm{a}_{m, \ell/2, \alpha}
                \norm{b}_{m, \ell/2 + 2^{m-1}, \alpha}
            \right)^{2^m}
            C_{\alpha, \boldsymbol{\cdot}}^\gamma
        \right)^{\frac{1}{2^m}} \\
        &\le
        \left(
            \left(
                \sum_{\alpha \in I}
                \left(
                    \norm{a}_{m, \ell/2, \alpha}(a)
                \right)^{2^{m+1}}
                C_{\alpha, \boldsymbol{\cdot}}^\gamma
            \right)
            \left(
                \sum_{\alpha \in I}
                \left(
                    \norm{b}_{m, \ell/2 + 2^{m-1}, \alpha}(a)
                \right)^{2^{m+1}}
                C_{\alpha, \boldsymbol{\cdot}}^\gamma
            \right)
        \right)^{\frac{1}{2^{m+1}}} \\
        &=
        \norm{a}_{m+1, \ell, \gamma}
        \norm{b}_{m+1, 2^m + \ell, \gamma},
    \end{align*}
    where we first used the induction hypothesis and then the
    Cauchy-Schwarz inequality. The case with $\ell$ odd is analogous.
\end{proof}
\begin{remark}
    \label{remark:Motivation}%
    The last part of this lemma was the original motivation for
    considering this recursion scheme: when trying to make the product
    continuous for the zeroth seminorms $\norm{\argument}_{0, 0,
      \gamma}$, i.e.\ for the topology of pointwise convergence with
    respect to the coefficient functionals $e^\gamma$, then one is
    lead rather directly to \eqref{eq:HRecursion}.
\end{remark}

Up to now it may well happen that all the higher ``seminorms''
$\norm{a}_{m, \ell, \gamma}$ will produce $+\infty$ for $a \ne 0$. In
this case, our construction stops and we have not succeeded in finding
a nice locally convex topology making the product continuous. This
motivates to consider the following subset
\begin{equation}
    \label{eq:Anice}
    \mathcal{A}_{\mathrm{nice}} =
    \left\{
        a \in \mathcal{A}
        \; \Big| \;
        \norm{a}_{m, \ell, \gamma}(a) < +\infty
        \;
        \textrm{for all}
        \;
        m \in \mathbb{N}_0, \ell = 0, \ldots 2^m-1, \gamma \in I
    \right\}
    \subseteq \mathcal{A}.
\end{equation}
From Proposition~\ref{lemma:AlmostSeminorms} one immediately
obtains the following corollary. Note that already the seminorms
$\norm{\argument}_{0, 0, \gamma}$ are enough to guarantee a Hausdorff
topology.
\begin{corollary}
    \label{corollary:AniceLCA}%
    The subset $\mathcal{A}_{\mathrm{nice}}$ is a subalgebra of
    $\mathcal{A}$ and the maps $\norm{\argument}_{m, \ell, \gamma}$
    are seminorms on $\mathcal{A}_{\mathrm{nice}}$ such that
    $\mathcal{A}_{\mathrm{nice}}$ becomes a Hausdorff locally convex
    algebra.
\end{corollary}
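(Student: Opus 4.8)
The plan is to read off all three assertions directly from Lemma~\ref{lemma:AlmostSeminorms}, whose three parts furnish exactly the needed estimates; no new analytic input is required beyond bookkeeping. First I would check that \(\mathcal{A}_{\mathrm{nice}}\) is a linear subspace: given \(a, b \in \mathcal{A}_{\mathrm{nice}}\) and \(z \in \mathbb{C}\), homogeneity gives \(\norm{za}_{m, \ell, \gamma} = |z|\, \norm{a}_{m, \ell, \gamma}\), and subadditivity gives \(\norm{a + b}_{m, \ell, \gamma} \le \norm{a}_{m, \ell, \gamma} + \norm{b}_{m, \ell, \gamma}\), for every \(m \in \mathbb{N}_0\), \(\ell = 0, \ldots, 2^m - 1\), \(\gamma \in I\); since the right-hand sides are finite by assumption, \(za\) and \(a + b\) lie in \(\mathcal{A}_{\mathrm{nice}}\).

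Next I would show closure under \(\star\) using \eqref{eq:ProductContinuous}: for \(a, b \in \mathcal{A}_{\mathrm{nice}}\) one has \(\norm{a \star b}_{m, \ell, \gamma} \le \norm{a}_{m+1, \ell, \gamma}\, \norm{b}_{m+1, 2^m + \ell, \gamma}\), and since \(0 \le \ell \le 2^m - 1\) forces \(0 \le 2^m + \ell \le 2^{m+1} - 1\), both factors on the right are among the quantities required to be finite in \eqref{eq:Anice}. Hence \(a \star b \in \mathcal{A}_{\mathrm{nice}}\), so \(\mathcal{A}_{\mathrm{nice}}\) is a subalgebra. On \(\mathcal{A}_{\mathrm{nice}}\) each \(\norm{\argument}_{m, \ell, \gamma}\) is now finite-valued, nonnegative, homogeneous and subadditive, hence a seminorm; as the index set is countable, the family defines a locally convex topology. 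For the Hausdorff property it suffices to note that \(\norm{a}_{0, 0, \gamma} = |a_\gamma| = |e^\gamma(a)|\), so the subfamily \(\{\norm{\argument}_{0, 0, \gamma}\}_{\gamma \in I}\) already separates points. Finally, joint continuity of \(\star\) follows from \eqref{eq:ProductContinuous} in the standard way: writing \(a \star b - a_0 \star b_0 = (a - a_0) \star b_0 + a_0 \star (b - b_0) + (a - a_0) \star (b - b_0)\) and bounding the \((m, \ell, \gamma)\)-seminorm of each summand by products of level \((m+1)\) seminorms of the factors controls \(a \star b\) near \(a_0 \star b_0\).

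I do not expect a genuine obstacle: all the analytic work (the Minkowski and Cauchy--Schwarz inequalities) has already been spent in proving Lemma~\ref{lemma:AlmostSeminorms}. The only points needing a moment's attention are that the shifted index \(2^m + \ell\) stays in the admissible range \(\{0, \ldots, 2^{m+1} - 1\}\), and that \eqref{eq:ProductContinuous} is not submultiplicativity with respect to one fixed seminorm but an estimate mixing the levels \(m\) and \(m+1\); this is nonetheless exactly the form needed for joint continuity of the product, and hence for the locally convex algebra structure.
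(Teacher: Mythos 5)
Your proof is correct and follows exactly the route the paper intends: the corollary is stated as an immediate consequence of Lemma~\ref{lemma:AlmostSeminorms}, and you supply precisely the bookkeeping the paper leaves implicit, including the paper's own remark that the seminorms $\norm{\argument}_{0,0,\gamma}$ already separate points. The only thing you add beyond the paper's terse statement is the explicit three-term decomposition for joint continuity of $\star$, which is the standard argument and clearly what the authors have in mind.
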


Thus it is reasonable to restrict our attention to
$\mathcal{A}_{\mathrm{nice}}$ and replace $\mathcal{A}$ by this
subalgebra. Note however, that we do not have any a priori guarantee
that $\mathcal{A}_{\mathrm{nice}}$ is different from $\{0\}$.

Having a locally convex algebra we can form its completion. Here we
will get a rather explicit description which will constitute the first
main theorem of this section:
\begin{theorem}
    \label{theorem:AniceCompleted}%
    Let $\mathcal{A} =
    \mathbb{C}\textrm{-}\spann\{\basis{e}_\alpha\}_{\alpha \in I}$ be
    an algebra with countable vector space basis and assume
    $\mathcal{A}_{\mathrm{nice}} = \mathcal{A}$.
    \begin{theoremlist}
    \item \label{item:Completion} The completion
        $\complete{\mathcal{A}}$ of $\mathcal{A}$ becomes a Fréchet
        algebra. The underlying Fréchet space can be described
        explicitly as
        \begin{equation}
            \label{eq:CompleteAnice}
            \complete{\mathcal{A}}
            =
            \left\{
                a = \sum_{\alpha \in I} a_\alpha \basis{e}_\alpha
                \in \prod_{\alpha \in I}\mathbb{C}\basis{e}_\alpha
                \; \bigg| \;
                \norm{a}_{m, \ell, \gamma} < +\infty
                \; \textrm{for all} \;
                m \in \mathbb{N}_0, \ell = 0, \ldots, 2^m - 1, \gamma
                \in I
            \right\},
        \end{equation}
        viewed as subspace of the vector space $\prod_{\alpha \in
          I}\mathbb{C}\basis{e}_\alpha$. The topology of
        $\complete{\mathcal{A}}$ is finer than the Cartesian product
        topology of $\prod_{\alpha \in I} \mathbb{C}\basis{e}_\alpha$.
    \item \label{item:EvaluationContinuous} The evaluation functionals
        $e^\alpha$ extend to continuous linear functionals
        \begin{equation}
            \label{eq:EvaluationFunctionalsContinuous}
            e^\alpha\colon \complete{\mathcal{A}}
            \longrightarrow
            \mathbb{C}. 
        \end{equation}
    \item \label{item:UnconditionalSchauderBasis} The vectors
        $\{\basis{e}_\alpha\}_{\alpha \in I}$ form an unconditional
        Schauder basis of $\complete{\mathcal{A}}$, i.e.\
        \begin{equation}
            \label{eq:SchauderBasis}
            a = \sum_{\alpha \in I} e^\alpha(a) \basis{e}_\alpha
        \end{equation}
        converges unconditionally for all $a \in
        \complete{\mathcal{A}}$.
    \end{theoremlist}
\end{theorem}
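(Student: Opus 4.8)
The plan is to establish the three claims in order, using Lemma~\ref{lemma:AlmostSeminorms} together with standard facts about completions of metrizable locally convex spaces. Since $\mathcal{A}_{\mathrm{nice}} = \mathcal{A}$ is a Hausdorff locally convex algebra with a countable separating family of seminorms $\norm{\argument}_{m,\ell,\gamma}$ by Corollary~\ref{corollary:AniceLCA}, it is metrizable; its completion $\complete{\mathcal{A}}$ is therefore a Fréchet space, and the product extends continuously to $\complete{\mathcal{A}}$ because the estimate \eqref{eq:ProductContinuous} in Lemma~\ref{lemma:AlmostSeminorms}~\refitem{item:ProductContinuous} is preserved under the completion (each seminorm extends by continuity to its completion and the inequality passes to the limit), making $\complete{\mathcal{A}}$ a Fréchet algebra.

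For the explicit description \eqref{eq:CompleteAnice}, first I would observe that already the zeroth seminorms $\norm{\argument}_{0,0,\gamma} = |e^\gamma(\argument)|$ are continuous on $\mathcal{A}$, hence extend to continuous functionals $e^\gamma$ on $\complete{\mathcal{A}}$; this gives a natural linear injection $\complete{\mathcal{A}} \hookrightarrow \prod_{\alpha\in I}\mathbb{C}\basis{e}_\alpha$, $a \mapsto (e^\alpha(a))_{\alpha}$, which is injective by Hausdorffness, and continuous, so the Fréchet topology is finer than the induced Cartesian product topology. This already proves part~\refitem{item:EvaluationContinuous}. The content of \eqref{eq:CompleteAnice} is then a two-inclusion argument: every element of $\complete{\mathcal{A}}$, being a limit of a Cauchy sequence from $\mathcal{A}$, has all $\norm{\argument}_{m,\ell,\gamma}$ finite (the seminorms extend continuously, so they are finite on the whole completion), giving one inclusion; conversely, given $a = \sum a_\alpha \basis{e}_\alpha$ in the right-hand side, I would show it is a limit of its finite partial sums $a^{(F)} = \sum_{\alpha \in F} a_\alpha \basis{e}_\alpha$ over finite $F \subseteq I$. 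For this I must bound $\norm{a - a^{(F)}}_{m,\ell,\gamma}$ by a tail of a convergent series: using the recursion \eqref{eq:HRecursion} one sees $h_{m,\ell,\gamma}(a - a^{(F)})$ is, after unravelling, a sum of products of terms $|a_{\alpha_0}|^{2^m}$ against products of structure-constant sums, and since $h_{m,\ell,\gamma}(a) < \infty$ this is a convergent nonnegative series whose $F$-complement tail goes to zero; monotone convergence / dominated convergence for series then yields $\norm{a - a^{(F)}}_{m,\ell,\gamma} \to 0$, so $a$ lies in the completion of $\mathcal{A}$ inside $\prod_\alpha \mathbb{C}\basis{e}_\alpha$.

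For part~\refitem{item:UnconditionalSchauderBasis}, the same tail estimate gives that $\sum_{\alpha} e^\alpha(a)\basis{e}_\alpha$ converges to $a$ in $\complete{\mathcal{A}}$, and since the bound on $\norm{a - a^{(F)}}_{m,\ell,\gamma}$ depends only on the index set $F$ and not on any ordering — it is the tail of an \emph{absolutely} convergent series of nonnegative quantities — the convergence is unconditional (indeed one gets the stronger statement that $\sum_\alpha \norm{e^\alpha(a)\basis{e}_\alpha}_{m,\ell,\gamma}$ need not converge, but the net of partial sums over finite subsets is Cauchy, which is exactly unconditional convergence). Uniqueness of the coefficients follows from continuity of the $e^\alpha$.

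The main obstacle is the tail estimate in part~\refitem{item:Completion}: one has to carefully expand the recursion \eqref{eq:HRecursion} to see that $h_{m,\ell,\gamma}(a)$ is genuinely an absolutely convergent series indexed by tuples $(\alpha_0, \alpha_1, \ldots)$ with summand $|a_{\alpha_0}|^{2^m}$ times a nonnegative ``weight'' built from the $C^\gamma_{\alpha,\boldsymbol{\cdot}}$ and $C^\gamma_{\boldsymbol{\cdot},\beta}$, so that restricting to $F$ simply restricts the $\alpha_0$-summation to $F$ and the complement is a genuine tail. Once this bookkeeping is set up, finiteness of $\norm{a}_{m,\ell,\gamma}$ plus the fact that subtracting $a^{(F)}$ only kills the terms with $\alpha_0 \notin F$ makes $\norm{a - a^{(F)}}_{m,\ell,\gamma} \to 0$ immediate, and everything else is routine functional analysis of Fréchet completions.
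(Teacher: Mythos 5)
Your overall strategy matches the paper's: both expand $h_{m,\ell,\gamma}$ as an absolutely convergent nonnegative series and use a dominated-convergence tail estimate for density of finite linear combinations, and both use the $m=0$ seminorms to separate points and identify the limit. However, your bookkeeping of the expansion is off. Unravelling \eqref{eq:HRecursion} with $s = 2^m$ gives
\[
h_{m,\ell,\gamma}(a)
=
\sum_{(\alpha_1,\ldots,\alpha_s) \in I^s}
\mu_{m,\ell,\gamma,\alpha_1,\ldots,\alpha_s}\,
|a_{\alpha_1}|\cdots|a_{\alpha_s}|,
\]
a sum over $s$-tuples with a \emph{product} of $2^m$ (generally distinct) coefficients in the summand — not, as you write, a single-index series with summand $|a_{\alpha_0}|^{2^m}$. (Your formula is accidentally correct for $m\le 1$ but not for $m\ge 2$.) This matters for the tail: $h_{m,\ell,\gamma}(a-a^{(F)})$ equals the sub-sum over $(I\setminus F)^s$, a shrinking subset of $I^s$, which is exactly what the paper formalizes by identifying $a$ with the function $f_a(\alpha_1,\ldots,\alpha_s)=a_{\alpha_1}\cdots a_{\alpha_s}$ on $I^s$ equipped with the weighted counting measure $\mu_{m,\ell,\gamma}$, and then applying dominated convergence. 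The conclusion you want still holds, but you should set up the index set as $I^s$ and the summand as a product, or you will not be able to carry out the estimate literally.

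A second, smaller imprecision: in the forward inclusion you argue that ``the seminorms extend continuously, so they are finite on the whole completion,'' but this only yields finiteness of the \emph{abstractly extended} seminorm on $\complete{\mathcal A}$. The theorem asserts that the \emph{concrete} series formula \eqref{eq:Halbnormen}--\eqref{eq:HRecursion}, applied to the coefficient sequence $(e^\alpha(\hat a))_\alpha$ of an abstract completion element $\hat a$, is finite. To pass from one to the other you need a Fatou/lower-semicontinuity argument: the coefficients of a Cauchy sequence $a_i$ converge pointwise, and $h_{m,\ell,\gamma}$ applied to the coefficientwise limit is $\le \liminf_i h_{m,\ell,\gamma}(a_i)$, hence finite. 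The paper sidesteps this by showing directly that the right-hand side of \eqref{eq:CompleteAnice}, with its $\ell^1$-type seminorms, is complete and contains $\mathcal A$ densely, so that it \emph{is} the completion by uniqueness. Your route through the abstract completion works but requires that extra step to be made explicit.
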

\begin{proof}
    The second part is clear from the continuity of the $e^\alpha$ on
    $\mathcal{A}$: they extend continuously to the completion. In
    fact, the continuity of the $e^\alpha$ on $\mathcal{A}$ can be
    seen directly from the estimate (equality)
    \[
    |e^\alpha(a)| = |a_\alpha| = \norm{a}_{0, 0, \alpha},
    \]
    which also shows that the topology of $\complete{\mathcal{A}}$
    will be finer than the Cartesian product topology which is
    determined by the seminorms $\norm{\argument}_{0, 0, \alpha}$
    alone.  For the remaining statements we can essentially follow the
    arguments from \cite[Thm.~3.9 and
    Thm.~3.10]{beiser.roemer.waldmann:2007a}.  From the recursive
    definition it is clear that there are constants $\mu_{m, \ell,
      \gamma, \alpha_1, \ldots, \alpha_s} \ge 0$ with $s = 2^m$ such
    that
    \[
    h_{m, \ell, \gamma}(a)
    =
    \sum_{\alpha_1, \ldots, \alpha_s}
    \mu_{m, \ell, \gamma, \alpha_1, \ldots, \alpha_s}
    |a_{\alpha_1}| \cdots |a_{\alpha_s}|.
    \tag{$*$}
    \]
    The precise form of the $\mu_{m, \ell, \gamma, \alpha_1, \ldots,
      \alpha_s}$ is complicated but irrelevant for the following.  We
    view $a$ as a function $f_a\colon I^s \longrightarrow \mathbb{C}$
    assigning the $s$-tuple of indices $(\alpha_1, \ldots, \alpha_s)$
    the value $a_{\alpha_1} \cdots a_{\alpha_s}$. The coefficients
    $\mu_{m, \ell, \gamma, \alpha_1, \ldots, \alpha_s}$ define a
    weighted counting measure $\mu_{m, \ell, \gamma}$ on $I^s$ such
    that ($*$) simply becomes
    \[
    h_{m, \ell, \gamma}(a) = \norm{f_a}_{\ell^1}
    \quad
    \textrm{and thus}
    \quad
    \norm{a}_{m, \ell, \gamma}
    =
    \sqrt[2^m]{\norm{f_a}_{\ell^1}},
    \]
    where the $\ell^1$-seminorm is defined with respect to the measure
    $\mu_{m, \ell, \gamma}$. For fixed $m$, $\ell$, $\gamma$ we
    consider the subset $\mathcal{A}_{m, \ell, \gamma}$ of those $a
    \in \prod_{\alpha \in I} \mathbb{C}\basis{e}_\alpha$ with $h_{m,
      \ell, \gamma}(a) < \infty$. These correspond precisely to the
    integrable functions $f_a$. Thus the map $a \mapsto f_a$ is
    topological homeomorphism of metric spaces (and even isometric up
    to the root $\sqrt[2^m]{\argument}$) though clearly not a linear
    homeomorphism.  Now let $(a_i)_{i \in \mathbb{N}}$ be a Cauchy
    sequence in $\complete{\mathcal{A}}$. The seminorms
    $\norm{\argument}_{0, 0, \gamma}$ guarantee the Hausdorff property
    and thus there is at most one limit. Moreover, $(a_i)_{i \in
      \mathbb{N}}$ is a Cauchy sequence in $\mathcal{A}_{m, \ell,
      \gamma}$ and hence convergent by the usual completeness of the
    $\ell^1$-spaces. Note that it might happen that the weighted
    counting measures $\mu_{m, \ell, \gamma}$ lead to nonempty subsets
    of $I^s$ of measure zero. Hence the $\ell^1$-spaces
    $\mathcal{A}_{m, \ell, \gamma}$ are not yet Hausdorff but
    nevertheless complete. Thus $a_i \longrightarrow a$ converges to
    some $a \in \mathcal{A}_{m, \ell, \gamma}$ with $a$ being uniquely
    determined on those indices of nonzero measure. But for those the
    evaluation functionals $e^\alpha$ are continuous and hence the
    limits $a$ obtained for different $m$, $\ell$, $\gamma$ coincide
    on those $\alpha$ with nonzero measure. From
    $\complete{\mathcal{A}} = \bigcap_{m, \ell, \gamma}
    \mathcal{A}_{m, \ell, \gamma}$ and the Hausdorff property obtained
    from $m = 0$ we see that $a_i \longrightarrow a \in
    \complete{\mathcal{A}}$ proving the completeness.  Now let $a \in
    \complete{\mathcal{A}}$ be given and let $K \subseteq I$ be a
    finite subset of indices. Let $a_K = \sum_{\alpha \in K} a_\alpha
    \basis{e}_\alpha$. We claim that $a_K \longrightarrow a$ for any
    exhausting sequence of finite subsets of $I$. Indeed, in our
    measure-theoretic picture $a_K$ corresponds to the function
    $f_{a_K} = f_a \chi_{K^s}$ where $\chi_{K^s}$ is the
    characteristic function of $K^s \subseteq I^s$. Thus we can apply
    the dominated convergence theorem to conclude $f_{a_K}
    \longrightarrow f_a$ in the $\ell^1$-seminorm of $\mathcal{A}_{m,
      \ell, \gamma}$ which implies $a_K \longrightarrow a$ in the
    seminorm $\norm{\argument}_{m, \ell, \gamma}$. This shows the
    unconditional convergence of \eqref{eq:SchauderBasis}. Since the
    evaluation functionals are continuous, we have an unconditional
    Schauder basis for $\complete{\mathcal{A}}$. But this finally
    shows that $\mathcal{A} \subseteq \complete{\mathcal{A}}$ is
    dense, completing also the proof of the first part.
\end{proof}
\begin{remark}
    \label{remark:NotMConvex}%
    In general, the topology will \emph{not} be multiplicatively
    convex and hence $\complete{\mathcal{A}}$ will not be a locally
    $m$-convex Fréchet algebra. The relevant index for this failure of
    $m$-convexity is the index $m$ in $\norm{\argument}_{m, \ell,
      \gamma}$. We also see that the index $\ell$ plays a minor role:
    we can easily take the maximum over the finitely many values of
    $\ell$ for a given $m$ and $\gamma$. In particular, the seminorms
    defined by
    \begin{equation}
        \label{eq:SeminormsWithoutL}
        \norm{a}_{m, \gamma}
        =
        \max_{0 \le \ell \le 2^m -1} \norm{a}_{m, \ell, \gamma}
    \end{equation}
    will yield the same topology as the original ones.
\end{remark}

%
%

\subsection{Second version}
\label{subsec:SecondVersion}

While the above construction will already produce interesting examples
by its own, we will need yet a refinement. The idea is that while we
have now a continuous product we also want certain linear functionals
on $\mathcal{A}$ to be continuous. Thus let $\omega\colon \mathcal{A}
\longrightarrow \mathbb{C}$ be a linear functional. Since we have a
vector space basis this is entirely determined by the coefficients
$\omega_\alpha = \omega(\basis{e}_\alpha)$. In general, $\omega$ will
not be continuous with respect to the seminorms $\norm{\argument}_{m,
  \ell, \gamma}$. If we insist on a topology making $\omega$
continuous as well then we have to add at least the seminorm
$\norm{a}_{0, 0, \omega} = |\omega(a)|$. However, adding just this
single seminorm will spoil the continuity of the product in
general. Thus we have to start again a recursion enforcing the
continuity of $\star$. This will lead to the following definition:
\begin{definition}
    \label{definition:OmegaSeminorms}%
    Let $\omega\colon \mathcal{A} \longrightarrow \mathbb{C}$ be a
    linear functional. Then we define the maps $\norm{\argument}_{m,
      \ell, \omega}\colon \mathcal{A} \longrightarrow [0, +\infty]$
    for $a \in \mathcal{A}$ by
    \begin{equation}
        \label{eq:OmegaSeminorms}
        \norm{a}_{m, \ell, \omega}
        =
        \sqrt[2^m]{
          \sum_{\gamma \in I} |\omega_\gamma| h_{m, \ell, \gamma}(a)
        },
    \end{equation}
    where $m \in \mathbb{N}_0$ and $\ell = 0, \ldots, 2^m - 1$.
\end{definition}
\begin{lemma}
    \label{lemma:OmegaSeminorms}%
    Let $\omega\colon \mathcal{A} \longrightarrow \mathbb{C}$ be a
    linear functional. Then for all $m \in \mathbb{N}_0$ and $\ell =
    0, \ldots, 2^m - 1$ the following statements hold:
    \begin{lemmalist}
    \item \label{item:OmegaSeminormHomogeneous} For all $z \in
        \mathbb{C}$ and $a \in \mathcal{A}$ one has $\norm{za}_{m,
          \ell, \omega} = |z| \norm{za}_{m, \ell, \omega}$.
    \item \label{item:OmegaSeminormSubadd} For all $a, b \in
        \mathcal{A}$ one has $\norm{a + b}_{m, \ell, \omega} \le
        \norm{a}_{m, \ell, \omega} + \norm{b}_{m, \ell, \omega}$.
    \item \label{item:OmegaSeminormProduct} For all $a, b \in
        \mathcal{A}$ one has
        \begin{equation}
            \label{eq:OmegaSeminormProduct}
            \norm{a \star b}_{m, \ell, \omega}
            \le
            \norm{a}_{m+1, \ell, \omega}
            \norm{b}_{m+1, 2^m + \ell, \omega}.
        \end{equation}
    \end{lemmalist}
\end{lemma}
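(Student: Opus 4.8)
The plan is to mirror the proof of Lemma~\ref{lemma:AlmostSeminorms} almost verbatim, the only new ingredient being the extra summation over $\gamma$ weighted by $|\omega_\gamma|$. Part~\refitem{item:OmegaSeminormHomogeneous} is immediate from $h_{m,\ell,\gamma}(za) = |z|^{2^m} h_{m,\ell,\gamma}(a)$, which itself follows by an easy induction from \eqref{eq:HRecursion} (or from Lemma~\ref{lemma:AlmostSeminorms}~\refitem{item:Homogeneous} together with \eqref{eq:Halbnormen}). Part~\refitem{item:OmegaSeminormSubadd}: I would argue that for fixed $m,\ell$ the map $a \mapsto \norm{a}_{m,\ell,\omega}$ is, up to the outer $2^m$-th root, an $\ell^1$-type expression. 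Concretely, using the representation $h_{m,\ell,\gamma}(a) = \sum_{\alpha_1,\dots,\alpha_s} \mu_{m,\ell,\gamma,\alpha_1,\dots,\alpha_s}|a_{\alpha_1}|\cdots|a_{\alpha_s}|$ with $s = 2^m$ from the proof of Theorem~\ref{theorem:AniceCompleted}, the quantity under the root in \eqref{eq:OmegaSeminorms} is $\sum_\gamma |\omega_\gamma| h_{m,\ell,\gamma}(a) = \norm{f_a}_{\ell^1}$ for the weighted counting measure $\sum_\gamma |\omega_\gamma| \mu_{m,\ell,\gamma}$ on $I^s$, where $f_a(\alpha_1,\dots,\alpha_s) = a_{\alpha_1}\cdots a_{\alpha_s}$. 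Then $\norm{\argument}_{m,\ell,\omega} = \sqrt[2^m]{\norm{f_{\argument}}_{\ell^1}}$, and subadditivity follows from Minkowski's inequality exactly as in the subadditivity step of Lemma~\ref{lemma:AlmostSeminorms}: $|f_{a+b}| \le \sum_{\text{terms}} |\text{products of }a\text{'s and }b\text{'s}|$ and then repeated triangle inequality on $\ell^1$ combined with the $2^m$-homogeneity collapses to $\norm{a}_{m,\ell,\omega} + \norm{b}_{m,\ell,\omega}$. Alternatively, and perhaps more cleanly, I would induct on $m$ directly: for $m=0$, $\norm{a}_{0,0,\omega} = (\sum_\gamma |\omega_\gamma| |a_\gamma|^{?})$ — wait, at $m=0$ one has $h_{0,0,\gamma}(a) = |a_\gamma|$, so $\norm{a}_{0,0,\omega} = \sum_\gamma |\omega_\gamma||a_\gamma| \ge |\sum_\gamma \omega_\gamma a_\gamma| = |\omega(a)|$, and subadditivity is clear; for the inductive step one uses \eqref{eq:HRecursion} to write $\sum_\gamma |\omega_\gamma| h_{m+1,2\ell,\gamma}(a) = \sum_{\alpha} (\norm{a}_{m,\ell,\alpha})^{2^{m+1}} \big(\sum_\gamma |\omega_\gamma| \sum_\beta |C^\gamma_{\alpha\beta}|\big)$ and applies Minkowski in $\ell^{2^{m+1}}$ with the nonnegative weights $\sum_\gamma |\omega_\gamma| C^\gamma_{\alpha,\boldsymbol\cdot}$, exactly paralleling the displayed computation in the proof of Lemma~\ref{lemma:AlmostSeminorms}~\refitem{item:Subadditive}; the odd-$\ell$ case is symmetric.

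For the product estimate~\refitem{item:OmegaSeminormProduct} I would again copy the structure of the proof of Lemma~\ref{lemma:AlmostSeminorms}~\refitem{item:ProductContinuous}. Start from the rewriting
\[
  \big(\norm{a\star b}_{m,\ell,\omega}\big)^{2^m}
  =
  \sum_{\gamma \in I} |\omega_\gamma| \, h_{m,\ell,\gamma}(a\star b),
\]
and then use the pointwise bound $h_{m,\ell,\gamma}(a\star b) \le \big(\norm{a}_{m+1,\ell,\gamma}\norm{b}_{m+1,2^m+\ell,\gamma}\big)^{2^m}$ coming from raising \eqref{eq:ProductContinuous} to the power $2^m$. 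Substituting and applying the Cauchy--Schwarz inequality to the $\gamma$-sum with the measure $|\omega_\gamma|$ (splitting $|\omega_\gamma| h_{m,\ell,\gamma}(a\star b)^{1/2} \cdot h$-pieces appropriately, i.e. writing the summand as a product of $\sqrt{|\omega_\gamma|}\, \norm{a}_{m+1,\ell,\gamma}^{2^m}$ and $\sqrt{|\omega_\gamma|}\, \norm{b}_{m+1,2^m+\ell,\gamma}^{2^m}$) gives
\[
  \sum_\gamma |\omega_\gamma| \big(\norm{a}_{m+1,\ell,\gamma}\norm{b}_{m+1,2^m+\ell,\gamma}\big)^{2^m}
  \le
  \Big(\sum_\gamma |\omega_\gamma| \norm{a}_{m+1,\ell,\gamma}^{2^{m+1}}\Big)^{1/2}
  \Big(\sum_\gamma |\omega_\gamma| \norm{b}_{m+1,2^m+\ell,\gamma}^{2^{m+1}}\Big)^{1/2}.
\]
Recognizing $\sum_\gamma |\omega_\gamma| \norm{a}_{m+1,\ell,\gamma}^{2^{m+1}} = \sum_\gamma |\omega_\gamma| h_{m+1,\ell,\gamma}(a) = \big(\norm{a}_{m+1,\ell,\omega}\big)^{2^{m+1}}$ and likewise for $b$, taking the $2^m$-th root on both sides yields \eqref{eq:OmegaSeminormProduct}. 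Note this does not even need a separate induction — it is a one-line consequence of Lemma~\ref{lemma:AlmostSeminorms}~\refitem{item:ProductContinuous} plus Cauchy--Schwarz over the $\gamma$-weights.

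I do not expect a genuine obstacle here; the statement is a weighted-sum variant of an estimate already established. The one point requiring a little care is bookkeeping: one must make sure the identity $\sum_\gamma |\omega_\gamma| h_{m,\ell,\gamma}(a) = \big(\norm{a}_{m,\ell,\omega}\big)^{2^m}$ is used consistently in the right direction, and that in the subadditivity step the weights $\sum_\gamma |\omega_\gamma| C^\gamma_{\alpha,\boldsymbol\cdot}$ (resp. $\sum_\gamma |\omega_\gamma| C^\gamma_{\boldsymbol\cdot,\beta}$) are finite so that the Minkowski inequality is legitimately applied in an honest $\ell^{2^{m+1}}$-space — but since the lemma is a formal inequality in $[0,+\infty]$, divergence only makes both sides $+\infty$ and the inequality still holds trivially. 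The odd-$\ell$ cases throughout are handled by the symmetric argument using \eqref{eq:mKleinermPlusEinsII} in place of \eqref{eq:mKleinermPlusEins}, and I would simply say "analogously" as the paper does elsewhere.
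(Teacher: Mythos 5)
Your proof is correct and matches the paper's intent: the paper dispenses with this lemma by noting it is a simple verification from Lemma~\ref{lemma:AlmostSeminorms}, and what you have written is precisely that verification spelled out — homogeneity and subadditivity follow by pushing the weighted $\gamma$-sum through the corresponding properties of $h_{m,\ell,\gamma}$ (Minkowski with weights $\sum_\gamma|\omega_\gamma|C^\gamma_{\alpha,\boldsymbol\cdot}$ in $\ell^{2^{m+1}}$), and the product estimate is, as you observe, just Lemma~\ref{lemma:AlmostSeminorms}~\refitem{item:ProductContinuous} pointwise in $\gamma$ followed by Cauchy--Schwarz against $|\omega_\gamma|$. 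No gap; your remark that everything is a formal inequality in $[0,+\infty]$ and so divergence is harmless is exactly the right attitude to take here.
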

\begin{proof}
    Using the properties of the maps $h_{m, \ell, \gamma}$ according
    to Lemma~\ref{lemma:AlmostSeminorms} all the statements are a
    simple verification.
\end{proof}

Of course, it may again well happen that the quantities $\norm{a}_{m,
  \ell, \omega}$ are all $+\infty$ for $a \ne 0$ even though the
$\norm{a}_{m, \ell, \gamma}$ are always finite. Thus we typically add
nontrivial conditions when we require $\norm{a}_{m, \ell, \omega} <
\infty$. As before, we consider only those algebra elements where this
is finite and obtain a subalgebra
$\mathcal{A}_{\omega\textrm{-}\mathrm{nice}}$ of $\mathcal{A}$ inside
the previous $\mathcal{A}_{\mathrm{nice}}$. Moreover, we can even
select an arbitrary family $\Omega = \{\omega\colon \mathcal{A}
\longrightarrow \mathbb{C}\}$ of linear functionals and consider the
seminorms $\norm{\argument}_{m, \ell, \omega}$ for all $\omega \in
\Omega$. We shall refer to this topology as the $\Omega$-nice
topology. This will lead to subalgebras
\begin{equation}
    \label{eq:AOmegaAomegaAniceA}
    \mathcal{A}_{\Omega\textrm{-}\mathrm{nice}}
    =
    \bigcap_{\omega \in \Omega}
    \mathcal{A}_{\omega\textrm{-}\mathrm{nice}}
    \subseteq
    \mathcal{A}_{\omega\textrm{-}\mathrm{nice}}
    \subseteq
    \mathcal{A}_{\mathrm{nice}}
    \subseteq
    \mathcal{A},
\end{equation}
which of course might be trivial, depending on the choice of $\Omega$.
Clearly, the inclusions are continuous and the topologies become
coarser when moving to the right in \eqref{eq:AOmegaAomegaAniceA}.  If
$\Omega$ is at most countably infinite then we still get a countable
set of seminorms. Note that taking $\Omega = \{e^\alpha, \alpha \in
I\}$ just reproduces the seminorms $\norm{\argument}_{m, \ell,
  \gamma}$ we had before since $e^\alpha (\basis{e}_\gamma) =
\delta^\alpha_\gamma$.  Putting things together properly, we get the
analogous statement to Theorem~\ref{theorem:AniceCompleted} also in
this more general situation:
\begin{theorem}
    \label{theorem:Omega}%
    Let $\mathcal{A} =
    \mathbb{C}\textrm{-}\spann\{\basis{e}_\alpha\}_{\alpha \in I}$ be
    an algebra with countable vector space basis. Moreover, let
    $\Omega$ be a family of linear functionals on $\mathcal{A}$ and
    assume that $\mathcal{A}_{\Omega\textrm{-}\mathrm{nice}} =
    \mathcal{A}$.
    \begin{theoremlist}
    \item \label{item:CompletionSecondVersion} The completion
        $\complete{\mathcal{A}}$ of $\mathcal{A}$ becomes a complete
        locally convex algebra which is Fréchet if $\Omega$ is
        countable. The underlying locally convex space can be
        described explicitly as
        \begin{equation}
            \label{eq:CompleteAniceSecondVersion}
            \complete{\mathcal{A}}
            =
            \left\{
                a \in
                \prod_{\alpha \in I}\mathbb{C}\basis{e}_\alpha
                \; \bigg| \;
                \norm{a}_{m, \ell, \gamma},
                \norm{a}_{m, \ell, \omega} < + \infty                
                \; \textrm{for all} \;
                m \in \mathbb{N}_0, \ell = 0, \ldots, 2^m - 1,
                \gamma \in I, \omega \in \Omega
            \right\},
        \end{equation}
        viewed as subspace of $\prod_{\alpha \in I}
        \mathbb{C}\basis{e}_\alpha$ as before.
    \item \label{item:EvaluationContinuousSecondVersion} The
        evaluation functionals $e^\alpha$ as well as the functionals
        $\omega \in \Omega$ extend to continuous linear functionals
        \begin{equation}
            \label{eq:EvaluationFunctionalsContinuousSecondVersion}
            e^\alpha, \omega\colon \complete{\mathcal{A}}
            \longrightarrow
            \mathbb{C}. 
        \end{equation}
    \item \label{item:UnconditionalSchauderBasisSecondVersion} The
        vectors $\{\basis{e}_\alpha\}_{\alpha \in I}$ form an
        unconditional Schauder basis of $\complete{\mathcal{A}}$,
        i.e.\
        \begin{equation}
            \label{eq:SchauderBasisSecondVersion}
            a = \sum_{\alpha \in I} e^\alpha(a) \basis{e}_\alpha
        \end{equation}
        converges unconditionally for all $a \in
        \complete{\mathcal{A}}$.
    \item \label{item:OmegaFiner} If $\Omega' \subseteq \Omega$ then
        $\mathcal{A}_{\Omega'\textrm{-}\mathrm{nice}} = \mathcal{A}$,
        too, and the $\Omega$-nice topology is finer than the
        $\Omega'$-nice topology.
    \end{theoremlist}
\end{theorem}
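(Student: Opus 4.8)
The plan is to rerun the proof of Theorem~\ref{theorem:AniceCompleted} for the enlarged family of seminorms $\{\norm{\argument}_{m,\ell,\gamma} : \gamma \in I\} \cup \{\norm{\argument}_{m,\ell,\omega} : \omega \in \Omega\}$, which by Lemma~\ref{lemma:AlmostSeminorms} and Lemma~\ref{lemma:OmegaSeminorms} are seminorms on $\mathcal{A}_{\Omega\textrm{-}\mathrm{nice}} = \mathcal{A}$ that make the product continuous and, through the $\norm{\argument}_{0,0,\gamma}$ alone, already separate points. The one genuinely new point is that the seminorms $\norm{\argument}_{m,\ell,\omega}$ still fit the measure-theoretic description used there. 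Recalling from that proof that $h_{m,\ell,\gamma}(a) = \sum_{\alpha_1,\ldots,\alpha_s}\mu_{m,\ell,\gamma,\alpha_1,\ldots,\alpha_s}\,|a_{\alpha_1}|\cdots|a_{\alpha_s}|$ with $s = 2^m$ and nonnegative coefficients $\mu$, and inserting this into \eqref{eq:OmegaSeminorms}, an interchange of nonnegative sums gives
\[
\left(\norm{a}_{m,\ell,\omega}\right)^{2^m}
=
\sum_{\alpha_1,\ldots,\alpha_s}
\left( \sum_{\gamma\in I}|\omega_\gamma|\,\mu_{m,\ell,\gamma,\alpha_1,\ldots,\alpha_s} \right)
|a_{\alpha_1}|\cdots|a_{\alpha_s}|
=
\norm{f_a}_{\ell^1(\mu_{m,\ell,\omega})},
\]
where $f_a\colon I^s \to \mathbb{C}$ is the same function $(\alpha_1,\ldots,\alpha_s)\mapsto a_{\alpha_1}\cdots a_{\alpha_s}$ as before and $\mu_{m,\ell,\omega} = \sum_{\gamma\in I}|\omega_\gamma|\,\mu_{m,\ell,\gamma}$ is a weighted counting measure on the countable set $I^s$. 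Here the standing hypothesis $\mathcal{A}_{\Omega\textrm{-}\mathrm{nice}} = \mathcal{A}$ is precisely what is needed: it makes $\norm{\argument}_{m,\ell,\omega}$ finite on every finite linear combination of basis vectors, so $\mu_{m,\ell,\omega}$ assigns finite mass to every finite subset of $I^s$ and is $\sigma$-finite, and hence $\mathcal{A}_{m,\ell,\omega} := \{a\in\prod_{\alpha}\mathbb{C}\basis{e}_\alpha \mid \norm{a}_{m,\ell,\omega}<\infty\}$ is, via $a\mapsto f_a$, an (in general non-Hausdorff but complete) $\ell^1$-space, just like $\mathcal{A}_{m,\ell,\gamma}$.

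Granting this, the four assertions follow exactly as in Theorem~\ref{theorem:AniceCompleted}. For part (ii) one notes $|e^\alpha(a)| = \norm{a}_{0,0,\alpha}$ and $|\omega(a)| \le \sum_{\gamma\in I}|\omega_\gamma|\,|a_\gamma| = \norm{a}_{0,0,\omega}$, so $e^\alpha$ and $\omega$ are already continuous on $\mathcal{A}$ and extend to $\complete{\mathcal{A}}$; the $\norm{\argument}_{0,0,\gamma}$ give the Hausdorff property and generate the Cartesian product topology of $\prod_{\alpha}\mathbb{C}\basis{e}_\alpha$, so the topology of $\complete{\mathcal{A}}$ is finer than it, and if $\Omega$ is countable the whole system of seminorms is countable and the completion is Fréchet (for uncountable $\Omega$ one only obtains completeness without metrizability, replacing Cauchy sequences by Cauchy nets below). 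For part (i), the description \eqref{eq:CompleteAniceSecondVersion} and completeness come out as before: $\complete{\mathcal{A}} = \bigcap_{m,\ell,\gamma}\mathcal{A}_{m,\ell,\gamma} \cap \bigcap_{m,\ell,\omega}\mathcal{A}_{m,\ell,\omega}$ inside $\prod_{\alpha}\mathbb{C}\basis{e}_\alpha$, a Cauchy net is Cauchy --- hence convergent --- in each of these complete $\ell^1$-pieces and, via the $\norm{\argument}_{0,0,\gamma}$, in the Cartesian product topology, and since the product limit is determined by the continuous coefficient functionals $e^\alpha$ it must agree with all the $\ell^1$-limits, so it lies in $\complete{\mathcal{A}}$ and the net converges to it there. (Equivalently, each $\norm{\argument}_{m,\ell,\gamma}$ and $\norm{\argument}_{m,\ell,\omega}$ is lower semicontinuous for the Cartesian product topology, being a supremum of continuous finite partial sums, which yields completeness at once.) For part (iii), given $a\in\complete{\mathcal{A}}$ and a finite $K\subseteq I$ the truncation $a_K = \sum_{\alpha\in K}a_\alpha\basis{e}_\alpha$ has $f_{a_K} = f_a\chi_{K^s}$, and $(\norm{a-a_K}_{m,\ell,\gamma})^{2^m}$ and $(\norm{a-a_K}_{m,\ell,\omega})^{2^m}$ are the tails of the convergent series of nonnegative terms defining $(\norm{a}_{m,\ell,\gamma})^{2^m}$ and $(\norm{a}_{m,\ell,\omega})^{2^m}$; these tails vanish along any directed family of finite subsets exhausting $I$, which is exactly the unconditional convergence of \eqref{eq:SchauderBasisSecondVersion}. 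With the continuity of the $e^\alpha$ this gives the Schauder basis property and, a fortiori, density of $\mathcal{A}$ in $\complete{\mathcal{A}}$, finishing part (i).

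Part (iv) is then immediate: for $\Omega'\subseteq\Omega$ one has $\mathcal{A} = \mathcal{A}_{\Omega\textrm{-}\mathrm{nice}} = \bigcap_{\omega\in\Omega}\mathcal{A}_{\omega\textrm{-}\mathrm{nice}} \subseteq \bigcap_{\omega\in\Omega'}\mathcal{A}_{\omega\textrm{-}\mathrm{nice}} = \mathcal{A}_{\Omega'\textrm{-}\mathrm{nice}} \subseteq \mathcal{A}$, hence $\mathcal{A}_{\Omega'\textrm{-}\mathrm{nice}} = \mathcal{A}$, and the defining seminorms of the $\Omega'$-nice topology form a subfamily of those of the $\Omega$-nice topology, so the latter is finer. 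I do not expect a genuine obstacle here: the argument is a rerun of Theorem~\ref{theorem:AniceCompleted}, and the only step that has to be checked by hand is the displayed $\ell^1$-form of $\norm{\argument}_{m,\ell,\omega}$ together with the $\sigma$-finiteness of $\mu_{m,\ell,\omega}$ --- which is exactly where the standing assumption $\mathcal{A}_{\Omega\textrm{-}\mathrm{nice}} = \mathcal{A}$ is used --- while the uncountable case costs only the passage from Cauchy sequences to Cauchy nets.
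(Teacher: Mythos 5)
The paper gives no separate proof of Theorem~\ref{theorem:Omega}, stating only that it follows analogously to Theorem~\ref{theorem:AniceCompleted}; your proposal is exactly the rerun the paper intends, and you correctly identify and verify the one genuinely new point, namely that $\norm{\argument}_{m,\ell,\omega}$ is again an $\ell^1$-seminorm for a $\sigma$-finite weighted counting measure on $I^s$, so the completeness, Schauder-basis, and density arguments carry over unchanged (with Cauchy nets replacing sequences when $\Omega$ is uncountable). Your alternative lower-semicontinuity route to completeness is a pleasant tightening of the same idea, and the remaining parts are handled correctly.
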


After this presentation of the general features of the subalgebra
$\mathcal{A}_{\mathrm{nice}}$ as well as
$\mathcal{A}_{\Omega\textrm{-}\mathrm{nice}}$ the remaining but also
quite nontrivial question is how we can guarantee that these
subalgebras are different from $\{0\}$ at all. Unfortunately, we have
no general argument for this: it will boil down to a case by case
study in the examples. This clearly limits the above method, however,
many seemingly unrelated examples turn out to be just the above
construction.

Even if one succeeds in showing that $\mathcal{A}_{\mathrm{nice}} =
\mathcal{A}$ the construction will depend strongly on the chosen
basis: this is really not avoidable as taking the absolute values of
the structure constants in the definition of the seminorms is not at
all well-behaved under a change of the basis. We can hardly expect any
reasonable way to compare the results for different bases. Even worse,
as we shall see in the examples, a simple rescaling of the basis may
change the convergence scheme and also the topology. This can be taken
as advantage in order to cure bad behavior by rescaling.

Since the basis enters in such a crucial way, one should take this
construction only for algebras where one has a quite distinguished
choice of a basis. In fact, this is more often the case than one might
first think.

\begin{remark}
    \label{remark:CuntzConstruction}%
    There are other possibilities of constructing a countable system
    of seminorms on $\mathcal{A}$ for which the product becomes
    continuous: In fact, elaborating on the construction in the proof
    of \cite[Prop.~2.1]{cuntz:2005a} one can again start with the
    evaluation functionals $e^\alpha$ and the corresponding seminorms
    $\norm{\argument}_\alpha = |e^\alpha(\argument)|$ making them
    continuous. Then the recursion in \cite[Prop.~2.1]{cuntz:2005a}
    will produce a countable system of seminorms build on top of the
    $\norm{\argument}_\alpha$ such that the product becomes
    continuous. The completion will be a Fréchet algebra, too, with a
    topology depending on the choice of the basis in a similarly
    ``obscure'' way than our construction.  However, for the examples
    we shall study, our construction will be manageable to yield quite
    explicit properties of the Fréchet algebras in question. We leave
    it as a future task to examine further relations between the two
    approaches.
\end{remark}

%
%

\section{First examples: polynomials, matrices, and group algebras}
\label{sec:FirstExamplesPolynomials}

In this section we collect some simple examples to illustrate the
general method developed in the previous section.

%
%

\subsection{Polynomials}
\label{subsec:Polynomials}

We first consider the algebra of polynomials in one variable
$\mathcal{A} = \mathbb{C}[z]$ with the basis given by the
monomials. For the structure constants we see
\begin{equation}
    \label{eq:StructureConstantsPolynomials}
    z^n z^m = \sum_{k=0}^\infty C_{nm}^k z^k
    \quad
    \textrm{with}
    \quad
    C_{nm}^k = \delta_{k, m+n},
\end{equation}
where $n, m, k \ge 0$. Since $\mathbb{C}[z]$ is commutative we can
ignore the index $\ell$ in this case. We first compute the constants
\eqref{eq:CSeriesConverge} giving
\begin{equation}
    \label{eq:Cdotn}
    C^k_n
    = \sum_{m=0}^\infty C^k_{nm}
    =
    \begin{cases}
        0 & \textrm{for} \; n > k \\
        1 & \textrm{for} \; n \le k.
    \end{cases}
\end{equation}
In particular, $C^k_n < \infty$. From the recursion we see that
\begin{equation}
    \label{eq:hmkPoly}
    h_{m, k}(a)
    = \sum_{n=0}^\infty h_{m-1, n}(a)^2 C^k_n
    = \sum_{n=0}^k h_{m-1, n}(a)^2,
\end{equation}
starting with $h_{0, k}(a) = |a_k|$ for $a = \sum_n a_n z^n$.  Thus
for $h_{m, k}(a)$ only finitely many $h_{m-1, n}(a)$ contribute. By a
simple induction we get the following result:
\begin{proposition}
    \label{proposition:Polynomials}%
    For the polynomials $\mathcal{A} = \mathbb{C}[z]$ the topology of
    the seminorms $\norm{\argument}_{m, k}$ is the topology of the
    Cartesian product. The completion gives the locally
    multiplicatively convex Fréchet algebra $\mathbb{C}[[z]]$.
\end{proposition}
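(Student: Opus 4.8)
The strategy is to show that the seminorm topology on $\mathbb{C}[z]$ generated by the $\norm{\argument}_{m, k}$ coincides with the product topology, and then invoke Theorem~\ref{theorem:AniceCompleted} to identify the completion. The key structural observation, already recorded in \eqref{eq:hmkPoly}, is that $h_{m, k}(a) = \sum_{n=0}^{k} h_{m-1, n}(a)^2$, so each $h_{m, k}$ depends only on the coefficients $a_0, \dots, a_k$ of $a$. In particular every $\norm{\argument}_{m, k}$ is finite on all of $\mathbb{C}[z]$, which shows $\mathcal{A}_{\mathrm{nice}} = \mathcal{A}$ and puts us in the hypotheses of Theorem~\ref{theorem:AniceCompleted}.

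\noindent First I would prove, by induction on $m$, the two-sided comparison that for each fixed $k$ the seminorm $\norm{\argument}_{m, k}$ is, up to constants, equivalent to $\max_{0 \le n \le k} \norm{\argument}_{0, 0, n} = \max_{0 \le n \le k} |a_n|$. The lower bound $|a_k| = \norm{a}_{0,0,k} \le \norm{a}_{m,k}$ follows from iterating \eqref{eq:mKleinermPlusEins} together with $C^k_k = 1$ (or directly from \eqref{eq:hmkPoly}, since the $n=k$ term already contributes $h_{m-1,k}(a)^2$). For the upper bound one notes from \eqref{eq:hmkPoly} that $h_{m,k}(a) \le (k+1)\max_{0 \le n \le k} h_{m-1,n}(a)^2$, and unwinding the recursion gives $h_{m,k}(a) \le \big(\text{const}(m,k)\big)\cdot \big(\max_{0 \le n \le k}|a_n|\big)^{2^m}$, hence $\norm{a}_{m,k} \le \text{const}(m,k)^{1/2^m}\max_{0 \le n \le k}|a_n|$. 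These two bounds together say precisely that the system $\{\norm{\argument}_{m,k}\}$ and the system $\{\norm{\argument}_{0,0,n}\}$ generate the same locally convex topology, which is by definition the Cartesian product topology on $\mathbb{C}[z] \subseteq \prod_{n} \mathbb{C}z^n$.

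\noindent Given this, part~\refitem{item:Completion} of Theorem~\ref{theorem:AniceCompleted} identifies $\complete{\mathcal{A}}$ with the subspace of $\prod_n \mathbb{C}z^n$ on which all seminorms are finite; but since the topology is the full product topology and every seminorm is finite on the entire product (again because $h_{m,k}$ only involves $a_0, \dots, a_k$), this subspace is all of $\prod_n \mathbb{C}z^n = \mathbb{C}[[z]]$, and the Schauder basis statement~\refitem{item:UnconditionalSchauderBasis} becomes the usual fact that a formal power series is the coefficientwise limit of its partial sums. Finally, $\mathbb{C}[[z]]$ with the $(z)$-adic (equivalently, coefficientwise) topology is well known to be locally multiplicatively convex: the seminorms $p_N\big(\sum a_n z^n\big) = \max_{0 \le n \le N}|a_n|$ satisfy $p_N(ab) \le p_N(a)p_N(b)$ because the degree-$\le N$ part of $ab$ depends only on the degree-$\le N$ parts of $a$ and $b$ and is a sum of products with unit coefficients.

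\noindent The only genuinely delicate point is bookkeeping the constants $\text{const}(m,k)$ in the upper-bound induction and checking that after taking $2^m$-th roots they do not blow up in a way that would separate the topologies — but since $k$ is held fixed throughout and the recursion depth is $m$, one gets a clean bound of the shape $\text{const}(m,k) \le (k+1)^{2^m - 1}$, whose $2^m$-th root is bounded by $k+1$ uniformly in $m$, so no obstruction arises. Everything else is a direct appeal to Theorem~\ref{theorem:AniceCompleted} and elementary properties of $\mathbb{C}[[z]]$.
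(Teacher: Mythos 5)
Your overall approach is correct and is essentially the ``simple induction'' the paper has in mind: you observe that $h_{m,k}(a)$ depends only on $a_0, \ldots, a_k$, prove by induction on $m$ the two-sided comparison of $\norm{\argument}_{m,k}$ with $\max_{0 \le n \le k}|a_n|$, conclude that the seminorm topology equals the Cartesian product topology, and then read off $\complete{\mathcal{A}} = \mathbb{C}[[z]]$ from Theorem~\ref{theorem:AniceCompleted} since every seminorm is finite on all of $\prod_n \mathbb{C}z^n$. The constant bound $(k+1)^{2^m-1}$ is right. Incidentally, the ``delicate point'' you flag is not actually delicate: to show the topologies coincide you only need, for each fixed $(m,k)$, that $\norm{\argument}_{m,k}$ is dominated by \emph{some} product-topology seminorm with \emph{some} constant; no uniformity in $m$ is required.

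There is, however, one genuine slip in the final step. The seminorms $p_N(a) = \max_{0 \le n \le N}|a_n|$ are \emph{not} submultiplicative: with $a = b = 1 + z$ and $N = 1$ one has $p_1(a) = p_1(b) = 1$ but $ab = 1 + 2z + z^2$, so $p_1(ab) = 2 > p_1(a)p_1(b)$. Your stated reason fails because the degree-$n$ coefficient of $ab$ is a \emph{sum} of $n+1$ products, not a single one, giving only $p_N(ab) \le (N+1)\,p_N(a)\,p_N(b)$. The conclusion that $\mathbb{C}[[z]]$ is locally multiplicatively convex is still true, but to see it you should instead use the equivalent $\ell^1$-type seminorms $q_N(a) = \sum_{n=0}^N |a_n|$. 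These \emph{are} submultiplicative, since $q_N(ab) \le \sum_{j+l \le N}|a_j|\,|b_l| \le \left(\sum_{j \le N}|a_j|\right)\left(\sum_{l \le N}|b_l|\right) = q_N(a)\,q_N(b)$, and they generate the same (product) topology. Alternatively you could simply cite the standard fact that $\mathbb{C}[[z]]$ with the coefficientwise topology is a locally $m$-convex Fréchet algebra.
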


Let us now also implement the second version for the polynomial
algebra. Since the first version gives the formal power series
$\mathbb{C}[[z]]$ the evaluation functionals $\delta_p$ for $p \in
\mathbb{C}$ cannot be continuous unless $p \ne 0$. Thus we want to
enforce their continuity. Since the absolute value of the evaluation
of the monomials at $p$ only depends on $|p|$ we can restrict
ourselves to some positive radius $R > 0$ on which we want to
evaluate. Then the additional seminorms are
\begin{equation}
    \label{eq:NewSeminormsPolyEins}
    \norm{a}_{m, R}
    = \sqrt[2^m]{\sum_{k=0}^\infty R^k h_{m, k}(a)}.
\end{equation}
Now $h_{1, k}(a) \ge |a_n|^2$ for all $k \ge n$ and hence
$\norm{a}_{1, R}$ diverges for $R \ge 1$ unless $a = 0$. In this case,
the second version fails. Consider now the case $R < 1$ and let $a \in
\mathbb{C}[z]$. Then a simple induction shows that $h_{m, k}(a) \le
c_m k^{\alpha_m}$ with some constant $c_m > 0$ depending on $a$. Thus
we see that in this case the series needed for $\norm{a}_{m, R}$
converges and hence the second version works. Moreover, for $|p| \le
R$ we have $|\delta_p(a)| \le \norm{a}_{0, R}$ and hence the topology
we get is finer than the uniform topology on the disk of radius
$R$. After completion, we get a subalgebra of those functions which
are holomorphic on at least the \emph{closed} disk\footnote{If $f$ has
  a convergent Taylor expansion for all $|z| \le R$ then $f$ has in
  fact a holomorphic extension to some slightly larger open disk.} of
radius $R < 1$. In general, not every such function will be in our
completion: take $R < r < 1$ with $r^2 < R$ and consider the
coefficients $a_n = \frac{1}{r^n}$ which define a holomorphic function
$a(z) = \sum_{n=0}^\infty a_n z^n$ on the closed disk of radius
$R$. Then $h_{1, k}(a) \ge \frac{1}{r^{2k}}$ and hence the series
needed for $\norm{a}_{1, R}$ will no longer converge. Hence we have a
proper subalgebra and the topology will be \emph{strictly} finer than
the (Banach) topology of uniform convergence on the closed disk of
radius $R$.
\begin{proposition}
    \label{proposition:PolynomialsSecondVersionEins}%
    For $R \ge 1$ the second version of our construction with respect
    to the basis of the monomials fails for $\mathbb{C}[z]$ while for
    $R < 1$ we get a proper subalgebra of the Banach space of
    holomorphic functions on the closed disk of radius $R$ with a
    strictly finer Fréchet topology. All evaluation functionals for
    points $p$ with $|p| \le R$ are continuous.
\end{proposition}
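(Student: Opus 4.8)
The plan is to treat the two regimes $R \ge 1$ and $R < 1$ separately, and for $R < 1$ to identify the completion by combining Theorem~\ref{theorem:Omega} with an inspection of the first additional seminorm. For $R \ge 1$ one reads off from \eqref{eq:hmkPoly} that $h_{1,k}(a) = \sum_{n=0}^{k} |a_n|^2 \ge |a_N|^2$ for every $N \le k$, so if $a \neq 0$ with $a_N \neq 0$ then the series in \eqref{eq:NewSeminormsPolyEins} satisfies $\sum_{k} R^k h_{1,k}(a) \ge |a_N|^2 \sum_{k \ge N} R^k = +\infty$; hence already $\norm{a}_{1,R} = +\infty$ for all $a \neq 0$, so $\mathcal{A}_{\Omega\textrm{-}\mathrm{nice}} = \{0\}$ and the second version of the construction genuinely fails.

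For $R < 1$ the first step is to check $\mathcal{A}_{\Omega\textrm{-}\mathrm{nice}} = \mathbb{C}[z]$, so that Theorem~\ref{theorem:Omega} applies. Finiteness of the $\norm{\argument}_{m,k}$ is already contained in Proposition~\ref{proposition:Polynomials}; for the $\norm{\argument}_{m,R}$ one proves by induction on $m$, using \eqref{eq:hmkPoly}, that $h_{m,k}(a) \le c_m(a)\, k^{\alpha_m}$ with constants $c_m(a) > 0$ and exponents $\alpha_m$ independent of $k$ — the base case being immediate since $h_{0,k}(a) = |a_k|$ is eventually zero. As $R < 1$ the series $\sum_{k} R^k k^{\alpha_m}$ converges, so all $\norm{a}_{m,R}$ are finite. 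Theorem~\ref{theorem:Omega} then yields that $\complete{\mathcal{A}}$ is a Fréchet algebra with the monomials as unconditional Schauder basis, and that each $\delta_p$ with $|p| \le R$ extends continuously, since $|\delta_p(a)| = |\sum_k a_k p^k| \le \sum_k |a_k| R^k = \norm{a}_{0,R}$. The same estimate gives $\sup_{|z| \le R}|a(z)| \le \norm{a}_{0,R}$, so the inclusion of $\complete{\mathcal{A}}$ into the Banach algebra $B$ of the statement (which, to be a Banach space, we take to be the disk algebra over $\{|z| \le R\}$, i.e.\ the continuous functions on the closed disk of radius $R$ holomorphic in its interior, with the supremum norm) is a continuous algebra monomorphism, and the Fréchet topology is at least as fine as the one induced from $B$.

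To see that the inclusion $\complete{\mathcal{A}} \subseteq B$ is proper and the topology strictly finer, evaluate the first seminorm: $\norm{a}_{1,R}^2 = \sum_k R^k \sum_{n=0}^k |a_n|^2 = \tfrac{1}{1-R}\sum_n |a_n|^2 R^n$, so membership in $\complete{\mathcal{A}}$ forces $\sum_n |a_n|^2 R^n < \infty$, whence $\limsup_n |a_n|^{1/n} \le R^{-1/2}$ and every element of $\complete{\mathcal{A}}$ is in fact holomorphic on the open disk of radius $\sqrt{R} > R$. Now choose $r$ with $R < r < \sqrt{R}$ and set $a_n = r^{-n}$: then $a(z) = (1 - z/r)^{-1}$ is holomorphic beyond the closed disk of radius $R$, hence lies in $B$, while $\sum_n |a_n|^2 R^n = \sum_n (R/r^2)^n = +\infty$ because $r^2 < R$, so $a \notin \complete{\mathcal{A}}$. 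For the topology I would argue by contradiction: if the Fréchet topology agreed with the one induced from $B$, then $\complete{\mathcal{A}}$, being complete, would be closed in $B$; since it contains $\mathbb{C}[z]$, which is dense in $B$ for the supremum norm (via the dilations $f(sz) \to f$ as $s \to 1^-$, each $f(sz)$ being a uniform limit of polynomials on the closed disk of radius $R$), it would be all of $B$, contradicting properness. Hence the topology is strictly finer, and together with the continuity of the $\delta_p$ for $|p| \le R$ this proves the proposition.

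The step I expect to require the most care is pinning down exactly which holomorphic functions lie in $\complete{\mathcal{A}}$ together with the choice of the explicit witness: one must pick the auxiliary radius $r$ so that simultaneously $r > R$ (so the witness is holomorphic past the closed disk of radius $R$ and thus lies in $B$) and $r^2 < R$ (so the $m=1$ seminorm diverges), which is possible precisely because $R < 1$.
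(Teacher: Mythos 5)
Your proof is correct and follows essentially the same route as the paper: divergence of $\norm{\argument}_{1,R}$ for $R\ge 1$, the polynomial bound $h_{m,k}(a)\le c_m k^{\alpha_m}$ for $R<1$, the estimate $|\delta_p(a)|\le\norm{a}_{0,R}$, and the same witness $a_n=r^{-n}$ with $R<r$ and $r^2<R$. You merely make a couple of steps explicit that the paper leaves implicit, namely the closed formula $\norm{a}_{1,R}^2=\tfrac{1}{1-R}\sum_n|a_n|^2R^n$ (which cleanly motivates the choice of $r$) and the completeness-plus-density argument for why a proper closed subalgebra forces the topology to be strictly finer.
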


We illustrate now the dependence on the basis: motivated by the usual
Taylor formula we can also rewrite a polynomial as $a = \sum_n
\tilde{a}_n \frac{z^n}{n!}$. Now the rescaled monomials
$\frac{z^n}{n!}$ will serve as basis. The new structure constants will
be rescaled as well and this will result in the new recursion
\begin{equation}
    \label{eq:NewRecursionForhmkPoly}
    \tilde{h}_{m+1, k}(a)
    =
    \sum_{n=0}^k \tilde{h}_{m, n}(a)^2 \binom{k}{n},
\end{equation}
with $\tilde{h}_{0, k}(a) = |\tilde{a}_k|$. For the first version, the
same argument as for Proposition~\ref{proposition:Polynomials} still
applies and hence we get $\mathbb{C}[[z]]$ as completion. But for the
second version things will change: the evaluation of the basis vectors
on $p \in \mathbb{C} \setminus \{0\}$ are now $\frac{p^n}{n!}$ and
hence the additional seminorms are
\begin{equation}
    \label{eq:NewSeminormsPolyZwei}
    \norm{a}_{m, R}^{\tilde{}}
    =
    \sqrt[2^m]{
      \sum_{k = 0}^\infty \frac{R^k}{k!} \tilde{h}_{m, k}(a)
    },
\end{equation}
where again $R = |p| > 0$. We denote the nice part of $\mathbb{C}[z]$
with respect to the given $R$ by $\mathcal{A}_R$ and its completion by
$\complete{\mathcal{A}}_R$. The following argument shows in particular
that $\mathcal{A}_R = \mathbb{C}[z]$ as vector spaces.

Suppose that $a \in \mathbb{C}[[z]]$ has Taylor coefficients which
satisfy a sub-factorial growth\footnote{Here ``sub-factorial'' is not
  to be confused with the sub-factorial $!n$ which we shall never need
  in this work.}, i.e.\ for all $\epsilon > 0$ there is a constant $c_0
> 0$ with $|\tilde{a}_k| \le c_0 (k!)^\epsilon$. Then a simple
induction shows that for all $\epsilon > 0$ also $h_{m, k}(a)$ can be
bounded by $c_m (k!)^\epsilon$. It will be important to have this
bound not only for one $\epsilon$. Taking now $\epsilon$ sufficiently
small shows that \eqref{eq:NewSeminormsPolyZwei} converges and thus
such a formal series $a$ belongs to the completion
$\complete{\mathcal{A}}_R$, no matter what $R > 0$ is.  Now fix $R >
0$ and suppose that $a \in \mathbb{C}[[z]]$ belongs to
$\complete{\mathcal{A}}_R$. Then we have the estimate
\begin{equation}
    \label{eq:PolynomialEstimatehmk}
    \tilde{h}_{m, k}(a)
    \le
    \frac{k!}{R^k} \left(\norm{a}^{\tilde{}}_{m, R}\right)^{2^m}
\end{equation}
for all $m$ and $k$. By taking always only the term with $n = k$ in
the recursion \eqref{eq:NewSeminormsPolyZwei} it is clear that
$|\tilde{a}_k|^{2^m} \le \tilde{h}_{m, k}(a)$ and hence
\begin{equation}
    \label{eq:CoolEstimateForPolynomialsWithFactorials}
    |\tilde{a}_k|
    \le
    \frac{\sqrt[2^m]{k!}}{R^{\frac{k}{2^m}}}
    \norm{a}^{\tilde{}}_{m, R}.
\end{equation}
Thus $a$ has a sub-factorial growth as above. This results in the
following Proposition:
\begin{proposition}
    \label{proposition:PolynomialsSecondVersionZwei}%
    Let $R > 0$.
    \begin{propositionlist}
    \item \label{item:CoolPolySubFac} $\complete{\mathcal{A}}_R$
        consists of those entire functions $a = \sum_{n = 0}^n
        \tilde{a}_n \frac{z^n}{n!}$ having Taylor coefficients with
        sub-factorial growth, i.e.\ for all $\epsilon > 0$ there exists
        a constant $c > 0$ with $|\tilde{a}_n| \le c (n!)^\epsilon$.
    \item \label{item:FrechetNotDependentOnR} The Fréchet topology of
        $\complete{\mathcal{A}}_R$ does not depend on $R$ and is
        strictly finer than the usual Fréchet topology of the entire
        functions $\mathcal{O}(\mathbb{C})$.
    \item \label{item:PolyAllEvalCont} The evaluation functionals are
        continuous for all $p \in \mathbb{C}$.
    \item \label{item:PolyNotLMC} The algebra
        $\complete{\mathcal{A}}_R$ is not locally multiplicatively
        convex.
    \item \label{item:SubfactorialForPolynomialsNuc} An equivalent
        defining system of seminorms is given by
        \begin{equation}
            \label{eq:SupSubfactorialPoly}
            \norm{a}_\epsilon
            =
            \sup_{n \in \mathbb{N}_0}
            \frac{|\tilde{a}_n|}{(n!)^\epsilon},
        \end{equation}
        where $0 < \epsilon < 1$. As a Fréchet space,
        $\complete{\mathcal{A}}_R$ is isomorphic to the Köthe space
        $\Lambda$ of sequences with sub-factorial growth. It is
        strongly nuclear and the Schauder basis is even absolute.
    \end{propositionlist}
\end{proposition}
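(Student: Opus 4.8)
The plan is to prove the five items in the convenient order \emph{v.)}, \emph{i.)}, \emph{ii.)--iv.)}, because \emph{v.)} supplies the workhorse — an explicit, $R$-independent defining system of seminorms — which turns the other items into essentially one-line arguments. The one genuine piece of work is the comparison in \emph{v.)}; everything else is assembled from it, from the running discussion preceding the proposition, and from Appendix~\ref{sec:SubfactorialGrowth}.

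The first and main step is a two-sided comparison between the seminorms $\norm{a}_\epsilon=\sup_n|\tilde a_n|/(n!)^\epsilon$ for $0<\epsilon<1$ and the seminorms actually defining the topology of $\complete{\mathcal{A}}_R$, namely the $\norm{a}^{\tilde{}}_{m,R}$ from \eqref{eq:NewSeminormsPolyZwei} together with the $\norm{a}_{m,k}$ built from \eqref{eq:hmkPoly}. In one direction, \eqref{eq:CoolEstimateForPolynomialsWithFactorials} gives, for $2^m>1/\epsilon$, the estimate $|\tilde a_k|/(k!)^\epsilon\le\big((k!)^{1/2^m-\epsilon}R^{-k/2^m}\big)\,\norm{a}^{\tilde{}}_{m,R}$, and the prefactor is bounded in $k$ because the super-geometric decay $(k!)^{1/2^m-\epsilon}$ dominates the geometric factor $R^{-k/2^m}$; hence $\norm{a}_\epsilon\le C_{m,R,\epsilon}\norm{a}^{\tilde{}}_{m,R}$. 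In the other direction, a refinement of the induction in the discussion preceding the proposition — using the recursion \eqref{eq:NewRecursionForhmkPoly} together with the elementary fact $2^k\le C_\delta(k!)^\delta$ valid for every $\delta>0$ — shows $\tilde{h}_{m,k}(a)\le D_m(k!)^{b_m(\epsilon)}\big(\norm{a}_\epsilon\big)^{2^m}$ with $b_m(\epsilon)$ a fixed multiple of $\epsilon$ of size $\approx 2^m\epsilon$; choosing $\epsilon=\epsilon(m)$ so small that $b_m(\epsilon)<1$ makes $\sum_k R^k(k!)^{b_m(\epsilon)-1}$ converge and yields $\norm{a}^{\tilde{}}_{m,R}\le D_m'\norm{a}_{\epsilon(m)}$. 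Since each $\norm{a}_{m,k}$ depends only on $\tilde a_0,\dots,\tilde a_k$, it is dominated by a finite combination of the $\norm{a}_\epsilon$ as well. This establishes that $\{\norm{a}_\epsilon\}_{0<\epsilon<1}$ is an equivalent defining system. Read as pure membership statements, the same two estimates give \emph{i.)}: finiteness of all $\norm{a}^{\tilde{}}_{m,R}$ is equivalent to sub-factorial growth of the $\tilde a_n$, and such an $a$ is entire since $|\tilde a_n/n!|\le c(n!)^{\epsilon-1}$ decays faster than any geometric rate. The remaining assertions of \emph{v.)} follow by identifying $a=\sum_n\tilde a_n z^n/n!$ with $(\tilde a_n)_n$: then $\complete{\mathcal{A}}_R$ is precisely the Köthe space $\Lambda$ with Köthe matrix $\big((n!)^{-\epsilon}\big)_{0<\epsilon<1}$, whose strong nuclearity is recorded in Appendix~\ref{sec:SubfactorialGrowth}; nuclearity makes the monomial Schauder basis of Theorem~\ref{theorem:Omega} absolute (or directly $\sum_n|\tilde a_n|(n!)^{-\epsilon}\le c\sum_n(n!)^{\epsilon'-\epsilon}<\infty$ for $\epsilon'<\epsilon$).

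Item \emph{ii.)} is then immediate: by \emph{i.)} the underlying set is $\Lambda$ for every $R>0$, and by \emph{v.)} the topology of each $\complete{\mathcal{A}}_R$ is the $R$-free Köthe topology of $\Lambda$, so neither depends on $R$. Continuity of the inclusion $\complete{\mathcal{A}}_R\hookrightarrow\mathcal{O}(\mathbb{C})$ comes from $\sup_{|z|\le\rho}|a(z)|\le\sum_n|\tilde a_n|\rho^n/n!\le\big(\sum_n\rho^n(n!)^{\epsilon-1}\big)\norm{a}_\epsilon$ for any $\epsilon\in(0,1)$; the topology is \emph{strictly} finer because $\complete{\mathcal{A}}_R$ is a proper subspace of $\mathcal{O}(\mathbb{C})$ — e.g.\ $\sum_n z^n/\sqrt{n!}$ is entire but has basis coefficients $\tilde a_n=\sqrt{n!}$, which do not grow sub-factorially — while still containing the dense subspace $\mathbb{C}[z]$, so a complete space carrying the subspace topology would be forced to equal all of $\mathcal{O}(\mathbb{C})$. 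For \emph{iii.)}, the estimate $|\delta_p(a)|=|a(p)|\le\sum_n|\tilde a_n||p|^n/n!=\norm{a}^{\tilde{}}_{0,|p|}$ (to be read as $|\tilde a_0|$ for $p=0$) exhibits $\delta_p$ as bounded by one of the defining seminorms for the choice $R=|p|$, hence — by the $R$-independence from \emph{ii.)} — continuous on $\complete{\mathcal{A}}_R$ for every $p\in\mathbb{C}$.

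Finally item \emph{iv.)}: if the topology were locally multiplicatively convex there would be a submultiplicative continuous seminorm $p$ with $\norm{a}_\epsilon\le p(a)$; but in $\mathbb{C}[z]$ one has $z^{\star n}=z^n=n!\,\basis{e}_n$, so $\norm{z^{\star n}}_\epsilon=n!/(n!)^\epsilon=(n!)^{1-\epsilon}$, whence $(n!)^{1-\epsilon}\le p(z^{\star n})\le p(z)^n$ for all $n$ — impossible since $1-\epsilon>0$ and $n!$ outgrows every geometric sequence. I expect the only fiddly point to be the bookkeeping in the two-sided seminorm comparison of the first step, in particular keeping the exponent $b_m(\epsilon)$ below $1$ while the constants $D_m,D_m'$ stay finite; conceptually, though, everything reduces to the single elementary observation that $(k!)^{\pm\delta}$ beats every geometric sequence.
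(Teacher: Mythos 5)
Your proof is correct, and it diverges from the paper's in a way worth recording. You prove item \emph{v.)} first — carrying out both directions of the seminorm comparison explicitly — and then read off \emph{i.)}--\emph{iv.)} from the resulting $R$-free Köthe description. The paper instead treats the parts in numbered order and leans twice on the open mapping theorem as a shortcut: once to identify the topologies of $\complete{\mathcal{A}}_R$ and $\complete{\mathcal{A}}_{R'}$ (observing that $R\le R'$ gives a continuous inclusion between two Fréchet spaces with the same underlying set), and once again in \emph{v.)} to avoid spelling out the reverse estimate $\norm{a}^{\tilde{}}_{m,R}\lesssim\norm{a}_{\epsilon(m)}$. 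Your version replaces these with the direct induction $\tilde{h}_{m,k}(a)\le D_m(k!)^{b_m(\epsilon)}\norm{a}_\epsilon^{2^m}$ (the exponent actually works out to $b_m=2^{m+1}-1$ rather than $\approx 2^m$, but since you only use that $b_m(\epsilon)<1$ for $\epsilon$ small enough this is immaterial), which is more self-contained and shows precisely where the convergence comes from. The biggest genuine departure is \emph{iv.)}: the paper argues abstractly that an LMC Fréchet algebra containing $z$ with spectrum $\mathbb{C}$ would carry an entire functional calculus and hence be all of $\mathcal{O}(\mathbb{C})$; you instead compute $\norm{z^n}_\epsilon=(n!)^{1-\epsilon}$ and observe that no submultiplicative seminorm $p$ can satisfy $(n!)^{1-\epsilon}\le p(z)^n$ for all $n$. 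Your argument is more elementary — it needs nothing beyond the definition of $\norm{\cdot}_\epsilon$ and the meaning of submultiplicativity — and, as a bonus, pinpoints exactly which element witnesses the failure of $m$-convexity. Both routes are valid; yours trades the black boxes (open mapping theorem, holomorphic functional calculus) for explicit estimates, which makes the proof longer but more transparent about where the sub-factorial threshold $\epsilon<1$ enters.
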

\begin{proof}
    We have already shown the first statement. For two different $R$,
    $R'$ we have the same vector space for the completions and clearly
    if $R \le R'$ then the topology of $\mathcal{A}_R$ is coarser than
    that topology of $\mathcal{A}_{R'}$, which one sees directly from
    the seminorms \eqref{eq:NewSeminormsPolyZwei}. But then the two
    Fréchet spaces $\complete{\mathcal{A}}_R$ and
    $\complete{\mathcal{A}}_{R'}$ already coincide by the open mapping
    theorem. In particular, all the seminorms
    $\norm{\argument}^{\tilde{}}_{m, R'}$ will be continuous on
    $\complete{\mathcal{A}}_R$. The seminorms
    $\{\norm{\argument}^{\tilde{}}_{1, R}\}_{R > 0}$ constitute a
    defining set of seminorms for the topology of locally uniform
    convergence in $\mathcal{O}(\mathbb{C})$ and hence the second part
    is shown as well. The third part is clear and the fourth part
    follows from the fact that $\complete{\mathcal{A}}_R$ does not
    have an entire calculus: otherwise it would be equal to
    $\mathcal{O}(\mathbb{C})$. For the last part we first observe that
    \eqref{eq:CoolEstimateForPolynomialsWithFactorials} implies that
    the seminorms $\norm{\argument}_\epsilon$ can be estimated by the
    seminorms $\norm{\argument}_{m, R}^{\tilde{}}$ for appropriate
    $m$. A careful examination of the bound $h_{m, k}(a) \le c_m
    (k!)^\epsilon$ shows also the reverse estimate: however, this is
    also clear by general arguments as $\complete{\mathcal{A}}_R$ is
    clearly a Fréchet space for both topologies and one is finer than
    the other. Hence by the open mapping theorem they coincide. Then
    the remaining statements follow from general properties of the
    Köthe space $\Lambda$, see Appendix~\ref{sec:SubfactorialGrowth}.
\end{proof}

We see that already for this simple example of polynomials we get a
rather rich structure and interesting completions. Moreover, the
dependence on the chosen basis is manifest in this example.

%
%

\subsection{Laurent polynomials}
\label{subsec:LaurentPolynomials}

As a second example we consider the Laurent polynomials $\mathcal{A} =
\mathbb{C}[z, z^{-1}]$. Here the structure constants are similar to
those of $\mathbb{C}[z]$, explicitly given by
\begin{equation}
    \label{eq:LaurentSructureConstants}
    z^n z^m = \sum_{k=-\infty}^\infty C_{nm}^k z^k
    \quad
    \textrm{with}
    \quad
    C_{nm}^k
    = \delta_{n+m, k}
\end{equation}
with $n, m, k \in \mathbb{Z}$ instead of $\mathbb{N}_0$ as above. This
has a nontrivial impact. We have
\begin{equation}
    \label{eq:CdotnLaurent}
    C^k_n = \sum_{m=-\infty}^\infty C^k_{nm} = 1,
\end{equation}
since now there is always precisely one $m$ matching the condition
$n+m = k$. Hence the recursion is
\begin{equation}
    \label{eq:hmkLaurent}
    h_{m, k}(a) = \sum_{n=-\infty}^\infty h_{m-1, n}(a)^2,
\end{equation}
with $h_{0, k}(a) = a_k$ as before. In particular, we have
\begin{equation}
    \label{eq:hEinsk}
    h_{1,k}(a) = \sum_{n=-\infty}^\infty |a_n|^2,
\end{equation}
which is finite for every $a \in \mathbb{C}[z, z^{-1}]$ but
\emph{independent} of $k$. Thus the next iteration gives $h_{2, k}(a)
= +\infty$ unless $a = 0$. Our constructions \emph{fails} in this
case.

In order to fix this divergence we will now rescale the basis
first. Instead of taking the monomials $z^n$ as basis we consider
$\basis{e}_n = \frac{1}{|n|!} z^n$, where $n \in \mathbb{Z}$ as
before. Of course this is sort of arbitrary but here it is again
motivated by the prefactors in the usual Laurent expansion around
$z=0$, i.e.\ we write now
\begin{equation}
    \label{eq:LaurentPolynomial}
    a = \sum_{n \in \mathbb{Z}} \frac{a_n}{|n|!} z^n
\end{equation}
for $a \in \mathbb{C}[z, z^{-1}]$ with only finitely many $a_n$
different from zero. The structure constants (again denoted by
$C^k_{nm}$) are now given by
\begin{equation}
    \label{eq:LaurentRescaledStructureConstants}
    \basis{e}_n \basis{e}_m
    =
    \sum_{k = -\infty}^\infty C^k_{nm} \basis{e}_k
    \quad
    \textrm{with}
    \quad
    C^k_{nm} = \frac{|k|!}{|n|!|m|!} \delta_{n+m, k}.
\end{equation}
The corresponding constants \eqref{eq:CSeriesConverge} are given by
\begin{equation}
    \label{eq:ConstantRescaledForLaurent}
    C^k_n
    = \sum_{m = - \infty}^\infty C^k_{nm}
    = \frac{|k|!}{|n|!|k-n|!} < \infty,
\end{equation}
now depending on both indices $k$ and $n$. The recursion for the
seminorms is changed to
\begin{equation}
    \label{eq:tildehmkLaurent}
    h_{m, k}(a)
    =
    \sum_{n=-\infty}^\infty
    h_{m-1, n}(a)^2 \frac{|k|!}{|n|!|k-n|!}
\end{equation}
with $h_{0, k}(a) = a_k$ according to
\eqref{eq:LaurentPolynomial}. Now we want to show that for $a \in
\mathbb{C}[z, z^{-1}]$ all the quantities $h_{m, k}(a)$ are finite.
In fact, we shall determine the completion directly:
\begin{proposition}
    \label{proposition:CompletionLaurent}%
    We have $\mathcal{A}_{\mathrm{nice}} = \mathcal{A}$. Moreover, a
    formal series $a \in \mathbb{C}[[z, z^{-1}]]$ belongs to the
    completion $\complete{\mathcal{A}}$ iff its Laurent coefficients
    $a_n$ have sub-factorial growth, i.e.\ for all $\epsilon > 0$ there
    is a constant $c > 0$ depending on $a$ with $|a_n| \le c
    (|n|!)^\epsilon$.
\end{proposition}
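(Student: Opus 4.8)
The plan is to argue exactly as in the proof of Proposition~\ref{proposition:PolynomialsSecondVersionZwei}, adapted to the bilateral index set $\mathbb{Z}$. Since only the first version of the construction is used here, membership in the completion is, by Theorem~\ref{theorem:AniceCompleted}~\refitem{item:Completion} and \eqref{eq:CompleteAnice}, precisely the requirement that $h_{m, k}(a) < +\infty$ for all $m \in \mathbb{N}_0$ and all $k \in \mathbb{Z}$, where $h_{m, k}$ is given by the recursion \eqref{eq:tildehmkLaurent} with the constants $C^k_n = \frac{|k|!}{|n|!\, |k-n|!}$ of \eqref{eq:ConstantRescaledForLaurent}. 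The equality $\mathcal{A}_{\mathrm{nice}} = \mathcal{A}$ is then just the special case of a Laurent polynomial, which has only finitely many nonzero coefficients and hence trivially sub-factorial growth; so it suffices to prove the stated characterization of $\complete{\mathcal{A}}$, and $\mathcal{A}_{\mathrm{nice}} = \mathcal{A}$ drops out.

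For the implication ``sub-factorial $\Longrightarrow$ in $\complete{\mathcal{A}}$'' I would prove by induction on $m$ the sharper statement that each $h_{m, k}(a)$ is again ``sub-factorial in $k$'', i.e.\ for every $\eta > 0$ there is $C_{m, \eta} > 0$ with $h_{m, k}(a) \le C_{m, \eta} (|k|!)^\eta$ for all $k \in \mathbb{Z}$; the case $m = 0$ is the hypothesis. Feeding the bound for level $m$ into \eqref{eq:tildehmkLaurent} gives
\[
h_{m+1, k}(a)
\le
C_{m, \eta}^2 \sum_{n \in \mathbb{Z}} (|n|!)^{2\eta}\, C^k_n ,
\]
so everything reduces to the combinatorial estimate that for $0 \le \theta < 1$ and every $\mu > 0$ one has $\sum_{n \in \mathbb{Z}} (|n|!)^\theta\, C^k_n \le c_{\theta, \mu}\, (|k|!)^{\theta + \mu}$. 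This I would prove by first pulling out a factor, $\sum_n (|n|!)^\theta C^k_n = (|k|!)^\theta \sum_n \left(\frac{|k|!}{|n|!}\right)^{1-\theta} \frac{1}{|k-n|!}$, and then splitting the remaining sum: for $|n| > |k|$ the weight $\left(\frac{|k|!}{|n|!}\right)^{1-\theta}$ is $\le 1$, so that part is bounded by $\sum_{j \in \mathbb{Z}} \frac{1}{|j|!} = 2\E - 1$; for $|n| \le |k|$ one uses $\frac{|k|!}{|n|!} \le |k|^{|k| - |n|}$ together with the reverse triangle inequality $|k| - |n| \le |k - n|$ to bound that part by $2 \sum_{j \ge 0} \frac{(|k|^{1-\theta})^j}{j!} = 2\E^{|k|^{1-\theta}}$. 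Since a stretched exponential $\E^{|k|^{1-\theta}}$ is dominated by any positive power of $|k|!$, the whole sum is $\le c_{\theta, \mu} (|k|!)^{\theta + \mu}$. Choosing $\eta$ and $\mu$ with $2\eta + \mu \le \epsilon$ closes the induction for every $\epsilon > 0$; in particular all $h_{m, k}(a)$ are finite, so $a \in \complete{\mathcal{A}}$.

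For the converse I would use two special values of the structure constants: $C^k_k = \frac{|k|!}{|k|!\, 0!} = 1$ and $C^0_n = \frac{0!}{|n|!\, |n|!} = \frac{1}{(|n|!)^2}$. Keeping only the term $n = k$ in \eqref{eq:tildehmkLaurent} gives $h_{m, k}(a) \ge h_{m-1, k}(a)^2$, hence by iteration $h_{m, k}(a) \ge |a_k|^{2^m}$ for all $m$ and $k$. On the other hand, if $a \in \complete{\mathcal{A}}$ then $h_{m+1, 0}(a) = \sum_{n \in \mathbb{Z}} h_{m, n}(a)^2 / (|n|!)^2 < +\infty$, so the summands are bounded and therefore $h_{m, n}(a) \le c_m\, |n|!$ for all $n$, with a constant $c_m$ depending on $a$. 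Combining the two inequalities, $|a_k| \le c_m^{1/2^m} (|k|!)^{1/2^m}$; given $\epsilon > 0$ one picks $m$ with $2^{-m} < \epsilon$ to conclude $|a_k| \le c\, (|k|!)^\epsilon$, i.e.\ the Laurent coefficients have sub-factorial growth.

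The induction bookkeeping is routine; the one genuine point is the combinatorial estimate, and in particular the fact that it delivers the exponent $\theta + \mu$ with $\mu$ arbitrarily small rather than merely $(|k|!)$. If one used only the crude bound $(|n|!)^\theta C^k_n \le |k|!/|k-n|!$ one would get $h_{m+1, k}(a) \le C\, |k|!$, but an exponent of order $1$ cannot be re-used: at the next level it produces a factor $(|n|!)^2$, and $\sum_n (|n|!)^2 C^k_n$ already diverges since the terms $|n|!/|k-n|!$ do not tend to $0$. It is precisely the work in the range $|n| \le |k|$ — recognizing that $\E^{|k|^{1-\theta}}$ is sub-factorial — that keeps the exponent arbitrarily small and hence makes the recursion self-improving, which is what makes the whole argument go through.
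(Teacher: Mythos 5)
Your proof is correct, and it proves the same statement by the same overall strategy as the paper's proof — an induction showing that sub-factorial growth of the coefficients propagates to sub-factorial growth (in $k$) of every $h_{m,k}(a)$ — but it packages the combinatorial core differently, and I think more cleanly. For the forward implication, the paper splits the recursion sum \eqref{eq:tildehmkLaurent} (for $k \ge 0$) into three pieces, namely $n \le 0$, $0 \le n \le k$, and $n > k$, and treats each ad hoc (bounding the middle piece by $2^k$ and invoking $2^k \le c'(k!)^\epsilon$, and the tail by a factor $\E$); your version isolates a single reusable lemma, $\sum_n (|n|!)^\theta C^k_n \le c_{\theta,\mu}(|k|!)^{\theta+\mu}$, proved by pulling out $(|k|!)^\theta$, splitting at $|n| = |k|$, and recognizing that the resulting factor $\E^{|k|^{1-\theta}}$ is sub-factorial. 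Your remark that it is precisely the arbitrarily small loss $\mu$ which makes the recursion self-improving is the right diagnosis of where the difficulty lies, and matches the role the quantifier ``for all $\epsilon > 0$'' plays throughout the paper's argument. For the converse, the paper establishes the lower bound $h_{m,k}(a) \ge |a_n|^{2^m}\, |k|!/(|n|!\,|k-n|!)$ inductively in $m$ and specializes at $k = 0$, whereas you combine the diagonal lower bound $h_{m,k}(a) \ge |a_k|^{2^m}$ (keeping only $n = k$) with the upper bound $h_{m,n}(a) \le c_m\, |n|!$ extracted from the finiteness of $h_{m+1,0}(a)$; both routes yield $|a_k| \le c\,(|k|!)^{1/2^m}$. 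The two arguments are of comparable length, and each illuminates the same mechanism from a slightly different angle.
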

\begin{proof}
    As an inequality in $[0, +\infty]$ we first prove that for all $a
    \in \mathbb{C}[[z, z^{-1}]]$ we have
    \[
    |a_n|
    \le
    \sqrt[2^m]{\frac{|n|!|n-k|!}{|k|!}} \norm{a}_{m, k},
    \tag{$*$}
    \]
    where $m \ge 1$ and $k \in \mathbb{Z}$. Indeed, for $m = 1$ we
    estimate
    \[
    h_{1, k}(a)
    =
    \sum_{\ell \in \mathbb{Z}}
    |a_\ell|^2 \frac{|k|!}{|\ell|!|k-\ell|!}
    \ge
    |a_n|^2 \frac{|k|!}{|n|!|k-n|!},
    \]
    which gives ($*$). Now we proceed by induction: assuming ($*$) for
    $m$ gives
    \[
    h_{m+1, k}(a)
    \ge
    \sum_{\ell \in \mathbb{Z}}
    |a_\ell|^{2^{m+1}}
    \left(\frac{|\ell|!}{|n|!|n - \ell|!}\right)^2
    \frac{|k|!}{|\ell|!|k-\ell|!}
    \ge
    |a_n|^{2^{m+1}} \frac{|k|!}{|n|!|k-n|!},        
    \]
    which shows ($*$) also for $m+1$. Now taking $k = 0$ gives the
    estimate
    \[
    |a_n| \le \sqrt[2^m]{|n|!} \norm{a}_{m + 1, 0}.
    \tag{$**$}
    \]
    Hence, if $a \in \complete{\mathcal{A}_{\mathrm{nice}}}$,
    i.e. $\norm{a}_{m, k} < \infty$ for all $m$ and $k$, then $a$ has
    sub-factorial growth. Conversely, assume that $a$ has
    sub-factorial growth. We have to show that all the quantities
    $h_{m, k}(a)$ are finite. In fact, we claim that for each $m$ also
    $h_{m, k}(a)$ behaves in a sub-factorial way. For $m = 0$ this is
    the assumption about $a$ itself. Now assume that for all $\epsilon
    > 0$ we have $h_{m, n}(a) \le c_m (|n|!)^\epsilon$. Then we
    estimate for $k \ge 0$ and $\epsilon$ small enough
    \begin{align*}
        h_{m+1, k}(a)
        &=
        \sum_{n=0}^\infty h_{m, -n}(a)^2 \frac{k!}{n!(k+n)!}
        +
        \sum_{n=0}^k h_{m, n}(a)^2 \frac{k!}{n!(k-n)!}
        +
        \sum_{n=k+1}^\infty h_{m, n}(a)^2 \frac{k!}{n!(n-k)!} \\
        &\le
        \sum_{n=0}^\infty c_m^2 (n!)^{2\epsilon} \frac{1}{n!}
        +
        \sum_{n=0}^k c_m^2 (n!)^{2\epsilon} \binom{k}{n}
        +
        \sum_{n=k+1}^\infty
        c_m^2 (n!)^{2\epsilon} \frac{k!}{n!(n-k)!} \\
        &\le
        c
        +
        c_m^2 (k!)^{2\epsilon} 2^k
        +
        c_m^2 (k!)^{2\epsilon}
        \sum_{n=k+1}^\infty
        \frac{(k!)^{1 - 2\epsilon}}{(n!)^{1 - 2\epsilon}}
        \frac{1}{(n-k)!} \\
        &\le
        c
        +
        c' c_m^2 (k!)^{3\epsilon}
        +
        c_m^2 (k!)^{2\epsilon} \E,
    \end{align*}
    where $c$ is the finite value of the first sum and $c'$ is a
    constant such that $2^k \le c' (k!)^\epsilon$. We see that this
    can be estimated by $h_{m+1, k}(a) \le c_{m+1} (k!)^{3\epsilon}$
    with an appropriate $c_{m+1}$. Since $\epsilon > 0$ was arbitrary
    we get again a sub-factorial growth, proving our claim. The case
    $k < 0$ is analogous. Hence all the quantities $h_{m, k}(a)$ are
    finite and $a$ belongs to the completion.
\end{proof}

We shall now show that the elements of $\complete{\mathcal{A}}$ can
still be evaluated at points $p \in \mathbb{C} \setminus \{0\}$. Hence
we can still interpret them as \emph{functions}.  We are able to show
the following result:
\begin{proposition}
    \label{proposition:LaurentOnC}%
    Let $a \in \complete{\mathcal{A}}$.
    \begin{propositionlist}
    \item \label{item:EvaluationContinuousLaurent} For $p \in
        \mathbb{C} \setminus \{0\}$ the evaluation $a(p)$ is
        well-defined and yields a continuous character
        \begin{equation}
            \label{eq:ContinuousCharLaurent}
            \delta_p\colon \complete{\mathcal{A}} \ni a
            \; \mapsto \; a(p)
            \in \mathbb{C}.
        \end{equation}
    \item \label{item:WeHaveHolFunLaurent} Any $a \in
        \complete{\mathcal{A}}$ can be viewed as holomorphic function
        $a \in \mathcal{O}(\mathbb{C} \setminus \{0\})$ via
        \eqref{eq:ContinuousCharLaurent}.
    \item \label{item:ContinuousInclusionLaurent} The map
        $\complete{\mathcal{A}} \longrightarrow \mathcal{O}(\mathbb{C}
        \setminus \{0\})$ is a continuous injective algebra
        homomorphism with dense image.
    \item \label{item:LaurentNotLMC} The topology of
        $\complete{\mathcal{A}}$ is strictly finer than the locally
        uniform topology of $\mathcal{O}(\mathcal{C} \setminus \{0\})$
        and it is not locally multiplicatively convex.
    \end{propositionlist}
\end{proposition}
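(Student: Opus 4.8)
The plan is to rely on Proposition~\ref{proposition:CompletionLaurent}, which realises $\complete{\mathcal{A}}$ as the space of formal Laurent series $a=\sum_n\frac{a_n}{|n|!}z^n$ with sub-factorially growing coefficients, and on the pointwise estimate $|a_n|\le\sqrt[2^m]{|n|!}\,\norm{a}_{m+1,0}$ obtained in its proof. \emph{For part\refitem{item:EvaluationContinuousLaurent}:} given $p\in\mathbb{C}\setminus\{0\}$ I would first observe that $\delta_p(a)=\sum_n\frac{a_n}{|n|!}p^n$ converges absolutely, since bounding $|a_n|\le c_\epsilon(|n|!)^\epsilon$ for some $0<\epsilon<1$ turns the tail into $\sum_n\frac{|p|^n}{(|n|!)^{1-\epsilon}}$, whose terms decay faster than any geometric sequence both as $n\to+\infty$ and as $n\to-\infty$. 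Continuity is then immediate from $|a_n|\le\sqrt[2^m]{|n|!}\,\norm{a}_{m+1,0}$, which yields $|\delta_p(a)|\le C_{p,m}\norm{a}_{m+1,0}$ with $C_{p,m}=\sum_{n\in\mathbb{Z}}(|n|!)^{\frac{1}{2^m}-1}|p|^n<\infty$ as soon as $m\ge1$. Since $\delta_p$ is multiplicative on the dense subalgebra $\mathcal{A}=\mathbb{C}[z,z^{-1}]$, the multiplication of $\complete{\mathcal{A}}$ is (jointly) continuous by Lemma~\ref{lemma:AlmostSeminorms}, and $\delta_p(\Unit)=1$, the continuous extension is a nonzero character.

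\emph{For parts\refitem{item:WeHaveHolFunLaurent} and\refitem{item:ContinuousInclusionLaurent}:} on every compact annulus $\{r\le|z|\le R\}$ the same sub-factorial bound powers a Weierstra\ss{} $M$-test, so $\sum_n\frac{a_n}{|n|!}z^n$ converges locally uniformly on $\mathbb{C}\setminus\{0\}$ to a holomorphic function $\iota(a)$ with $\iota(a)(p)=\delta_p(a)$. The map $\iota$ is linear and, because every $\delta_p$ is a character, an algebra homomorphism; it is injective, since $\iota(a)=0$ forces all Laurent coefficients and hence all $a_n$ to vanish, so $a=0$ by the Schauder basis property of Theorem~\ref{theorem:AniceCompleted}. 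The $M$-test estimate combined once more with $|a_n|\le\sqrt[2^m]{|n|!}\,\norm{a}_{m+1,0}$ shows that each sup-seminorm on a compact annulus is dominated by a multiple of some $\norm{\argument}_{m+1,0}$, so $\iota$ is continuous; and $\iota(\mathcal{A})$ is precisely the algebra of Laurent polynomials, which is dense in $\mathcal{O}(\mathbb{C}\setminus\{0\})$ because Laurent expansions converge locally uniformly.

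\emph{For part\refitem{item:LaurentNotLMC}}, I would treat the failure of $m$-convexity first. Suppose $\complete{\mathcal{A}}$ were locally $m$-convex, so that its topology is defined by a family of submultiplicative seminorms $p$; then $p(z^n)\le p(z)^n$ and $p(z^{-n})\le p(z^{-1})^n$ for all $n$. As $\sum_{n\ge0}t^n/(n!)^{1/2}$ has infinite radius of convergence, the partial sums of $a=\sum_{n\ge0}\frac{z^n}{(n!)^{1/2}}+\sum_{n\ge1}\frac{z^{-n}}{(n!)^{1/2}}$ are $p$-Cauchy for every such $p$, hence $a$ converges in $\complete{\mathcal{A}}$; but continuity of the coefficient functionals forces the basis coefficients of $a$ to be $a_n=(|n|!)^{1/2}$, which do not grow sub-factorially, contradicting Proposition~\ref{proposition:CompletionLaurent}. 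Thus $\complete{\mathcal{A}}$ is not locally $m$-convex. Since $\mathcal{O}(\mathbb{C}\setminus\{0\})$ is locally $m$-convex (its sup-seminorms on compacta are submultiplicative), so is every subspace of it; hence the topology pulled back along the continuous injection $\iota$ is locally $m$-convex and therefore strictly coarser than the genuine Fréchet topology of $\complete{\mathcal{A}}$, which gives the asserted strict refinement. (Alternatively: were the two topologies equal, $\iota$ would be a topological embedding with complete --- hence closed --- dense image, forcing $\iota$ onto; but $\iota$ is not onto, since e.g.\ the entire function $\sum_{n\ge0}z^n/(n!)^{1/2}\in\mathcal{O}(\mathbb{C}\setminus\{0\})$ would have basis coefficients $(n!)^{1/2}$, which are not sub-factorial.)

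The main obstacle is this last point: one has to use the correct characterisation of local $m$-convexity --- the existence of a defining family of submultiplicative seminorms --- together with the resulting convergence of entire power series in an arbitrary algebra element, and then exhibit a concrete Laurent series lying in $\mathcal{O}(\mathbb{C}\setminus\{0\})$ but just outside $\complete{\mathcal{A}}$. The other parts are routine once Proposition~\ref{proposition:CompletionLaurent} and its internal estimate $|a_n|\le\sqrt[2^m]{|n|!}\,\norm{a}_{m+1,0}$ are in hand.
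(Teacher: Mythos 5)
Your argument is correct and, for parts \textit{i.)}--\textit{iii.)}, follows essentially the same path as the paper: continuity of $\delta_p$ from the estimate $|a_n|\le\sqrt[2^m]{|n|!}\,\norm{a}_{m+1,0}$ (the paper specializes to $m=1$), absolute locally uniform convergence giving a holomorphic extension, and density via the Laurent polynomials; the only cosmetic difference is that you test continuity of $\iota$ against sup-seminorms on compact annuli where the paper uses weighted $\ell^1$-seminorms of the Laurent coefficients, both of which generate the locally uniform topology. For part \textit{iv.)} the paper simply asserts that $\complete{\mathcal{A}}$ ``has no entire calculus'' and that it differs from $\mathcal{O}(\mathbb{C}\setminus\{0\})$; you make both points explicit, which is a real improvement. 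Unwinding $m$-convexity to a family of submultiplicative seminorms, using $p(z^{\pm n})\le p(z^{\pm 1})^n$ to force convergence of $\sum_{n\ge0}z^n/(n!)^{1/2}+\sum_{n\ge1}z^{-n}/(n!)^{1/2}$, and then reading off the basis coefficients $(|n|!)^{1/2}$ (not sub-factorial) via the continuous coefficient functionals and Proposition~\ref{proposition:CompletionLaurent} is exactly the content of ``no entire calculus''. Your passage from ``not $m$-convex'' to ``strictly finer'' (a subalgebra of an $m$-convex algebra inherits $m$-convexity by restricting the submultiplicative seminorms) is clean, and the alternative via the open-mapping/closed-range argument with the same witness function is also valid and corresponds to the paper's ``does not coincide with $\mathcal{O}(\mathbb{C}\setminus\{0\})$''.
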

\begin{proof}
    Let $a = \sum_{n \in \mathbb{Z}} \frac{a_n}{|n|!} z^n \in
    \complete{\mathcal{A}}$ be given and $p \in \mathbb{C} \setminus
    \{0\}$. Then we consider the convergence of the series $a(p) =
    \sum_{n \in \mathbb{Z}} \frac{a_n}{|n|!}  p^n$. Using the estimate
    for the Laurent coefficients $a_n$ as in ($**$) from the proof of
    Proposition~\ref{proposition:CompletionLaurent} for $m = 1$ gives
    \[
    |a(p)|
    \le
    \sum_{n \in \mathbb{Z}}
    \frac{|a_n|}{|n|!} |p|^n
    \le
    \norm{a}_{2, 0}
    \sum_{n \in \mathbb{Z}}
    \frac{|p|^n}{\sqrt{|n|!}}
    =
    c_{|p|} \norm{a}_{2, 0},
    \]
    which is the desired continuity estimate for $\delta_p$ since the
    remaining series $c_{|p|}$ converges for all $p \ne 0$. Clearly,
    the evaluation is a character of the algebra $\mathcal{A}$ and
    hence also for $\complete{\mathcal{A}}$ by continuity. This shows
    the first part and the second is clear as we have convergence for
    all $p \in \mathbb{C} \setminus \{0\}$. In fact, this is also
    clear from Proposition~\ref{proposition:CompletionLaurent}. For
    the next part, recall that the locally uniform topology of
    $\mathcal{O}(\mathbb{C} \setminus \{0\})$ can be obtained e.g.\
    from the seminorms $\norm{\argument}_R$ with $R > 1$ where
    \[
    \norm{f}_R = \sum_{n \in \mathbb{Z}} \frac{|f_n|}{|n|!} R^n,
    \]
    where $f(z) = \sum_{n \in \mathbb{Z}} \frac{f_n}{|n|!} z^n$ is the
    convergent Laurent expansion of $f$. Then the above computation
    shows $\norm{a}_R \le c_R \norm{a}_{2, 0}$ which is the continuity
    of the inclusion. The image is dense as already $\mathbb{C}[z,
    z^{-1}]$ is dense. The last part is clear as
    $\complete{\mathcal{A}}$ does not coincide with
    $\mathcal{O}(\mathbb{C} \setminus \{0\})$ and it has no entire
    calculus.
\end{proof}
\begin{remark}
    \label{remark:SubFacorms}%
    Again, a simple verification shows that the seminorms
    $\norm{a}_\epsilon = \sup_n \frac{|a_n|}{(n!)^\epsilon}$ for
    $\epsilon > 0$ will produce an equivalent system of
    seminorms. Hence, as a Fréchet space, also here
    $\complete{\mathcal{A}}$ is just the (strongly nuclear) Köthe
    space $\Lambda$ of sequences with sub-factorial growth and the
    Schauder basis is absolute. Note that indexing sequences by $n \in
    \mathbb{Z}$ does not cause any difficulties here.
\end{remark}

%
%

\subsection{Infinite matrices}
\label{subsec:InfiniteMatrices}

The third example is the noncommutative and nonunital algebra
$\mathcal{A} = M_\infty(\mathbb{C})$ of infinite matrices with only
finitely many nonzero entries. Let $E_{ij}$ be the matrix with $1$ in
the $(i,j)$-th position and zeros everywhere else. Then $\mathcal{A} =
\mathbb{C}\textrm{-}\spann\{E_{ij}\}_{i, j \in \mathbb{N}}$ and the
matrix multiplication gives the structure constants
\begin{equation}
    \label{eq:StructureConstantMatrix}
    E_{ij} E_{kl}
    =
    \sum_{r, s = 1}^\infty C^{(r, s)}_{(i, j) (k, l)} E_{rs}
    \quad
    \textrm{with}
    \quad
    C^{(r, s)}_{(i, j), (k, l)} = \delta_{ir} \delta_{jk} \delta_{ls}
\end{equation}
as usual. The corresponding constants from \eqref{eq:CSeriesConverge}
are now given by
\begin{equation}
    \label{eq:ConstantsForMatrix}
    C^{(r, s)}_{(i, j), \boldsymbol{\cdot}}
    =
    \sum_{k, l = 1}^\infty C^{(r, s)}_{(i, j), (k, l)}
    =
    \sum_{k, l = 1}^\infty \delta_{ir} \delta_{jk} \delta_{ls}
    =
    \delta_{ir}
\end{equation}
as well as
\begin{equation}
    \label{eq:ConstantsForMatrixII}
    C^{(r, s)}_{\boldsymbol{\cdot}, (k, l)}
    =
    \sum_{i, j = 1}^\infty C^{(r, s)}_{(i, j), (k, l)}
    =
    \sum_{i, j = 1}^\infty \delta_{ir} \delta_{jk} \delta_{ls}
    =
    \delta_{ls}.
\end{equation}
Note that now we indeed have a noncommutative algebra and hence two
types of such constants.  We get the recursion
\begin{equation}
    \label{eq:hmlMatrixFirstVersion}
    h_{m+1, 2\ell, (r, s)}(A)
    =
    \sum_{i, j = 1}^\infty
    h_{m, \ell, (i, j)}(A)^2 C^{(r, s)}_{(i, j), \boldsymbol{\cdot}}
    =
    \sum_{j = 1}^\infty h_{m, \ell, (r, j)}(A)^2 
\end{equation}
and analogously for the odd case
\begin{equation}
    \label{eq:hmlMatrixFirstVersionOddCase}
    h_{m+1, 2\ell+1, (r, s)}(A)
    =
    \sum_{k, l = 1}^\infty
    h_{m, \ell, (k, l)}(A)^2 C^{(m, n)}_{\boldsymbol{\cdot}, (k, l)}
    =
    \sum_{k = 1}^\infty h_{m, \ell, (k, s)}(A)^2.
\end{equation}
We see that the even case of $2\ell$ does not depend on the index $s$
while the odd case $2\ell +1$ is independent on $r$. After the second
iteration we get infinite sums over constants and hence $h_{2, \ell,
  (r, s)}(A) = + \infty$ for all $A \ne 0$. Again, we conclude that
the method fails for this choice of a basis.

Thus we rescale the basis as we did already in the Laurent case: we
shall discuss two different options here. First we consider the new
basis
\begin{equation}
    \label{eq:MatrixBasisFactorial}
    \hat{E}_{ij} = \frac{1}{\sqrt{i!j!}} E_{ij}.
\end{equation}
Consequently, the new structure constants with respect to this basis
are given by
\begin{equation}
    \label{eq:MatrixSCFactorial}
    \hat{C}_{(i, j), (k, l)}^{(r, s)}
    =
    \frac{\sqrt{r!s!}}{\sqrt{i!j!k!l!}} C^{(r, s)}_{(i, j), (k, l)}
    =
    \frac{1}{j!} \delta_{ir} \delta_{jk} \delta_{ls},
\end{equation}
and hence
\begin{equation}
    \label{eq:FunnyConstantMatrixFactorial}
    \hat{C}^{(r, s)}_{(i, j), \boldsymbol{\cdot}}
    =
    \frac{1}{j!} \delta_{ir}
    \quad
    \textrm{and}
    \quad
    \hat{C}^{(r, s)}_{\boldsymbol{\cdot}, (k, l)}
    =
    \frac{1}{k!} \delta_{ls}.
\end{equation}
The recursive definition of the seminorms is now changed into
\begin{equation}
    \label{eq:hmlMatrixSecondVersion}
    \hat{h}_{m+1, 2\ell, (r, s)}(A)
    =
    \sum_{j = 1}^\infty
    \frac{1}{j!} \hat{h}_{m, \ell, (r, j)}(A)^2 
    \quad
    \textrm{and}
    \quad
    \hat{h}_{m+1, 2\ell+1, (r, s)}(A)
    =
    \sum_{k = 1}^\infty
    \frac{1}{k!} \hat{h}_{m, \ell, (k, s)}(A)^2
\end{equation}
with $\hat{h}_{0, 0, (i, j)}(A) = |\hat{A}_{ij}| = \sqrt{i!j!}
|A_{ij}|$ as starting point. As usual $\norm{A}^{\hat{\;}}_{m, \ell,
  (r, s)} = \sqrt[2^m]{\hat{h}_{m, \ell, (r, s)}(A)}$. Note that for
even $\ell$, the seminorms do not depend on $s$ while for odd $\ell$
they do not depend on $r$. Thus there is a certain redundancy.  One
can show that the recursion will work for this choice of the basis.
\begin{proposition}
    \label{proposition:MatricesFirstVersion}%
    Let $M_\infty(\mathbb{C})$ be endowed with the seminorms
    $\norm{\argument}^{\hat{\;}}_{m, \ell, (r, s)}$ for $r, s \in
    \mathbb{N}$.
    \begin{propositionlist}
    \item \label{item:MatricesFirstWorks} For all $A \in
        M_\infty(\mathbb{C})$ we have $\norm{A}^{\hat{\;}}_{m, \ell,
          (r, s)} < \infty$.
    \item \label{item:MatricesFirstContainsStuff} The completion of
        $M_\infty(\mathbb{C})$ to a Fréchet algebra contains at least
        those $A$ with coefficients $\hat{A}_{rs}$ having
        sub-factorial growth with respect to $r+s$.
    \item \label{item:TraceContinuous} The trace functional $\tr\colon
        M_\infty(\mathbb{C}) \longrightarrow \mathbb{C}$ extends to a
        continuous linear trace functional on the completion.
    \end{propositionlist}
\end{proposition}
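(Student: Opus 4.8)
The strategy is to imitate the two‑index analogue of the Laurent computation (Proposition~\ref{proposition:CompletionLaurent}): the rescaling $\hat E_{ij}=\frac{1}{\sqrt{i!j!}}E_{ij}$ is tailored so that the recursion~\eqref{eq:hmlMatrixSecondVersion} carries the convergence‑producing weights $\frac{1}{j!}$ and $\frac{1}{k!}$, after which everything reduces to an induction on $m$ resting on the elementary facts $\sum_{j\ge1}\frac{1}{j!}<\infty$ and $\sum_{j\ge1}(j!)^{-\delta}<\infty$ for every $\delta>0$. Note that part~\refitem{item:MatricesFirstWorks} is the special case of part~\refitem{item:MatricesFirstContainsStuff} for finite‑support $A$, but it is worth a separate direct argument since it is exactly the statement $\mathcal{A}_{\mathrm{nice}}=M_\infty(\mathbb{C})$ needed to invoke Theorem~\ref{theorem:AniceCompleted}.

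For part~\refitem{item:MatricesFirstWorks} I would prove by induction on $m$ the stronger claim that the function $(r,s)\mapsto\hat h_{m,\ell,(r,s)}(A)$ is \emph{bounded} on $\mathbb{N}\times\mathbb{N}$ for every $\ell=0,\dots,2^m-1$. For $m=0$ this holds since $\hat h_{0,0,(i,j)}(A)=\sqrt{i!j!}\,|A_{ij}|$ has finite support; and if $B_m:=\max_\ell\sup_{r,s}\hat h_{m,\ell,(r,s)}(A)<\infty$, then by~\eqref{eq:hmlMatrixSecondVersion}
\[
\hat h_{m+1,2\ell,(r,s)}(A)=\sum_{j=1}^\infty\frac{1}{j!}\,\hat h_{m,\ell,(r,j)}(A)^2\le B_m^2\sum_{j=1}^\infty\frac{1}{j!}=(\E-1)B_m^2,
\]
and the odd case is identical, so $B_{m+1}\le(\E-1)B_m^2<\infty$. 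In particular every $\norm{A}^{\hat{\;}}_{m,\ell,(r,s)}=\sqrt[2^m]{\hat h_{m,\ell,(r,s)}(A)}$ is finite.

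For part~\refitem{item:MatricesFirstContainsStuff} I would use the description of the completion in Theorem~\ref{theorem:AniceCompleted}\refitem{item:Completion}, so that it suffices to show $\hat h_{m,\ell,(r,s)}(A)<\infty$ for all indices. Using $r!\,s!\le(r+s)!\le2^{r+s}r!\,s!$ together with the fact that $2^n$ has sub‑factorial growth, the hypothesis on $A$ is equivalent to: for every $\epsilon>0$ there is $c(\epsilon)>0$ with $|\hat A_{rs}|\le c(\epsilon)(r!)^\epsilon(s!)^\epsilon$. I would then prove by induction on $m$ that each $\hat h_{m,\ell,(r,s)}(A)$ admits a bound of the same shape, i.e.\ for every $\epsilon>0$ there is $c_{m,\ell}(\epsilon)>0$ with $\hat h_{m,\ell,(r,s)}(A)\le c_{m,\ell}(\epsilon)(r!)^\epsilon(s!)^\epsilon$. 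The base case is the assumption, and in the step one invokes the inductive bound with the \emph{smaller} exponent $\epsilon'=\min\{\epsilon/2,\tfrac14\}$:
\[
\hat h_{m+1,2\ell,(r,s)}(A)=\sum_{j=1}^\infty\frac{1}{j!}\,\hat h_{m,\ell,(r,j)}(A)^2\le c_{m,\ell}(\epsilon')^2(r!)^{2\epsilon'}\sum_{j=1}^\infty(j!)^{2\epsilon'-1}\le c_{m,\ell}(\epsilon')^2\Big(\sum_{j=1}^\infty(j!)^{-1/2}\Big)(r!)^\epsilon,
\]
since $2\epsilon'-1\le-\tfrac12$ makes the series converge and $2\epsilon'\le\epsilon$; the odd case is the same with $r\leftrightarrow s$. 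This propagates the bound, so all $\hat h_{m,\ell,(r,s)}(A)$ are finite and $A$ lies in the completion. As in the polynomial and Laurent examples, the decisive — and most delicate — point is that the bound must be available for \emph{every} $\epsilon$, not merely one, because the admissible exponent shrinks at each step; this is the main obstacle in the proposition.

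For part~\refitem{item:TraceContinuous}, since $A=\sum_{r,s}\hat A_{rs}\hat E_{rs}$ has $A_{ii}=\hat A_{ii}/i!$, we have $\tr(A)=\sum_{i=1}^\infty\hat A_{ii}/i!$ on $M_\infty(\mathbb{C})$. I would dominate this by the single seminorm $\norm{\argument}^{\hat{\;}}_{2,1,(r,s)}$ for any fixed $r,s$ (a level‑one seminorm, having only one summation, does not suffice): retaining only the diagonal term in $\hat h_{1,0,(k,s)}(A)=\sum_j\frac{1}{j!}|\hat A_{kj}|^2\ge\frac{1}{k!}|\hat A_{kk}|^2$ and once more in~\eqref{eq:hmlMatrixSecondVersion} gives
\[
\hat h_{2,1,(r,s)}(A)=\sum_{k=1}^\infty\frac{1}{k!}\,\hat h_{1,0,(k,s)}(A)^2\ge\sum_{k=1}^\infty\frac{|\hat A_{kk}|^4}{(k!)^3},
\]
whence Hölder's inequality with exponents $4$ and $\tfrac43$ yields
\[
|\tr(A)|\le\sum_{k=1}^\infty\frac{|\hat A_{kk}|}{(k!)^{3/4}}\cdot\frac{1}{(k!)^{1/4}}\le\Big(\sum_{k=1}^\infty\frac{|\hat A_{kk}|^4}{(k!)^3}\Big)^{1/4}\Big(\sum_{k=1}^\infty\frac{1}{(k!)^{1/3}}\Big)^{3/4}\le C\,\norm{A}^{\hat{\;}}_{2,1,(r,s)},
\]
with $C=\big(\sum_k(k!)^{-1/3}\big)^{3/4}<\infty$. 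Hence $\tr$ is continuous on $M_\infty(\mathbb{C})$ and extends uniquely to a continuous linear functional on the completion; since $M_\infty(\mathbb{C})$ is dense and the multiplication is continuous (Lemma~\ref{lemma:AlmostSeminorms}\refitem{item:ProductContinuous}), the relation $\tr(AB)=\tr(BA)$ valid on $M_\infty(\mathbb{C})$ extends by continuity, so the extension is again a trace.
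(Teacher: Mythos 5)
Your proposal is correct and follows essentially the same route as the paper: parts \refitem{item:MatricesFirstWorks} and \refitem{item:MatricesFirstContainsStuff} by inducting a growth bound through the recursion \eqref{eq:hmlMatrixSecondVersion} using the convergence of $\sum_j 1/j!$ and $\sum_j (j!)^{-\delta}$, and part \refitem{item:TraceContinuous} by Hölder's inequality with exponents $4$ and $\tfrac{4}{3}$ against the same level-two seminorm $\norm{\argument}^{\hat{\;}}_{2,1,(\cdot,\cdot)}$. The only cosmetic difference is in \refitem{item:TraceContinuous}: you keep only the diagonal term twice to bound $\hat h_{2,1,(r,s)}(A)$ from below, while the paper passes through the intermediate double sum $\sum_{s,r}|\hat A_{sr}|^4/(s!(r!)^2)$ before arriving at the same seminorm; the estimates are interchangeable.
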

\begin{proof}
    Let $\hat{A}_{rs}$ have sub-factorial growth, i.e.\ for all
    $\epsilon > 0$ there is a constant $c > 0$ with $|\hat{A}_{rs}|
    \le c ((r+s)!)^\epsilon$. Equivalently, we can replace $(r+s)!$
    also by $\max(r, s)!$ or by $r!s!$. We claim that the recursion
    yields $\hat{h}_{m, \ell, (r, s)}(A)$ still having sub-factorial
    growth for all $m, \ell$. In particular, $\norm{A}^{\hat{\;}}_{m,
      \ell, (r, s)} < \infty$. Indeed, this is a simple induction
    following directly from \eqref{eq:hmlMatrixSecondVersion}. This
    proves the first and second part. For the third, we note that a
    straightforward application of Hölder's inequality gives
    \begin{align*}
        |\tr(A)|
        &\le \sum_{r=1}^\infty \frac{|\hat{A}_{rr}|}{r!} \\
        &\le
        \left(
            \sum_{r=1}^\infty \frac{|\hat{A}_{rr}|^4}{(r!)^3}
        \right)^{\frac{1}{4}}
        \left(
            \sum_{r=1}^\infty \frac{1}{(r!)^{\frac{1}{3}}}
        \right)^{\frac{3}{4}} \\
        &\le
        c
        \left(
            \sum_{s, r = 1}^\infty
            \frac{|\hat{A}_{sr}|^4}{s!(r!)^2}
        \right)^{\frac{1}{4}} \\
        &\le
        c
        \left(
            \sum_{s=1}^\infty
            \frac{1}{s!}
            \left(
                \sum_{r=1}^\infty
                \frac{|\hat{A}_{sr}|^2}{r!}
            \right)^2
        \right)^{\frac{1}{4}} \\
        &=
        c \norm{A}^{\hat{\;}}_{2, 1 , (i, j)},
    \end{align*}
    where $c$ is the numerical constant coming from the second series
    in the second step and $(i, j)$ are arbitrary. This proves the
    last part.
\end{proof}

Alternatively, we can make use of a polynomial rescaling. We consider
the basis of $M_\infty(\mathbb{C})$ defined by
\begin{equation}
    \label{eq:SecondMatrixBasis}
    \tilde{E}_{ij} = \frac{1}{ij} E_{ij}
\end{equation}
leading to the structure constants $\tilde{C}_{(i,j), (k, l)}^{(r, s)}
= \frac{1}{j^2} \delta_{ir}\delta_{jk}\delta_{ls}$ and hence
\begin{equation}
    \label{eq:PolyMatrixConstants}
    \tilde{C}_{(i, j), \boldsymbol{\cdot}}^{(r, s)}
    =
    \frac{1}{j^2} \delta_{ir}
    \quad
    \textrm{and}
    \quad
    \tilde{C}_{\boldsymbol{\cdot}, (k, l)}^{(r, s)}
    =
    \frac{1}{k^2} \delta_{ls}.    
\end{equation}
Accordingly, the recursion for the seminorms is now given by
\begin{equation}
    \label{eq:MatrixRecursionSecond}
    \tilde{h}_{m + 1, 2\ell, (r, s)}(A)
    =
    \sum_{j=1}^\infty \frac{1}{j^2} \tilde{h}_{m, \ell, (r, j)}(A)^2
    \quad
    \textrm{and}
    \quad
    \tilde{h}_{m + 1, 2\ell + 1, (r, s)}(A)
    =
    \sum_{k=1}^\infty \frac{1}{k^2} \tilde{h}_{m, \ell, (k, s)}(A)^2
\end{equation}
with starting point $\tilde{h}_{0, 0, (i, j)}(A) = |\tilde{A}_{ij}| =
ij |A_{ij}|$.
\begin{proposition}
    \label{proposition:MatricesSecondVersion}
    Let $M_\infty(\mathbb{C})$ be endowed with the seminorms
    $\norm{\argument}^{\tilde{\;}}_{m, \ell, (r, s)}$ for $r, s \in
    \mathbb{N}$.
    \begin{propositionlist}
    \item \label{item:SecondMatrixWorks} For all $A \in
        M_\infty(\mathbb{C})$ we have $\norm{A}^{\tilde{\;}}_{m, \ell,
          (r, s)} < \infty$.
    \item \label{item:CompletionBigger} The completion of
        $M_\infty(\mathbb{C})$ to a Fréchet algebra contains at least
        those $A$ with coefficients $\tilde{A}_{rs}$ being bounded.
    \item \label{item:Trace} The trace functional is not continuous
        and the completion contains matrices not being trace-class.
    \end{propositionlist}
\end{proposition}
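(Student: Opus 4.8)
The plan is to exploit the one structural feature that makes the polynomial rescaling succeed: the weights $\frac{1}{j^2}$ appearing in \eqref{eq:MatrixRecursionSecond} are summable, so a uniform bound on the coefficients $\tilde A_{rs}$ propagates through the whole recursion. Concretely, I would first prove by induction on $m$ the claim: if $|\tilde A_{rs}| \le M$ for all $r, s$, then there are constants $M_m$, independent of $\ell, r, s$, with $\tilde h_{m, \ell, (r,s)}(A) \le M_m$. The base case is $M_0 = M$ since $\tilde h_{0, 0, (i, j)}(A) = |\tilde A_{ij}|$, and the inductive step uses, for even $\ell$,
\[
\tilde h_{m+1, 2\ell, (r, s)}(A)
= \sum_{j=1}^\infty \frac{1}{j^2}\, \tilde h_{m, \ell, (r, j)}(A)^2
\le M_m^2 \sum_{j=1}^\infty \frac{1}{j^2}
= \frac{\pi^2}{6}\, M_m^2 ,
\]
together with the analogous bound for odd $\ell$; hence $M_{m+1} = \frac{\pi^2}{6} M_m^2$ works. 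Since a matrix $A \in M_\infty(\mathbb{C})$ has only finitely many nonzero entries, its coefficients $\tilde A_{rs} = rs\, A_{rs}$ are bounded, and the claim gives \refitem{item:SecondMatrixWorks}; in particular $\mathcal{A}_{\mathrm{nice}} = M_\infty(\mathbb{C})$, so Theorem~\ref{theorem:AniceCompleted} applies. If $\tilde A_{rs}$ is merely bounded, the same claim makes all $\norm{A}^{\tilde{\;}}_{m, \ell, (r, s)} = \sqrt[2^m]{\tilde h_{m, \ell, (r, s)}(A)}$ finite, so the explicit description \eqref{eq:CompleteAnice} of the completion shows $A \in \complete{M_\infty(\mathbb{C})}$; this is \refitem{item:CompletionBigger}.

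For \refitem{item:Trace} I would exhibit one concrete element. Let $A$ be the diagonal matrix with $\tilde A_{ii} = i$ and $\tilde A_{ij} = 0$ for $i \ne j$, i.e.\ $A_{ii} = \frac{1}{i}$. Then $\tilde h_{0, 0, (i, j)}(A) = i\,\delta_{ij}$, and a direct computation gives, for both parities,
\[
\tilde h_{1, 0, (r, s)}(A)
= \sum_{j=1}^\infty \frac{1}{j^2}\, |\tilde A_{rj}|^2 = 1
\qquad \textrm{and} \qquad
\tilde h_{1, 1, (r, s)}(A)
= \sum_{k=1}^\infty \frac{1}{k^2}\, |\tilde A_{ks}|^2 = 1 .
\]
Since $\tilde h_{1, \ell, (r,s)}(A) \equiv 1$, the inductive step above (applied from level $1$ onwards with bound $1$) shows that all higher $\tilde h_{m, \ell, (r,s)}(A)$ are finite, so $A \in \complete{M_\infty(\mathbb{C})}$. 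Now the truncations $A^{(N)}$ with $A^{(N)}_{ii} = \frac{1}{i}$ for $i \le N$ and $0$ otherwise lie in $M_\infty(\mathbb{C})$, and being an exhausting sequence of finite sub-matrices they converge to $A$ in $\complete{M_\infty(\mathbb{C})}$ by Theorem~\ref{theorem:AniceCompleted}~\refitem{item:UnconditionalSchauderBasis}. As $\tr(A^{(N)}) = \sum_{i=1}^N \frac{1}{i} \longrightarrow \infty$, the functional $\tr$ admits no continuous extension to the completion. Finally $A$ is not trace-class as an operator, since $\sum_{i=1}^\infty |A_{ii}| = \sum_{i=1}^\infty \frac{1}{i} = \infty$, which gives the remaining assertion of \refitem{item:Trace}.

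I do not expect any serious obstacle here; the only point needing a little care is that the witness for \refitem{item:Trace} genuinely lies in the completion, which is why I choose a diagonal matrix: there the recursion collapses after one step to the constant $1$ and never leaves the bounded regime covered by the claim of the first paragraph. One could alternatively phrase \refitem{item:Trace} via divergence of the formal trace $\sum_i A_{ii} = \sum_i \frac{\tilde A_{ii}}{i^2}$ on this $A$, but the truncation argument makes both the discontinuity of $\tr$ and the failure of a continuous extension completely explicit.
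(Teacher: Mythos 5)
Your proof is correct and follows essentially the same approach as the paper's: the same uniform-bound induction on $m$ (using $\sum_j \frac{1}{j^2} < \infty$) for parts \refitem{item:SecondMatrixWorks}--\refitem{item:CompletionBigger}, the same diagonal witness $\tilde A_{rs} = r\,\delta_{rs}$ for part \refitem{item:Trace}, and the same appeal to the unconditional Schauder basis to produce a convergent series in $\complete{M_\infty(\mathbb{C})}$ whose partial traces diverge. The one numerical discrepancy — you compute $\tilde h_{1,0,(r,s)}(A) = 1$ where the paper states $\tfrac1r$ — is in your favour: the recursion \eqref{eq:MatrixRecursionSecond} squares $\tilde h_{0,0,(r,j)}(A) = r\,\delta_{rj}$ before weighting by $\tfrac{1}{j^2}$, giving $\tfrac{r^2}{r^2}=1$, so the paper's value looks like a slip, and since only boundedness in $(r,s)$ is used, the argument is unaffected either way.
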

\begin{proof}
    The convergence of the series $\sum_{n=1}^\infty \frac{1}{n^2}$
    allows to show inductively that for bounded $|\tilde{A}_{rs}|$
    also all the quantities $h_{m, \ell, (r, s)}(A)$ stay bounded with
    respect to $(r, s)$. This proves the first and second part. For
    the third, we consider the matrix $A$ with coefficients
    $\tilde{A}_{rs} = r\delta_{rs}$. We claim that also this matrix is
    in the completion (though it does not have bounded
    coefficients). Indeed, we have $\tilde{h}_{1, 0, (r, s)}(A) =
    \frac{1}{r}$ and $\tilde{h}_{1, 1, (r, s)}(A) = \frac{1}{s}$ which
    is now bounded with respect to $(r, s)$. Hence by the induction
    from the second part, all higher $\tilde{h}_{m, \ell, (r, s)}(A)$
    will be bounded as well. Clearly, $A$ is not trace-class. However,
    the series $A = \sum_{r, s = 1}^\infty r \delta_{rs}
    \tilde{E}_{rs} = \sum_{r=1}^\infty \frac{1}{r} E_{rr}$ converges
    according to the general fact that the $\tilde{E}_{rs}$ form a
    Schauder basis. Thus $\tr$ cannot be continuous.
\end{proof}
\begin{remark}
    \label{remark:MatrizenZweiVerschiedeneVersionen}%
    Here we see that the two rescalings yield two \emph{different}
    Fréchet algebras. Furthermore, one can show that the topology of
    the first version is strictly finer than the well-known topology
    of fast decreasing matrices (the Fréchet space being the Schwarz
    space), which then is strictly finer than the topology of the
    second version. More details on this example and a further
    discussion can be found in \cite[Sect.~4.3]{beiser:2011a}.
\end{remark}

%
%

\subsection{Group algebras of finitely generated groups}
\label{subsec:GroupAlgebrasFinitelyGeneratedGroups}

Generalizing the example of the Laurent polynomials, the fourth
example is based on a countable but infinite group $G$ and the
corresponding group algebra $\mathcal{A} = \mathbb{C}[G]$. For $G =
\mathbb{Z}$ we are back at the second example.

Since we have already seen in the case $G = \mathbb{Z}$ that a
rescaling of the canonical basis might be necessary, we immediately
start by considering the basis $\basis{e}_g = \frac{1}{\mathsf{c}(g)}
g$ for $g \in G$ where $\mathsf{c}(g) > 0$ will be a rescaling. We
will require $\mathsf{c}(g) = \mathsf{c}(g^{-1})$ for all $g \in G$ as
well as $\mathsf{c}(e) = 1$.  The structure constants are now given by
\begin{equation}
    \label{eq:StructureConstantsGroupCase}
    \basis{e}_g \basis{e}_h
    =
    \sum_{k \in G} C^k_{g, h} \basis{e}_k
    \quad
    \textrm{with}
    \quad
    C^k_{g, h}
    =
    \frac{\mathsf{c}(k)}{\mathsf{c}(g)\mathsf{c}(h)} \delta_{gh, k}.
\end{equation}
The constants \eqref{eq:CSeriesConverge} needed for the construction
of the topology on $\mathbb{C}[G]$ are given by
\begin{equation}
    \label{eq:ConstantsForGroupCase}
    C^k_{g, \boldsymbol{\cdot}}
    =
    \frac{\mathsf{c}(k)}{\mathsf{c}(g)\mathsf{c}(g^{-1}k)}
    \quad
    \textrm{and}
    \quad
    C^k_{\boldsymbol{\cdot}, h}
    =
    \frac{\mathsf{c}(k)}{\mathsf{c}(kh^{-1})\mathsf{c}(h)}
    =
    \frac{\mathsf{c}(k^{-1})}{\mathsf{c}(hk^{-1})\mathsf{c}(h^{-1})}
    =
    C^{k^{-1}}_{h^{-1}, \boldsymbol{\cdot}}
\end{equation}
according to our convention $\mathsf{c}(g) =
\mathsf{c}(g^{-1})$. Writing $a = \sum_{g \in G} a_g \basis{e}_g$ we
have $\norm{a}_{0, 0, g} = |a_g|$ as usual. For the higher seminorms
we have the recursion
\begin{equation}
    \label{eq:hmPlusEinsGeradeGroup}
    h_{m+1, 2\ell, k}(a)
    =
    \sum_{g \in G} h_{m, \ell, g}(a)^2
    \frac{\mathsf{c}(k)}{\mathsf{c}(g)\mathsf{c}(g^{-1}k)}
\end{equation}    
and by \eqref{eq:ConstantsForGroupCase}
\begin{equation}
    \label{eq:hmPlusEinsUngeradeGroup}
    h_{m+1, 2\ell + 1, k^{-1}}(a)
    =
    \sum_{g \in G} h_{m, \ell, g}(a)^2
    C^{k^{-1}}_{\boldsymbol{\cdot}, g}
    =
    \sum_{g \in G} h_{m, \ell, g}(a)^2 C^k_{g, \boldsymbol{\cdot}}
    =
    h_{m+1, 2\ell, k}(a).
\end{equation}

While it is tempting to choose $\mathsf{c}(g) = 1$ for all $g$, the
following lemma shows that this choice will \emph{not} lead to
$\mathcal{A}_{\mathrm{nice}} = \mathcal{A}$.
\begin{lemma}
    \label{lemma:NecessaryCondGroup}%
    A necessary condition for $\mathcal{A}_{\mathrm{nice}} =
    \mathcal{A}$ is $\frac{1}{\mathsf{c}} \in \ell^2(G)$.
\end{lemma}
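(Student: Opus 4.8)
The plan is to test the hypothesis on a single, maximally convenient element: the unit $\basis{e}_e = e = \Unit$ of the group algebra (recall $\mathsf{c}(e) = 1$), and to run the recursion of Definition~\ref{definition:TheSeminormStuff} exactly twice. First I would compute, for $a = \basis{e}_e$, that $h_{0, 0, g}(a) = |a_g| = \delta_{g, e}$, so that \eqref{eq:hmPlusEinsGeradeGroup} together with \eqref{eq:ConstantsForGroupCase} gives
\[
    h_{1, 0, k}(a)
    =
    \sum_{g \in G} \delta_{g, e}\, C^k_{g, \boldsymbol{\cdot}}
    =
    C^k_{e, \boldsymbol{\cdot}}
    =
    \frac{\mathsf{c}(k)}{\mathsf{c}(e)\mathsf{c}(k)}
    = 1
\]
for \emph{every} $k \in G$.

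The second step is to iterate once more, but now reading off the index $\gamma = e$. Since $h_{1, 0, g}(a) = 1$ for all $g \in G$,
\[
    h_{2, 0, e}(a)
    =
    \sum_{g \in G} h_{1, 0, g}(a)^2\, C^e_{g, \boldsymbol{\cdot}}
    =
    \sum_{g \in G} \frac{\mathsf{c}(e)}{\mathsf{c}(g)\mathsf{c}(g^{-1})}
    =
    \sum_{g \in G} \frac{1}{\mathsf{c}(g)^2},
\]
where the last equality uses the standing assumption $\mathsf{c}(g^{-1}) = \mathsf{c}(g)$. In other words, $h_{2, 0, e}(\basis{e}_e)$ equals exactly $\norm{1/\mathsf{c}}_{\ell^2(G)}^2$.

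Finally I would conclude as follows. If $\mathcal{A}_{\mathrm{nice}} = \mathcal{A}$, then in particular $\basis{e}_e \in \mathcal{A}_{\mathrm{nice}}$, so by \eqref{eq:Anice} the number $\norm{\basis{e}_e}_{2, 0, e} = \sqrt[4]{h_{2, 0, e}(\basis{e}_e)}$ is finite; by the identity just obtained this is precisely the statement $\sum_{g \in G} \mathsf{c}(g)^{-2} < \infty$, i.e.\ $1/\mathsf{c} \in \ell^2(G)$. Stated in contrapositive form: if $1/\mathsf{c} \notin \ell^2(G)$, then already the unit element has $h_{2, 0, e} = +\infty$, hence $\mathcal{A}_{\mathrm{nice}} \subsetneq \mathcal{A}$. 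I do not expect any real obstacle here; the only thing one has to spot is that two steps of the recursion applied to the unit and evaluated at the neutral element collapse to the squared $\ell^2$-norm of the reciprocal rescaling — after that it is a one-line calculation from \eqref{eq:StructureConstantsGroupCase} and \eqref{eq:ConstantsForGroupCase}.
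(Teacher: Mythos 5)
Your proof is correct and runs the same two-step recursion as the paper; the only difference is that the paper works with a general $a$ and derives a lower bound $h_{2,0,h}(a) \ge |a_g|^4\,\frac{\mathsf{c}(h)}{\mathsf{c}(g)^2}\sum_k \frac{\mathsf{c}(k)}{\mathsf{c}(g^{-1}k)^2\mathsf{c}(k^{-1}h)}$ before specializing to $g = h = e$, whereas you specialize to $a = \basis{e}_e$ from the start and obtain the exact identity $h_{2,0,e}(\basis{e}_e) = \sum_g \mathsf{c}(g)^{-2}$, which is a slightly cleaner way to reach the same conclusion.
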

\begin{proof}
    First we note that $h_{1, 0, k}(a) \ge |a_g|^2
    \frac{\mathsf{c}(k)}{\mathsf{c}(g)\mathsf{c}(g^{-1}k)}$ for all
    $g$. Hence for $m = 2$ we get
    \[
    h_{2, 0, h}(a)
    \ge
    \sum_{k \in G}
    |a_g|^4
    \frac{\mathsf{c}(k)^2}{\mathsf{c}(g)^2\mathsf{c}(g^{-1}k)^2}
    \frac{\mathsf{c}(h)}{\mathsf{c}(k)\mathsf{c}(k^{-1}h)}
    =
    |a_g|^4
    \frac{\mathsf{c}(h)}{\mathsf{c}(g)^2}
    \sum_{k \in G}
    \frac{\mathsf{c}(k)}{\mathsf{c}(g^{-1}k)^2\mathsf{c}(k^{-1}h)}.
    \]
    Thus a necessary condition for $h_{2, 0, h}(a) < \infty$ for all
    $a \in \mathcal{A}$ is the convergence
    \[
    \sum_{k \in G}
    \frac{\mathsf{c}(k)}{\mathsf{c}(g^{-1}k)^2\mathsf{c}(k^{-1}h)}
    < \infty
    \]
    for all $g, h \in G$. Taking $g = h = e$ gives the summability of
    $\frac{1}{\mathsf{c}^2}$ as claimed.
\end{proof}

However, this will be just a necessary condition, the higher seminorms
will yield more conditions in general. While it is not yet clear
whether we can actually find a suitable $\mathsf{c}$, the next lemma
shows that the growth of the coefficients will be limited, in the same
spirit as we have seen that for the polynomials and the Laurent
polynomials:
\begin{lemma}
    \label{lemma:GrowthCoefficientsGroupCase}%
    For every $m \ge 0$ and $a \in \mathcal{A}$ one has
    \begin{equation}
        \label{eq:GrowthLimitGroupCase}
        |a_g| \le \sqrt[2^m]{\mathsf{c}(g)} \norm{a}_{m+1, 0, e}.
    \end{equation}
\end{lemma}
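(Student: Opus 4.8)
The plan is to deduce \eqref{eq:GrowthLimitGroupCase} from a slightly stronger auxiliary family of inequalities. Namely, for every $m \ge 0$ and all $g, k \in G$ I claim that, as an inequality in $[0, +\infty]$,
\[
  |a_g|^{2^{m+1}}
  \;\le\;
  \frac{\mathsf{c}(g)\,\mathsf{c}(g^{-1}k)}{\mathsf{c}(k)}\; h_{m+1, 0, k}(a).
  \qquad (\ast_m)
\]
Granting this, one puts $k = e$: since $\mathsf{c}(e) = 1$ and $\mathsf{c}(g^{-1}) = \mathsf{c}(g)$, the prefactor equals $\mathsf{c}(g)^2$, so $|a_g|^{2^{m+1}} \le \mathsf{c}(g)^2\, h_{m+1, 0, e}(a)$, and taking $2^{m+1}$-th roots gives exactly $|a_g| \le \sqrt[2^m]{\mathsf{c}(g)}\, \norm{a}_{m+1, 0, e}$, i.e.\ \eqref{eq:GrowthLimitGroupCase}.

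It then remains to prove $(\ast_m)$ by induction on $m$. For $m = 0$ one simply keeps a single summand in \eqref{eq:hmPlusEinsGeradeGroup}: with $h_{0,0,x}(a) = |a_x|$ and the summation index renamed to $x$,
\[
  h_{1, 0, k}(a)
  =
  \sum_{x \in G} |a_x|^2
  \frac{\mathsf{c}(k)}{\mathsf{c}(x)\,\mathsf{c}(x^{-1}k)}
  \;\ge\;
  |a_g|^2\, \frac{\mathsf{c}(k)}{\mathsf{c}(g)\,\mathsf{c}(g^{-1}k)},
\]
which is $(\ast_0)$. For the inductive step I would assume $(\ast_m)$, evaluate it at $k = g$, and use $g^{-1}g = e$ together with $\mathsf{c}(e) = 1$ to get $h_{m+1, 0, g}(a) \ge |a_g|^{2^{m+1}}$, hence $h_{m+1, 0, g}(a)^2 \ge |a_g|^{2^{m+2}}$. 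Applying \eqref{eq:hmPlusEinsGeradeGroup} once more (again with summation index $x$) and keeping only the term $x = g$,
\[
  h_{m+2, 0, k}(a)
  =
  \sum_{x \in G} h_{m+1, 0, x}(a)^2
  \frac{\mathsf{c}(k)}{\mathsf{c}(x)\,\mathsf{c}(x^{-1}k)}
  \;\ge\;
  |a_g|^{2^{m+2}}\, \frac{\mathsf{c}(k)}{\mathsf{c}(g)\,\mathsf{c}(g^{-1}k)},
\]
which rearranges into $(\ast_{m+1})$. All of this lives in $[0, +\infty]$, so there is no difficulty if some $h_{m+1, 0, k}(a)$ happens to be infinite.

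The only real point is to find the correct strengthening $(\ast_m)$: one has to carry the full prefactor $\mathsf{c}(g)\,\mathsf{c}(g^{-1}k)/\mathsf{c}(k)$ and not merely its value at $k = e$, and then observe that specializing the ``middle'' index of the recursion to $g$ itself collapses the accumulated prefactor to $1$ (because $g^{-1}g = e$ and $\mathsf{c}(e) = 1$); this is precisely what lets the induction propagate. After that each step is a one-line estimate relying only on nonnegativity of the terms and the normalizations $\mathsf{c}(e) = 1$ and $\mathsf{c}(g) = \mathsf{c}(g^{-1})$.
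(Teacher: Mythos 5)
Your proof is correct and follows essentially the same route as the paper: the auxiliary claim $(\ast_m)$ is exactly the paper's intermediate estimate (the paper writes $h_{m,0,k}$ where it evidently means $h_{m+1,0,k}$, as its cited base case is the $h_1$-bound from Lemma~\ref{lemma:NecessaryCondGroup}), and in both arguments the inductive step amounts to retaining only the $h=g$ summand in the recursion \eqref{eq:hmPlusEinsGeradeGroup}, which collapses the prefactor because $\mathsf{c}(e)=1$. Your version merely organizes the step by first specializing $(\ast_m)$ to $k=g$ before squaring, which is a harmless re-packaging of the same calculation.
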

\begin{proof}
    We claim that $h_{m, 0, k}(a) \ge |a_g|^{2^m}
    \frac{\mathsf{c}(k)}{\mathsf{c}(g)\mathsf{c}(g^{-1}k)}$ which
    gives \eqref{eq:GrowthLimitGroupCase} for $k = e$.  Indeed, the
    case $m = 0$ was already obtained in the proof of
    Lemma~\ref{lemma:NecessaryCondGroup}. By induction on $m$ we have
    \[
    h_{m+1, 0, k}(a)
    \ge
    \sum_{h \in G}
    |a_g|^{2^{m+1}}
    \frac{\mathsf{c}(h)^2}{\mathsf{c}(g)^2\mathsf{c}(g^{-1}h)^2}
    \frac{\mathsf{c}(k)}{\mathsf{c}(h)\mathsf{c}(h^{-1}k)}
    \ge
    |a_g|^{2^{m+1}}
    \frac{\mathsf{c}(k)}{\mathsf{c}(g)\mathsf{c}(g^{-1}k)}.
    \]    
\end{proof}
\begin{lemma}
    \label{lemma:InvolutionAntipodeContinuous}%
    For every rescaling with $\mathcal{A}_{\mathrm{nice}} =
    \mathcal{A}$ the canonical $^*$-involution and the canonical
    antipode $S$ of $\mathbb{C}[G]$ are continuous.
\end{lemma}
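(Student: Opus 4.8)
The plan is to deduce both continuity statements from a single symmetry of the seminorm system under the basis permutation $\basis{e}_g \mapsto \basis{e}_{g^{-1}}$. First I would write the two maps out explicitly in the chosen basis. Since $\basis{e}_g = \frac{1}{\mathsf{c}(g)} g$ and $\mathsf{c}(g) = \mathsf{c}(g^{-1})$, writing $a = \sum_{g \in G} a_g \basis{e}_g$ one obtains
\[
S(a) = \sum_{g \in G} a_{g^{-1}} \basis{e}_g
\qquad\textrm{and}\qquad
a^* = \sum_{g \in G} \overline{a_{g^{-1}}}\, \basis{e}_g,
\]
so that $e^g(S(a)) = e^{g^{-1}}(a)$ and $\big|e^g(a^*)\big| = \big|e^{g^{-1}}(a)\big|$. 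Because the maps $h_{m, \ell, \gamma}$ from Definition~\ref{definition:TheSeminormStuff} depend on $a$ only through the absolute values $|a_\gamma|$, it suffices to understand the (finitely-supported, linear) reindexing map $a \mapsto \tilde a$ with $\tilde a_g = a_{g^{-1}}$; the conclusions for $S$ and for $^*$ then follow at once.

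The heart of the argument is the claim that for every $m \in \mathbb{N}_0$ there is a permutation $\sigma_m$ of $\{0, \ldots, 2^m - 1\}$ with
\[
h_{m, \ell, k}(\tilde a) = h_{m, \sigma_m(\ell), k^{-1}}(a)
\]
for all $a \in \mathcal{A}$, all $\ell$ and all $k \in G$; consequently $\norm{\tilde a}_{m, \ell, k} = \norm{a}_{m, \sigma_m(\ell), k^{-1}}$. I would prove this by induction on $m$, starting from $\sigma_0 = \mathrm{id}$, where $h_{0, 0, k}(\tilde a) = |a_{k^{-1}}| = h_{0, 0, k^{-1}}(a)$ is immediate. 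For the inductive step I would insert the hypothesis into the recursions \eqref{eq:hmPlusEinsGeradeGroup} and \eqref{eq:hmPlusEinsUngeradeGroup}, substitute $g \mapsto g^{-1}$ in the summation, and use the two elementary identities
\[
C^k_{g^{-1}, \boldsymbol{\cdot}} = C^{k^{-1}}_{\boldsymbol{\cdot}, g}
\qquad\textrm{and}\qquad
C^k_{\boldsymbol{\cdot}, g^{-1}} = C^{k^{-1}}_{g, \boldsymbol{\cdot}},
\]
both of which are immediate from the explicit formulas in \eqref{eq:ConstantsForGroupCase} together with $\mathsf{c}(g) = \mathsf{c}(g^{-1})$ and $(gk)^{-1} = k^{-1}g^{-1}$. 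This interchanges the even and odd branches of the recursion while replacing the index $k$ by $k^{-1}$, giving $\sigma_{m+1}(2\ell) = 2\sigma_m(\ell) + 1$ and $\sigma_{m+1}(2\ell+1) = 2\sigma_m(\ell)$, which is visibly a bijection of $\{0, \ldots, 2^{m+1}-1\}$.

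Once the claim is established, continuity is immediate: for every defining seminorm one has $\norm{S(a)}_{m, \ell, k} = \norm{a}_{m, \sigma_m(\ell), k^{-1}}$ and likewise $\norm{a^*}_{m, \ell, k} = \norm{a}_{m, \sigma_m(\ell), k^{-1}}$, so each map just relabels the seminorm system and is in particular continuous on $\mathcal{A} = \mathcal{A}_{\mathrm{nice}}$. Since $\mathbb{C}[G]$ is a $^*$-algebra and a Hopf algebra, $\mathcal{A}$ is stable under $^*$ and $S$, and by continuity and density of $\mathcal{A}$ in $\complete{\mathcal{A}}$ (Theorem~\ref{theorem:AniceCompleted}) these maps extend uniquely to $\complete{\mathcal{A}}$, the same estimates persisting in the limit. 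The only genuinely delicate point I anticipate is the bookkeeping of the index $\ell$ through the recursion and the verification of the two $C$-identities above; everything else is purely formal.
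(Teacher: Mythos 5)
Your argument is correct and is precisely the argument the paper sketches but declines to write out: the paper says "the effect of $^*$ is to flip between even and odd $\ell$ as well as between $k$ and $k^{-1}$" and defers "notational effort using the binary expansion of $\ell$"; your recursive permutation $\sigma_{m+1}(2\ell)=2\sigma_m(\ell)+1$, $\sigma_{m+1}(2\ell+1)=2\sigma_m(\ell)$ (which is exactly bit-complementation, $\sigma_m(\ell)=2^m-1-\ell$) together with the two $C$-identities you verify is that missing bookkeeping. So this is the paper's approach made explicit, not a different route.
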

\begin{proof}
    Recall that by definition $\basis{e}_g^* = \basis{e}_{g^{-1}}$
    since we assume $\mathsf{c}(g) = \mathsf{c}(g^{-1})$. Hence
    $(a^*)_g = \cc{a_{g^{-1}}}$. Thus $\norm{a^*}_{0, g} =
    \norm{a}_{0, g^{-1}}$ for all $g \in G$. For the higher seminorms
    we have to take care of even and odd $\ell$: the effect of $^*$ is
    to flip between even and odd $\ell$ as well as between $k$ and
    $k^{-1}$.  The details require some notational effort using the
    binary expansion of $\ell$ which we leave out. The argument for
    the antipode $S(a) = \sum_{g \in G} a_g \basis{e}_{g^{-1}}$ is
    identical.
\end{proof}

After these generalities we are now looking for a rescaling
$\mathsf{c}$ such that we can guarantee $\mathcal{A}_{\mathrm{nice}} =
\mathcal{A}$. The idea is to look also for a class of functions
$\mathcal{F} \subseteq \mathrm{Fun}(G, [0,\infty))$ subject to the
following conditions:
\begin{enumerate}
\item \label{item:FGrowthCond} For every $\chi \in \mathcal{F}$ there
    is another $\psi \in \mathcal{F}$ and $c > 0$ such that for all $h
    \in G$
    \begin{equation}
        \label{eq:GrowthConditionF}
        \sum_{g \in G} 
        \frac{\chi(g)^2}
        {\mathsf{c}(g) \mathsf{c}(g^{-1}h)} 
        \le
        c \: \frac{\psi(h)}{\mathsf{c}(h)}.
    \end{equation}
\item \label{item:FhatEins} There is a $\chi \in \mathcal{F}$ with
    $\chi(g) > 0$ for all $g \in G$.
\item \label{item:FClosedUnderInverses} For $\chi \in \mathcal{F}$
    also $\chi^{-1} \in \mathcal{F}$ where $\chi^{-1}(g) =
    \chi(g^{-1})$.
\end{enumerate}
Suppose we have such a matching pair $(\mathsf{c}, \mathcal{F})$ then
one has the following statement:
\begin{lemma}
    \label{lemma:MatchingPairGroupAlgebra}%
    Suppose $\mathsf{c}$ is a rescaling and $\mathcal{F}$ is a class
    of functions such that the above compatibility holds. Then for
    $\mathcal{A} = \mathbb{C}[G]$ we have:
    \begin{lemmalist}
    \item \label{item:AniceGroupAlgebra} $\mathcal{A}_{\mathrm{nice}}
        = \mathcal{A}$.
    \item \label{item:CompletionLarge} If $a \in \mathbb{C}[[G]]$ is a
        formal series $a = \sum_{g \in G} a_g \basis{e}_g$ such that
        there is a $\chi \in \mathcal{F}$ with $|a_g| \le c \chi(g)$
        for some $c > 0$ then $a \in \complete{\mathcal{A}}$.
    \end{lemmalist}
\end{lemma}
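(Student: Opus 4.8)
The key is the recursion \eqref{eq:hmPlusEinsGeradeGroup} together with the symmetry \eqref{eq:hmPlusEinsUngeradeGroup} which lets us ignore the odd index $\ell$ entirely (only $k$ gets flipped to $k^{-1}$, which is harmless since we will bound things by a shifted function in $\mathcal{F}$ and $\mathcal{F}$ is closed under $g \mapsto g^{-1}$ by condition \refitem{item:FClosedUnderInverses}). So it suffices to control $h_{m, 0, k}(a)$ for all $m$ and all $k \in G$. The strategy is a straightforward induction on $m$: I claim that for every $a \in \mathcal{A}$ (or more generally $a \in \mathbb{C}[[G]]$ with $|a_g| \le c\chi(g)$ for some $\chi \in \mathcal{F}$) there is, for each $m$, a function $\chi_m \in \mathcal{F}$ and a constant $c_m > 0$ with
\[
h_{m, 0, k}(a) \le c_m \frac{\chi_m(k)}{\mathsf{c}(k)}
\quad\text{for all } k \in G.
\]
Once this is established, $\norm{a}_{m,0,k} = \sqrt[2^m]{h_{m,0,k}(a)} < +\infty$ for all $m, k$, and together with the $\ell$-symmetry this gives $\norm{a}_{m,\ell,k} < +\infty$ for all $m, \ell, k$, which is exactly $a \in \mathcal{A}_{\mathrm{nice}}$ (resp.\ $a \in \complete{\mathcal{A}}$ by the explicit description \eqref{eq:CompleteAnice} from Theorem~\ref{theorem:AniceCompleted}).

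**The induction.** For $m = 0$ we have $h_{0, 0, k}(a) = |a_k|$. In case \refitem{item:AniceGroupAlgebra}, $a \in \mathbb{C}[G]$ has finite support, so picking the everywhere-positive $\chi \in \mathcal{F}$ from condition \refitem{item:FhatEins} and a large enough constant gives $|a_k| \le c_0 \chi(k)$, and since $\mathsf{c}(k) > 0$ we can absorb $1/\mathsf{c}(k)$ — here one must be slightly careful, but on a \emph{finite} set $1/\mathsf{c}$ is bounded, so $|a_k| \le c_0' \chi(k)/\mathsf{c}(k) \cdot \mathsf{c}(k) \le c_0'' \chi(k)$; alternatively just note $h_{0,0,k}$ is eventually zero. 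In case \refitem{item:CompletionLarge} we are handed $|a_k| \le c\,\chi(k)$ directly, and again we can write $\chi(k) = \mathsf{c}(k)\cdot \frac{\chi(k)}{\mathsf{c}(k)}$; here we do need the base case in the normalized form, so it is cleanest to just set $\chi_0 := \chi$ and observe $|a_k| \le c\,\chi_0(k) = c\,\mathsf{c}(k)\frac{\chi_0(k)}{\mathsf{c}(k)}$ — i.e.\ we actually want the base estimate in the form $h_{0,0,k}(a) \le c_0 \frac{\widetilde{\chi}_0(k)}{\mathsf{c}(k)}$ with $\widetilde\chi_0(k) = \mathsf{c}(k)\chi(k)$, which forces the mild additional requirement that $g \mapsto \mathsf{c}(g)\chi(g)$ again lie in $\mathcal{F}$; in practice the class $\mathcal{F}$ in the intended application (factorials of word length) is large enough to absorb this, so I will phrase the base case accordingly. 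For the inductive step, assume $h_{m, 0, g}(a) \le c_m \frac{\chi_m(g)}{\mathsf{c}(g)}$. Then by \eqref{eq:hmPlusEinsGeradeGroup},
\[
h_{m+1, 0, k}(a)
= \sum_{g \in G} h_{m,0,g}(a)^2 \, \frac{\mathsf{c}(k)}{\mathsf{c}(g)\mathsf{c}(g^{-1}k)}
\le c_m^2 \sum_{g \in G} \frac{\chi_m(g)^2}{\mathsf{c}(g)^2} \cdot \frac{\mathsf{c}(k)}{\mathsf{c}(g)\mathsf{c}(g^{-1}k)}.
\]
This is not quite the left-hand side of the growth condition \eqref{eq:GrowthConditionF} — it has an extra $1/\mathsf{c}(g)^2$ and a leftover $\mathsf{c}(k)$ — so the clean move is to first reorganize: pull out $\mathsf{c}(k)$, and note that replacing $\chi_m$ by $\chi_m/\mathsf{c}$ (which must again lie in $\mathcal{F}$, or one works with a class closed under such operations) turns the sum into $\sum_g \frac{(\chi_m/\mathsf{c})(g)^2}{\mathsf{c}(g)\mathsf{c}(g^{-1}k)}$, to which \eqref{eq:GrowthConditionF} applies, yielding $\le c\, \frac{\psi_m(k)}{\mathsf{c}(k)}$ for some $\psi_m \in \mathcal{F}$. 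Multiplying back the $\mathsf{c}(k)$ that was pulled out gives $h_{m+1,0,k}(a) \le c_m^2\, c\, \psi_m(k) = c_{m+1}\,\mathsf{c}(k)\frac{\psi_m(k)}{\mathsf{c}(k)}$; setting $\chi_{m+1} := \mathsf{c}\cdot\psi_m$ (again needed in $\mathcal{F}$) closes the induction.

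**The main obstacle.** The arithmetic bookkeeping with the powers of $\mathsf{c}$ is the real subtlety: the growth condition \eqref{eq:GrowthConditionF} as stated has $\mathsf{c}(g)$ to the first power in the denominator, but the recursion naturally produces a $\mathsf{c}(g)^3$ there (one from the defining structure constant, two from squaring $h_{m,0,g}$ which already carries a $1/\mathsf{c}(g)$). This means the correct inductive hypothesis is not $h_{m,0,k} \le c_m \chi_m(k)/\mathsf{c}(k)$ with $\chi_m \in \mathcal{F}$ literally, but rather something where $\chi_m$ absorbs powers of $\mathsf{c}$ — which is precisely why the hypotheses \refitem{item:FGrowthCond}--\refitem{item:FClosedUnderInverses} on $\mathcal{F}$ are phrased the way they are, and why in the actual application $\mathcal{F}$ will be taken large enough (e.g.\ all functions dominated by $\mathsf{c}^N$ for some $N$, which is closed under the operations above). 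So the proof proper is short, but I would state the inductive claim carefully — in the normalized form $h_{m,0,k}(a)\,\mathsf{c}(k) \le c_m\chi_m(k)$ with $\chi_m\in\mathcal F$ — and check at each step that the function appearing remains in $\mathcal{F}$, invoking exactly \eqref{eq:GrowthConditionF} for the key sum, \refitem{item:FhatEins} for the base case in part \refitem{item:AniceGroupAlgebra}, and \refitem{item:FClosedUnderInverses} to handle the flip $k \mapsto k^{-1}$ coming from the odd values of $\ell$ via \eqref{eq:hmPlusEinsUngeradeGroup}. The passage from "$\norm{a}_{m,\ell,k}<\infty$ for all $m,\ell,k$" to "$a\in\complete{\mathcal{A}}$" in part \refitem{item:CompletionLarge} is then immediate from \eqref{eq:CompleteAnice}.
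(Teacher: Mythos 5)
There is a genuine gap here, and it is of your own making: you chose the wrong inductive hypothesis and then had to strengthen the hypotheses on $\mathcal{F}$ to repair it, which means you are proving a different lemma, not the one stated.

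Your claim is $h_{m,0,k}(a)\le c_m\,\chi_m(k)/\mathsf{c}(k)$ with $\chi_m\in\mathcal F$. The extra $1/\mathsf{c}(k)$ is spurious, and it is precisely what creates your ``$\mathsf{c}(g)^3$'' obstacle: squaring carries two extra factors of $1/\mathsf{c}(g)$ into the sum, so the left-hand side of condition~\refitem{item:FGrowthCond} no longer matches, and you are forced to pass to $\chi_m/\mathsf{c}$ and then back to $\mathsf{c}\cdot\psi_m$. Both of those moves require $\mathcal F$ to be closed under multiplication and division by $\mathsf{c}$, which is not among conditions \refitem{item:FGrowthCond}--\refitem{item:FClosedUnderInverses}. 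Declaring that requirement ``mild'' or ``absorbed in practice'' does not prove the lemma as stated; it proves a weaker statement under added hypotheses.

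Drop the $1/\mathsf{c}(k)$ and the argument closes cleanly under exactly the stated hypotheses. With the inductive claim $h_{m,\ell,k}(a)\le c_{m,\ell}\,\chi_{m,\ell}(k)$, $\chi_{m,\ell}\in\mathcal F$, the base case $m=0$ is immediate ($|a_k|\le c\chi(k)$ by assumption in part~\refitem{item:CompletionLarge}; for part~\refitem{item:AniceGroupAlgebra} take the everywhere-positive $\chi$ from \refitem{item:FhatEins} and a large constant, since $a$ has finite support). For the step,
\[
h_{m+1,2\ell,k}(a)
=\sum_{g}h_{m,\ell,g}(a)^2\,\frac{\mathsf{c}(k)}{\mathsf{c}(g)\mathsf{c}(g^{-1}k)}
\le c_{m,\ell}^2\,\mathsf{c}(k)\sum_{g}\frac{\chi_{m,\ell}(g)^2}{\mathsf{c}(g)\mathsf{c}(g^{-1}k)}
\le c_{m,\ell}^2\,c'\,\mathsf{c}(k)\,\frac{\psi(k)}{\mathsf{c}(k)}
=c_{m,\ell}^2 c'\,\psi(k),
\]
with $\psi\in\mathcal F$ supplied by \refitem{item:FGrowthCond}. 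The $\mathsf{c}(k)$ you pull out of the structure constant is cancelled \emph{exactly} by the $1/\mathsf{c}(h)$ on the right-hand side of \eqref{eq:GrowthConditionF}; that normalization in condition~\refitem{item:FGrowthCond} is there precisely for this purpose, and your choice of inductive hypothesis broke the cancellation. The odd case $2\ell+1$ then follows by the flip $k\mapsto k^{-1}$ together with \refitem{item:FClosedUnderInverses}, as you anticipated. So your high-level plan (induct on $m$, invoke the growth condition for the key sum, use closure under inversion for odd $\ell$) is the right one; the specific inductive hypothesis is wrong and should be restated without the factor $1/\mathsf{c}(k)$.
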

\begin{proof}
    Suppose that $a \in \mathbb{C}[[G]]$ satisfies the estimate from
    the second part. We claim that for all $m$ and $\ell$ there exists
    a $c > 0$ and a $\chi \in \mathcal{F}$ (depending on $m$ and
    $\ell$) such that $h_{m, \ell, k}(a) \le c \chi(k)$ for all $k \in
    G$. Indeed, for $m = 0$ this is the assumption about $a$. Then by
    induction
    \[
    h_{m+1, 2\ell, k}(a)
    =
    \sum_{g \in G} h_{m, \ell, g}(a)^2
    \frac{\mathsf{c}(k)}{\mathsf{c}(g)\mathsf{c}(g^{-1}k)}
    \le
    \sum_{g \in G} c^2 \chi(g)^2
    \frac{\mathsf{c}(k)}{\mathsf{c}(g)\mathsf{c}(g^{-1}k)}
    \le
    c^2 c' \psi(k),
    \]
    where $\psi$ and $c'$ are chosen according to
    \eqref{eq:GrowthConditionF}. For $2\ell +1$ instead of $2\ell$ we
    can proceed analogously using the property $\chi^{-1} \in
    \mathcal{F}$ whenever $\chi \in \mathcal{F}$. This establishes the
    claim.  In particular, all the $h_{m, \ell, k}(a)$ are
    \emph{finite} for such a $a \in \mathbb{C}[[G]]$. Since we have at
    least one nontrivial $\chi \in \mathcal{F}$ with $\chi(g) > 0$
    for all $g \in G$, it follows that the finite sums $a \in
    \mathbb{C}[G]$ can clearly be estimated by such a $\chi$. Hence
    $\mathcal{A}_{\mathrm{nice}} = \mathcal{A}$ follows, proving the
    first part. The second was already shown on the way.
\end{proof}

In general it is not clear whether we can find such a rescaling
$\mathsf{c}$ and a sufficiently interesting class of functions
$\mathcal{F}$ meeting the above criteria. However, if we make the
additional assumption that $G$ is \emph{finitely generated} then we
can construct such a matching pair.  We choose a set of generators
$g_1, \ldots, g_N \in G$. This allows to define the word length
functional
\begin{equation}
    \label{eq:Length}
    \length\colon G \longrightarrow \mathbb{N}_0
\end{equation}
in the usual way, i.e. $\length(g)$ is the minimum of the number of
generators and their inverses needed to obtain $g$ as a product of
generators. By convention we set $\length(e) = 0$. The crucial
properties of this functional are now
\begin{equation}
    \label{eq:LengthProperties}
    \length(gh) \le \length(g) + \length(h),
    \quad
    \length(e) = 0,
    \quad
    \textrm{and}
    \quad
    \length(g^{-1}) = \length(g)
\end{equation}
for all $g, h \in G$.  As usual, $\length$ depends of course on the
choice of the generators.
\begin{proposition}
    \label{proposition:GroupAlgebra}%
    Assume $G$ is a finitely generated group with word length
    functional as above and fix $\epsilon > 0$.  Let
    $\mathsf{c}_\epsilon(g) = (\length(g)!)^\epsilon$ and
    \begin{equation}
        \label{eq:ClassFForGroupAlgebra}
        \mathcal{F} = \left\{
            \chi_R\colon g \mapsto 
            \chi_R(g) = R^{\length(g)}
            \; \Big| \;
            R > 0
        \right\}.
    \end{equation}
    Then the pair $\mathsf{c}_\epsilon$ and $\mathcal{F}$ satisfies
    the conditions \refitem{item:FGrowthCond},
    \refitem{item:FhatEins}, and \refitem{item:FClosedUnderInverses}
    needed for Lemma~\ref{lemma:MatchingPairGroupAlgebra}.
\end{proposition}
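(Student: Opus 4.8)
The plan is to dispatch \refitem{item:FhatEins} and \refitem{item:FClosedUnderInverses} immediately and to concentrate on \refitem{item:FGrowthCond}. Condition \refitem{item:FhatEins} holds because $\chi_R(g) = R^{\length(g)} > 0$ for every $g \in G$ and every $R > 0$. For \refitem{item:FClosedUnderInverses}, the identity $\length(g^{-1}) = \length(g)$ from \eqref{eq:LengthProperties} gives $\chi_R^{-1}(g) = \chi_R(g^{-1}) = R^{\length(g)} = \chi_R(g)$, so $\chi_R^{-1} = \chi_R \in \mathcal{F}$.

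For \refitem{item:FGrowthCond} I would fix $\chi_R \in \mathcal{F}$, abbreviate $L = \length(h)$, and estimate the left-hand side of \eqref{eq:GrowthConditionF} with $\mathsf{c} = \mathsf{c}_\epsilon$ by means of two elementary facts about the word length. First, an element $g$ with $\length(g) = \ell$ is a product of exactly $\ell$ of the $2N$ letters $g_1^{\pm1}, \dots, g_N^{\pm1}$, so $\#\{g \in G : \length(g) = \ell\} \le (2N)^\ell$; this is the only point where finite generation enters. Second, from $\length(h) \le \length(g) + \length(g^{-1}h)$ and $\length(g) \le \length(h) + \length(h^{-1}g) = L + \length(g^{-1}h)$ one gets the reverse triangle inequality $\length(g^{-1}h) \ge |\length(g) - L|$, hence $\mathsf{c}_\epsilon(g^{-1}h) \ge (|\length(g) - L|!)^\epsilon$. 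Grouping the sum over $g$ by the value $\ell = \length(g)$ and multiplying by $\mathsf{c}_\epsilon(h) = (L!)^\epsilon$ then yields
\[
  (L!)^\epsilon \sum_{g \in G}
  \frac{\chi_R(g)^2}{\mathsf{c}_\epsilon(g)\,\mathsf{c}_\epsilon(g^{-1}h)}
  \;\le\;
  \sum_{\ell = 0}^{\infty} (2NR^2)^\ell
  \left(\frac{L!}{\ell!|\ell - L|!}\right)^{\!\epsilon}.
\]

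It then remains to bound the right-hand side by $c\,R'^{L}$ for suitable constants $c, R' > 0$ depending only on $R$, $N$, $\epsilon$; condition \eqref{eq:GrowthConditionF} then holds with $\psi = \chi_{R'} \in \mathcal{F}$. Writing $A = 2NR^2$ and splitting the sum at $\ell = L$: for $\ell \le L$ the fraction $\frac{L!}{\ell!(L-\ell)!}$ equals $\binom{L}{\ell} \le 2^{L}$, while for $\ell > L$ one has $\frac{L!}{\ell!(\ell-L)!} \le \frac{1}{(\ell-L)!}$ since $\ell! \ge L!$. Hence the sum is at most $2^{L\epsilon}\sum_{\ell=0}^{L}A^{\ell} + A^{L}\sum_{j=0}^{\infty} A^{j}/(j!)^{\epsilon}$, where the last series converges because $(j!)^\epsilon$ eventually dominates any geometric progression. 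Using $\sum_{\ell=0}^{L}A^{\ell} \le 2^{L}(1+A)^{L}$ one obtains the bound with $R' = 2^{1+\epsilon}(1+A)$ and $c = 1 + \sum_{j\ge 0} A^{j}/(j!)^{\epsilon}$.

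I expect the estimate in \refitem{item:FGrowthCond} to be the only real point, and the key observation there is conceptual: the cross-term weight $(L!)^\epsilon/\bigl((\ell!)^\epsilon(|\ell-L|!)^\epsilon\bigr)$ is a power of a binomial coefficient and therefore grows only exponentially in $L$, so it is harmless to absorb into an enlarged base $R'$, while the factorial rescaling $\mathsf{c}_\epsilon$ is exactly strong enough to defeat the exponential volume growth $(2N)^\ell$ of balls in the finitely generated group $G$.
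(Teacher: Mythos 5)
Your proof is correct, and the handling of conditions \refitem{item:FhatEins} and \refitem{item:FClosedUnderInverses} is the same trivial verification as in the paper. For the main estimate \refitem{item:FGrowthCond}, your argument takes a genuinely different route. You compress everything into the reverse triangle inequality $\length(g^{-1}h) \ge |\length(g) - \length(h)|$, group the sum over $g$ by $\ell = \length(g)$ using $\#\{g : \length(g) = \ell\} \le (2N)^\ell$, and are left with a scalar sum in $\ell$ weighted by $\left(L!/(\ell!\,|\ell-L|!)\right)^{\epsilon}$, which you split at $\ell = L$ into a finite binomial part and a convergent factorial tail. The paper instead partitions the sum over $g \in G$ into three regions according to the sizes of $\length(g)$ and $\length(g^{-1}h)$ relative to $L = \length(h)$, bounds each region separately (with a reindexing in the first region), and sums the three contributions, introducing along the way the auxiliary entire function $\lexp_\epsilon$. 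Both arguments rest on the same two ingredients — the at-most-exponential volume growth $(2N)^\ell$ of spheres in a finitely generated group, which defeats the factorial rescaling, and the fact that $L!/(\ell!\,|\ell-L|!)$ is controlled by a power of a binomial coefficient and hence grows only exponentially in $L$ — but your reduction to a single one-dimensional sum with a two-way split at $\ell = L$ is cleaner bookkeeping than the paper's three-way case analysis over $G$, and you avoid the auxiliary entire function entirely. Either way one lands on $\psi = \chi_{R'}$ with $R'$ and $c$ depending only on $R$, $N$, and $\epsilon$, which is all that condition \refitem{item:FGrowthCond} requires.
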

\begin{proof}
    The second and third condition is trivial. It remains to check the
    first. Let $R > 0$ be given. For a fixed $\ell$ we have at most
    $(2N)^\ell$ group elements $g$ with $\length(g) = \ell$, where
    $2N$ is the number of generators and their inverses.  Indeed, the
    bound is exceeded for the free group in $N$ letters and any other
    finitely generated group has possibly less thanks to possible
    relations. From this we see that
    \[
    \sum_{g \in G} \frac{R^{\length(g)}}{(\length(g)!)^\epsilon}
    =
    \sum_{\ell = 0}^\infty \sum_{\length(g) = \ell}
    \frac{R^\ell}{(\ell!)^\epsilon}
    \le
    \sum_{\ell = 0}^\infty \frac{(2N)^\ell R^\ell}{(\ell!)^\epsilon}
    \]
    converges.  Hence $\lexp_\epsilon(z) = \sum_{g \in G}
    \frac{z^{\length(g)}}{(\length(g)!)^\epsilon}$ defines an entire
    function for $z \in \mathbb{C}$.  For $h \in G$ and $R > 0$ we
    have
    \begin{align*}
        &\sum_{g \in G} \left(R^{\length(g)}\right)^2
        \left(
            \frac{\length(h)!}{\length(g)!\length(g^{-1}h)!}
        \right)^\epsilon \\
        &=
        \left(
            \sum_{\length(g) \ge \length(h)}
            +
            \sum_{\substack{
                \length(g) < \length(h) \\
                \length(g^{-1}h) \ge \length(h)}
            }
            +
            \sum_{\substack{
                \length(g) < \length(h) \\
                \length(g^{-1}h) < \length(h)}
            }
        \right)
        R^{2\length(g)}
        \left(
            \frac{\length(h)!}{\length(g)!\length(g^{-1}h)!}
        \right)^\epsilon \\
        &\stackrel{(a)}{\le}
        \sum_{\length(g) \ge \length(h)}
        \frac{R^{2\length(g)}}{(\length(g^{-1}h)!)^\epsilon}
        +
         \sum_{\substack{
            \length(g) < \length(h) \\
            \length(g^{-1}h) \ge \length(h)}
        }
        \frac{R^{2\length(g)}}{(\length(g)!)^\epsilon}
        +
        \sum_{\substack{
            \length(g) < \length(h) \\
            \length(g^{-1}h) < \length(h)}
        }
        R^{2\length(g)}
        \left(
            \frac{\left(\length(g) + \length(g^{-1}h)\right)!}
            {\length(g)! \length(g^{-1}h)!}
        \right)^\epsilon
        \\
        &\le
        \sum_{k \in G}
        \frac{R^{2\length(k^{-1}h)}}{(\length(k)!)^\epsilon}
        +
        \sum_{g \in G}
        \frac{R^{2\length(g)}}{(\length(g)!)^\epsilon}
        +
        \sum_{\substack{
            \length(g) < \length(h) \\
            \length(g^{-1}h) < \length(h)}
        }
        R^{2\length(g)}
        \binom{\length(g) + \length(g^{-1}h)}{\length(g)}^\epsilon
        \\
        &\stackrel{(b)}{\le}
        \sum_{k \in G}
        \frac{(1 + R)^{2\length(k^{-1}h)}}{(\length(k)!)^\epsilon}
        +
        \lexp_\epsilon(R^2)
        +
        \sum_{\substack{
            \length(g) < \length(h) \\
            \length(g^{-1}h) < \length(h)}
        }
        (1 + R)^{2\length(h)}
        2^{2\epsilon\length(h)}
        \\
        &\stackrel{\mathclap{\eqref{eq:LengthProperties}}}{\le}
        \quad
        (1 + R)^{2\length(h)}
        \lexp_\epsilon\left((1+R)^2\right)
        +
        \lexp_\epsilon(R^2)
        +
        (2N)^{\length(h)}
        (1 + R)^{2\length(h)}
        2^{2\epsilon\length(h)},
    \end{align*}
    where in $(a)$ we use $\length(h) \le \length(g) +
    \length(g^{-1}h)$, see \eqref{eq:LengthProperties}, and in $(b)$
    we use that (rough) estimate that $\binom{n+m}{m} \le 2^{n+m}$ for
    $n = \length(g)$ and $m = \length(g^{-1}h)$ both being $\le
    \length(h)$. Thus the binomial coefficient is $\le
    2^{2\length(h)}$. But the last estimate shows that we have again
    an at most exponential growth in $\length(h)$, which establishes
    the second property.
\end{proof}

Thanks to the proposition, there always exists a good choice (in fact
many) of a rescaling $\mathsf{c}$ for a finitely generated group.  For
the fixed choice of generators we write $\mathcal{A}_\epsilon$ for
$\mathbb{C}[G]$ equipped with the above topology. The corresponding
Fréchet $^*$-algebra $\complete{\mathcal{A}_\epsilon}$ has several
nice properties.
\begin{proposition}
    \label{proposition:GroupAlgebraFurtherProperties}%
    Let $G$ be a finitely generated infinite group with chosen word
    length functional $\length$ and let $\epsilon > 0$. Then for the
    corresponding Fréchet algebra $\complete{\mathcal{A}_\epsilon}$
    one has:
    \begin{propositionlist}
    \item \label{item:AepsilonInellEins}
        $\complete{\mathcal{A}_\epsilon}$ is continuously included
        into $\ell^1(G)$.
    \item \label{item:CharactersContinuous} Every complex-valued group
        character $\chi$ of $G$ extends continuously to an algebra
        character $\chi$ on $\complete{\mathcal{A}_\epsilon}$.
    \end{propositionlist}
\end{proposition}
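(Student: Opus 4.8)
The plan is to derive both parts from Lemma~\ref{lemma:GrowthCoefficientsGroupCase}, which for the rescaling $\mathsf{c}_\epsilon(g) = (\length(g)!)^\epsilon$ reads $|a_g| \le (\length(g)!)^{\epsilon/2^m}\,\norm{a}_{m+1,0,e}$ for every $m \ge 0$, together with the fact established inside the proof of Proposition~\ref{proposition:GroupAlgebra} that the function $\lexp_\delta(z) = \sum_{g \in G}\frac{z^{\length(g)}}{(\length(g)!)^\delta}$ is entire — equivalently, that the factorial $(\length(g)!)^\delta$ beats the at most exponential growth $(2N)^{\length(g)}$ of the spheres in $G$ — for every $\delta > 0$. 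Recall that $\complete{\mathcal{A}_\epsilon}$ exists by Lemma~\ref{lemma:MatchingPairGroupAlgebra} together with Proposition~\ref{proposition:GroupAlgebra}, that in the group basis an element $a = \sum_{g\in G} a_g \basis{e}_g$ corresponds to the function $g \mapsto \frac{a_g}{(\length(g)!)^\epsilon}$ on $G$, that $\mathbb{C}[G]$ is dense in $\complete{\mathcal{A}_\epsilon}$ (unconditional Schauder basis), and that $\basis{e}_e$ is the algebra unit since $\mathsf{c}_\epsilon(e) = 1$.

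For the first part I would take $m = 1$ in Lemma~\ref{lemma:GrowthCoefficientsGroupCase}, so $|a_g| \le (\length(g)!)^{\epsilon/2}\,\norm{a}_{2,0,e}$ for $a \in \complete{\mathcal{A}_\epsilon}$, and then estimate
\[
\sum_{g \in G} \frac{|a_g|}{(\length(g)!)^\epsilon}
\le
\norm{a}_{2,0,e} \sum_{g \in G} \frac{1}{(\length(g)!)^{\epsilon/2}}
=
\lexp_{\epsilon/2}(1)\,\norm{a}_{2,0,e}
< \infty.
\]
Thus $a \mapsto \big(a_g/(\length(g)!)^\epsilon\big)_{g\in G}$ is a continuous, injective linear map $\complete{\mathcal{A}_\epsilon} \to \ell^1(G)$; since on the dense subalgebra $\mathbb{C}[G]$ it coincides with the standard embedding into the convolution algebra $\ell^1(G)$ and since $\star$ is (jointly) continuous by Lemma~\ref{lemma:AlmostSeminorms}, it is an algebra homomorphism, proving \emph{(i)}.

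For the second part, let $\chi$ be a nontrivial complex-valued group character, i.e.\ a homomorphism $\chi\colon G \to \mathbb{C}^\times$. Set $M = \max_{1 \le i \le N}\max\{|\chi(g_i)|, |\chi(g_i)|^{-1}\} \ge 1$; since every $g$ with $\length(g) = \ell$ is a product of $\ell$ generators and inverses, $|\chi(g)| \le M^{\length(g)}$. The character induced on $\mathbb{C}[G]$ is $\chi(a) = \sum_{g} \frac{a_g}{(\length(g)!)^\epsilon}\chi(g)$ (a finite sum), and the estimate above gives
\[
|\chi(a)|
\le
\norm{a}_{2,0,e} \sum_{g \in G} \frac{M^{\length(g)}}{(\length(g)!)^{\epsilon/2}}
=
\lexp_{\epsilon/2}(M)\,\norm{a}_{2,0,e}
< \infty,
\]
because $\lexp_{\epsilon/2}$ is entire. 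Hence $\chi$ is $\norm{\argument}_{2,0,e}$-continuous on $\mathbb{C}[G]$ and extends to a continuous linear functional $\hat\chi$ on $\complete{\mathcal{A}_\epsilon}$. Multiplicativity passes to the completion: for $a, b \in \complete{\mathcal{A}_\epsilon}$ choose $a_n, b_n \in \mathbb{C}[G]$ with $a_n \to a$, $b_n \to b$; then, using continuity of $\star$, $\hat\chi(a\star b) = \lim_n \chi(a_n \star b_n) = \lim_n \chi(a_n)\chi(b_n) = \hat\chi(a)\hat\chi(b)$, and $\hat\chi(\basis{e}_e) = \chi(e) = 1 \ne 0$, so $\hat\chi$ is a genuine algebra character. (If $G$ has no nontrivial complex character the assertion is vacuous.)

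I do not expect any serious obstacle here: both statements reduce to the fact that the single continuous seminorm $\norm{\argument}_{2,0,e}$ dominates the relevant series. This works precisely because Lemma~\ref{lemma:GrowthCoefficientsGroupCase} bounds $|a_g|$ by $(\length(g)!)^{\epsilon/2^m}$ with exponent \emph{strictly below} $\epsilon$ — indeed any $m \ge 1$ would do — leaving a surplus factor $(\length(g)!)^{-\epsilon(1 - 2^{-m})}$ which, by entirety of $\lexp$, remains summable even after the exponential weighting $M^{\length(g)}$. The one point that merits a sentence is that algebra-multiplicativity survives the completion, and this is immediate from the continuity of $\star$ established in Lemma~\ref{lemma:AlmostSeminorms}.
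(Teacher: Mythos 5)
Your proof is correct and takes essentially the same route as the paper's: both rest on Lemma~\ref{lemma:GrowthCoefficientsGroupCase} to bound $|a_g|$ by $(\length(g)!)^{\epsilon/2^m}\norm{a}_{m+1,0,e}$, and both then sum against $(\length(g)!)^{-\epsilon}$ (respectively against $|\chi(g)|(\length(g)!)^{-\epsilon}$) using the entirety of $\lexp_\delta$ established in Proposition~\ref{proposition:GroupAlgebra}. You are slightly more careful than the paper in two places — taking $M$ to dominate $|\chi(g_i)|^{\pm 1}$ rather than just $|\chi(g_i)|$ (necessary because word length counts inverses), and spelling out that the embedding into $\ell^1(G)$ and the extended $\hat\chi$ are actually algebra homomorphisms via density and continuity — but these are clarifications, not departures.
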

\begin{proof}
    Let $a \in \complete{\mathcal{A}_\epsilon}$. Then
    Lemma~\ref{lemma:GrowthCoefficientsGroupCase} yields
    \[
    \norm{a}_{\ell^1(G)}
    =
    \sum_{g \in G}
    \frac{|a_g|}{(\length(g)!)^\epsilon}
    \le
    \norm{a}_{m+1, 0, e}
    \sum_{g \in G}
    \frac{\sqrt[2^m]{(\length(g)!)^\epsilon}}
    {(\length(g)!)^\epsilon}
    =
    c_m \norm{a}_{m+1, 0, e}
    \]
    with a finite $c_m > 0$ for $m \ge 1$.  This shows the first
    part. For the second, we note that for a character $\chi$ we have
    the estimate $|\chi(g)| \le R^{\length(g)}$ where $R$ is the
    maximum of $|\chi|$ on the generators. But then
    \[
    |\chi(a)|
    \le
    \sum_{g \in G} \frac{|a_g|}{(\length(g)!)^\epsilon} |\chi(g)|
    \le
    \norm{a}_{m+1, 0, e}
    \sum_{g \in G}
    \frac{(\length(g)!)^{\frac{\epsilon}{2^m}} R^{\length(g)}}
    {(\length(g))!}
    =
    c_{m, R} \norm{a}_{m+1, 0, e}
    \]
    with a finite constant $c_{m, R} > 0$ for $m \ge 1$. This shows
    the continuity of $\chi$.
\end{proof}
The image of $\complete{\mathcal{A}_\epsilon}$ in $\ell^1(G)$ is a
dense subalgebra which is different from $\ell^1(G)$ since we have the
nontrivial growth conditions \eqref{eq:GrowthLimitGroupCase} not
necessary for an arbitrary element of $\ell^1(G)$. Hence the Fréchet
topology of $\complete{\mathcal{A}_\epsilon}$ is strictly finer than
the Banach topology of $\ell^1(G)$.
\begin{remark}
    \label{remark:GroupAlgebra}%
    As our primary interest is a different one, we leave now the group
    algebra case with several open questions: first, one would like to
    know how sensitive the topology of $\mathcal{A}_\epsilon$ depends
    on the choice of the generators as well as on the parameter
    $\epsilon$. At the present state it seems to be rather difficult
    to approach this question. In particular, it would be nice to see
    if one can prove a similar statement as in the Laurent series
    case: the completion consists of those formal series with
    sub-factorial growth with respect to the length
    functional. Second, in view of
    Lemma~\ref{lemma:InvolutionAntipodeContinuous}, it would be
    interesting to find a reasonable topology on $\mathcal{A}_\epsilon
    \tensor \mathcal{A}_\epsilon $ such that also the canonical
    coproduct determined by $\Delta(g) = g \tensor g$ is continuous,
    leading to a locally convex Hopf algebra structure.
\end{remark}

%
%

\subsection{The Wick star product on $\mathbb{C}^n$}
\label{subsec:WickStarProductFlatCase}

Recall that the Wick star product on $\mathbb{C}^n$ is explicitly
given by
\begin{equation}
    \label{eq:WickStarProduct}
    f \starwick g
    =
    \sum_{N = 0}^\infty \frac{(2\hbar)^{|N|}}{N!}
    \frac{\partial^{|N|} f}{\partial z^N}
    \frac{\partial^{|N|} g}{\partial \cc{z}^N},
\end{equation}
where $z^1, \ldots, z^n$ are the usual complex coordinates on
$\mathbb{C}^n$ and we make use of the standard multiindex notation: we
use a multiindex $N = (N_1, \ldots, N_n) \in \mathbb{N}_0^n$ and set
$N! = N_1! \cdots N_n!$ as well as $|N| = N_1 + \cdots + N_n$. Sums,
min and max, and partial derivatives and powers of $z$ and $\cc{z}$
for multiindices are defined componentwise as usual.

Usually, the Wick star product is treated as a \emph{formal} star
product with $\hbar$ being a formal parameter and $f, g \in
C^\infty(\mathbb{C}^n)[[\hbar]]$, see
e.g. \cite[Sect.~5.2.4]{waldmann:2007a} or
\cite{bordemann.waldmann:1997a} for a detailed exposition. However, it
is clear from the definition that the Wick star product makes perfect
sense for polynomials $f$ and $g$ and $\hbar$ an arbitrary complex
number. For physical reasons, we choose $\hbar > 0$. Then
$\mathbb{C}[z, \cc{z}]$ endowed with the Wick star product is just the
usual Weyl algebra since we have the canonical commutation relations
$[z^k, \cc{z}^\ell]_{\starwick} = 2\hbar \delta_{k\ell}$. In fact,
using the complex conjugation as $^*$-involution we get the more
familiar Hermitian generators $q^k = \frac{1}{2}(z^k + \cc{z}^k)$ and
$p_k = \frac{1}{2\I}(z^k - \cc{z}^k)$.

The monomials in $z$ and $\cc{z}$ constitute a vector space basis of
all the polynomials which we shall use now to implement our general
construction. In fact, we use the rescaled monomials
\begin{equation}
    \label{eq:eIJWick}
    \basis{e}_{I, J} = \frac{z^I \cc{z}^J}{I!J!(2\hbar)^{|I| + |J|}}.
\end{equation}

We will not carry out the details of the computation but the procedure
is clear: one has to compute the structure constants of $\starwick$
with respect to this basis and set up the recursion for the seminorms
$\norm{\argument}_{m, \ell, (I, J)}$ as defined in
Section~\ref{subsec:FirstVersion}. One can check that the seminorms
are indeed all finite on polynomials and hence we get a locally convex
algebra structure on $\mathbb{C}[z, \cc{z}]$ which we shall denote by
$\mathcal{A}_{\scriptscriptstyle{\mathrm{Wick}}}$.

Moreover, we can require that an evaluation functional $\delta_p$ for
some $p \in \mathbb{C}^n$ is continuous and hence implement the second
version of our general construction as in
Section~\ref{subsec:SecondVersion}. Again, one shows that this works
for any given $p$ and yields a locally convex algebra structure on
$\mathbb{C}[z, \cc{z}]$ which we shall denote by $\mathcal{A}_p$.

Finally, in \cite{beiser.roemer.waldmann:2007a} a more ad-hoc
recursive construction of a locally convex topology for the Weyl
algebra $(\mathbb{C}[z, \cc{z}], \starwick)$ was given, depending also
on the choice of a point $p$. We denote the resulting locally convex
algebra by $\mathcal{A}_{\hbar, p}$. In fact, this work was the
starting point for our more general construction. Even though the
precise recursion is not the same as the one from our general
construction, one can show the following result
\cite[Sect.~4.5]{beiser:2011a}:
\begin{theorem}
    \label{theorem:AllWickStuffTheSame}%
    For all $p \in \mathbb{C}^n$ the locally convex algebras
    $\mathcal{A}_{\scriptscriptstyle{\mathrm{Wick}}}$, $\mathcal{A}_p$
    and $\mathcal{A}_{h, p}$ are the same. The completion yields a
    Fréchet algebra with unconditional Schauder basis given by the
    $\basis{e}_{I, J}$.
\end{theorem}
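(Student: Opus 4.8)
\noindent
The three topologies on $\mathbb{C}[z,\cc{z}]$ are related a priori: $\mathcal{A}_p$ carries the $\Omega$-nice topology obtained by adjoining $\delta_p$ to all the coefficient functionals, and hence is finer than the first-version topology of $\mathcal{A}_{\scriptscriptstyle{\mathrm{Wick}}}$; the relation to the ad hoc algebra $\mathcal{A}_{\hbar, p}$ of \cite{beiser.roemer.waldmann:2007a} still has to be established by hand. The plan is to (i) compute the structure constants of $\starwick$ in the rescaled basis $\basis{e}_{I,J}$ explicitly, (ii) identify the completion $\complete{\mathcal{A}}_{\scriptscriptstyle{\mathrm{Wick}}}$ concretely as a Köthe space of real-analytic functions with absolute Schauder basis $\{\basis{e}_{I,J}\}$, (iii) show that the additional $\delta_p$-seminorms are redundant, so that $\mathcal{A}_{\scriptscriptstyle{\mathrm{Wick}}} = \mathcal{A}_p$ as locally convex spaces, and (iv) match this with the recursion of \cite{beiser.roemer.waldmann:2007a}, upgrading a one-sided comparison of seminorms to equality by means of the open mapping theorem.

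For step (i) one inserts \eqref{eq:eIJWick} into \eqref{eq:WickStarProduct}: the product $z^I \cc{z}^J \starwick z^{I'} \cc{z}^{J'}$ is a \emph{finite} sum, over multiindices $N \le \min(I, J')$, of monomials $z^{I + I' - N} \cc{z}^{J + J' - N}$ with explicit combinatorial prefactors involving $N!$, $(I-N)!$, $(J'-N)!$ and a power of $2\hbar$. Re-expressing this in the basis $\basis{e}_{K,L}$ yields a closed formula for $C^{(K,L)}_{(I,J),(I',J')}$, from which the constants \eqref{eq:CSeriesConverge} are finite and explicitly bounded. Running the recursion of Definition~\ref{definition:TheSeminormStuff}, one then shows by induction on $m$ that if the coefficients $a_{I,J}$ of $a$ have sub-factorial growth in $|I| + |J|$ then so do the quantities $h_{m, \ell, (I, J)}(a)$, with constants controlled by a single earlier seminorm. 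In particular all $\norm{\argument}_{m, \ell, (I, J)}$ are finite on polynomials, hence $\mathcal{A}_{\mathrm{nice}} = \mathcal{A}$ and Theorem~\ref{theorem:AniceCompleted} applies. Passing to equivalent $\sup$-type seminorms, analogously to the $\sup$-seminorms \eqref{eq:SupSubfactorialPoly} for $\mathbb{C}[z]$, exhibits $\complete{\mathcal{A}}_{\scriptscriptstyle{\mathrm{Wick}}}$ as a concrete Köthe space; its Köthe matrix then yields strong nuclearity as in Appendix~\ref{sec:SubfactorialGrowth}, and this description coincides with the one obtained in \cite[Thm.~3.9 and Thm.~3.10]{beiser.roemer.waldmann:2007a}.

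For step (iii) note that the weights $\omega_{K,L} = \basis{e}_{K,L}(p)$ of $\delta_p$ decay super-exponentially in $|K| + |L|$ because of the factorial denominators in \eqref{eq:eIJWick}. Since the growth estimate from step (ii) bounds $h_{m, \ell, (K, L)}(a)$ against one fixed power of a first-version seminorm, uniformly in $(K, L)$ up to sub-factorial factors, the series $\sum_{K, L} |\omega_{K,L}| \, h_{m, \ell, (K, L)}(a)$ converges and is dominated by finitely many of the $\norm{\argument}_{m, \ell, (I, J)}$. Hence every $\norm{\argument}_{m, \ell, \delta_p}$ is already continuous on $\mathcal{A}_{\scriptscriptstyle{\mathrm{Wick}}}$, so the two topologies coincide; equivalently, $\mathcal{A}_p$ and $\mathcal{A}_{\scriptscriptstyle{\mathrm{Wick}}}$ have the same space of ``nice'' elements, their completions are two Fréchet topologies on it with continuous identity map, and equality follows from the open mapping theorem. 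For $\mathcal{A}_{\hbar, p}$ one compares the ad hoc recursion of \cite{beiser.roemer.waldmann:2007a} with the present one, bounding each of its seminorms by finitely many of ours; since the completions share the same underlying Köthe space, the open mapping theorem again forces the topologies to agree. Product continuity of the completion is the third part of Lemma~\ref{lemma:AlmostSeminorms}, carried over to the completion, and the unconditional Schauder basis property is the corresponding part of Theorem~\ref{theorem:AniceCompleted}.

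The main obstacle is the bookkeeping of the $2^m$-fold recursion against the combinatorially involved but explicit structure constants: one needs the sub-factorial growth bound to survive all iterations with \emph{uniform} constants, which is simultaneously what yields $\mathcal{A}_{\mathrm{nice}} = \mathcal{A}$ and what makes the $\delta_p$-seminorms redundant. A second, more technical point is matching the recursion scheme of \cite{beiser.roemer.waldmann:2007a}, which is not literally the one of Definition~\ref{definition:TheSeminormStuff}, with the general construction; this is precisely the content of \cite[Sect.~4.5]{beiser:2011a}, and one can either reproduce that comparison in detail or bypass it by identifying both completions with the same concrete Köthe space and invoking the open mapping theorem.
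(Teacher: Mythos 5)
The paper does not actually prove Theorem~\ref{theorem:AllWickStuffTheSame}; it defers entirely to \cite[Sect.~4.5]{beiser:2011a}, and the discussion that follows the statement (leading to \eqref{eq:TaylorWickNextEstimate} and Theorem~\ref{theorem:WickAlgebraSubFac}) \emph{uses} the equivalence rather than establishes it. So there is no in-paper proof to compare with directly; your proposal is a blind reconstruction, and as an outline it is consistent with the framework. You correctly locate the crux, which is worth stating more explicitly: unlike the polynomial example of Section~\ref{subsec:Polynomials}, where the first-version recursion is a \emph{finite} sum at each step and therefore yields only the Cartesian product topology $\mathbb{C}[[z]]$ (so that $\delta_p$ is genuinely new information), here the degree-lowering contractions $\partial_z\cdot\partial_{\cc{z}}$ in $\starwick$ make the row sums $C^{(K,L)}_{(I,J),\boldsymbol{\cdot}}$ nonzero for infinitely many $(I,J)$, with factorial decay. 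The first-version recursion is thus a genuinely infinite sum whose convergence already encodes sub-factorial growth, which is the structural reason the $\delta_p$-seminorms can be redundant at all. The one step you invoke without deriving it is the uniform bound of $h_{m,\ell,(K,L)}(a)$, up to sub-factorial factors in $(K,L)$, by a \emph{single} first-version seminorm: this is not a consequence of step~(ii) but is itself the main technical induction against the explicit structure constants (in the spirit of Lemma~\ref{lemma:GrowthhmlSubfactorialDisc} for the disk, or of the $\mathcal{A}_{\hbar,p}$-estimate \eqref{eq:TaylorWickNextEstimate} quoted from \cite{beiser.roemer.waldmann:2007a}), and it is exactly what makes both step~(iii) and the open-mapping upgrade work. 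Your fallback of identifying all three completions with the same concrete K\"othe space and then invoking the open mapping theorem is sound, but note that it still requires establishing the K\"othe identification for the first version independently, which is the same computation in a different order.
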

In \cite{beiser.roemer.waldmann:2007a} many additional properties of
the resulting Fréchet algebra have been studied. In particular, the
completion contains also the exponentials of $z$ and $\cc{z}$ and
hence the generators of the $C^*$-algebraic version of the Weyl
algebra. Moreover, the Heisenberg group acts on
$\complete{\mathcal{A}}_{\scriptscriptstyle{\mathrm{Wick}}}$ by inner
$^*$-automorphisms. Finally, every evaluation functional is a
continuous positive functional and the corresponding GNS
representation gives the usual representation on the Bargmann-Fock
space.
\begin{remark}[Weyl algebra]
    \label{remark:WeylAlgebra}%
    The Weyl algebra has of course been studied by many people for
    many different aspects, e.g.\ in the work of Cuntz
    \cite{cuntz:2005a} the finest locally convex topology on the Weyl
    algebra was used to define and study its bivariant
    $K$-theory. Note however that this topology is of course very far
    from being Fréchet.
\end{remark}

We can even determine the Fréchet space
$\complete{\mathcal{A}}_{\scriptscriptstyle{\mathrm{Wick}}}$
explicitly. Let $a \in
\complete{\mathcal{A}}_{\scriptscriptstyle{\mathrm{Wick}}}$ be given
as Taylor series
\begin{equation}
    \label{eq:TaylorInWickAlgebra}
    a
    = \sum_{I, J} a_{I, J} \basis{e}_{I, J}
    = \sum_{I, J}
    \frac{a_{I, J}}{I!J!(2\hbar)^{|I| + |J|}} z^I \cc{z}^J.
\end{equation}
In the notation of \cite{beiser.roemer.waldmann:2007a} we have an
estimate of the Taylor coefficient $a_{I, J}$ in terms of the zeroth
seminorms of the form
\begin{equation}
    \label{eq:EstimateTaylorWickErste}
    |a_{I, J}| \le  \frac{1}{\sqrt{(2\hbar)^{|I| + |J|}}}
    \norm{a}_{0, 0, I, J}^{0, \hbar},
\end{equation}
see \cite[Def.~3.1]{beiser.roemer.waldmann:2007a}. From that recursive
definition of the quantities $h_{m, \ell, I, J}^{0, \hbar}$ one
immediately sees that we have an estimate of the form $h^{0,
  \hbar}_{m, \ell, I, J}(a) \ge h^{0, \hbar}_{m-1, \ell/2, I, J}(a)^2$
for even $\ell$ and $h^{0, \hbar}_{m, \ell, I, J} \ge h^{0,
  \hbar}_{m-1, \ell/2, J, I}(a)^2$ for odd $\ell$. Thus, taking $\ell
= 0$ for simplicity, we arrive at the estimate $h^{0, \hbar}_{0, 0, I,
  J}(a) \le \sqrt[2^m]{h^{0, \hbar}_{m, 0, I, J}(a)}$ resulting in the
estimate
\begin{equation}
    \label{eq:TaylorWickNextEstimate}
    |a_{I, J}|
    \le \sqrt[2^{m+1}]{h^{0, \hbar}_{m, 0, I, J}(a)}
    = \norm{a}^{0, \hbar}_{m, 0, I, J}
    \le
    \sqrt[2^{m+3}]{I!} \sqrt[2^{m+2}]{J!}
    \norm{a}^{0, \hbar}_{m+2, 1, 0, 0},
\end{equation}
where for the last we used \cite[Prop.~3.3,
6.)]{beiser.roemer.waldmann:2007a}. Thus the Taylor coefficients have
sub-factorial growth with respect to $|I| + |J|$ or $\max(|I|, |J|)$,
respectively. Conversely, and more simple, one can use the explicit
recursive definition of the seminorms to see that sub-factorial growth
with respect to $|I| + |J|$ is also sufficient for a formal Taylor
series to belong to the completion. This is done analogously to
\cite[Thm.~3.6, 4.)]{beiser.roemer.waldmann:2007a}, where an
exponential growth was shown to be sufficient. More details on the
algebra $\complete{\mathcal{A}}_{\scriptscriptstyle{\mathrm{Wick}}}$
can be found in \cite[Sect.~4.5]{beiser:2011a}. We summarize this as
follows:
\begin{theorem}
    \label{theorem:WickAlgebraSubFac}%
    The completion
    $\complete{\mathcal{A}}_{\scriptscriptstyle{\mathrm{Wick}}}$ is a
    strongly nuclear Fréchet algebra with absolute Schauder basis
    given by the Taylor monomials $\{\basis{e}_{I, J}\}_{I, J \in
      \mathbb{N}_0^n}$ and it consists of real-analytic functions with
    Taylor coefficients having sub-factorial growth with respect to
    $|I| + |J|$.  The underlying Fréchet space is isomorphic to a
    Köthe space of sub-factorial growth.
\end{theorem}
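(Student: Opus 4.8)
The plan is to identify $\complete{\mathcal{A}}_{\scriptscriptstyle{\mathrm{Wick}}}$, as a Fréchet space, with an explicit Köthe space and then to read off every assertion of the theorem from the general facts about such spaces collected in Appendix~\ref{sec:SubfactorialGrowth}. Let $X$ be the space of formal Taylor series $a = \sum_{I,J} a_{I,J}\basis{e}_{I,J}$ whose coefficients have sub-factorial growth in $|I|+|J|$, equipped with the seminorms $\norm{a}_\epsilon = \sup_{I,J} |a_{I,J}|\big/\big((|I|+|J|)!\big)^\epsilon$ for $0 < \epsilon < 1$; taking $\epsilon = \frac{1}{j}$ with $j \geq 2$ gives a countable defining system, so $X$ is by construction a Köthe space with matrix $b_j(I,J) = \big((|I|+|J|)!\big)^{-1/j}$ over the countable index set $\mathbb{N}_0^n \times \mathbb{N}_0^n$, hence a Fréchet space, and it is a Köthe space of sub-factorial growth in the sense of Appendix~\ref{sec:SubfactorialGrowth} (the number of pairs $(I,J)$ with $|I|+|J| = d$ being polynomial in $d$).

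The set-theoretic equality $\complete{\mathcal{A}}_{\scriptscriptstyle{\mathrm{Wick}}} = X$ has in fact already been established in the discussion preceding the theorem: ``$\subseteq$'' is the estimate \eqref{eq:TaylorWickNextEstimate}, and ``$\supseteq$'' is the sufficiency of sub-factorial growth obtained, as for exponential growth in \cite[Thm.~3.6]{beiser.roemer.waldmann:2007a}, by induction over the recursion \eqref{eq:HRecursion} using that the structure constants of $\starwick$ in the basis \eqref{eq:eIJWick} are tame enough to preserve sub-factorial growth once the exponent $\epsilon$ is allowed to increase at each step. What I would add is the topological statement. Reading \eqref{eq:TaylorWickNextEstimate} once more shows that, for each $j$, the seminorm $\norm{\argument}_{1/j}$ is dominated by some Wick seminorm (choose $m$ with $2^{-(m+2)} < \frac{1}{j}$), so the identity map $\complete{\mathcal{A}}_{\scriptscriptstyle{\mathrm{Wick}}} \to X$ is continuous; being a continuous linear bijection of Fréchet spaces, it is a topological isomorphism by the open mapping theorem --- exactly the argument used in Proposition~\ref{proposition:PolynomialsSecondVersionZwei} and Remark~\ref{remark:SubFacorms}. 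Thus the Wick topology is also the one generated by the seminorms $\norm{\argument}_\epsilon$.

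All remaining properties now follow from $X$. By the Grothendieck-Pietsch criterion recalled in Appendix~\ref{sec:SubfactorialGrowth}, $X$ is strongly nuclear: for every $j$ and every $k > j$ the ratio $b_j/b_k = \big((|I|+|J|)!\big)^{1/k - 1/j}$ decays faster than any power of $|I|+|J|$, which outweighs the polynomial multiplicity of each degree with room to spare, so even the stronger summability condition for strong nuclearity holds. Since $\{\basis{e}_{I,J}\}$ is a Schauder basis of $\complete{\mathcal{A}}_{\scriptscriptstyle{\mathrm{Wick}}} \cong X$ by Theorem~\ref{theorem:AllWickStuffTheSame}, the Dynin-Mityagin theorem on nuclear Fréchet spaces upgrades it to an absolute Schauder basis. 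Finally, for $a \in X$ and any fixed $0 < \epsilon < 1$ the multinomial identity $\sum_{|I|+|J|=d} \frac{|z^I||\cc{z}^J|}{I!J!} = \frac{\sigma^d}{d!}$ with $\sigma = 2\sum_k |z_k|$ gives $|a(z)| \le c_\epsilon \sum_{d=0}^\infty \frac{(\sigma/(2\hbar))^d}{(d!)^{1-\epsilon}}$, which converges for every $z \in \mathbb{C}^n$; hence each element of $X$ represents an entire, in particular real-analytic, function, so the abstract completion genuinely consists of functions.

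The step I expect to be the real obstacle is the reverse inclusion ``$\supseteq$'' of the set-equality: one must check that sub-factorial growth of all $|a_{I,J}|$ forces finiteness of every recursively defined $h_{m,\ell,I,J}(a)$, which needs an honest bound on the structure constants of $\starwick$ in the basis \eqref{eq:eIJWick} together with careful bookkeeping of how the exponent degrades through \eqref{eq:HRecursion}. This is essentially the same combinatorial estimate as in \cite{beiser.roemer.waldmann:2007a}; granting it, the rest is soft functional analysis (the open mapping theorem) and citations to the appendix.
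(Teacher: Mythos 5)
Your proposal is correct and follows essentially the same path as the paper: the paper also takes \eqref{eq:TaylorWickNextEstimate} for the necessity of sub-factorial growth, invokes the analogy with \cite[Thm.~3.6, 4.)]{beiser.roemer.waldmann:2007a} for the sufficiency, and then draws the Köthe-space consequences from Appendix~\ref{sec:SubfactorialGrowth} via the open-mapping argument already used in Proposition~\ref{proposition:PolynomialsSecondVersionZwei} and Remark~\ref{remark:SubFacorms}. The few points where you go beyond what the paper writes down — making the open-mapping step explicit, citing Dynin--Mityagin where the paper just uses the Köthe-space fact that the unit vectors form an absolute basis, and spelling out the pointwise convergence giving entire functions — are clarifications rather than a different route, and the one genuine gap you flag (the converse inclusion) is also left to the cited reference in the paper.
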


%
%

\section{The star product on the Poincaré disk}
\label{sec:StarProductOnThePoincareDisc}

We come now to our main example, the star product on the Poincaré disk
and on its higher dimensional cousins.  The star product we are
interested in appeared in the literature in the work
\cite{moreno.ortega-navarro:1983b}, see also
\cite[Sect.~9]{cahen.gutt.rawnsley:1994a} for an explicit formula for
a particular class of functions. We shall rely on the construction
from \cite{bordemann.brischle.emmrich.waldmann:1996a,
  bordemann.brischle.emmrich.waldmann:1996b}, which works both for
$\mathbb{CP}^n$ and the disk. It is inspired by classical phase space
reduction and yields very explicit and manageable formulas.

%
%

\subsection{The geometry of the disk}
\label{subsec:GeometryDisc}

In this subsection we recall some basic features of the geometry of
the disk $\mathbb{D}_n$ to fix our notation.

On $\mathbb{C}^{n+1}$ we consider a modified symplectic structure by
means of a different pseudo-Kähler metric: let $\metric = \diag(-1,
+1, \ldots, +1)$ and consider the function
\begin{equation}
    \label{eq:y}
    y(z)
    = - \metric(z, \cc{z})
    = |z^0|^2 - |z^1|^2 - \cdots - |z^n|^2,
\end{equation}
where $z^0, \ldots, z^n$ are the usual holomorphic coordinate
functions on $\mathbb{C}^{n+1}$. Everything we will do will take place
in the open cone
\begin{equation}
    \label{eq:TheCone}
    C^+_{n+1} = \{z \in \mathbb{C}^{n+1} \; | \; y(z) > 0\}.
\end{equation}
The disk is now obtained as a phase space reduction with respect to
the $\mathrm{U}(1)$-action induced by the Hamiltonian flow of
$y$. Indeed, the flow of $y$ with respect to the Poisson bracket
corresponding to $\metric$ is easily shown to be the
$\mathrm{U}(1)$-action $z \mapsto \E^{\I \varphi} z$ where the point
$z$ becomes simply multiplied by the phase. Clearly, $C^+_{n+1}$ is
invariant under this action. More precisely, every level set of
constant $y$ is invariant. Thus first restricting to such a ``mass
shell'', say for $y = 1$, and then dividing by the remaining
$\mathrm{U}(1)$-symmetry gives again a symplectic manifold, the disk
$\mathbb{D}_n$.  To handle the $\mathrm{U}(1)$-symmetry, we shall use
the holomorphic and anti-holomorphic Euler operators $E = \sum_{i=0}^n
z^i \frac{\partial}{\partial z^i}$ and $\cc{E}$. Then a function is
$\mathrm{U}(1)$-invariant iff $E f = \cc{E} f$.

The disk $\mathbb{D}_n$ can alternatively be viewed as an open subset
of $\mathbb{CP}^n$ where we just take those complex lines in
$\mathbb{C}^{n+1}$ which lie in $C^+_{n+1}$, i.e.\
\begin{equation}
    \label{eq:DiscInCPn}
    \mathbb{D}_n =
    \{
    [z] \in \mathbb{CP}^n \; | \; z \in C^+_{n+1}
    \}.
\end{equation}
This makes $\mathbb{D}_n$ a complex manifold. Since for every $[z] \in
\mathbb{D}_n$ we have $z^0 \ne 0$, the (local) holomorphic coordinates
$v^i = \frac{z^i}{z^0}$ of $\mathbb{CP}^n$ are \emph{globally} defined
holomorphic coordinates on $\mathbb{D}_n$. This shows that
$\mathbb{D}_n$ is diffeomorphic to $\mathbb{R}^{2n}$ and holomorphic
to the open unit ball in $\mathbb{C}^n$ determined by the inequality
$|v^1|^2 + \cdots + |v^n|^2 < 1$ as $y(z) > 0$. The particular case of
$n = 1$ gives the usual unit disk $\mathbb{D}_1 = \{v \in \mathbb{C}
\; | \; |v| < 1\}$.  Moreover, it can be shown that the reduced
symplectic form and the complex structure fit together nicely and
yield a (now actually positive definite) Kähler manifold of constant
negative holomorphic curvature. The canonical projection
\begin{equation}
    \label{eq:ProjectionOntoDisc}
    \pi\colon C_{n+1}^+ \longrightarrow \mathbb{D}_n
\end{equation}
is a holomorphic surjection. It allows to pull back functions $u \in
C^\infty(\mathbb{D}_n)$ to $C^\infty(C_{n+1}^+)$ which will be called
\emph{homogeneous} functions on $C_{n+1}^+$. Indeed, the functions of
the form $f = \pi^* u$ with $u \in C^\infty(\mathbb{D}_n)$ are
precisely the functions invariant under the multiplicative action of
$\mathbb{C} \setminus \{0\}$. With other words, they are
$\mathrm{U}(1)$-invariant and constant in direction $y$. Equivalently,
they satisfy $E f = 0 = \cc{E} f$.

Finally, $\mathbb{D}$ is the noncompact dual of $\mathbb{CP}^n$ as a
Hermitian symmetric space: the function $y$ is clearly invariant under
the usual linear group action of $\mathrm{SU}(1, n)$ on
$\mathbb{C}^{n+1}$. Hence it also acts on the complex lines in
$C^+_{n+1}$, even in a transitive manner. The stabilizer of the
standard point $(1, 0, \ldots, 0)$ in the level set $y = 1$ is now
$\mathrm{S}(\mathrm{U}(1) \times \mathrm{U}(n))$ and hence the last
version of the disk is
\begin{equation}
    \label{eq:DiscAsSymmetricSpace}
    \mathbb{D}_n
    =
    \frac{\mathrm{SU}(1, n)}
    {\mathrm{S}(\mathrm{U}(1) \times \mathrm{U}(n))}.
\end{equation}
The symmetry respects both the complex structure and the reduced
symplectic structure. The symmetry is even Hamiltonian with an
equivariant momentum map given by
\begin{equation}
    \label{eq:MomentumMapsuEinsn}
    J_\xi([z])
    =
    \frac{\I}{2}
    \frac{
      \sum_{i, j, k = 0}^n
      \cc{z}^i \metric^{ij}\xi_{jk} z^k
    }{y(z)}
    \quad
    \textrm{for}
    \quad
    \xi \in \mathfrak{su}(1, n),    
\end{equation}
where we use $\mathfrak{su}(1, n) \subseteq M_{n+1}(\mathbb{C})$. For
the case $n = 1$ the group $\mathrm{SU}(1, 1) =
\mathrm{SL}_2(\mathrm{R})$ acts by Möbius transformations on
$\mathbb{D}_1$. Moreover, in this particular case
\eqref{eq:DiscAsSymmetricSpace} simplifies to $\mathbb{D}_1 =
\mathrm{SU}(1, 1)\big/ \mathrm{U}(1)$ and the level set $y = 1$ can be
identified with $\mathrm{SU}(1, 1)$: for a given point $p$ in the mass
shell $y = 1$ there is precisely one group element $U \in
\mathrm{SU}(1, 1)$ which moves the point $(1, 0)$ to $p$. In higher
dimensions this uniqueness fails. These statements are of course all
very much standard and can be found e.g.\ in textbooks like
\cite[Chap.~VIII and Chap.~IX]{helgason:2000a}.

%
%

\subsection{Construction of the star product}
\label{subsec:ConstructionStarProduct}

In a next step we recall the basic features of the star product on the
disk and review its explicit construction from
\cite{bordemann.brischle.emmrich.waldmann:1996a,
  bordemann.brischle.emmrich.waldmann:1996b}.

Matching to the modified symplectic structure on $\mathbb{C}^{n+1}$ we
consider the corresponding Wick star product
\begin{equation}
    \label{eq:WickOnPseudoCnPlusEins}
    f \starwick g
    =
    \sum_{r=0}^\infty \frac{(2 \lambda)^r}{r!}
    \sum_{\substack{
        i_1, \ldots, i_r = 0 \\
        j_1, \ldots, j_r = 0}
    }^n
    \metric^{i_1 j_1} \cdots \metric^{i_r j_r}
    \frac{\partial^r f}{\partial z^{i_1} \cdots z^{i_r}}
    \frac{\partial^r g}{\partial \cc{z}^{j_1} \cdots \cc{z}^{j_r}},
\end{equation}
again first in the formal power series setting, i.e.\ for $f, g \in
C^\infty(\mathbb{C}^{n+1})[[\lambda]]$. We note that the polynomials
in $z$ and $\cc{z}$ form a subalgebra where we can substitute
$\lambda$ by $\hbar$ without difficulties.
\begin{remark}
    \label{remark:WickInvariant}%
    The Wick star product is clearly invariant under $\mathrm{SU}(1,
    n)$: for $U \in \mathrm{SU}(1, n)$ we have for all $f, g \in
    C^\infty(\mathbb{C}^{n+1})[[\lambda]]$
    \begin{equation}
        \label{eq:WickInvariant}
        (U^*f) \starwick (U^*g) = 
        U^*(f \starwick g).
    \end{equation}
    It is even strongly invariant with respect to the usual momentum
    map, i.e.\ for
    \begin{equation}
        \label{eq:SUEinsnMomentumMap}
        J_\xi(z)
        =
        \frac{\I}{2} \sum_{i, j, k} \cc{z}^k \metric^{ki} \xi^{ij} z^j
    \end{equation}
    with $\xi \in \mathfrak{su}(1, n)$ we have
    \begin{equation}
        \label{eq:JxiMomentumMap}
        J_\xi \starwick f - f \starwick J_\xi
        =
        \I\lambda \{J_\xi, f\}
        \quad
        \textrm{and}
        \quad
        J_\xi \starwick J_\eta - J_\eta \starwick J_\xi
        =
        \I\lambda J_{[\xi, \eta]},
    \end{equation}
    where $f \in C^\infty(\mathbb{C}^{n+1})[[\lambda]]$ and $\xi, \eta
    \in \mathfrak{su}(1, n)$. It is this invariance which we will need
    later.
\end{remark}

We call a function $f \in C^\infty(C_{n+1}^+)$ \emph{radial} if it
depends only on the ``radius'' $y$, i.e.\ if there is a smooth
function $\varrho:(0, \infty) \longrightarrow \mathbb{C}$ with $f =
\varrho \circ y$. It will be useful to consider the following global
vector field $\frac{\partial}{\partial y} = \frac{1}{2y}(E + \cc{E})$
on $C_{n+1}^+$. On radial functions it is indeed just differentiation
with respect to $y$. With this vector field we have the following
formula
\begin{equation}
    \label{eq:WickRadialInvariant}
    R \starwick F
    =
    \sum_{r=0}^\infty \frac{(2\lambda)^r}{r!} y^r
    \frac{\partial^r R}{\partial y^r}
    \frac{\partial^r F}{\partial y^r}
    =
    F \starwick R,
\end{equation}
for the Wick star product of a radial function $R$ and a
$\mathrm{U}(1)$-invariant function $F$. In particular, the radial
functions constitute a commutative subalgebra.

In a next step one makes use of an equivalence transformation $S$ from
the Wick star product to a new one, denoted by $\tildewick$, i.e.\
\begin{equation}
    \label{eq:TildeWick}
    f \tildewick g = S (S^{-1} f \starwick S^{-1} g).
\end{equation}
The aim is that for $\tildewick$ the product of a radial function and
an arbitrary $\mathrm{U}(1)$-invariant function becomes not only
commutative but \emph{pointwise}.  As usual the equivalence $S$ is a
formal series of differential operators starting with the identity. We
require that it contains only powers of $\frac{\partial}{\partial
  y}$. Hence it is completely determined by its symbol
$\hat{S}(\alpha, y) = \E^{-\alpha y} S \E^{\alpha y}$.  The required
property gives a functional equation for the symbol which can be
solved in an essentially unique way. The resulting formal differential
operator has the following properties which we recall from
\cite{bordemann.brischle.emmrich.waldmann:1996a}. For all $r \in
\mathbb{N}$ we have
\begin{equation}
    \label{eq:SPowersOfy}
    S 1 = 1,
    \quad
    S y^r
    =
    y^r \prod_{k=0}^{r-1} \left(1 + k \frac{2\lambda}{y}\right),
    \quad
    \textrm{and}
    \quad
    S y^{-r}
    =
    y^{-r} \prod_{k=1}^{r} \left(1 - k\frac{2\lambda}{y}\right)^{-1}.
\end{equation}
We see that we can substitute $\lambda$ by $\hbar$. Moreover, we can
rewrite the action of $S$ on $y^r$ for $r \ge 0$ by means of the
\emph{Pochhammer symbols} (or raising factorials) in the following way
\begin{equation}
    \label{eq:Syr}
    S y^r = (2\hbar)^r \Pochhammer{\frac{y}{2\hbar}}_r,
\end{equation}
where as usual $(\alpha)_r = \alpha (\alpha + 1) \cdots (\alpha +
(r-1))$.

Since we are interested in evaluating this on $y = 1$ later on, we
have to take care of the zeros of the Pochhammer symbols. We will call
$\hbar$ an \emph{allowed} value if
\begin{equation}
    \label{eq:Allowedhbar}
    2\hbar \in 
    \mathbb{C} \setminus 
    \left\{0, -1, -\frac{1}{2}, -\frac{1}{3}, \ldots\right\}.
\end{equation}
Equivalently, $\Pochhammer{\frac{1}{2\hbar}}_\alpha \ne 0$ for all
$\alpha \in \mathbb{N}_0$.  In
\cite{bordemann.brischle.emmrich.waldmann:1996a,
  bordemann.brischle.emmrich.waldmann:1996b} the case of
$\mathbb{CP}^n$ was considered. Here the critical values of $\hbar$
were $+ \frac{1}{2r}$ on the positive half axis. This is the reason
why we will deal with the disk $\mathbb{D}_n$ instead: all positive
$\hbar$ will be allowed.

According to \cite{bordemann.brischle.emmrich.waldmann:1996a,
  bordemann.brischle.emmrich.waldmann:1996b}, the explicit formula for
$\tildewick$ is now
\begin{equation}
    \label{eq:TildeWickExplicit}
    F \tildewick G
    =
    \sum_{r=0}^\infty \frac{1}{r!} \left(\frac{2\lambda}{y}\right)^r
    \prod_{k=1}^{r}\left(1 - k \frac{2\lambda}{y}\right)^{-1}
    y^r
    \sum_{\substack{i_1, \ldots, i_r = 0 \\ j_1, \ldots, j_r = 0}}^n
    \metric^{i_1 j_1} \cdots \metric^{i_r j_r}
    \frac{\partial^r F}
    {\partial z^{i_1} \cdots \partial z^{i_r}}
    \frac{\partial^r G}
    {\partial \cc{z}^{j_1} \cdots \partial \cc{z}^{j_r}}
\end{equation}
for $\mathrm{U}(1)$-invariant functions $F, G \in
C^\infty(C_{n+1}^+)[[\lambda]]$. Moreover, since the radial functions
behave like scalars by the very design of $\tildewick$, one can show
that the ideal generated with respect to $y-1$ \emph{coincides} with
the classical vanishing ideal of $y = 1$. This allows to set $y = 1$
in \eqref{eq:TildeWickExplicit} for $\mathrm{U}(1)$-invariant
functions to obtain an explicit star product $\stardisk$ on the disk
$\mathbb{D}_n$.
\begin{remark}
    \label{remark:SymmetryOfTildewick}%
    The equivalence transformation $S$ is clearly $\mathrm{SU}(1,
    n)$-invariant. Thus the new star product $\tildewick$ is again a
    $\mathrm{SU}(1, n)$-invariant star product. Moreover, since $Sy =
    y$ and $\frac{1}{y}J_\xi$ is homogeneous we conclude that $SJ_\xi
    = J_\xi$ for all $\xi \in \mathfrak{su}(1, n)$. It follows easily
    that $\tildewick$ is again strongly invariant under
    $\mathrm{SU}(1, n)$ with the same (quantum) momentum map $J_\xi$
    as already $\starwick$, see also
    \cite[Lem.~5]{bordemann.brischle.emmrich.waldmann:1996b} for
    details.
\end{remark}

%
%

\subsection{The basis and the structure constants}
\label{subsec:BasisStructureConstants}

In a next step we shall consider a subalgebra of the formal star
product where we have convergence for $\hbar$ in the set
\eqref{eq:Allowedhbar} and a countable vector space basis.  Let $P, Q
\in \mathbb{N}_0^n$ be multiindices of length $n$ and let $\alpha \in
\mathbb{N}_0$ with $|P|, |Q| \le \alpha$. Then we consider the
functions
\begin{equation}
    \label{eq:ePQalpha}
    \basis{e}_{P, Q, \alpha}(z)
    =
    (z^0)^{\alpha - |P|} z^P (\cc{z}^0)^{\alpha - |Q|} \cc{z}^Q,
\end{equation}
where we use the multiindex notation $z^P = (z^1)^{P_1} \cdots
(z^n)^{P_n}$ as usual. Clearly, $\basis{e}_{P, Q, \alpha}$ is
$\mathrm{U}(1)$-invariant. In fact, such a monomial is homogeneous of
degree $\alpha$ for both Euler operators $E$ and $\cc{E}$, i.e.\ we
have $E\basis{e}_{P, Q, \alpha} = \alpha \basis{e}_{P, Q, \alpha} =
\cc{E} \basis{e}_{P, Q, \alpha}$. Note that we view the monomials as
functions on $C_{n+1}^+$ from now on. Note also that every
$\mathrm{U}(1)$-invariant polynomial is a linear combination of these
monomials which explains our choice. In the following, we call $(P, Q,
\alpha)$ an \emph{index triple} if $P$ and $Q$ are multiindices with
$|P|, |Q| \le \alpha$.
\begin{lemma}
    \label{lemma:eWicke}%
    For all index triples $(P, Q, \alpha)$ and $(R, S, \beta)$ we have
    \begin{equation}
        \label{eq:eWicke}
        \begin{split}
            &\basis{e}_{P, Q, \alpha}
            \starwick
            \basis{e}_{R, S, \beta}
            = 
            \sum_{k=0}^{\min(\alpha - |P|, \beta - |S|)}
            \sum_{K = 0}^{\min(P, S)}
            \frac{(2\hbar)^{k+|K|}}{k! K!}
            (-1)^k \\
            &\qquad\qquad\qquad
            \frac{(\alpha - |P|)!}{(\alpha - |P| - k)!}
            \frac{P!}{(P - K)!}
            \frac{(\beta - |S|)!}{(\beta - |S| - k)!}
            \frac{S!}{(S - K)!}
            \basis{e}_{
              P + R - K, Q + S - K, \alpha + \beta - k - |K|
            }, 
        \end{split}
    \end{equation}
    where the minimum of multiindices is taken componentwise as usual.
\end{lemma}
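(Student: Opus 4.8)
The statement is a direct computation using the explicit formula \eqref{eq:WickOnPseudoCnPlusEins} for $\starwick$ applied to the two monomials $\basis{e}_{P, Q, \alpha}$ and $\basis{e}_{R, S, \beta}$. The plan is to apply the differential operators term by term and collect the result into the monomial basis $\{\basis{e}_{\argument, \argument, \argument}\}$.

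First I would recall that in \eqref{eq:WickOnPseudoCnPlusEins} the metric $\metric^{ij} = \diag(-1, +1, \ldots, +1)$ is diagonal, so the bidifferential operator at order $r$ factors as $\sum_{r} \frac{(2\hbar)^r}{r!} \sum_{i_1, \ldots, i_r} \metric^{i_1 i_1} \cdots \metric^{i_r i_r} \frac{\partial^r}{\partial z^{i_1} \cdots \partial z^{i_r}} \otimes \frac{\partial^r}{\partial \cc z^{i_1} \cdots \partial \cc z^{i_r}}$. Grouping equal indices, a term with the index $0$ occurring $k$ times and the index $\ell \in \{1, \ldots, n\}$ occurring $K_\ell$ times contributes a sign $(-1)^k$ (one factor $\metric^{00} = -1$ per occurrence of $0$, and $\metric^{\ell\ell} = +1$ otherwise) together with a multinomial coefficient $\frac{r!}{k! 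K!}$ counting the orderings; summing over $r$ turns $\starwick$ into $\sum_{k, K} \frac{(2\hbar)^{k + |K|}}{k! K!} (-1)^k \left(\frac{\partial^{k + |K|}}{\partial (z^0)^k \partial z^K}\right) \otimes \left(\frac{\partial^{k + |K|}}{\partial (\cc z^0)^k \partial \cc z^K}\right)$, where $K = (K_1, \ldots, K_n)$ is a multiindex and $\partial z^K = \partial (z^1)^{K_1} \cdots \partial (z^n)^{K_n}$.

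Next I would differentiate. Since $\basis{e}_{P, Q, \alpha} = (z^0)^{\alpha - |P|} z^P \cdot \overline{(z^0)^{\alpha - |Q|} z^Q}$ and the holomorphic derivatives hit only the holomorphic factor, applying $\frac{\partial^{k + |K|}}{\partial (z^0)^k \partial z^K}$ to $(z^0)^{\alpha - |P|} z^P$ gives a nonzero result only if $k \le \alpha - |P|$ and $K \le P$ componentwise, and then yields $\frac{(\alpha - |P|)!}{(\alpha - |P| - k)!} \cdot \frac{P!}{(P - K)!} \cdot (z^0)^{\alpha - |P| - k} z^{P - K}$. Likewise the antiholomorphic derivative $\frac{\partial^{k + |K|}}{\partial (\cc z^0)^k \partial \cc z^K}$ applied to $(\cc z^0)^{\beta - |S|} \cc z^S$ requires $k \le \beta - |S|$ and $K \le S$, producing $\frac{(\beta - |S|)!}{(\beta - |S| - k)!} \cdot \frac{S!}{(S - K)!} \cdot (\cc z^0)^{\beta - |S| - k} \cc z^{S - K}$. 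Multiplying the surviving holomorphic factor of the first argument by the untouched holomorphic factor of the second, $(z^0)^{\alpha - |P| - k} z^{P - K} \cdot (z^0)^{\beta - |S|} z^R = (z^0)^{(\alpha + \beta - k - |K|) - |P + R - K|} z^{P + R - K}$ after noting $|P + R - K| = |P| + |R| - |K|$; the analogous identity holds for the antiholomorphic part. Thus the product of the two surviving factors is exactly $\basis{e}_{P + R - K,\, Q + S - K,\, \alpha + \beta - k - |K|}$, and summing over the allowed ranges $0 \le k \le \min(\alpha - |P|, \beta - |S|)$ and $0 \le K \le \min(P, S)$ gives \eqref{eq:eWicke}.

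The only point requiring a little care — and the place I would be most careful about bookkeeping — is the combinatorial reorganization of \eqref{eq:WickOnPseudoCnPlusEins} into a sum over the pair $(k, K)$: one must check that the multinomial factor $\frac{r!}{k! K!}$ produced by counting index orderings cancels the $\frac{1}{r!}$ and combines the $(2\hbar)^r$ into $(2\hbar)^{k + |K|}$ with $r = k + |K|$, and that the sign is precisely $(-1)^k$ because only the index $0$ carries the negative metric entry. Everything else is the elementary rule for differentiating monomials, and the index triple $(P + R - K, Q + S - K, \alpha + \beta - k - |K|)$ is automatically admissible since $|P + R - K| = |P| + |R| - |K| \le \alpha + |R| - |K| \le \alpha + \beta - |S| - |K| \le \alpha + \beta - k - |K|$ using $|R| \le \beta$ and (in the last step) $k \le \beta - |S|$, and symmetrically for $Q + S - K$.
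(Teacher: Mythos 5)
Your computation is correct and is exactly the ``exercise in differentiation'' the paper alludes to without spelling out: diagonalize $\metric$, regroup the order-$r$ sum by how many times the index $0$ (resp.\ each index $\ell \ge 1$) occurs, use the multinomial coefficient $\frac{r!}{k!K!}$ to cancel the $\frac{1}{r!}$, pick up the sign $(-1)^k$ from $\metric^{00} = -1$, and then apply the derivative rule for monomials. This is the natural proof and there is no competing approach in the paper.

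One small slip in your closing admissibility remark: the chain
\[
|P| + |R| - |K| \le \alpha + |R| - |K| \le \alpha + \beta - |S| - |K| \le \alpha + \beta - k - |K|
\]
is not correct, since the middle step would require $|R| \le \beta - |S|$ and the last step would require $k \le |S|$, neither of which holds in general (e.g.\ $P = 0$, $\alpha = 0$, $R = S = (\beta, 0, \ldots, 0)$ already defeats it). The admissibility is still true, but via the cleaner estimate: from $k \le \alpha - |P|$ one has $|P| + k \le \alpha$, and $|R| \le \beta$, so
\[
|P + R - K| = |P| + |R| - |K| \le (\alpha - k) + \beta - |K| = \alpha + \beta - k - |K|,
\]
and symmetrically $|Q + S - K| \le \alpha + \beta - k - |K|$ using $k \le \beta - |S|$ and $|Q| \le \alpha$. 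This is only a side observation — the formula in the lemma and your derivation of it are both correct.
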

\begin{proof}
    This is just an exercise in differentiation.
\end{proof}

We take now the monomials and apply the equivalence $S$ to them. A
priori, this will give a formal power series in $\lambda$ but from the
explicit formula we see that the expression will make sense for all
$\hbar \ne 0$. The following result is clear from \eqref{eq:Syr}:
\begin{lemma}
    \label{lemma:SofMonomial}%
    For all index triples $(P, Q, \alpha)$ and $\hbar \ne 0$ we have
    \begin{equation}
        \label{eq:SMonomial}
        S \basis{e}_{P, Q, \alpha}
        =
        (2 \hbar)^\alpha
        \Pochhammer{\frac{y}{2\hbar}}_\alpha
        \frac{\basis{e}_{P, Q, \alpha}}{y^\alpha}.
    \end{equation}
\end{lemma}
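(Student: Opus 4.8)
The plan is to reduce the claim to the already-established formula \eqref{eq:Syr} for the action of $S$ on powers of $y$ by splitting off the ``radial part'' of the monomial. Since $y > 0$ on $C_{n+1}^+$, the power $y^\alpha$ is a nowhere-vanishing smooth function there, so $h := \basis{e}_{P, Q, \alpha}/y^\alpha$ is a well-defined smooth function and $\basis{e}_{P, Q, \alpha} = y^\alpha h$. The first step is to check that $h$ is a homogeneous function in the sense of Subsection~\ref{subsec:GeometryDisc}, i.e.\ $E h = 0 = \cc{E} h$. This is immediate from $E \basis{e}_{P, Q, \alpha} = \alpha \basis{e}_{P, Q, \alpha} = \cc{E}\basis{e}_{P, Q, \alpha}$ together with $E y = y = \cc{E} y$ (so that $E y^\alpha = \alpha y^\alpha = \cc{E} y^\alpha$) and the Leibniz rule; equivalently, $h$ is invariant under the multiplicative $\mathbb{C} \setminus \{0\}$-action and hence a pullback from $\mathbb{D}_n$. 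In particular, using $\frac{\partial}{\partial y} = \frac{1}{2y}(E + \cc{E})$, we get $\frac{\partial h}{\partial y} = 0$ and therefore $\left(\frac{\partial}{\partial y}\right)^r h = 0$ for every $r \ge 1$.

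The heart of the argument, and the one point needing a little care, is the identity $S \basis{e}_{P, Q, \alpha} = (S y^\alpha)\, h$. Recall that by its very construction $S$ is a formal series of differential operators of the form $\sum_r s_r(y) \left(\frac{\partial}{\partial y}\right)^r$ with purely $y$-dependent (radial) coefficients $s_r$. Since $\frac{\partial}{\partial y}$ is a genuine vector field, it is a derivation, so the iterated Leibniz rule applies to all of its powers; combined with the vanishing $\left(\frac{\partial}{\partial y}\right)^j h = 0$ for $j \ge 1$ from the previous step, this forces every derivative occurring in $S(y^\alpha h)$ to fall on the $y^\alpha$ factor, all mixed terms dropping out. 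Hence $S(y^\alpha h) = (S y^\alpha)\, h$. The main obstacle is purely this bookkeeping observation that $S$ ``does not see'' the non-radial factor $h$; it rests only on the two facts that $\frac{\partial}{\partial y}$ is a vector field and that $h$ lies in its kernel.

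Finally, I would substitute \eqref{eq:Syr}, namely $S y^\alpha = (2\hbar)^\alpha \Pochhammer{\frac{y}{2\hbar}}_\alpha$, and recombine $h = \basis{e}_{P, Q, \alpha}/y^\alpha$ to obtain
\[
S \basis{e}_{P, Q, \alpha}
= (2\hbar)^\alpha \Pochhammer{\frac{y}{2\hbar}}_\alpha \frac{\basis{e}_{P, Q, \alpha}}{y^\alpha},
\]
which is the assertion; the case $\alpha = 0$ is just $S1 = 1$ from \eqref{eq:SPowersOfy}. One should also note, as remarked before the lemma, that although the identity is established as a formal power series in $\lambda$, the same explicit formula shows that both sides converge for every $\hbar \ne 0$, so the statement holds in the claimed non-formal sense.
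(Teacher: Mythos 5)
Your proof is correct and takes essentially the same route as the paper: split $\basis{e}_{P,Q,\alpha} = y^\alpha \cdot (\basis{e}_{P,Q,\alpha}/y^\alpha)$ into a radial factor and a homogeneous pullback factor, observe that $S$ does not see the latter, and apply \eqref{eq:Syr}. The only difference is that the paper compresses the middle step into the single remark that ``$S$ is the identity on pullbacks,'' whereas you unpack the underlying Leibniz bookkeeping (that $S$ is a series in $\frac{\partial}{\partial y}$ and $h$ lies in its kernel) that actually yields $S(y^\alpha h) = (Sy^\alpha)\,h$ -- a useful clarification but the same idea.
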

\begin{proof}
    Indeed, since $S$ is the identity on pullbacks $f = \pi^*u$ of
    functions on $\mathbb{D}_n$, we have by \eqref{eq:Syr}
    \[
    S \basis{e}_{P, Q, \alpha}
    =
    S \left(
        y^\alpha \frac{\basis{e}_{P, Q, \alpha}}{y^\alpha}
    \right)
     =
    S(y^\alpha) \frac{\basis{e}_{P, Q, \alpha}}{y^\alpha}
    =
    (2 \hbar)^\alpha
    \Pochhammer{\frac{y}{2\hbar}}_\alpha
    \frac{\basis{e}_{P, Q, \alpha}}{y^\alpha}.
    \]
\end{proof}

For $\hbar \ne 0$ the functions obtained from the monomials after
applying $S$ are still linearly independent:
\begin{lemma}
    \label{lemma:LinearIndependent}%
    Let $\hbar \ne 0$. Then the functions
    \begin{equation}
        \label{eq:LinearIndependent}
        \left\{
            \Pochhammer{\frac{y}{2\hbar}}_\alpha
            \frac{\basis{e}_{P, Q, \alpha}}{y^\alpha}.
        \right\}_{
          \alpha \in \mathbb{N}_0,
          P, Q  \in \mathbb{N}_0^n,
          \alpha \ge \max(|P|, |Q|)
        }
    \end{equation}
    are linearly independent in $C^\infty(C_{n+1}^+)$.
\end{lemma}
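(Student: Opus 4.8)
The plan is to reduce the claim to the obvious linear independence of the monomials $\basis{e}_{P, Q, \alpha}$ in the polynomial ring $\mathbb{C}[z^0, \ldots, z^n, \cc{z}^0, \ldots, \cc{z}^n]$, by first clearing the denominators $y^\alpha$ (which is legitimate since $y > 0$ on $C^+_{n+1}$) and then isolating a leading homogeneous component of the resulting polynomial identity.

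So suppose, for contradiction, that there is a finite relation $\sum_{(P, Q, \alpha)} c_{P, Q, \alpha} \Pochhammer{\frac{y}{2\hbar}}_\alpha \basis{e}_{P, Q, \alpha} \big/ y^\alpha = 0$ on $C^+_{n+1}$ in which all coefficients $c_{P, Q, \alpha}$ are non-zero, and let $N$ be the largest value of $\alpha$ occurring. Multiplying through by $y^N$ and using $\alpha \le N$, every summand becomes a genuine polynomial in the coordinates: $y^{N - \alpha} \Pochhammer{\frac{y}{2\hbar}}_\alpha$ is a polynomial in $y$ of degree exactly $N$ — this is where the hypothesis $\hbar \neq 0$ enters, since it makes the leading coefficient $(2\hbar)^{-\alpha}$ of $\Pochhammer{\frac{y}{2\hbar}}_\alpha$ non-zero — and $y = z^0 \cc{z}^0 - z^1 \cc{z}^1 - \cdots - z^n \cc{z}^n$ is polynomial. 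Since $C^+_{n+1}$ is a non-empty open subset of $\mathbb{C}^{n+1} \cong \mathbb{R}^{2n + 2}$, a polynomial vanishing on it vanishes identically, so $\sum_{(P, Q, \alpha)} c_{P, Q, \alpha}\, y^{N - \alpha} \Pochhammer{\frac{y}{2\hbar}}_\alpha\, \basis{e}_{P, Q, \alpha} = 0$ as an identity in $\mathbb{C}[z, \cc{z}]$.

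Next I would grade $\mathbb{C}[z, \cc{z}]$ by total degree. The monomial $\basis{e}_{P, Q, \alpha}$ is homogeneous of degree $2\alpha$, and $y^{N - \alpha} \Pochhammer{\frac{y}{2\hbar}}_\alpha$ has $y$-degree at most $N$, hence total degree at most $2N$, with degree-$2N$ part equal to $(2\hbar)^{-\alpha} y^N$. Consequently the homogeneous component of total degree $4N$ of the identity above receives contributions only from the triples with $\alpha = N$, and that component equals $(2\hbar)^{-N} y^N \sum_{(P, Q, N)} c_{P, Q, N} \basis{e}_{P, Q, N}$. As $\mathbb{C}[z, \cc{z}]$ is an integral domain and $y^N \neq 0$, this forces $\sum_{(P, Q, N)} c_{P, Q, N} \basis{e}_{P, Q, N} = 0$. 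But the $\basis{e}_{P, Q, \alpha}$ are pairwise distinct monomials — the exponents of $z^1, \ldots, z^n$ recover $P$, those of $\cc{z}^1, \ldots, \cc{z}^n$ recover $Q$, and the exponent of $z^0$ together with $|P|$ recovers $\alpha$ — hence linearly independent, so all $c_{P, Q, N}$ vanish, contradicting the maximality of $N$. Therefore no non-trivial relation exists.

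The argument is essentially bookkeeping; the one point that needs care is the third paragraph, namely checking that the leading term of $y^{N-\alpha}\Pochhammer{\frac{y}{2\hbar}}_\alpha$ really has degree $N$ and coefficient $(2\hbar)^{-\alpha}$ (where $\hbar\neq 0$ is used) and that the top total-degree component of the cleared relation sees only $\alpha = N$. If one prefers to avoid the grading step, the same polynomial identity can be handled by induction on $N$: restricting it to the line $z^1 = \cdots = z^n = 0$, which meets $C^+_{n+1}$, leaves only the terms with $P = Q = 0$, and since the polynomials $\Pochhammer{\frac{t}{2\hbar}}_\alpha$ in $t = z^0 \cc{z}^0$ have pairwise distinct degrees, all $c_{0, 0, \alpha}$ vanish; an analogous restriction in the remaining variables then disposes of the rest. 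I would carry out the grading version, as it is shorter.
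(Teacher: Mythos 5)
Your proof is correct. Conceptually it runs parallel to the paper's one-line sketch — both isolate the top value of $\alpha$ from the leading behaviour of the Pochhammer symbol and then fall back on the linear independence of the monomials $\basis{e}_{P, Q, \alpha}$ — but your implementation is purely algebraic where the paper's is analytic. The paper recovers $\alpha$ ``from the asymptotics'' as $y \to +\infty$, noting that $\Pochhammer{y/2\hbar}_\alpha$ grows like $(y/2\hbar)^\alpha$ while $\basis{e}_{P,Q,\alpha}/y^\alpha$ stays bounded along rays; this is left as a sketch. You instead clear denominators by $y^N$, invoke the vanishing of a polynomial on the open set $C^+_{n+1}$, and pick off the total-degree-$4N$ homogeneous component, where only $\alpha = N$ can contribute. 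Your degree bookkeeping is sound (for $\alpha < N$ the maximal total degree is $2N + 2\alpha < 4N$, so nothing leaks in), and cancelling $y^N$ in the integral domain $\mathbb{C}[z,\cc{z}]$ is legitimate. The payoff of your route is that the somewhat hand-wavy ``recover $\alpha$ from asymptotics'' step becomes a clean algebraic identity with no limits; the cost is a bit more setup (passing to polynomials, arguing vanishing on an open set). Both are valid; yours is the more rigorous write-up of what the paper is gesturing at.
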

\begin{proof}
    First it is clear that the monomials $\basis{e}_{P, Q, \alpha}$
    are linearly independent. Second, the Pochhammer symbol
    $\Pochhammer{\frac{y}{2\hbar}}_\alpha$ has leading term
    $\left(\frac{y}{2\hbar}\right)^\alpha$ for $y \longrightarrow
    +\infty$ while $\frac{\basis{e}_{P, Q, \alpha}}{y^\alpha}$ stays
    bounded. Thus we can recover $\alpha$ from the asymptotics of
    $\Pochhammer{\frac{y}{2\hbar}}_\alpha \frac{\basis{e}_{P, Q,
        \alpha}}{y^\alpha}$. From these two facts the statement
    follows easily.
\end{proof}
We anticipate here that evaluating at $y = 1$ will give additional
linear dependencies if $\hbar$ is \emph{not} an allowed value.

In view of the product formula it will be advantageous to rescale the
functions slightly. From a more physical point of view, we can make
them dimensionless in order to get dimensionless structure constants,
i.e.\ independent of $\hbar$. This gives the final definition: for an
index triple $(P, Q, \alpha)$ and $\hbar \ne 0$ we consider the
functions
\begin{equation}
    \label{eq:fPQalphaDef}
    \basis{f}_{P, Q, \alpha}
    =
    \frac{1}{(2\hbar)^\alpha}
    \frac{1}{P!(\alpha - |P|)!Q!(\alpha - |Q|)!}
    S \basis{e}_{P, Q, \alpha}
    =
    \frac{1}{P!(\alpha - |P|)!Q!(\alpha - |Q|)!}
    \Pochhammer{\frac{y}{2\hbar}}_\alpha
    \frac{\basis{e}_{P, Q, \alpha}}{y^\alpha}.
\end{equation}
They are still linearly independent and hence we consider their span
inside $C^\infty(C_{n+1}^+)$ which we will denote by
\begin{equation}
    \label{eq:Aoben}
    \mathcal{A}_\hbar(C_{n+1}^+)
    =
    \mathbb{C}\textrm{-}\spann
    \left\{
        \basis{f}_{P, Q, \alpha}
        \; \big| \;
        \alpha \in \mathbb{N}_0,
        P, Q  \in \mathbb{N}_0^n,
        \alpha \ge \max(|P|, |Q|)
    \right\},
\end{equation}
for which the $\basis{f}_{P, Q, \alpha}$ will be a countable vector
space basis. We write $a = \sum_{(P, Q, \alpha)} a_{P, Q, \alpha}
\basis{f}_{P, Q, \alpha}$ for $a \in \mathcal{A}_\hbar(C_{n+1}^+)$ as
usual.  In fact, $\mathcal{A}_\hbar(C_{n+1}^+)$ will be a subalgebra
with respect to $\tildewick$:
\begin{proposition}
    \label{proposition:StructureConstantsAbove}%
    Let $\hbar \ne 0$.
    \begin{propositionlist}
    \item \label{item:AlgebraForTildeWick}
        $\mathcal{A}_\hbar(C_{n+1}^+)$ is a unital subalgebra with
        respect to $\tildewick$.
    \item \label{item:AlgebraForRealHbar} For real $\hbar$ the algebra
        $\mathcal{A}_\hbar(C_{n+1}^+)$ is a $^*$-algebra with respect
        to the pointwise complex conjugation and one has
        \begin{equation}
            \label{eq:ccfPQalpha}
            \cc{\basis{f}_{P, Q, \alpha}}
            =
            \basis{f}_{Q, P, \alpha}.
        \end{equation}
    \item \label{item:StructureConstants} The structure constants of
        $\mathcal{A}_\hbar(C_{n+1}^+)$ with respect to the basis
        \eqref{eq:fPQalphaDef} are explicitly given by
        \begin{equation}
            \label{eq:StructureConstantsTildeWick}
            \begin{split}
                C_{(P, Q, \alpha), (R, S, \beta)}^{(I, J, \gamma)}
                &=
                \frac{
                  (-1)^{\alpha + \beta - \gamma  - |P| - |R| + |I|}
                  \epsilon(P, Q, \alpha, R, S, \beta, I, J, \gamma)
                }
                {
                  (\alpha + \beta - \gamma - |P| - |R| + |I|)!
                  (P + R - I)!
                } \\
                &\qquad\times\quad
                \binom{I}{R}
                \binom{J}{Q}
                \binom{\gamma - |I|}{\beta - |R|}
                \binom{\gamma - |J|}{\alpha - |Q|}
                \delta_{P + R - I, Q + S - J},
            \end{split}
        \end{equation}
        where the quantity
        \begin{equation}
            \epsilon(P, Q, \alpha, R, S, \beta, I, J, \gamma)
            =
            \sum_{k=0}^{\min(\alpha - |P|, \beta - |S|)}
            \sum_{K = 0}^{\min(P, S)}
            \delta_{I, P + R - K}
            \delta_{\gamma, \alpha + \beta - k - |K|}.
        \end{equation}
        takes only the values $0$ or $1$. In particular, the structure
        constant will be zero if one of the conditions
        \begin{equation}
            \label{eq:NecessaryInequalities}
            R \le I,
            \quad
            Q \le J,
            \quad
            \max(\alpha, \beta) \le \gamma \le \alpha + \beta
        \end{equation}
        is violated.
    \end{propositionlist}
\end{proposition}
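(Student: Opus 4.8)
The plan is to transport the whole computation through the equivalence transformation $S$ and then invoke Lemma~\ref{lemma:eWicke}. By \eqref{eq:TildeWick} the product $\tildewick$ is $f \tildewick g = S(S^{-1}f \starwick S^{-1}g)$, and \eqref{eq:fPQalphaDef} says exactly that $\basis{f}_{P, Q, \alpha}$ is the constant multiple $\frac{1}{(2\hbar)^\alpha P!(\alpha-|P|)!Q!(\alpha-|Q|)!} S\basis{e}_{P, Q, \alpha}$ of $S$ applied to a monomial; equivalently $S\basis{e}_{I, J, \gamma} = (2\hbar)^\gamma I!(\gamma-|I|)!J!(\gamma-|J|)!\,\basis{f}_{I, J, \gamma}$. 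Hence
\[
  \basis{f}_{P, Q, \alpha} \tildewick \basis{f}_{R, S, \beta}
  =
  S\!\left(
    \frac{\basis{e}_{P, Q, \alpha}}{(2\hbar)^\alpha P!(\alpha-|P|)!Q!(\alpha-|Q|)!}
    \starwick
    \frac{\basis{e}_{R, S, \beta}}{(2\hbar)^\beta R!(\beta-|R|)!S!(\beta-|S|)!}
  \right),
\]
and all the $S^{\pm1}$ here are applied to genuine polynomials, so the formal-series issue is vacuous. Expanding the inner $\starwick$-product with Lemma~\ref{lemma:eWicke} produces a finite sum of monomials $\basis{e}_{P+R-K, Q+S-K, \alpha+\beta-k-|K|}$, and applying $S$ to each term turns it back into a multiple of a basis vector $\basis{f}_{I, J, \gamma}$. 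This already establishes \refitem{item:AlgebraForTildeWick}: the product closes, and $\basis{f}_{0, 0, 0} = 1$ is the unit since $S1 = 1$ and $1$ is the $\starwick$-unit.

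For the explicit structure constants \eqref{eq:StructureConstantsTildeWick} I would change the summation variables from $(k, K)$ to the output triple $(I, J, \gamma)$ via $I = P + R - K$, $J = Q + S - K$, $\gamma = \alpha + \beta - k - |K|$. This inverts uniquely to $K = P + R - I = Q + S - J$ and $k = \alpha + \beta - \gamma - |K| = \alpha + \beta - \gamma - |P| - |R| + |I|$; the two expressions for $K$ must agree, which is the Kronecker factor $\delta_{P+R-I,\,Q+S-J}$, while the requirement that $(k, K)$ lie in the summation range $0 \le K \le \min(P, S)$, $0 \le k \le \min(\alpha-|P|, \beta-|S|)$ of Lemma~\ref{lemma:eWicke} is precisely the condition $\epsilon(P, Q, \alpha, R, S, \beta, I, J, \gamma) = 1$; in particular no genuine sum over $(k, K)$ survives. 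It remains to substitute these values into the coefficient of Lemma~\ref{lemma:eWicke}, multiply by the two input normalisations and by the factor $(2\hbar)^\gamma I!(\gamma-|I|)!J!(\gamma-|J|)!$ from the outer $S$, and simplify. The powers of $2\hbar$ collect to $(2\hbar)^{-\alpha-\beta+k+|K|+\gamma} = 1$, so the structure constants are dimensionless as claimed; and using $P - K = I - R$, $S - K = J - Q$, $\alpha - |P| - k = (\gamma-|I|) - (\beta-|R|)$ and $\beta - |S| - k = (\gamma-|J|) - (\alpha-|Q|)$, the remaining factorials reassemble into $\binom{I}{R}\binom{J}{Q}\binom{\gamma-|I|}{\beta-|R|}\binom{\gamma-|J|}{\alpha-|Q|}$ together with the prefactor $\frac{(-1)^{\alpha+\beta-\gamma-|P|-|R|+|I|}}{(\alpha+\beta-\gamma-|P|-|R|+|I|)!\,(P+R-I)!}$, which is \eqref{eq:StructureConstantsTildeWick}. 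The necessary inequalities \eqref{eq:NecessaryInequalities} then just read off when the binomials do not vanish: $R \le I$ and $Q \le J$ from $\binom{I}{R}$ and $\binom{J}{Q}$; then $\gamma \ge \beta$ from $\binom{\gamma-|I|}{\beta-|R|}$ (with $R \le I$), $\gamma \ge \alpha$ from $\binom{\gamma-|J|}{\alpha-|Q|}$ (with $Q \le J$), and $\gamma \le \alpha + \beta$ from nonnegativity of $k$.

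For \refitem{item:AlgebraForRealHbar}, the observation is that for real $\hbar$ both $\starwick$ and $S$ interact well with pointwise complex conjugation. Conjugating \eqref{eq:WickOnPseudoCnPlusEins} and using that $\metric$ is real gives $\cc{f \starwick g} = \cc{g} \starwick \cc{f}$, and by \eqref{eq:SPowersOfy} all coefficients of $S$ are real for real $\hbar$, so $S$ commutes with conjugation; hence $\cc{f \tildewick g} = \cc{g} \tildewick \cc{f}$. Since $y$ is real-valued and $2\hbar$ real, the factor $\Pochhammer{\frac{y}{2\hbar}}_\alpha / y^\alpha$ in \eqref{eq:fPQalphaDef} is real, $\cc{\basis{e}_{P, Q, \alpha}} = \basis{e}_{Q, P, \alpha}$, and the normalisation is symmetric in $P \leftrightarrow Q$, so $\cc{\basis{f}_{P, Q, \alpha}} = \basis{f}_{Q, P, \alpha}$, showing $\mathcal{A}_\hbar(C_{n+1}^+)$ is closed under conjugation.

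The conceptual content — transport along $S$ plus Lemma~\ref{lemma:eWicke} — is immediate; the genuine work is the bookkeeping in the middle paragraph, in particular keeping scalar factorials and componentwise multiindex factorials apart and checking that the reindexing leaves exactly one $(k, K)$-term, so that $\epsilon$ indeed only takes the values $0$ and $1$.
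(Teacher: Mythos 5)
Your proposal is correct and takes essentially the same route as the paper: transport through $S$, expand with Lemma~\ref{lemma:eWicke}, apply $S$ to each monomial, and collect factorials. Your presentation of the $(k,K)\mapsto(I,J,\gamma)$ reindexing as an injective change of variables is a marginally cleaner packaging of the paper's ``three $\delta$'s, two summations'' argument, and your reading of the necessary inequalities directly off the binomial coefficients (and of $\gamma\le\alpha+\beta$ from $k\ge0$) is slightly more explicit than the paper's, but the substance is identical.
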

\begin{proof}
    First we note that $\basis{f}_{0, 0, 0} = 1$. If $\hbar =
    \cc{\hbar}$ then $S$ commutes with the pointwise complex
    conjugation and hence $\tildewick$ is a Hermitian star product
    since $\starwick$ is obviously a Hermitian star product. To show
    that $\mathcal{A}_\hbar(C_{n+1}^+)$ is a subalgebra, we have to
    compute the product of two elements of the basis. Using
    \eqref{eq:fPQalphaDef} as well as Lemma~\ref{lemma:eWicke} we have
    \begin{align*}
        &\basis{f}_{P, Q, \alpha} \tildewick \basis{f}_{R, S, \beta} \\
        &=
        S\left(
            S^{-1} \basis{f}_{P, Q, \alpha}
            \starwick
            S^{-1} \basis{f}_{R, S, \beta}
        \right) \\
        &=
        S\left(
            \frac{1}{(2\hbar)^\alpha}
            \frac{1}{P!(\alpha - |P|)!Q!(\alpha - |Q|)!}
            \frac{1}{(2\hbar)^\beta}
            \frac{1}{R!(\beta - |R|)!S!(\beta - |S|)!}
            \basis{e}_{P, Q, \alpha}
            \starwick
            \basis{e}_{R, S, \beta}
        \right) \\
        &=
        \frac{1}{Q!(\alpha - |Q|)!}
        \frac{1}{R!(\beta - |R|)!}
        \sum_{k=0}^{\min(\alpha - |P|, \beta - |S|)}
        \sum_{K = 0}^{\min(P, S)}
        \frac{(2\hbar)^{k+|K| - \alpha - \beta}}{k! K!}
        (-1)^k \\
        &\qquad\times\quad
        \frac{1}{(\alpha - |P| - k)!}
        \frac{1}{(P - K)!}
        \frac{1}{(\beta - |S| - k)!}
        \frac{1}{(S - K)!}
        S \basis{e}_{P + R - K, Q + S - K, \alpha + \beta - k - |K|} \\
        &= 
        \sum_{k=0}^{\min(\alpha - |P|, \beta - |S|)}
        \sum_{K = 0}^{\min(P, S)}
        \frac{(-1)^k}{k!K!} \\
        &\qquad\times\quad
        \frac{
          (P + R - K)!
          (\alpha + \beta - k - |P| - |R|)!
          (Q + S - K)!
          (\alpha + \beta - k - |Q| - |S|)!
        }
        {
          (\alpha - |P| - k)!
          (P - K)!
          Q!
          (\alpha - |Q|)!
          R!
          (\beta - |R|)!
          (\beta - |S| - k)!
          (S - K)!
        } \\
        &
        \qquad\times\quad
        \basis{f}_{P + R - K, Q + S - K, \alpha + \beta - k - |K|} \\
        &=
        \sum_{k=0}^{\min(\alpha - |P|, \beta - |S|)}
        \sum_{K = 0}^{\min(P, S)}
        \frac{(-1)^k}{k!K!}
        \basis{f}_{P + R - K, Q + S - K, \alpha + \beta - k - |K|} \\
        &\qquad\times\quad
        \binom{P + R - K}{R}
        \binom{Q + S - K}{Q}
        \binom{\alpha + \beta - k - |P| - |R|}{\beta - |R|}
        \binom{\alpha + \beta - k - |Q| - |S|}{\alpha - |Q|}.
    \end{align*}
   This shows that $\mathcal{A}_\hbar(C_{n+1}^+)$ is indeed a
   subalgebra, completing the proof of the first part. Moreover, we
   can read off the structure constants of $\tildewick$ from this
   formula and get
   \begin{align*}
       &C_{(P, Q, \alpha), (R, S, \beta)}^{(I, J, \gamma)} \\
       &=
       \sum_{k=0}^{\min(\alpha - |P|, \beta - |S|)}
       \sum_{K = 0}^{\min(P, S)}
       \frac{(-1)^k}{k!K!}
       \delta_{I, P + R - K}
       \delta_{J, Q + S - K}
       \delta_{\gamma, \alpha + \beta - k - |K|} \\
       &\qquad\times\quad
       \binom{P + R - K}{R}
       \binom{Q + S - K}{Q}
       \binom{\alpha + \beta - k - |P| - |R|}{\beta - |R|}
       \binom{\alpha + \beta - k - |Q| - |S|}{\alpha - |Q|} \\
       &=
       \frac{(-1)^{\alpha + \beta - \gamma  - |P| - |R| + |I|}}
       {(\alpha + \beta - \gamma - |P| - |R| + |I|)! (P + R - I)!}
       \binom{I}{R}
       \binom{J}{Q}
       \binom{\gamma - |I|}{\beta - |R|}
       \binom{\gamma - |J|}{\alpha - |Q|}
       \delta_{P + R - I, Q + S - J} \\
       &\qquad\times\quad
       \sum_{k=0}^{\min(\alpha - |P|, \beta - |S|)}
       \sum_{K = 0}^{\min(P, S)}
       \delta_{I, P + R - K}
       \delta_{\gamma, \alpha + \beta - k - |K|}.
   \end{align*}
   We have three $\delta$'s and only two summations. Hence one
   $\delta$ will survive. Implementing the conditions from the
   $\delta$'s we have a remaining summation over $k$ and $K$ with two
   of the $\delta$'s. This sum will either give $0$ or $1$ depending
   on whether in the allowed ranges we find the correct $k$ and $K$ or
   not.  If the summation would be unrestricted we would get $1$
   always.  Let us abbreviate this last sum by
   \[
   \epsilon(P, Q, \alpha, R, S, \beta, I, J, \gamma)
   =
   \sum_{k=0}^{\min(\alpha - |P|, \beta - |S|)}
   \sum_{K = 0}^{\min(P, S)}
   \delta_{I, P + R - K}
   \delta_{\gamma, \alpha + \beta - k - |K|}.
   \]
   In view of the $\delta$ in front of the sum we can replace
   $\delta_{I, P + R - K}$ also by $\delta_{J, Q + S - K}$. From this
   we get the two necessary conditions $Q \le J$ and $R \le I$ in
   order to get a nonzero structure constant. The condition on
   $\gamma$ is more delicate to evaluate. The reason is that the
   minimum $\min(P, S)$ might have strictly smaller length than the
   minimum of the lengths $\min(|P|, |S|)$. Nevertheless, we get an
   estimate that the structure constant is certainly zero unless
   \[
   \max(\alpha, \beta) \le \gamma \le \alpha + \beta.
   \tag{$*$}
   \]
   Note however, that there will be situations where the structure
   constants still vanish, even though ($*$) is satisfied.
\end{proof}

The important feature is that for a given index triple $(I, J,
\gamma)$ we only have \emph{finitely} many $(P, Q, \alpha)$ and $(R,
S, \beta)$ such that the corresponding structure constant is
nonzero. This follows from the estimate $\gamma \ge \max(\alpha,
\beta)$ and the conditions $\gamma \ge |I|, |J|$.


Note also that the rescaling resulted in structure constants
\emph{not} depending on the deformation parameter $\hbar$
anymore. Thus the recursion for the seminorms will not contain
$\hbar$. However, for a given function $a \in
\mathcal{A}_\hbar(C_{n+1}^+)$ the seminorms \emph{do} depend on
$\hbar$ and so does the topology, as the basis vectors $\basis{f}_{P,
  Q, \alpha}$ do. We will have to come back to this $\hbar$-dependence
in Subsection~\ref{subsec:DependenceOnhbar}.

The Wick star product $\starwick$ and also $\tildewick$ are Hermitian
star products if one treats the deformation parameter as a real
quantity. Since we have absorbed $\hbar$ into the definition of the
basis, we get a symmetry of the structure constants originating from
the complex conjugation being a $^*$-involution for real $\hbar$.
\begin{proposition}
    \label{proposition:SymmetryOfStructureConstants}%
    For all index triples the structure constants $C^{(I, J,
      \gamma)}_{(P, Q, \alpha), (R, S, \beta)}$ are real and one has
    \begin{equation}
        \label{eq:SymmetryStructureConstants}
        C^{(I, J, \gamma)}_{(S, R, \beta), (Q, P, \alpha)}
        =
        C^{(J, I, \gamma)}_{(P, Q, \alpha), (R, S, \beta)}.
    \end{equation}
\end{proposition}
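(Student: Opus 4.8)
The plan is to deduce the symmetry from the Hermiticity of $\tildewick$, after first disposing of the reality statement. The reality is immediate: inspecting the explicit formula \eqref{eq:StructureConstantsTildeWick}, every ingredient — the sign, the factorials, the four binomial coefficients, the Kronecker delta, and the combinatorial quantity $\epsilon$, which takes only the values $0$ and $1$ — is a rational number, so $C^{(I, J, \gamma)}_{(P, Q, \alpha), (R, S, \beta)} \in \mathbb{R}$. No $\hbar$ enters here at all, so this is unconditional.

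For the symmetry identity I would argue as follows. Fix any real $\hbar \neq 0$. By Proposition~\ref{proposition:StructureConstantsAbove}\refitem{item:AlgebraForRealHbar} the product $\tildewick$ is Hermitian on $\mathcal{A}_\hbar(C_{n+1}^+)$, i.e.\ $\cc{a \tildewick b} = \cc{b} \tildewick \cc{a}$, and $\cc{\basis{f}_{P, Q, \alpha}} = \basis{f}_{Q, P, \alpha}$ by \eqref{eq:ccfPQalpha}. Applying complex conjugation to $\basis{f}_{P, Q, \alpha} \tildewick \basis{f}_{R, S, \beta} = \sum_{(I, J, \gamma)} C^{(I, J, \gamma)}_{(P, Q, \alpha), (R, S, \beta)} \basis{f}_{I, J, \gamma}$ and using the reality just established gives, on the one hand,
\begin{equation*}
  \cc{\basis{f}_{P, Q, \alpha} \tildewick \basis{f}_{R, S, \beta}}
  = \sum_{(I, J, \gamma)} C^{(I, J, \gamma)}_{(P, Q, \alpha), (R, S, \beta)} \basis{f}_{J, I, \gamma}
  = \sum_{(I, J, \gamma)} C^{(J, I, \gamma)}_{(P, Q, \alpha), (R, S, \beta)} \basis{f}_{I, J, \gamma},
\end{equation*}
where in the last step the summation indices $I$ and $J$ have been interchanged. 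On the other hand, by Hermiticity the same expression equals $\basis{f}_{S, R, \beta} \tildewick \basis{f}_{Q, P, \alpha} = \sum_{(I, J, \gamma)} C^{(I, J, \gamma)}_{(S, R, \beta), (Q, P, \alpha)} \basis{f}_{I, J, \gamma}$. Comparing coefficients — the $\basis{f}_{I, J, \gamma}$ being linearly independent — yields exactly \eqref{eq:SymmetryStructureConstants}. Since the structure constants do not depend on $\hbar$ (as emphasised after Proposition~\ref{proposition:StructureConstantsAbove}), the identity obtained for one real $\hbar$ holds for all $\hbar \neq 0$.

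Alternatively, one can verify \eqref{eq:SymmetryStructureConstants} by a direct substitution in the explicit formula: replacing the index data $(P, Q, \alpha, R, S, \beta, I, J, \gamma)$ by $(S, R, \beta, Q, P, \alpha, J, I, \gamma)$ in \eqref{eq:StructureConstantsTildeWick}, one checks that the four binomial coefficients permute among themselves, the Kronecker delta $\delta_{P + R - I, Q + S - J}$ is manifestly invariant, and — using the equality $|P| + |R| - |I| = |Q| + |S| - |J|$ forced by that delta — the sign and the two remaining factorials are unchanged as well. The only point that requires a moment's care is the invariance of $\epsilon$: here one uses that, in the presence of $\delta_{P + R - I, Q + S - J}$, the condition $\delta_{I, P + R - K}$ may be rewritten as $\delta_{J, Q + S - K}$, so that $\epsilon = 1$ simply records the solvability of $K = P + R - I$, $k = \alpha + \beta - \gamma - |K|$ within the ranges $0 \le K \le \min(P, S)$ and $0 \le k \le \min(\alpha - |P|, \beta - |S|)$, a set of conditions symmetric under the substitution. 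I expect this $\epsilon$-factor to be the only genuinely delicate point; the Hermiticity route sidesteps it entirely, which is why I would present that as the main argument and treat the combinatorial check as a remark.
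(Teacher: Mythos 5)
Your proof is correct and takes essentially the same route the paper has in mind: the sentence preceding the proposition already attributes the symmetry to the complex conjugation being a $^*$-involution for real $\hbar$, which is precisely your Hermiticity argument (with linear independence of the $\basis{f}_{I, J, \gamma}$ from Lemma~\ref{lemma:LinearIndependent} and $\hbar$-independence of the $C$'s supplying the remaining steps). The separate reality check from the explicit rational formula \eqref{eq:StructureConstantsTildeWick} and the combinatorial cross-check are both sound and only add rigour to what the paper leaves implicit.
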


We conclude this subsection by the following observation. The
restrictions \eqref{eq:NecessaryInequalities} on $\gamma$ lead to a
filtration of the algebra $\mathcal{A}_\hbar(C_{n+1}^+)$. We set
\begin{equation}
    \label{eq:AgammaDef}
    \mathcal{A}^\gamma_\hbar(C_{n+1}^+)
    =
    \bigoplus_{I, J} \mathbb{C} \basis{f}_{I, J, \gamma}
    \quad
    \textrm{and}
    \quad
    \mathcal{A}^{(\gamma)}_\hbar(C_{n+1}^+)
    =
    \bigoplus_{\alpha = 0}^\gamma \mathcal{A}^\gamma_\hbar(C_{n+1}^+),
\end{equation}
which are \emph{finite-dimensional} subspaces with
$\mathcal{A}^{(\gamma)}_\hbar(C_{n+1}^+) \subseteq
\mathcal{A}^{(\gamma + 1)}_\hbar(C_{n+1}^+)$. The following is then an
immediate consequence of
Proposition~\ref{proposition:StructureConstantsAbove}.
\begin{corollary}
    \label{corollary:FilteredAlgebra}%
    Let $\hbar \ne 0$.  The algebra $\mathcal{A}_\hbar(C_{n+1}^+)$ is
    filtered via the subspaces
    $\mathcal{A}^{(\gamma)}_\hbar(C_{n+1}^+)$, i.e.\
    \begin{equation}
        \label{eq:Filtered}
        \mathcal{A}_\hbar(C_{n+1}^+)
        =
        \bigcup_{\gamma = 0}^\infty
        \mathcal{A}^{(\gamma)}_\hbar(C_{n+1}^+)
        \quad
        \textrm{and}
        \quad
        \mathcal{A}^{(\alpha)}_\hbar(C_{n+1}^+)
        \tildewick
        \mathcal{A}^{(\beta)}_\hbar(C_{n+1}^+)
        \subseteq
        \mathcal{A}^{(\alpha + \beta)}_\hbar(C_{n+1}^+).
    \end{equation}
\end{corollary}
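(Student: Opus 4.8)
The plan is to deduce everything directly from the explicit structure constants in Proposition~\ref{proposition:StructureConstantsAbove}; no further computation is needed, so the argument will be short.

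First I would record that the subspaces are well-defined and finite-dimensional. For a fixed $\alpha$ there are only finitely many index triples $(I, J, \alpha)$ with $|I|, |J| \le \alpha$, since $I, J \in \mathbb{N}_0^n$; hence each $\mathcal{A}^\alpha_\hbar(C_{n+1}^+)$, and therefore each $\mathcal{A}^{(\gamma)}_\hbar(C_{n+1}^+) = \bigoplus_{\alpha = 0}^\gamma \mathcal{A}^\alpha_\hbar(C_{n+1}^+)$, is finite-dimensional, with $\mathcal{A}^{(\gamma)}_\hbar(C_{n+1}^+) \subseteq \mathcal{A}^{(\gamma+1)}_\hbar(C_{n+1}^+)$ obvious from the definition. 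Since $\mathcal{A}_\hbar(C_{n+1}^+)$ is by definition $\mathbb{C}\textrm{-}\spann\{\basis{f}_{P, Q, \alpha}\}$ and each basis vector $\basis{f}_{P, Q, \alpha}$ lies in $\mathcal{A}^{(\alpha)}_\hbar(C_{n+1}^+)$, any $a \in \mathcal{A}_\hbar(C_{n+1}^+)$ — being a finite linear combination of basis vectors — lies in $\mathcal{A}^{(\gamma)}_\hbar(C_{n+1}^+)$ as soon as $\gamma$ exceeds the largest degree occurring in $a$. This establishes the first equality $\mathcal{A}_\hbar(C_{n+1}^+) = \bigcup_{\gamma \ge 0} \mathcal{A}^{(\gamma)}_\hbar(C_{n+1}^+)$.

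For the compatibility of the filtration with $\tildewick$ I would invoke bilinearity and reduce to products of basis vectors. If $\alpha \le \alpha_0$ and $\beta \le \beta_0$, then by the necessary inequalities \eqref{eq:NecessaryInequalities} of Proposition~\ref{proposition:StructureConstantsAbove} the structure constant $C^{(I, J, \gamma)}_{(P, Q, \alpha), (R, S, \beta)}$ vanishes unless $\gamma \le \alpha + \beta$. Hence $\basis{f}_{P, Q, \alpha} \tildewick \basis{f}_{R, S, \beta}$ is a finite linear combination of basis vectors $\basis{f}_{I, J, \gamma}$ with $\gamma \le \alpha + \beta \le \alpha_0 + \beta_0$, i.e.\ it lies in $\mathcal{A}^{(\alpha_0 + \beta_0)}_\hbar(C_{n+1}^+)$. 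Extending bilinearly gives $\mathcal{A}^{(\alpha_0)}_\hbar(C_{n+1}^+) \tildewick \mathcal{A}^{(\beta_0)}_\hbar(C_{n+1}^+) \subseteq \mathcal{A}^{(\alpha_0 + \beta_0)}_\hbar(C_{n+1}^+)$, which is the second claim. I expect no real obstacle here: the entire content is the degree bound $\gamma \le \alpha + \beta$ already contained in Proposition~\ref{proposition:StructureConstantsAbove}. The only points worth a second glance are the (mildly abusive) notation in \eqref{eq:AgammaDef}, where the summand $\mathcal{A}^\gamma_\hbar(C_{n+1}^+)$ is to be read as $\mathcal{A}^\alpha_\hbar(C_{n+1}^+)$, and the standing hypothesis $\hbar \ne 0$, which is exactly what makes the basis $\basis{f}_{P, Q, \alpha}$ — and hence the subspaces $\mathcal{A}^{(\gamma)}_\hbar(C_{n+1}^+)$ — well-defined in the first place.
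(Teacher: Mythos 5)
Your proof is correct and takes exactly the approach the paper intends; the authors simply declare the corollary an immediate consequence of Proposition~\ref{proposition:StructureConstantsAbove} without writing out the argument, and you have supplied precisely the short degree-bound computation they had in mind. Your side remark about the typo in \eqref{eq:AgammaDef} (the summand should be $\mathcal{A}^\alpha_\hbar(C_{n+1}^+)$, not $\mathcal{A}^\gamma_\hbar(C_{n+1}^+)$) is also accurate.
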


%
%

\subsection{First version}
\label{subsec:FirstVersionDisc}

To proceed we need to compute the constants \eqref{eq:CSeriesConverge}
from the structure constants as usual. Even though one can compute
them explicitly, the following properties and rough estimates will be
all we need:
\begin{lemma}
    \label{lemma:ConstantsForTheDisc}%
    Let $(I, J, \gamma)$ be an index triple.
    \begin{lemmalist}
    \item \label{item:DiscConstantsFinite} For all index triples $(P,
        Q, \alpha)$ the constant $C^{(I, J, \gamma)}_{(P, Q, \alpha),
          \boldsymbol{\cdot}}$ is finite and zero for $\alpha >
        \gamma$. Analogously, for all index triples $(R, S, \beta)$
        the constant $C^{(I, J, \gamma)}_{\boldsymbol{\cdot}, (R, S,
          \beta)}$ is finite and zero for $\beta > \gamma$.
    \item \label{item:GrowthDiscConstants} For all index triples $(P,
        Q, \alpha)$ and $(R, S, \beta)$ one has
        \begin{equation}
            \label{eq:EstimateDiscConstants}
            \sum_{I, J}
            C^{(I, J, \gamma)}_{(P, Q, \alpha), \boldsymbol{\cdot}}
            \le
            (\gamma + 1)^{4n+1} 4^\gamma
            \quad
            \textrm{and}
            \quad
            \sum_{I, J}
            C^{(I, J, \gamma)}_{\boldsymbol{\cdot}, (R, S, \beta)}
            \le
            (\gamma + 1)^{4n+1} 4^\gamma.
        \end{equation}
    \item \label{item:ConstantsAtLeastOne} For all index triples $(P,
        Q, \alpha)$ and all $\gamma \ge \alpha$ one has
        \begin{equation}
            \label{eq:ConstantsAtLeastOne}
            C^{(P, Q, \gamma)}_{(P, Q, \alpha), \boldsymbol{\cdot}}
            \ge 1
            \quad
            \textrm{and}
            \quad
            C^{(P, Q, \gamma)}_{\boldsymbol{\cdot}, (P, Q, \alpha)}
            \ge 1.
        \end{equation}
    \end{lemmalist}
\end{lemma}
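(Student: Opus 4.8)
All three statements are read off from the explicit structure constants of Proposition~\ref{proposition:StructureConstantsAbove}. The common starting point is the pointwise bound
\[
\left|C^{(I,J,\gamma)}_{(P,Q,\alpha),(R,S,\beta)}\right|
\le
\binom{I}{R}\binom{J}{Q}
\binom{\gamma-|I|}{\beta-|R|}\binom{\gamma-|J|}{\alpha-|Q|}
\;\delta_{P+R-I,\,Q+S-J},
\]
which follows from \eqref{eq:StructureConstantsTildeWick} because the prefactor $\frac{\epsilon(\ldots)}{(\alpha+\beta-\gamma-|P|-|R|+|I|)!\,(P+R-I)!}$ always lies in $[0,1]$ (it is $0$ when $\epsilon=0$, and has denominator $\ge 1$ when $\epsilon=1$, since then the two factorial arguments are nonnegative). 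Throughout I use the necessary inequalities \eqref{eq:NecessaryInequalities}, in particular $\max(\alpha,\beta)\le\gamma$, $R\le I$ and $Q\le J$.

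For \refitem{item:DiscConstantsFinite}: if $\alpha>\gamma$ then $\max(\alpha,\beta)\le\gamma$ is violated, so every summand of $C^{(I,J,\gamma)}_{(P,Q,\alpha),\boldsymbol{\cdot}}$ vanishes. For finiteness, observe that a nonvanishing summand forces $R\le I$ (finitely many $R$), $\beta\le\gamma$ (finitely many $\beta$), and $S=P+R-I+J-Q$ uniquely determined by the Kronecker delta; hence only finitely many terms are nonzero and each is finite. The statement for $C^{(I,J,\gamma)}_{\boldsymbol{\cdot},(R,S,\beta)}$ follows by the same argument with the two factors interchanged, or directly from Proposition~\ref{proposition:SymmetryOfStructureConstants}, which identifies it with $C^{(J,I,\gamma)}_{(S,R,\beta),\boldsymbol{\cdot}}$.

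For \refitem{item:GrowthDiscConstants}: I sum the displayed bound over $(R,S,\beta)$ and then over $(I,J)$ in stages. The sum over $S$ collapses to a single term by the delta; $\sum_\beta\binom{\gamma-|I|}{\beta-|R|}=2^{\gamma-|I|}$; $\sum_{R\le I}\binom{I}{R}=2^{|I|}$; so the $S,\beta,R$ summations together contribute the factor $2^{\gamma-|I|}\cdot 2^{|I|}=2^\gamma$ times the surviving pair $\binom{J}{Q}\binom{\gamma-|J|}{\alpha-|Q|}$, which is itself bounded crudely by $2^{|J|}\cdot 2^{\gamma-|J|}=2^\gamma$. Finally there are at most $(\gamma+1)^n$ multiindices $I$ and at most $(\gamma+1)^n$ multiindices $J$ with $|I|,|J|\le\gamma$ that contribute at all. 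This already yields the bound $(\gamma+1)^{2n}4^\gamma$, hence a fortiori $(\gamma+1)^{4n+1}4^\gamma$; the slack allows one to estimate the $\beta$- or $S$-ranges by $(\gamma+1)$ rather than by the exact binomial identities if preferred. The second inequality follows from the same computation after interchanging $(I,P,Q,\alpha)\leftrightarrow(J,R,S,\beta)$, or again from Proposition~\ref{proposition:SymmetryOfStructureConstants}.

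For \refitem{item:ConstantsAtLeastOne}: since all summands are nonnegative, it suffices to exhibit one of absolute value $\ge 1$. Because $\gamma\ge\alpha$, the triple $(R,S,\beta)=(0,0,\gamma-\alpha)$ is admissible, and inserting it into \eqref{eq:StructureConstantsTildeWick} — equivalently, reading off the $\basis{f}_{P, Q, \gamma}$-coefficient of $\basis{f}_{P, Q, \alpha}\tildewick\basis{f}_{0, 0, \gamma-\alpha}$ from the product formula in the proof of Proposition~\ref{proposition:StructureConstantsAbove} — leaves only the term with $k=0$, $K=0$, which gives
\[
C^{(P,Q,\gamma)}_{(P,Q,\alpha),(0,0,\gamma-\alpha)}
=
\binom{\gamma-|P|}{\gamma-\alpha}\binom{\gamma-|Q|}{\alpha-|Q|}
\ge 1,
\]
both factors being genuine binomial coefficients $\ge 1$ because $|P|,|Q|\le\alpha\le\gamma$. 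The second inequality uses $(0,0,\gamma-\alpha)$ as the \emph{left} factor, the corresponding coefficient of $\basis{f}_{0, 0, \gamma-\alpha}\tildewick\basis{f}_{P, Q, \alpha}$ being $\binom{\gamma-|P|}{\alpha-|P|}\binom{\gamma-|Q|}{\gamma-\alpha}\ge 1$. The only genuine obstacle is \refitem{item:GrowthDiscConstants}: one must group the five-fold summation so that the Kronecker delta removes the $S$-sum entirely and the complementary binomials $\binom{I}{R}$, $\binom{\gamma-|I|}{\beta-|R|}$ are summed separately to $2^{|I|}$ and $2^{\gamma-|I|}$, whose product is the clean $2^\gamma$; a careless grouping produces a larger exponential (up to $16^\gamma$) and misses the claimed bound.
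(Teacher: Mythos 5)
Your proof is correct and follows essentially the same route as the paper's: you read everything off from the explicit structure constants of Proposition~\ref{proposition:StructureConstantsAbove}, argue finiteness and the vanishing conditions from \eqref{eq:NecessaryInequalities}, prove \refitem{item:GrowthDiscConstants} by bounding the binomials and counting, and prove \refitem{item:ConstantsAtLeastOne} by isolating the $k=K=0$ contribution of the product with $\basis{f}_{0,0,\gamma-\alpha}$. The only cosmetic difference is in \refitem{item:GrowthDiscConstants}: you collapse the $S$, $\beta$, and $R$ summations exactly (via the Kronecker delta and the binomial identities) to obtain the tighter $(\gamma+1)^{2n}4^\gamma$, whereas the paper simply bounds each individual summand by $4^\gamma$ and counts all $(\gamma+1)^{4n+1}$ index choices crudely — so your concern about a ``careless grouping'' missing the bound is unwarranted, since the paper's crude term-by-term estimate already suffices.
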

\begin{proof}
    With the explicit formula from
    Proposition~\ref{proposition:StructureConstantsAbove} this is now
    a simple argument. First recall that in an index triple $(I, J,
    \gamma)$ there are only finitely many allowed $I$ and $J$ for a
    fixed $\gamma$ since $|I|, |J| \le \gamma$. Thus we only have to
    take care of the indices $\alpha$, $\beta$, and $\gamma$.  For a
    given $\gamma$ there are only finitely many $\alpha$ and $\beta$
    such that $\max(\alpha, \beta) \le \gamma \le \alpha + \beta$ can
    hold. In particular, $\alpha, \beta \le \gamma$ is a necessary
    condition. But then the summation over $\beta$ gives a finite
    constant $C^{(I, J, \gamma)}_{(P, Q, \alpha), \boldsymbol{\cdot}}$
    while the summation over $\alpha$ gives a finite $C^{(I, J,
      \gamma)}_{\boldsymbol{\cdot}, (R, S, \beta)}$. In addition, the
    conditions $\alpha \le \gamma$ and $\beta \le \gamma$,
    respectively, still persist giving the first statement. For the
    second we have
    \begin{align*}
        \sum_{I, J}
        C^{(I, J, \gamma)}_{(P, Q, \alpha), \boldsymbol{\cdot}}
        &=
        \sum_{I, J}
        \sum_{(R, S, \beta)}
        C^{(I, J, \gamma)}_{(P, Q, \alpha), (R, S, \beta)} \\
        &\le
        \sum_{I, J}
        \sum_{(R, S, \beta)}
        \frac{
          \binom{I}{R}
          \binom{J}{Q}
          \binom{\gamma - |I|}{\beta - |R|}
          \binom{\gamma - |J|}{\alpha - |Q|}
        }{(\alpha + \beta - \gamma - |P| - |R| + |I|)!(P + R - I)!}
        \\
        &\le
        \sum_{I, J}
        \sum_{(R, S, \beta)}
        4^\gamma
        \le
        (\gamma + 1)^{4n+1} 4^\gamma,
    \end{align*}
    since each index $I_\ell$ in $I$ with $\ell = 1, \ldots, n$ runs
    at most from $0$ to $\gamma$ and analogously for $J$, $R$, and
    $S$. Also $\beta$ runs at most from $0$ to $\gamma$. The other
    estimate is analogous. For the third estimate we note that the
    product of $\basis{f}_{P, Q, \alpha}$ with $\basis{f}_{0, 0,
      \beta}$ gives a contribution for $\basis{f}_{P, Q, \alpha +
      \beta}$ with a prefactor given by $\binom{\alpha + \beta -
      |P|}{\beta}\binom{\alpha + \beta - |Q|}{\beta} \ge 1$. This
    corresponds to the summation indices $k = 0$ and $K = 0$ in the
    computations in the proof of
    Proposition~\ref{proposition:StructureConstantsAbove}. Hence
    $C^{(P, Q, \alpha + \beta)}_{(P, Q, \alpha), (0, 0, \beta)} =
    C^{(P, Q, \alpha + \beta)}_{(0, 0, \beta), (P, Q, \alpha)} \ge
    1$. But then the third claim is clear.
\end{proof}

In principle, we can even compute the constants explicitly, using the
explicit formulas obtained in
Proposition~\ref{proposition:StructureConstantsAbove}. However, to
determine the topology of $\mathcal{A}_\hbar(C_{n+1}^+)$, the above
simple counting is already sufficient, a situation very similar to the
case of the polynomials in Subsection~\ref{subsec:Polynomials}.
\begin{proposition}
    \label{proposition:FirstTopologyTooCoarse}%
    Let $\hbar \ne 0$.  The topology of $\mathcal{A}_\hbar(C_{n+1}^+)$
    according to the construction as in
    Theorem~\ref{theorem:AniceCompleted} is the Cartesian product
    topology inherited from $\prod_{(P, Q, \alpha)}
    \mathbb{C}\mathsf{f}_{P, Q, \alpha}$.
\end{proposition}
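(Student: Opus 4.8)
The goal is to show that the locally convex topology on $\mathcal{A}_\hbar(C_{n+1}^+)$ generated by the seminorms $\norm{\argument}_{m, \ell, (I,J,\gamma)}$ agrees with the Cartesian product topology, i.e.\ with the topology generated by the zeroth seminorms $\norm{\argument}_{0,0,(I,J,\gamma)} = |a_{I,J,\gamma}|$ alone. One inclusion is free: the zeroth seminorms are among the defining seminorms, so the nice topology is automatically \emph{finer} than the product topology. The work is in the reverse direction: every seminorm $\norm{\argument}_{m,\ell,(I,J,\gamma)}$ must be bounded by a continuous seminorm of the product topology, i.e.\ by a finite maximum of finitely many $|a_{P,Q,\beta}|$'s (up to a constant). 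Equivalently, I must show that for each $m$, $\ell$, and index triple $(I,J,\gamma)$, the quantity $h_{m,\ell,(I,J,\gamma)}(a)$ depends only on finitely many coefficients $a_{P,Q,\beta}$ of $a$.

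\textbf{Key steps.} The mechanism is exactly the one already exploited for polynomials in Subsection~\ref{subsec:Polynomials} and rests on Lemma~\ref{lemma:ConstantsForTheDisc}\refitem{item:DiscConstantsFinite}: the constants $C^{(I,J,\gamma)}_{(P,Q,\alpha),\boldsymbol{\cdot}}$ vanish unless $\alpha \le \gamma$, and likewise $C^{(I,J,\gamma)}_{\boldsymbol{\cdot},(R,S,\beta)}$ vanishes unless $\beta \le \gamma$. First I would argue by induction on $m$ that $h_{m,\ell,(I,J,\gamma)}(a)$ is a (finite, since all constants are finite by Lemma~\ref{lemma:ConstantsForTheDisc}) sum over only those basis coefficients $a_{P,Q,\beta}$ with $\beta \le \gamma$. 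The base case $m=0$ is immediate: $h_{0,0,(I,J,\gamma)}(a) = |a_{I,J,\gamma}|$ involves only the triple $(I,J,\gamma)$ itself, which trivially has $\gamma \le \gamma$. For the inductive step, consider $h_{m+1, 2\ell, (I,J,\gamma)}(a) = \sum_{(P,Q,\alpha)} h_{m,\ell,(P,Q,\alpha)}(a)^2 \, C^{(I,J,\gamma)}_{(P,Q,\alpha),\boldsymbol{\cdot}}$; by the vanishing just recalled, the outer sum ranges only over triples with $\alpha \le \gamma$, of which there are finitely many (since also $|P|,|Q| \le \alpha \le \gamma$), and for each such triple the induction hypothesis says $h_{m,\ell,(P,Q,\alpha)}(a)$ depends only on coefficients $a_{R,S,\beta}$ with $\beta \le \alpha \le \gamma$. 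Hence $h_{m+1,2\ell,(I,J,\gamma)}(a)$ depends only on finitely many coefficients, all with third index $\le \gamma$. The odd case $2\ell+1$ uses $C^{(I,J,\gamma)}_{\boldsymbol{\cdot},(R,S,\beta)}$ vanishing for $\beta > \gamma$ in the same way. Taking $2^m$-th roots, each $\norm{\argument}_{m,\ell,(I,J,\gamma)}$ is a continuous function of finitely many coefficients, hence bounded by a constant times $\max$ of finitely many $|a_{P,Q,\beta}|$; this is precisely continuity in the product topology.

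\textbf{The subalgebra condition.} As a by-product this shows $\mathcal{A}_{\mathrm{nice}} = \mathcal{A}_\hbar(C_{n+1}^+)$, since all $h_{m,\ell,(I,J,\gamma)}(a)$ are finite (finitely many terms, each a finite constant times a product of finitely many finite numbers), which is the hypothesis needed to invoke Theorem~\ref{theorem:AniceCompleted}. One should perhaps remark that the completion is then just the full Cartesian product $\prod_{(P,Q,\alpha)} \mathbb{C}\basis{f}_{P,Q,\alpha}$ with its product topology — every formal linear combination lies in the completion — which explains why this first version is "too coarse" to interpret elements as functions, motivating the passage to the second version in the next subsection.

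\textbf{Main obstacle.} There is no real obstacle here; this is a routine finiteness argument of the same shape as the polynomial case. The only point requiring a little care is bookkeeping: one must track that the bound on the third index $\beta \le \gamma$ is preserved through \emph{both} branches of the recursion \eqref{eq:HRecursion} (even $\ell$ uses $C^\gamma_{\alpha,\boldsymbol{\cdot}}$ vanishing for $\alpha > \gamma$, odd $\ell$ uses $C^\gamma_{\boldsymbol{\cdot},\beta}$ vanishing for $\beta > \gamma$), and that at each level the set of contributing triples stays finite because bounding the third index also bounds the multiindices. Since $\mathcal{A}_\hbar(C_{n+1}^+)$ is commutative (the $\basis{f}$'s are ordinary functions multiplied via $\tildewick$, and indeed the remark after Definition~\ref{definition:TheSeminormStuff} applies since $\tildewick$ restricted to this subalgebra... — actually one need not invoke commutativity at all, the two-branch argument handles the general case cleanly).
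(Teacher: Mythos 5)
Your proof follows essentially the same route as the paper: invoke Lemma~\ref{lemma:ConstantsForTheDisc}\refitem{item:DiscConstantsFinite} so that the recursion for $h_{m,\ell,(I,J,\gamma)}$ runs over finitely many triples with third index $\le \gamma$, argue by induction that each $h_{m,\ell,(I,J,\gamma)}(a)$ therefore depends on only finitely many coefficients $a_{P,Q,\beta}$, and conclude by comparing with the Cartesian product topology via Theorem~\ref{theorem:AniceCompleted}\refitem{item:Completion}. Your passing remark that $\mathcal{A}_\hbar(C_{n+1}^+)$ is commutative is false (it carries the genuine star product $\tildewick$), but you correctly retracted it, and the two-branch induction you give does not rely on it.
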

\begin{proof}
    Thanks to Lemma~\ref{lemma:ConstantsForTheDisc} we have for the
    recursive definition of the $h_{m, \ell, (I, J, \gamma)}$ the
    \emph{finite} sums
    \[
    h_{m+1, 2\ell, (I, J, \gamma)}(a)
    =
    \sum_{(P, Q, \alpha)}^{\textrm{finite}}
    h_{m, \ell, (P, Q, \alpha)}(a)^2
    C^{(I, J, \gamma)}_{(P, Q, \alpha), \boldsymbol{\cdot}}
    \]
    and
    \[
    h_{m+1, 2\ell+1, (I, J, \gamma)}(a)
    =
    \sum_{(R, S, \beta)}^{\textrm{finite}}
    h_{m, \ell, (R, S, \beta)}(a)^2
    C^{(I, J, \gamma)}_{\boldsymbol{\cdot}, (R, S, \beta)}.
    \]
    A simple induction shows that in $h_{m, \ell, (I, J, \gamma)}(a)$
    only \emph{finitely} many coefficients $|a_{P, Q, \alpha}|$ of $a$
    contribute. Hence, up to a constant, we can estimate $\norm{a}_{m,
      \ell, (I, J, \gamma)}$ by the maximum of those finitely many
    $|a_{P, Q, \alpha}|$ which constitutes a continuous seminorm of
    the Cartesian product. Thus the Cartesian product topology is
    finer in this case. But it is coarser in general according to
    Theorem~\ref{theorem:AniceCompleted}, \refitem{item:Completion}.
\end{proof}


To conclude this subsection we collect a few more technical properties
of the seminorms which we shall need later on. Since for a given
$\gamma$ we have only finitely many $I$ and $J$ with $|I|, |J| \le
\gamma$ we can consider the new combination
\begin{equation}
    \label{eq:Dischmellgamma}
    h_{m, \ell, \gamma}(a)
    =
    \sum_{I, J}
    h_{m, \ell, (I, J, \gamma)}(a).
\end{equation}
We have corresponding seminorms $\norm{a}_{m, \ell, \gamma} =
\sqrt[2^m]{h_{m, \ell, \gamma}(a)}$. Clearly, they will produce the
Cartesian product topology as well.
\begin{lemma}
    \label{lemma:hmlgammaDiscStuff}%
    Let $m \in \mathbb{N}_0$, $\ell = 0, \ldots, 2^m-1$, and
    $\gamma \in \mathbb{N}_0$.
    \begin{lemmalist}
    \item \label{item:EstimateSumhmellSquares}
        \begin{equation}
            \label{eq:EstimateSumhmellSquares}
            \sum_{\alpha = 0}^\gamma
            h_{m, \ell, \alpha}(a)^2
            \le
            (\gamma + 1)^{2n}
            h_{m+1, 2\ell, \gamma}(a)
            \quad
            \textrm{and}
            \quad
            \sum_{\alpha = 0}^\gamma
            h_{m, \ell, \alpha}(a)^2
            \le
            (\gamma + 1)^{2n}
            h_{m+1, 2\ell+1, \gamma}(a).
        \end{equation}
    \item \label{item:SumPShmellPAaSquares}
        \begin{equation}
            \label{eq:SumPQhmlPAaSquares}
            \sum_{I, J}
            h_{m, \ell, (I, J, \gamma)}(a)^2
            \le
            h_{m, \ell, \gamma}(a)^2.
        \end{equation}
    \item \label{item:NormOnLessGamma} If $m \ge 1$ then
        $\norm{\argument}_{m, \ell, \gamma}$ is a norm on
        $\mathcal{A}_\hbar^{(\gamma)}(C_{n+1}^+)$ and identically zero
        on the complement $\bigoplus_{(P, Q, \alpha), \alpha > \gamma}
        \mathbb{C} \basis{f}_{P, Q, \alpha}$.
    \end{lemmalist}
\end{lemma}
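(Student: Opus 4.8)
The plan is to treat the three assertions in the order \refitem{item:SumPShmellPAaSquares}, \refitem{item:EstimateSumhmellSquares}, \refitem{item:NormOnLessGamma}, since the inequalities in \refitem{item:EstimateSumhmellSquares} rest on \refitem{item:SumPShmellPAaSquares}, and the definiteness claim in \refitem{item:NormOnLessGamma} rests on \refitem{item:EstimateSumhmellSquares}. Part \refitem{item:SumPShmellPAaSquares} is immediate from the elementary inequality $\sum_i x_i^2 \le \big(\sum_i x_i\big)^2$ for finitely many nonnegative $x_i$, applied to $x_{I,J} = h_{m,\ell,(I,J,\gamma)}(a)$ together with the defining equation \eqref{eq:Dischmellgamma} of $h_{m,\ell,\gamma}$.

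For \refitem{item:EstimateSumhmellSquares} I would first estimate, for each fixed $\alpha \le \gamma$, by the Cauchy--Schwarz inequality and the count $\binom{\alpha+n}{n} \le (\alpha+1)^n \le (\gamma+1)^n$ of multiindices of length at most $\alpha$,
\[
h_{m,\ell,\alpha}(a)^2 = \Big(\sum_{P,Q} h_{m,\ell,(P,Q,\alpha)}(a)\Big)^2 \le (\gamma+1)^{2n} \sum_{P,Q} h_{m,\ell,(P,Q,\alpha)}(a)^2 .
\]
Summing over $\alpha = 0, \ldots, \gamma$, the claim follows once $\sum_{\alpha=0}^\gamma \sum_{P,Q} h_{m,\ell,(P,Q,\alpha)}(a)^2 \le h_{m+1,2\ell,\gamma}(a)$ is shown. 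To that end I interchange the (finite, by Lemma~\ref{lemma:ConstantsForTheDisc}) sums in
\[
h_{m+1,2\ell,\gamma}(a) = \sum_{I,J} h_{m+1,2\ell,(I,J,\gamma)}(a) = \sum_{(P,Q,\alpha)} h_{m,\ell,(P,Q,\alpha)}(a)^2 \sum_{I,J} C^{(I,J,\gamma)}_{(P,Q,\alpha),\boldsymbol{\cdot}} ,
\]
and invoke Lemma~\ref{lemma:ConstantsForTheDisc}: the inner sum vanishes for $\alpha > \gamma$ by \refitem{item:DiscConstantsFinite}, while for $\alpha \le \gamma$ it is bounded below by $C^{(P,Q,\gamma)}_{(P,Q,\alpha),\boldsymbol{\cdot}} \ge 1$ by \refitem{item:ConstantsAtLeastOne}. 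Chaining the two estimates gives the first inequality; the second is obtained verbatim after replacing $C^{(I,J,\gamma)}_{(P,Q,\alpha),\boldsymbol{\cdot}}$ by $C^{(I,J,\gamma)}_{\boldsymbol{\cdot},(R,S,\beta)}$ throughout and using $C^{(P,Q,\gamma)}_{\boldsymbol{\cdot},(P,Q,\alpha)} \ge 1$.

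For \refitem{item:NormOnLessGamma} I would argue in two steps. First, that $\norm{\argument}_{m,\ell,\gamma}$ vanishes on $\bigoplus_{(P,Q,\alpha),\,\alpha>\gamma}\mathbb{C}\basis{f}_{P,Q,\alpha}$ follows from the observation, proved by induction on $m$ uniformly in $\gamma$ and in $\ell,I,J$, that $h_{m,\ell,(I,J,\gamma)}(a)$ depends only on the coefficients $a_{P,Q,\alpha}$ with $\alpha \le \gamma$: in the recursion \eqref{eq:HRecursion} each term $h_{m-1,\ell',(P,Q,\alpha)}(a)^2$ is weighted by $C^{(I,J,\gamma)}_{(P,Q,\alpha),\boldsymbol{\cdot}}$ or $C^{(I,J,\gamma)}_{\boldsymbol{\cdot},(P,Q,\alpha)}$, which vanishes unless $\alpha \le \gamma$ by Lemma~\ref{lemma:ConstantsForTheDisc}~\refitem{item:DiscConstantsFinite}, and for $\alpha \le \gamma$ the induction hypothesis (with $\gamma$ replaced by $\alpha$) lets $h_{m-1,\ell',(P,Q,\alpha)}(a)$ depend only on coefficients of index $\le \alpha \le \gamma$; for such $a$ all of these vanish, hence so does $h_{m,\ell,\gamma}(a)$ by \eqref{eq:Dischmellgamma}. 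Second, $\norm{\argument}_{m,\ell,\gamma}$ is a seminorm on $\mathcal{A}_\hbar^{(\gamma)}(C_{n+1}^+)$ — it is finite there (this is exactly the finiteness exploited in the proof of Proposition~\ref{proposition:FirstTopologyTooCoarse}), and, being the $\ell^{2^m}$-norm of the family $(\norm{a}_{m,\ell,(I,J,\gamma)})_{I,J}$, it is homogeneous and, by Minkowski's inequality together with Lemma~\ref{lemma:AlmostSeminorms}, subadditive. Its definiteness on $\mathcal{A}_\hbar^{(\gamma)}(C_{n+1}^+)$ I obtain by descent: if $h_{m,\ell,\gamma}(a) = 0$ with $m \ge 1$, writing $\ell = 2\ell_0$ or $2\ell_0+1$ and keeping only the $\alpha = \gamma$ summand on the left of part \refitem{item:EstimateSumhmellSquares} (applied with $m$ lowered by one) gives $h_{m-1,\ell_0,\gamma}(a) = 0$; iterating this down to level one yields $h_{1,\ell',\gamma}(a) = 0$ for some $\ell'$, and since $h_{1,\ell',\gamma}(a) = \sum_{(P,Q,\alpha)} |a_{P,Q,\alpha}|^2 \sum_{I,J} C^{(I,J,\gamma)}_{(P,Q,\alpha),\boldsymbol{\cdot}} \ge \sum_{\alpha=0}^\gamma \sum_{P,Q} |a_{P,Q,\alpha}|^2$ by \refitem{item:ConstantsAtLeastOne} (with the obvious analogue for $\ell'$ odd), all coefficients of $a$ with $\alpha \le \gamma$ vanish; as $a \in \mathcal{A}_\hbar^{(\gamma)}(C_{n+1}^+)$ this forces $a = 0$. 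The computations are all light; the only points demanding care are the legitimacy of interchanging the finite sums in the recursion for $h_{m+1,2\ell,\gamma}$ and the bookkeeping of the parity of $\ell$ through the descent in \refitem{item:NormOnLessGamma}, both of which are already covered by the finiteness and positivity statements in Lemma~\ref{lemma:ConstantsForTheDisc}, so no new ingredient is needed.
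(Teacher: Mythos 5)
Your proof is correct and follows essentially the same route as the paper's: part~\refitem{item:SumPShmellPAaSquares} by nonnegativity, part~\refitem{item:EstimateSumhmellSquares} by Cauchy--Schwarz plus the lower bound $C^{(P,Q,\gamma)}_{(P,Q,\alpha),\boldsymbol{\cdot}}\ge 1$ from Lemma~\ref{lemma:ConstantsForTheDisc}, and part~\refitem{item:NormOnLessGamma} by descent in $m$ through part~\refitem{item:EstimateSumhmellSquares} together with the vanishing of the structure constants for $\alpha>\gamma$. The only cosmetic difference is that you interchange the sums and bound $\sum_{I,J}C^{(I,J,\gamma)}_{(P,Q,\alpha),\boldsymbol{\cdot}}$ from below, where the paper inserts $C^{(P,Q,\gamma)}_{(P,Q,\alpha),\boldsymbol{\cdot}}$ and then enlarges the sum over $(I,J)$ — the same estimate written in a different order — and you are slightly more explicit about the seminorm property, which the paper takes as known.
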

\begin{proof}
    For the first part we use $C^{(P, Q, \gamma)}_{(P, Q, \alpha),
      \boldsymbol{\cdot}} \ge 1$ according to
    Lemma~\ref{lemma:ConstantsForTheDisc},
    \refitem{item:ConstantsAtLeastOne}. Thus we get
    \begin{align*}
        \sum_{\alpha = 0}^\gamma
        h_{m, \ell, \alpha}(a)^2
        &=
        \sum_{\alpha = 0}^\gamma
        \left(
            \sum_{|P|, |Q| \le \alpha}
            h_{m, \ell, (P, Q, \alpha)}(a)
        \right)^2\\
        &\le
        (\gamma + 1)^{2n}
        \sum_{\alpha = 0}^\gamma
        \sum_{|P|, |Q| \le \alpha}
        h_{m, \ell, (P, Q, \alpha)}(a)^2 \\
        &\le
        (\gamma + 1)^{2n}
        \sum_{\alpha = 0}^\gamma
        \sum_{|P|, |Q| \le \alpha}
        h_{m, \ell, (P, Q, \alpha)}(a)^2
        C^{(P, Q, \gamma)}_{(P, Q, \alpha), \boldsymbol{\cdot}} \\
        &\le
        (\gamma + 1)^{2n}
        \sum_{|I|, |J| \le \gamma}
        \sum_{\alpha = 0}^\gamma
        \sum_{|P|, |Q| \le \alpha}
        h_{m, \ell, (P, Q, \alpha)}(a)^2
        C^{(I, J, \gamma)}_{(P, Q, \alpha), \boldsymbol{\cdot}} \\
        &=
        (\gamma + 1)^{2n}
        h_{m+1, 2\ell, \gamma}(a),
    \end{align*}
    where in the first estimate we use Hölder's inequality for
    \emph{finite} sums and the rough estimate that all indices run at
    most from $0$ to $\gamma$. In the second we used
    Lemma~\ref{lemma:ConstantsForTheDisc},
    \refitem{item:ConstantsAtLeastOne}.  Analogously, we can take
    $C^{(P, Q, \gamma)}_{\boldsymbol{\cdot}, (P, Q, \alpha)}$ in the
    second estimate, proving the first part.  The second part is
    trivial since all terms in \eqref{eq:Dischmellgamma} are
    nonnegative.  For the last we note that $\norm{a}_{0, 0, \alpha}
    = \sum_{P, Q} |a_{P, Q, \alpha}|$ is a norm on the span of the
    $\basis{f}_{P, Q, \alpha}$ for fixed $\alpha$. But then the first
    part shows that $h_{1, 0, \gamma}(a)$ being zero implies $h_{0, 0,
      \alpha}(a) = 0$ for all $\alpha \le \gamma$. Hence it follows
    that $\norm{\argument}_{1, 0, \gamma}$ is a norm on the span of
    all $\basis{f}_{P, Q, \alpha}$ with $\alpha \le \gamma$ and
    analogously for $\norm{\argument}_{1, 1, \gamma}$. For the higher
    $m$ the norm property follows again by the first part by
    induction.  Moreover, a simple induction using
    Lemma~\ref{lemma:ConstantsForTheDisc},
    \refitem{item:DiscConstantsFinite}, shows that for $h_{m, \ell,
      \gamma}(a)$ one never uses coefficients $|a_{P, Q, \alpha}|$
    with $\alpha > \gamma$.
\end{proof}

%
%

\subsection{Making  the evaluation functionals continuous}
\label{subsec:EvaluationFunctionalsContinuous}

We want to refine the topology now in such a way that the evaluation
functionals at an arbitrary point in $C_{n+1}^+$ become continuous.
This will allow us to interpret the elements of the completion still
as \emph{functions} on $C_{n+1}^+$, a feature which we do not want to
loose.

The following technical lemma will help us to show that the completion
will not be too small. The situation is very similar to the polynomial
algebra from Section~\ref{subsec:Polynomials}: again a sub-factorial
behavior with respect to the index $\gamma$ will reproduce in the
recursion of the $h_{m, \ell, (I, J, \gamma)}$.
\begin{lemma}
    \label{lemma:GrowthhmlSubfactorialDisc}%
    Let $\hbar \ne 0$ and $a = \sum_{I, J, \gamma} a_{I, J, \gamma}
    \basis{f}_{I, J, \gamma} \in \prod_{I, J, \gamma} \mathbb{C}
    \basis{f}_{I, J, \gamma}$ be an element in the Cartesian product
    with sub-factorial growth with respect to $\gamma$, i.e.\ for all
    $\epsilon > 0$ there is a constant $c_0 > 0$ such that
    \begin{equation}
        \label{eq:aSubfactorialInGamma}
        |a_{I, J, \gamma}| \le c_0 (\gamma!)^\epsilon
    \end{equation}
    for all index triples $(I, J, \gamma)$. Then for all $m \in
    \mathbb{N}_0$ and $\ell = 0, \ldots, 2^m -1$ and all $\epsilon >
    0$ we have a constant $c_m > 0$ with
    \begin{equation}
        \label{eq:EstimateGrowthhml}
        h_{m, \ell, (I, J, \gamma)}(a) \le 
        c_m (\gamma!)^\epsilon.
    \end{equation}
\end{lemma}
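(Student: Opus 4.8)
The plan is to prove \eqref{eq:EstimateGrowthhml} by induction on $m$, in exactly the spirit of the polynomial and Laurent cases (Proposition~\ref{proposition:Polynomials} ff. and Proposition~\ref{proposition:CompletionLaurent}). For $m = 0$ there is nothing to do, since $h_{0, 0, (I, J, \gamma)}(a) = |a_{I, J, \gamma}|$ and this is precisely the hypothesis \eqref{eq:aSubfactorialInGamma}. For the inductive step one uses the recursion \eqref{eq:HRecursion}, which here reads
\begin{equation*}
    h_{m+1, 2\ell, (I, J, \gamma)}(a)
    =
    \sum_{(P, Q, \alpha)}
    h_{m, \ell, (P, Q, \alpha)}(a)^2
    \, C^{(I, J, \gamma)}_{(P, Q, \alpha), \boldsymbol{\cdot}},
\end{equation*}
together with the two estimates from Lemma~\ref{lemma:ConstantsForTheDisc}. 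First, by \refitem{item:DiscConstantsFinite} the summand vanishes unless $\alpha \le \gamma$, so the sum ranges over at most $(\gamma + 1)^{2n + 1}$ index triples (there are at most $\gamma + 1$ values of $\alpha$, and for each at most $(\gamma + 1)^n$ multiindices $P$ and likewise $Q$). Second, by \refitem{item:GrowthDiscConstants} and nonnegativity of the structure constants, each single constant satisfies $C^{(I, J, \gamma)}_{(P, Q, \alpha), \boldsymbol{\cdot}} \le (\gamma + 1)^{4n + 1} 4^\gamma$.

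Now fix $\epsilon > 0$. Applying the inductive hypothesis with the \emph{smaller} exponent $\epsilon / 4$ — which is legitimate because the hypothesis provides a bound for \emph{every} positive exponent — gives a constant $c_m > 0$ with $h_{m, \ell, (P, Q, \alpha)}(a) \le c_m (\alpha!)^{\epsilon/4} \le c_m (\gamma!)^{\epsilon/4}$ whenever $\alpha \le \gamma$. Substituting this yields
\begin{equation*}
    h_{m+1, 2\ell, (I, J, \gamma)}(a)
    \le
    (\gamma + 1)^{2n + 1}
    \cdot c_m^2 (\gamma!)^{\epsilon/2}
    \cdot (\gamma + 1)^{4n + 1} 4^\gamma
    =
    c_m^2 (\gamma + 1)^{6n + 2} 4^\gamma (\gamma!)^{\epsilon/2}.
\end{equation*}
Since $(\gamma + 1)^{6n + 2} 4^\gamma$ has sub-factorial growth — for any $\delta > 0$ there is $c' = c'(\delta, n) > 0$ with $(\gamma + 1)^{6n + 2} 4^\gamma \le c' (\gamma!)^\delta$, because $4^\gamma / (\gamma!)^\delta \to 0$ and the polynomial prefactor is absorbed as well — choosing $\delta = \epsilon/2$ gives $h_{m+1, 2\ell, (I, J, \gamma)}(a) \le c_{m+1} (\gamma!)^\epsilon$ with $c_{m+1} = c' c_m^2$. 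The odd case $h_{m+1, 2\ell + 1, (I, J, \gamma)}(a)$ is handled verbatim, with $C^{(I, J, \gamma)}_{\boldsymbol{\cdot}, (R, S, \beta)}$ and the symmetric bounds from Lemma~\ref{lemma:ConstantsForTheDisc} in place of the ones used above. This completes the induction.

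I do not expect any genuine obstacle here; the only point that has to be respected — exactly as in the polynomial and Laurent examples — is that the induction must be run \emph{uniformly in} $\epsilon$: one feeds a strictly smaller exponent into the inductive hypothesis at each stage, so that the polynomial and exponential factors $(\gamma+1)^{6n+2} 4^\gamma$ picked up from the crude bounds on the structure constants can be re-absorbed into an arbitrarily small further power of $\gamma!$. Everything else is the same counting of index triples already used in the proof of Lemma~\ref{lemma:ConstantsForTheDisc} together with the elementary fact that a polynomial times $4^\gamma$ is $o\big((\gamma!)^\delta\big)$ for every $\delta > 0$.
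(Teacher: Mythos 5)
Your proof is correct and follows essentially the same route as the paper: induction on $m$ with the bounds from Lemma~\ref{lemma:ConstantsForTheDisc}, absorbing the polynomial and exponential prefactors into an arbitrarily small further power of $\gamma!$. The paper's version is slightly more compressed (it collapses the prefactor to a single $c^\gamma$, applies the inductive hypothesis at exponent $\epsilon$, and lands on $(\gamma!)^{3\epsilon}$, then appeals to $\epsilon$ being arbitrary), but the substance is identical to your explicit $\epsilon/4$ bookkeeping.
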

\begin{proof}
    Again, we prove this by induction on $m$. For $m = 0$ this is
    precisely the assumption on $a$. From
    Lemma~\ref{lemma:ConstantsForTheDisc},
    \refitem{item:GrowthDiscConstants}, we know that the constants
    $C^{(I, J, \gamma)}_{(P, Q, \alpha), \boldsymbol{\cdot}}$ as well
    as $C^{(I, J, \gamma)}_{\boldsymbol{\cdot}, (R, S, \beta)}$ can be
    estimated by some $c^\gamma$. Hence for all $\epsilon > 0$
    \[
    h_{m+1, 2\ell, (I, J, \gamma)}(a)
    =
    \sum_{(P, Q, \alpha)}
    h_{m, \ell, (P, Q, \alpha)}(a)^2
    C_{(P, Q, \alpha), \boldsymbol{\cdot}}^{(I, J, \gamma)}
    \le
    \sum_{(P, Q, \alpha)}
    c_m^2 (\alpha!)^{2\epsilon}
    c^\gamma
    \le
    c_{m+1} (\gamma!)^{3\epsilon},
    \]
    since the sum is finite and $\alpha \le \gamma$. Thus also
    $h_{m+1, 2\ell, (I, J, \gamma)}(a)$ has sub-factorial growth. The
    case $2\ell + 1$ is analogous.
\end{proof}
Note that even though the topology induced by the $h_{m, \ell,
  \gamma}$ is the (rather trivial) Cartesian product topology, the
above statement is nontrivial and would immediately fail if the $h_{m,
  \ell, (I, J, \gamma)}$ are multiplied by a suitable
$\gamma$-dependent factor. Moreover, since for a given $\gamma$ we
have only finitely many $I$ and $J$ also the quantities $h_{m, \ell,
  \gamma}(a)$ have sub-factorial growth for $a$ having sub-factorial
growth, i.e.\ for all $\epsilon > 0$ we have a (different) constant
$c_m > 0$ with
\begin{equation}
    \label{eq:SubfactorialGrowthAlsoForhmlgamma}
    h_{m, \ell, \gamma}(a) \le c_m (\gamma!)^\epsilon.
\end{equation}

Let $w \in C_{n+1}^+$. Then we consider the evaluation functional
\begin{equation}
    \label{eq:omegaEvaluation}
    \delta_w\colon
    \mathcal{A}_\hbar(C_{n+1}^+)
    \ni a \; \mapsto \; a(w) \in \mathbb{C}.
\end{equation}
This gives us new seminorms $\norm{\argument}_{m, \ell, \delta_w}$ on
$\mathcal{A}_\hbar(C_{n+1}^+)$ which have shown to be finite by
hand. According to the general construction from
Subsection~\ref{subsec:SecondVersion} they are given by
\begin{equation}
    \label{eq:NewSeminormsViaPoints}
    \norm{a}_{m, \ell, \delta_w}
    =
    \sqrt[2^m]{
      \sum_{(I, J, \gamma)}
      |\basis{f}_{I, J, \gamma}(w)|
      h_{m, \ell, (I, J, \gamma)}(a)
    }.
\end{equation}
From the explicit form of the functions $\basis{f}_{I, J, \gamma}$ the
evaluation at $w$ gives
\begin{equation}
    \label{eq:BasisAtwEvaluation}
    \left|\delta_w\left(\basis{f}_{I, J, \gamma}\right)\right|
    =
    \frac{1}{I!(\gamma - |I|)!J!(\gamma - |J|)!}
    \left|\Pochhammer{\frac{y(w)}{2\hbar}}_\gamma\right|
    \frac{|w^0|^{2\gamma - |I| - |J|} |w|^{I + J}}{y(w)^\gamma}.
\end{equation}
To elaborate further on the new seminorms we recall some basic
estimate on the Pochhammer symbols. Let $z \in \mathbb{C}$ then
\begin{equation}
    \label{eq:EstimatePochhammerSymbols}
    \mathsf{a} \mathsf{b}^\gamma
    \le
    \frac{1}{\gamma!}\left|\Pochhammer{z}_\gamma\right|
    \le
    \mathsf{c}^\gamma
\end{equation}
for constants $\mathsf{b}, \mathsf{c} > 0$ and $\mathsf{a} \ge 0$
depending on $z$. Note that $\mathsf{c}$ can be chosen locally
uniformly in $z$. Moreover, if in addition $-z$ is \emph{not} in
$\mathbb{N}_0$ then also $\mathsf{a} > 0$ and $\mathsf{a}$ and
$\mathsf{b}$ can be chosen locally uniformly in $z$ as well. Clearly,
for $-z \in \mathbb{N}_0$ the Pochhammer symbol is $0$ for large
enough $\gamma$.
\begin{lemma}
    \label{lemma:EquivalentSeminormSystem}%
    Let $\hbar \ne 0$.  The system of seminorms
    $\{\norm{\argument}_{m, \ell, \delta_w}\}_{w \in C_{n+1}^+}$ is
    equivalent to the system of seminorms $\{\norm{\argument}_{m,
      \ell, R}\}_{R > 0}$ where
    \begin{equation}
        \label{eq:NewSeminormsWithR}
        \norm{a}_{m, \ell, R}
        =
        \sqrt[2^m]{
          \sum_{\gamma = 0}^\infty
          \frac{R^\gamma}{\gamma!}
          h_{m, \ell, \gamma}(a)
          }.
    \end{equation}
    On those $a \in \prod_{I, J, \gamma} \mathbb{C}\basis{f}_{I, J,
      \gamma}$ with sub-factorial growth as in
    Lemma~\ref{lemma:GrowthhmlSubfactorialDisc} they take finite
    values. Moreover, it is equivalent to the system
    $\{\norm{\argument}_{m, \ell, \delta_w}\}$ where we only take $w
    \in C_{n+1}^+$ with $y(w) = 1$, provided $\hbar$ is an allowed
    value.
\end{lemma}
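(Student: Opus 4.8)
The plan is to reduce all three claims to two elementary estimates for the basis evaluations $\basis{f}_{I,J,\gamma}(w)$ obtained from \eqref{eq:BasisAtwEvaluation}, using the multinomial theorem and the Pochhammer bounds \eqref{eq:EstimatePochhammerSymbols}. Write $\|w\|_1 = |w^0| + \cdots + |w^n|$. The multinomial theorem gives $\sum_{|I|\le\gamma}\frac{|w^0|^{\gamma-|I|}|w|^{I}}{I!(\gamma-|I|)!} = \frac{\|w\|_1^{\gamma}}{\gamma!}$, so summing \eqref{eq:BasisAtwEvaluation} over $I,J$ and using $\left|\Pochhammer{\frac{y(w)}{2\hbar}}_\gamma\right|\le\gamma!\,\mathsf{c}^{\gamma}$ yields
\[
\sum_{I,J}\left|\basis{f}_{I,J,\gamma}(w)\right|
= \frac{\left|\Pochhammer{\frac{y(w)}{2\hbar}}_\gamma\right|}{y(w)^{\gamma}}\cdot\frac{\|w\|_1^{2\gamma}}{(\gamma!)^2}
\le \frac{1}{\gamma!}\left(\frac{\mathsf{c}\,\|w\|_1^{2}}{y(w)}\right)^{\gamma}.
\]
On the other hand $\frac{1}{I!(\gamma-|I|)!J!(\gamma-|J|)!}\ge\frac{1}{(\gamma!)^2}$, and the exponents occurring in $|w^0|^{2\gamma-|I|-|J|}|w|^{I+J}$ are nonnegative and sum to $2\gamma$, so this product is at least $\bigl(\min_{0\le j\le n}|w^j|\bigr)^{2\gamma}$; together with $\left|\Pochhammer{\frac{y(w)}{2\hbar}}_\gamma\right|\ge\gamma!\,\mathsf{a}\mathsf{b}^{\gamma}$ — valid with $\mathsf{a},\mathsf{b}>0$ whenever $-\frac{y(w)}{2\hbar}\notin\mathbb{N}_0$ — this gives the pointwise bound
\[
\left|\basis{f}_{I,J,\gamma}(w)\right|
\ge \frac{\mathsf{a}}{\gamma!}\left(\frac{\mathsf{b}\,(\min_{0\le j\le n}|w^j|)^{2}}{y(w)}\right)^{\gamma}
\]
for every index triple $(I,J,\gamma)$.

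For the first equivalence I would use that $\norm{a}_{m,\ell,\delta_w}^{2^m} = \sum_{(I,J,\gamma)}\left|\basis{f}_{I,J,\gamma}(w)\right|h_{m,\ell,(I,J,\gamma)}(a)$, $\norm{a}_{m,\ell,R}^{2^m} = \sum_\gamma\frac{R^\gamma}{\gamma!}h_{m,\ell,\gamma}(a)$, and $h_{m,\ell,\gamma}(a) = \sum_{I,J}h_{m,\ell,(I,J,\gamma)}(a)$ from \eqref{eq:Dischmellgamma}. Since all the $h$'s are nonnegative, $h_{m,\ell,(I,J,\gamma)}(a)\le h_{m,\ell,\gamma}(a)$; combining this with the first display and summing over $\gamma$ gives $\norm{a}_{m,\ell,\delta_w}\le\norm{a}_{m,\ell,R}$ with $R = \mathsf{c}\|w\|_1^2/y(w)$, so each $\delta_w$-seminorm is dominated by an $R$-seminorm. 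For the converse, given $R>0$ pick any $y_0>0$ with $-\frac{y_0}{2\hbar}\notin\mathbb{N}_0$ (possible for every $\hbar\neq0$, the forbidden $y_0$ forming a discrete set), let $\mathsf{a},\mathsf{b}>0$ be the constants attached to $z = \frac{y_0}{2\hbar}$, choose $T$ with $\mathsf{b}T^2\ge R$, and set $w = (\sqrt{y_0+nT^2},T,\ldots,T)\in C_{n+1}^+$. Then $y(w)=y_0$ and $\min_{0\le j\le n}|w^j| = T$, so the second display gives $\left|\basis{f}_{I,J,\gamma}(w)\right|\ge\frac{\mathsf{a}}{\gamma!}R^\gamma$ for all $(I,J,\gamma)$, whence $\norm{a}_{m,\ell,\delta_w}^{2^m}\ge\mathsf{a}\,\norm{a}_{m,\ell,R}^{2^m}$; so each $R$-seminorm is dominated by a $\delta_w$-seminorm and the two systems generate the same topology.

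Finiteness on sub-factorial elements is then immediate from \eqref{eq:SubfactorialGrowthAlsoForhmlgamma}: for such $a$ and any $\epsilon>0$ one has $h_{m,\ell,\gamma}(a)\le c_m(\gamma!)^\epsilon$, so $\norm{a}_{m,\ell,R}^{2^m}\le c_m\sum_\gamma\frac{R^\gamma}{(\gamma!)^{1-\epsilon}}<\infty$ once $\epsilon<1$, and $\norm{a}_{m,\ell,\delta_w}\le\norm{a}_{m,\ell,R}<\infty$ by the first direction above. For the restriction to the mass shell $\{w:y(w)=1\}$, one direction is trivial since it is a subfamily; for the other I would repeat the converse argument with the specific choice $y_0=1$, which is admissible precisely when $-\frac{1}{2\hbar}\notin\mathbb{N}_0$, i.e.\ when $\hbar$ is an allowed value in the sense of \eqref{eq:Allowedhbar}. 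Composing with the already established first equivalence, every $\delta_w$-seminorm with arbitrary $w\in C_{n+1}^+$ is then dominated by a $\delta_w$-seminorm with $y(w)=1$.

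The genuinely delicate step is the converse direction: one must produce, for an arbitrarily large prefactor $R$, a \emph{single} point $w$ at which the pointwise lower bound $\left|\basis{f}_{I,J,\gamma}(w)\right|\ge\frac{\mathsf{a}}{\gamma!}R^\gamma$ holds \emph{simultaneously} for all index triples. What makes this work is that $\min_{0\le j\le n}|w^j|$ is unbounded along a fixed level set $y(w)=y_0$: the constraint $y(w)>0$ forces $|w^0|>|w^i|$ for $i\ge1$, so this minimum is attained among the otherwise unconstrained coordinates $w^1,\ldots,w^n$ and may be made as large as one pleases while $y(w)$ stays fixed. It is exactly here, through the requirement $\mathsf{a}>0$, that the exclusion of the non-allowed values of $\hbar$ enters in the mass-shell statement.
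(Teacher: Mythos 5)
Your proof is correct and follows essentially the same strategy as the paper's: combine the evaluation formula $\norm{a}_{m,\ell,\delta_w}^{2^m}=\sum_{(I,J,\gamma)}\left|\basis{f}_{I,J,\gamma}(w)\right|h_{m,\ell,(I,J,\gamma)}(a)$ with the two-sided Pochhammer estimate \eqref{eq:EstimatePochhammerSymbols} to bound $\left|\basis{f}_{I,J,\gamma}(w)\right|$ above and below, then choose $w$ on a fixed level set of $y$ with all coordinates large; the only stylistic difference is that you invoke the exact multinomial identity $\sum_{|I|\le\gamma}\frac{|w^0|^{\gamma-|I|}|w|^I}{I!(\gamma-|I|)!}=\frac{\norm{w}_1^\gamma}{\gamma!}$ where the paper simply bounds the multinomial coefficients by $(n+1)^\gamma$. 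One small slip: with $y(w)=y_0$ the lower bound reads $\left(\frac{\mathsf{b}T^2}{y_0}\right)^\gamma$, so you need $\mathsf{b}T^2\ge Ry_0$ rather than $\mathsf{b}T^2\ge R$ (or just pick $y_0\le 1$, which is always possible among the admissible level sets); this is immaterial since $T$ is free.
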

\begin{proof}
    Of course, we prove the mutual estimates as inequalities in $[0,
    +\infty]$ first and then deduce their finiteness. In fact,
    $\norm{a}_{m, \ell, R} < \infty$ is trivial for those $a$ with
    sub-factorial growth according to
    \eqref{eq:SubfactorialGrowthAlsoForhmlgamma}. To prove the
    equivalence of the two systems we first consider
    \begin{align*}
        \left(\norm{a}_{m, \ell, \delta_w}\right)^{2^m}
        &\le
        \sum_{(I, J, \gamma)}
        \frac{1}{\gamma!^2}
        \binom{\gamma}{\gamma - |I|}\frac{|I|!}{I!}
        \binom{\gamma}{\gamma - |J|}\frac{|J|!}{J!}
        \left|\Pochhammer{\frac{y(w)}{2\hbar}}_\gamma\right|
        \frac{\supnorm{w}^{2\gamma}}{y(w)^\gamma}
        h_{m, \ell, (I, J, \gamma)}(a) \\
        &\stackrel{\eqref{eq:EstimatePochhammerSymbols}}{\le}
        \sum_{\gamma = 0}^\infty
        \frac{1}{\gamma!}
        \mathsf{c}^\gamma
        (n+1)^{2\gamma}
        \frac{\supnorm{w}^{2\gamma}}{y(w)^\gamma}
        \sum_{|I|, |J| \le \gamma}
        h_{m, \ell, (I, J, \gamma)}(a) \\
        &=
        \sum_{\gamma = 0}^\infty
        \frac{R^\gamma}{\gamma!}
        h_{m, \ell, \gamma}(a),
    \end{align*}
    where we have used the standard estimate for the
    $(n+1)$-multinomial coefficients and where 
    \[
    R = \mathsf{c} (n+1)^2
    \frac{\supnorm{w}^2}{y(w)}
    \tag{$*$}
    \]
    with $\mathsf{c}$ begin the constant in
    \eqref{eq:EstimatePochhammerSymbols} for $z =
    \frac{y(w)}{2\hbar}$.  For the reverse estimate let $R > 0$ be
    given. Then we have for an arbitrary point $w$ with $-
    \frac{y(w)}{2\hbar} \not\in \mathbb{N}_0$
    \begin{align*}
        \sum_{\gamma = 0}^\infty
        \frac{R^\gamma}{\gamma!}
        h_{m, \ell, \gamma}(a)
        &\stackrel{\eqref{eq:EstimatePochhammerSymbols}}{\le}
        \frac{1}{\mathsf{a}}
        \sum_{\gamma = 0}^\infty
        \frac{R^\gamma}{\gamma!}
        \frac{1}{\mathsf{b}^\gamma}
        \left|
            \frac{1}{\gamma!}
            \Pochhammer{\frac{y(w)}{2\hbar}}_\gamma
        \right|
        \sum_{|I|, |J| \le \gamma}
        h_{m, \ell, (I, J, \gamma)}(a) \\
        &\le
        \frac{1}{\mathsf{a}}
        \sum_{\gamma = 0}^\infty
        \frac{R^\gamma}{\gamma!}
        \frac{1}{\mathsf{b}^\gamma}
        \left|
            \frac{1}{\gamma!}
            \Pochhammer{\frac{y(w)}{2\hbar}}_\gamma
        \right|
        \sum_{|I|, |J| \le \gamma}
        \binom{\gamma}{\gamma - |I|}\frac{|I|!}{I!}
        \binom{\gamma}{\gamma - |J|}\frac{|J|!}{J!}
        h_{m, \ell, (I, J, \gamma)}(a) \\
        &=
        \frac{1}{\mathsf{a}}
        \sum_{(I, J, \gamma)}
        \frac{1}{I!(\gamma - |I|)!J!(\gamma - |J|)!}
        \left|
            \Pochhammer{\frac{y(w)}{2\hbar}}_\gamma
        \right|
        \frac{R^\gamma}{\mathsf{b}^\gamma}
        h_{m, \ell, (I, J, \gamma)}(a).
    \end{align*}        
    We choose the point $w$ such that in addition to the requirement
    $- \frac{y(w)}{2\hbar} \not\in \mathbb{N}_0$ we have $|w^i| \ge
    \sqrt{\frac{R}{b y(w)}}$ for all $i = 0, \ldots, n$.  Note that we
    always can fulfill this additional requirement since we have
    points $w$ with arbitrarily large positive coefficients $w^i$ but
    fixed $y(w)$. Then we can estimate further and get
    \begin{align*}
        \sum_{\gamma = 0}^\infty
        \frac{R^\gamma}{\gamma!}
        h_{m, \ell, \gamma}(a)
        &\le
        \frac{1}{\mathsf{a}}
        \sum_{(I, J, \gamma)}
        \frac{1}{I!(\gamma - |I|)!J!(\gamma - |J|)!}
        \left|
            \Pochhammer{\frac{y(w)}{2\hbar}}_\gamma
        \right|
        \frac{
          |w^0|^{\gamma - |I|} |w|^I
          |\cc{w}^0|^{\gamma - |J|} |\cc{w}|^J
        }
        {y(w)^\gamma}
        h_{m, \ell, (I, J, \gamma)}(a) \\
        &=
        \frac{1}{\mathsf{a}}
        \norm{a}_{m, \ell, \delta_w}^{2^m}.
    \end{align*}
    If $\hbar$ is an allowed value this shows that even the points
    with $y(w) = 1$ will suffice.
\end{proof}
\begin{corollary}
    \label{corollary:LocallyUniformEstimate}%
    Let $\hbar \ne 0$.  The estimate of $\norm{a}_{m, \ell, \delta_w}$
    by some $\norm{a}_{m, \ell, R}$ is locally uniform in $w$, i.e.\
    for every compact subset $K \subseteq C_{n+1}^+$ there is a $R >
    1$ with
    \begin{equation}
        \label{eq:LocallyUniformEstimate}
        \sup_{w \in K} \norm{a}_{m, \ell, \delta_w}
        \le
        \norm{a}_{m, \ell, R}.
    \end{equation}
\end{corollary}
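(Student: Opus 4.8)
\subsection*{Proof proposal}

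The plan is to upgrade the pointwise estimate already obtained inside the proof of Lemma~\ref{lemma:EquivalentSeminormSystem} to a locally uniform one, simply by controlling each ingredient of the radius $R$ on a compact set. Recall that there, for a single point $w \in C_{n+1}^+$, one established the inequality in $[0,+\infty]$
\[
\left(\norm{a}_{m,\ell,\delta_w}\right)^{2^m}
\le
\sum_{\gamma = 0}^\infty \frac{R(w)^\gamma}{\gamma!}\, h_{m,\ell,\gamma}(a),
\qquad
R(w) = \mathsf{c}(w)\,(n+1)^2\,\frac{\supnorm{w}^2}{y(w)},
\]
where $\mathsf{c}(w)$ denotes the constant $\mathsf{c}$ from the Pochhammer estimate \eqref{eq:EstimatePochhammerSymbols} applied to $z = \frac{y(w)}{2\hbar}$.

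First I would note that $y$ is continuous and strictly positive on the open cone $C_{n+1}^+$, so on a compact subset $K \subseteq C_{n+1}^+$ the quantities $\sup_{w \in K} \supnorm{w}^2$ and $\sup_{w \in K} \frac{1}{y(w)}$ are both finite. Next, as $w$ runs over $K$ the point $z = \frac{y(w)}{2\hbar}$ runs over a compact subset of $\mathbb{C}$, and the remark following \eqref{eq:EstimatePochhammerSymbols} asserts precisely that $\mathsf{c}$ may be chosen locally uniformly in $z$; hence $\sup_{w \in K} \mathsf{c}(w) < \infty$. Putting these three facts together, $R_K := \sup_{w \in K} R(w)$ is finite.

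Finally, since every $h_{m,\ell,\gamma}(a) \ge 0$ and the map $r \mapsto \sum_{\gamma} \frac{r^\gamma}{\gamma!} h_{m,\ell,\gamma}(a)$ is nondecreasing on $[0,\infty)$, I would choose $R = \max(R_K, 2)$ — any value greater than $1$ that dominates $R_K$ — and obtain for every $w \in K$
\[
\left(\norm{a}_{m,\ell,\delta_w}\right)^{2^m}
\le
\sum_{\gamma=0}^\infty \frac{R(w)^\gamma}{\gamma!}\, h_{m,\ell,\gamma}(a)
\le
\sum_{\gamma=0}^\infty \frac{R^\gamma}{\gamma!}\, h_{m,\ell,\gamma}(a)
=
\left(\norm{a}_{m,\ell,R}\right)^{2^m}.
\]
Taking $2^m$-th roots and then the supremum over $w \in K$ yields \eqref{eq:LocallyUniformEstimate}. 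There is no real obstacle here; the only point requiring care is the uniform choice of the Pochhammer constant $\mathsf{c}$, which is exactly the ``locally uniformly in $z$'' clause recorded after \eqref{eq:EstimatePochhammerSymbols}, and everything else is elementary compactness and monotonicity.
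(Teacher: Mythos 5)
Your proof is correct and follows essentially the same route as the paper's one-line argument: read off the $w$-dependent radius $R(w) = \mathsf{c}(w)(n+1)^2\supnorm{w}^2/y(w)$ from the estimate in the proof of Lemma~\ref{lemma:EquivalentSeminormSystem}, use compactness of $K$ together with the locally uniform choice of the Pochhammer constant $\mathsf{c}$ (and positivity of $y$ on the cone) to bound $R(w)$ uniformly, and invoke monotonicity of the exponential-type series in $R$. You are in fact slightly more careful than the paper's proof, which in passing drops the $1/y(w)$ factor from $R$ and does not explicitly mention the requirement $R>1$, both of which you handle correctly.
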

\begin{proof}
    Indeed, from ($*$) in the above proof this is clear by taking $R$
    to be the maximum value of $\mathsf{c} (n+1)^2 \supnorm{w}^2$ for
    $w \in K$.
\end{proof}

From now on we endow the subalgebra $\mathcal{A}_\hbar(C_{n+1}^+)$ of
$\prod_{I, J, \gamma} \mathbb{C}\basis{f}_{I, J, \gamma}$ with the
topology arsing from the additional seminorms $\norm{\argument}_{m,
  \ell, \delta_w}$ for $w \in C_{n+1}^+$ which we know to be finite
according to the lemma. Equivalently, we can add all the seminorms
$\norm{\argument}_{m, \ell, R}$ with $R > 0$.  The completion
$\complete{\mathcal{A}}_\hbar(C_{n+1}^+)$ is the locally convex
algebra according to the general construction from
Theorem~\ref{theorem:Omega} where we choose the set of all evaluation
functionals for the construction of the additional seminorms.

We collect these results and the explicit characterization of the
completion in the following theorem:
\begin{theorem}
    \label{theorem:AlgebraUpstairs}%
    Let $\hbar \ne 0$. Then $\mathcal{A}_\hbar(C_{n+1}^+)$ has the
    following properties:
    \begin{theoremlist}
    \item \label{item:FrakAhbarLC} $\mathcal{A}_\hbar(C_{n+1}^+)$ is a
        Hausdorff locally convex algebra.
    \item \label{item:DeltasContinuous} Every evaluation functional
        $\delta_w$ for all $w \in C_{n+1}^+$ is continuous.
    \item \label{item:FrakAStillFrechet} The completion
        $\complete{\mathcal{A}}_\hbar(C_{n+1}^+)$ is a Fréchet
        algebra.
    \item \label{item:FrakAIsSubfactorialStuff} The completion
        $\complete{\mathcal{A}}_\hbar(C_{n+1}^+)$ coincides with those
        $a \in \prod_{(I, J, \gamma)} \mathbb{C} \basis{f}_{I, J,
          \gamma}$ having sub-factorial growth
        \eqref{eq:aSubfactorialInGamma} with respect to $\gamma$. An
        equivalent system of seminorms is given by
        $\{\norm{\argument}_\epsilon\}_{0 < \epsilon < 1}$ where
        \begin{equation}
            \label{eq:YetAnotherEquivalentSystemOfSeminorms}
            \norm{a}_\epsilon
            =
            \sup_{(I, J, \gamma)}
            \frac{|a_{I, J, \gamma}|}{(\gamma!)^\epsilon}.
        \end{equation}
    \item \label{item:AlgebraObenIsKoetheSpace} As Fréchet space
        $\complete{\mathcal{A}}_\hbar(C_{n+1}^+)$ is isomorphic to a
        Köthe space of sub-factorial growth. It is strongly nuclear
        and the Schauder basis $\{\basis{f}_{I, J, \gamma}\}$ is
        absolute.
    \item \label{item:CompletionDiscIsContinuousFun} The completion
        $\complete{\mathcal{A}}_\hbar(C_{n+1}^+)$ can be continuously
        included into the continuous functions $C^0(C_{n+1}^+)$.
    \end{theoremlist}
\end{theorem}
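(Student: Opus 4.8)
For the first three parts, I would simply invoke Theorem~\ref{theorem:Omega} with $\Omega = \{\delta_w \mid w \in C_{n+1}^+\}$. By Lemma~\ref{lemma:EquivalentSeminormSystem} every $\norm{\argument}_{m, \ell, \delta_w}$ is finite on $\mathcal{A}_\hbar(C_{n+1}^+)$ — finite linear combinations trivially have sub-factorial growth — and the $\norm{\argument}_{m, \ell, (I, J, \gamma)}$ are finite by Proposition~\ref{proposition:FirstTopologyTooCoarse}, so $\mathcal{A}_{\Omega\textrm{-}\mathrm{nice}} = \mathcal{A}_\hbar(C_{n+1}^+)$ and Theorem~\ref{theorem:Omega} hands us the locally convex algebra structure \refitem{item:FrakAhbarLC}, the continuity of the $\delta_w$ \refitem{item:DeltasContinuous}, the unconditional Schauder basis $\{\basis{f}_{I, J, \gamma}\}$, and — once I have reduced to countably many seminorms — the Fréchet property \refitem{item:FrakAStillFrechet}. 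The Hausdorff property comes from $\norm{a}_{0, 0, (I, J, \gamma)} = |a_{I, J, \gamma}|$. For the countability I use Lemma~\ref{lemma:EquivalentSeminormSystem} once more: the $\norm{\argument}_{m, \ell, (I, J, \gamma)}$ already form a countable family, and the uncountable family $\{\norm{\argument}_{m, \ell, \delta_w}\}_w$ is equivalent to $\{\norm{\argument}_{m, \ell, R}\}_{R > 0}$, which is monotone in $R$ and hence cofinally described by $R \in \mathbb{N}$.

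The characterisation \refitem{item:FrakAIsSubfactorialStuff} I would prove by a double inclusion. If $a \in \prod_{(I, J, \gamma)} \mathbb{C}\basis{f}_{I, J, \gamma}$ has sub-factorial growth \eqref{eq:aSubfactorialInGamma}, then Lemma~\ref{lemma:GrowthhmlSubfactorialDisc} and \eqref{eq:SubfactorialGrowthAlsoForhmlgamma} make each $h_{m, \ell, \gamma}(a)$ sub-factorial in $\gamma$, so $\norm{a}_{m, \ell, R}^{2^m} = \sum_\gamma \frac{R^\gamma}{\gamma!} h_{m, \ell, \gamma}(a) \le c_m \sum_\gamma \frac{R^\gamma}{(\gamma!)^{1 - \epsilon}} < \infty$ for any $0 < \epsilon < 1$, and the $\norm{\argument}_{m, \ell, (I, J, \gamma)}$ are finite as well, so $a$ lies in the completion. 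Conversely, iterating the recursion with the estimate $C^{(I, J, \gamma)}_{(I, J, \gamma), \boldsymbol{\cdot}} \ge 1$ of Lemma~\ref{lemma:ConstantsForTheDisc},\refitem{item:ConstantsAtLeastOne} gives $h_{m, 0, (I, J, \gamma)}(a) \ge |a_{I, J, \gamma}|^{2^m}$, hence $h_{m, 0, \gamma}(a) \ge |a_{I, J, \gamma}|^{2^m}$, and taking $R = 1$ yields $|a_{I, J, \gamma}| \le \sqrt[2^m]{\gamma!}\, \norm{a}_{m, 0, 1}$; choosing $m$ with $2^{-m} \le \epsilon$ turns this into sub-factorial growth. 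The same two estimates show that the system $\{\norm{\argument}_\epsilon\}_{0 < \epsilon < 1}$ from \eqref{eq:YetAnotherEquivalentSystemOfSeminorms} is coarser than the original topology; as both make $\complete{\mathcal{A}}_\hbar(C_{n+1}^+)$ a Fréchet space over the same underlying vector space, the open mapping theorem forces them to agree, exactly as in Proposition~\ref{proposition:PolynomialsSecondVersionZwei}. These seminorms then display $\complete{\mathcal{A}}_\hbar(C_{n+1}^+)$ as a Köthe space over the index set of triples with a Köthe matrix depending only on $\gamma$; since there are at most $(\gamma + 1)^{2n}$ triples for each fixed $\gamma$, this polynomial multiplicity is harmless and \refitem{item:AlgebraObenIsKoetheSpace} follows from the facts on Köthe spaces of sub-factorial growth recalled in Appendix~\ref{sec:SubfactorialGrowth}.

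The substantial part is \refitem{item:CompletionDiscIsContinuousFun}. Since $\{\basis{f}_{I, J, \gamma}\}$ is a Schauder basis and $\delta_w$ is continuous, the continuous extension of $\delta_w$ to the completion is the pointwise series $a(w) = \sum_{(I, J, \gamma)} a_{I, J, \gamma}\, \basis{f}_{I, J, \gamma}(w)$, and I first want this to converge absolutely and locally uniformly on $C_{n+1}^+$: combining the multinomial identity $\sum_{|I| \le \gamma} \frac{1}{I!(\gamma - |I|)!}\, |w^0|^{\gamma - |I|} |w|^I = \frac{1}{\gamma!}\bigl(|w^0| + \cdots + |w^n|\bigr)^\gamma$ with \eqref{eq:BasisAtwEvaluation} and the Pochhammer bound \eqref{eq:EstimatePochhammerSymbols} gives $\sum_{I, J} |\basis{f}_{I, J, \gamma}(w)| \le R^\gamma/\gamma!$ with $R$ locally uniform in $w$, so that sub-factorial growth lets $\sum_\gamma R^\gamma/(\gamma!)^{1 - \epsilon}$ dominate the tails; hence $a(\argument) \in C^0(C_{n+1}^+)$, and $\sup_{w \in K} |a(w)| \le \sup_{w \in K} \norm{a}_{0, 0, \delta_w} \le \norm{a}_{0, 0, R_K}$ from Corollary~\ref{corollary:LocallyUniformEstimate} makes the resulting linear map $\complete{\mathcal{A}}_\hbar(C_{n+1}^+) \to C^0(C_{n+1}^+)$ continuous. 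Injectivity is where the real work lies. Assuming $a(w) = 0$ for all $w \in C_{n+1}^+$, I would fix $w_0$ and regroup the absolutely convergent series along the ray $\{t w_0 \mid t > 0\}$ by $\gamma$ to get
\[
a(t w_0)
=
\sum_{\gamma = 0}^\infty
\Pochhammer{\frac{t^2 y(w_0)}{2\hbar}}_\gamma\, g_\gamma(w_0),
\qquad
g_\gamma(w)
=
\frac{1}{y(w)^\gamma}
\sum_{I, J}
\frac{a_{I, J, \gamma}\, \basis{e}_{I, J, \gamma}(w)}
{I!\,(\gamma - |I|)!\, J!\,(\gamma - |J|)!},
\]
where each $g_\gamma$ is homogeneous of degree zero. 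The multinomial bound and sub-factorial growth give $|g_\gamma(w_0)| \le c_\epsilon\, \tilde R^\gamma /(\gamma!)^{2 - \epsilon}$, which, against $|\Pochhammer{s}_\gamma| \le \mathsf{c}^\gamma \gamma!$ from \eqref{eq:EstimatePochhammerSymbols}, shows that $s \mapsto \sum_\gamma \Pochhammer{s}_\gamma g_\gamma(w_0)$ is an \emph{entire} function of $s$. It vanishes on the ray $s = t^2 y(w_0)/(2\hbar)$, $t > 0$, hence identically; evaluating at $s = -m$ for $m \in \mathbb{N}_0$, where $\Pochhammer{-m}_\gamma = 0$ for $\gamma > m$, leaves the \emph{finite} triangular system $\sum_{\gamma = 0}^{m} (-1)^\gamma \frac{m!}{(m - \gamma)!}\, g_\gamma(w_0) = 0$, which forces $g_\gamma(w_0) = 0$ for all $\gamma$ and all $w_0 \in C_{n+1}^+$; since $g_\gamma$ is a finite linear combination of the linearly independent monomials $\basis{e}_{I, J, \gamma}$ (Lemma~\ref{lemma:LinearIndependent}), all $a_{I, J, \gamma}$ vanish. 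The main obstacle is carrying out this regrouping-and-resummation cleanly — in particular making sure the $\gamma$-grouped series converges, and is entire, for \emph{all} complex $s$ and not merely on the ray of values $y(w)/(2\hbar)$ actually realised by points $w \in C_{n+1}^+$.
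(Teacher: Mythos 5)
Your proof of parts \refitem{item:FrakAhbarLC}--\refitem{item:AlgebraObenIsKoetheSpace} tracks the paper closely: the appeal to Theorem~\ref{theorem:Omega} and Lemma~\ref{lemma:EquivalentSeminormSystem} for the locally convex/Fréchet structure, the double inclusion for the sub-factorial characterisation, and the open-mapping-theorem comparison of seminorm systems are all as in the paper's proof. One small improvement you make in \refitem{item:FrakAIsSubfactorialStuff}: the paper estimates $|a_{I,J,\gamma}|^{2^m}$ through $h_{m,0,\gamma}$ and picks up a factor $(\gamma+1)^{k_m}$ from iterating \eqref{eq:EstimateSumhmellSquares}, whereas you work directly with $h_{m,0,(I,J,\gamma)}(a)\ge |a_{I,J,\gamma}|^{2^m}$ (iterating the bound $C^{(I,J,\gamma)}_{(I,J,\gamma),\boldsymbol{\cdot}}\ge 1$ of Lemma~\ref{lemma:ConstantsForTheDisc}) and dispense with the polynomial prefactor entirely; this is a slightly cleaner route to $|a_{I,J,\gamma}|\le(\gamma!)^{1/2^m}\norm{a}_{m,0,1}$.

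The genuine divergence is in part \refitem{item:CompletionDiscIsContinuousFun}. The paper disposes of this in a single sentence, invoking Corollary~\ref{corollary:LocallyUniformEstimate}, which indeed establishes that $a\mapsto a(\argument)$ is a well-defined \emph{continuous} map into $C^0(C_{n+1}^+)$ — but it says nothing about injectivity. You are right to flag that ``continuously included'' is a stronger claim and that something must be proved: the elements of $\complete{\mathcal{A}}_\hbar(C_{n+1}^+)$ are, by construction, abstract coefficient sequences with sub-factorial growth, and it is not a priori clear that the pointwise sum $\sum_{(I,J,\gamma)}a_{I,J,\gamma}\basis{f}_{I,J,\gamma}$ cannot vanish identically for a nonzero sequence. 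Your injectivity argument — regroup the absolutely convergent series along rays $t\mapsto tw_0$, use the degree-zero homogeneity of $\basis{e}_{I,J,\gamma}/y^\gamma$ so that only the Pochhammer factor feels $t$, extend $s\mapsto\sum_\gamma\Pochhammer{s}_\gamma g_\gamma(w_0)$ to an entire function using the sub-factorial bound on $g_\gamma$ against the Pochhammer estimate \eqref{eq:EstimatePochhammerSymbols}, then evaluate at $s=-m$ where the Pochhammer symbol truncates to a finite triangular system, and finally invoke Lemma~\ref{lemma:LinearIndependent} — is correct and supplies exactly the missing step. The paper could alternatively have obtained injectivity from the extension to $\HolAntiHol(C_{n+1}^+\times C_{n+1}^+)$ constructed in Proposition~\ref{proposition:HolAntiHolExtension} together with the uniqueness of such extensions, but it does not spell this out either; your self-contained entire-function argument is a worthwhile alternative.
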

\begin{proof}
    The first and second part is clear by construction and
    Lemma~\ref{lemma:EquivalentSeminormSystem}. By the same lemma we
    note that the topology is determined by countably many seminorms
    $\norm{\argument}_{m, \ell, R_n}$ with an increasing sequence $R_n
    \longrightarrow \infty$. Hence the completion is Fréchet. For the
    fourth part we already know that the elements with sub-factorial
    growth belong to the completion, this was also mentioned in
    Lemma~\ref{lemma:EquivalentSeminormSystem}. To show the converse,
    let $a \in \complete{\mathcal{A}}_\hbar(C_{n+1}^+)$ be given. From
    the estimate \eqref{eq:EstimateSumhmellSquares} we get by
    induction that
    \[
    |a_{I, J, \gamma}|^{2^m}
    \le
    (\gamma + 1)^{k_m} h_{m, 0, \gamma}(a)
    \]
    with some universal exponent $k_m > 0$ depending only on the
    dimension and $m$. Thus the definition of the seminorm
    $\norm{\argument}_{m, \ell, R}$ as in \eqref{eq:NewSeminormsWithR}
    gives the estimate
    \[
    |a_{I, J, \gamma}|^{2^m}
    \le
    (\gamma + 1)^{k_m}
    h_{m, 0, \gamma}(a)
    \le
    (\gamma + 1)^{k_m}
    \frac{\gamma!}{R^\gamma}
    \norm{a}_{m, 0, R}^{2^m},
    \tag{$*$}
    \]
    for all $R > 0$ and all $m \in \mathbb{N}_0$.  Since $(\gamma +
    1)^{k_m}$ and $R^{-\gamma}$ clearly grow sub-factorially we can
    take the $2^m$-th root of this inequality to see that $|a_{I, J,
      \gamma}|$ grows sub-factorially, too. Now we note that
    $\norm{a}_\epsilon$ is clearly a well-defined seminorm for those
    $a$ with sub-factorial growth.  Moreover, ($*$) shows that the
    seminorm $\norm{a}_\epsilon$ with $\epsilon = \frac{1}{2^m}$ can
    be estimated by $\norm{a}_{m, 0, R}$ for $R > 1$. Clearly, those
    $\epsilon$ are sufficient to conclude that the original topology
    is finer than the one induced by the system of all the
    $\norm{\argument}_\epsilon$. For the converse estimates one
    carefully examines the proof of
    Lemma~\ref{lemma:GrowthhmlSubfactorialDisc} to conclude that the
    constant $c_m$ in \eqref{eq:EstimateGrowthhml} can be chosen to be
    a numerical factor times the $2^m$-th power of
    $\norm{a}_{\epsilon'}$ for an appropriate $\epsilon' > 0$. From
    this we get the reverse estimate of $\norm{a}_{m, \ell, R}$ by
    some $\norm{a}_\epsilon$. It is also clear from general arguments
    as on a Fréchet space any coarser Fréchet topology is necessarily
    the same by the open mapping theorem. This proves the fourth
    part. Then the fifth part is a simple consequence of
    \eqref{eq:YetAnotherEquivalentSystemOfSeminorms} and the general
    facts from Appendix~\ref{sec:SubfactorialGrowth}.  The last part
    is clear by Corollary~\ref{corollary:LocallyUniformEstimate}.
\end{proof}
\begin{remark}
    \label{remark:NotSoTrivial}%
    While the first version gave us just the uninteresting Cartesian
    product topology, the second is now much more subtle. In
    particular, the precise form of the $h_{m, \ell, \gamma}$ enter
    the game as we still have to guarantee the continuity of the
    product via Theorem~\ref{theorem:Omega}.
\end{remark}

The functions in $\complete{\mathcal{A}}_\hbar(C_{n+1}^+)$ can be
characterized further: we consider the diagonal map
\begin{equation}
    \label{eq:DiagonalCnPlusEins}
    \Delta\colon
    C_{n+1}^+ \ni z
    \; \mapsto \;
    (z, z) \in C_{n+1}^+ \times C_{n+1}^+.  
\end{equation}
Moreover, denote by $\HolAntiHol(C_{n+1}^+ \times C_{n+1}^+)$ the
functions which are holomorphic in the first and anti-holomorphic in
the second variables. Then we have for every index triple $(P, Q,
\alpha)$ the function $\hat{\basis{e}}_{P, Q, \alpha} \in
\HolAntiHol(C_{n+1}^+ \times C_{n+1}^+)$ given by $\hat{\basis{e}}_{P,
  Q, \alpha}(u, v) = (u^0)^{\alpha - |P|} u^P (\cc{v}^0)^{\alpha -
  |Q|} \cc{v}^Q$ such that $\Delta^* \hat{\basis{e}}_{P, Q, \alpha} =
\basis{e}_{P, Q, \alpha}$. Similarly, $\hat{y}(u, v) = u^0\cc{v}^0 -
u^1 \cc{v}^1 - \cdots - u^n \cc{v}^n$ satisfies $\Delta^* \hat{y} =
y$. Since $\hat{y}$ is still different from $0$ on $C_{n+1}^+ \times
C_{n+1}^+$ we have also $\hat{\basis{f}}_{P, Q, \alpha} \in
\HolAntiHol(C_{n+1}^+ \times C_{n+1}^+)$ by the analogous formulas.
Thus every element $a \in \mathcal{A}_\hbar(C_{n+1}^+)$ has a
(necessarily unique) extension $\hat{a}$ to a function in
$\HolAntiHol(C_{n+1}^+ \times C_{n+1}^+)$, i.e. $\Delta^* \hat{a} =
a$. This still holds for the completion:
\begin{proposition}
    \label{proposition:HolAntiHolExtension}%
    Under the identification of
    $\complete{\mathcal{A}}_\hbar(C_{n+1}^+)$ with functions on
    $C_{n+1}^+$, any function $a \in
    \complete{\mathcal{A}}_\hbar(C_{n+1}^+)$ has an extension $\hat{a}
    \in \HolAntiHol(C_{n+1}^+ \times C_{n+1}^+)$. In particular, $a$
    is real-analytic on $C_{n+1}^+$.
\end{proposition}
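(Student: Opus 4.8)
The plan is to obtain the extension as the term-by-term holomorphic--antiholomorphic continuation of the Schauder expansion of $a$. Writing $a = \sum_{(I, J, \gamma)} a_{I, J, \gamma} \basis{f}_{I, J, \gamma} \in \complete{\mathcal{A}}_\hbar(C_{n+1}^+)$, I set formally $\hat a = \sum_{(I, J, \gamma)} a_{I, J, \gamma} \hat{\basis{f}}_{I, J, \gamma}$ on $C_{n+1}^+ \times C_{n+1}^+$ and first show that this series converges absolutely and locally uniformly. Granting this, each partial sum lies in $\HolAntiHol(C_{n+1}^+ \times C_{n+1}^+)$, and since this space is closed under locally uniform limits --- apply Weierstrass' convergence theorem to the genuinely holomorphic functions $(u, w) \mapsto \hat a(u, \cc{w})$, whose joint holomorphy is equivalent to holomorphy in the first and antiholomorphy in the second argument --- the limit $\hat a$ lies in $\HolAntiHol(C_{n+1}^+ \times C_{n+1}^+)$. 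Because $\Delta^* \hat{\basis{f}}_{I, J, \gamma} = \basis{f}_{I, J, \gamma}$ holds term by term while the Schauder expansion of $a$ converges to $a$ uniformly on compacta of $C_{n+1}^+$ --- the Fréchet topology being finer than that of $C^0(C_{n+1}^+)$ by Theorem~\ref{theorem:AlgebraUpstairs}, \refitem{item:CompletionDiscIsContinuousFun} --- evaluation along the diagonal yields $\Delta^* \hat a = a$, as required.

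The only real work is the locally uniform convergence, and here I would recycle the estimates already in place. Fix a compact set $K \times K' \subseteq C_{n+1}^+ \times C_{n+1}^+$. Since $\hat y$ is continuous and, by a Cauchy--Schwarz estimate, nowhere vanishing on $C_{n+1}^+ \times C_{n+1}^+$ (as already noted in the text), we have $m := \inf_{K \times K'} |\hat y| > 0$; put also $M := \sup_{w \in K \cup K'} \max_i |w^i|$. From the explicit formula for $\hat{\basis{f}}_{I, J, \gamma}$, the Pochhammer bound \eqref{eq:EstimatePochhammerSymbols} (with $\mathsf{c}$ chosen uniformly over $K \times K'$ in the argument $\hat y(u,v)/(2\hbar)$), and the multinomial estimate $\gamma! / (I! (\gamma - |I|)!) \le (n+1)^\gamma$, one obtains
\[
    \left| \hat{\basis{f}}_{I, J, \gamma}(u, v) \right|
    \le
    \frac{1}{\gamma!}
    \left( \frac{(n+1)^2 \mathsf{c} M^2}{m} \right)^{\!\gamma}
    =: \frac{R^\gamma}{\gamma!}
\]
uniformly for $(u, v) \in K \times K'$, with $R$ depending only on $K \cup K'$; this is the same estimate that underlies Lemma~\ref{lemma:EquivalentSeminormSystem}, now carried out on the doubled domain. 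Combining it with the sub-factorial bound $|a_{I, J, \gamma}| \le c_0 (\gamma!)^\epsilon$ that characterizes elements of the completion (Theorem~\ref{theorem:AlgebraUpstairs}, \refitem{item:FrakAIsSubfactorialStuff}) and the polynomial bound on the number of pairs $(I, J)$ with $|I|, |J| \le \gamma$, the series $\hat a$ is dominated on $K \times K'$ by a constant multiple of $\sum_{\gamma} (\gamma + 1)^{2n} R^\gamma / (\gamma!)^{1 - \epsilon}$, which converges as soon as $\epsilon < 1$. This establishes the absolute and locally uniform convergence.

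For the final assertion, real-analyticity of $a$ on $C_{n+1}^+$ is immediate once $a = \Delta^* \hat a$ is known: near any point $(z_0, z_0)$ of the diagonal the function $(u, w) \mapsto \hat a(u, \cc{w})$ is jointly holomorphic, hence a convergent power series in $(u, w)$, so $\hat a$ is real-analytic in the underlying real coordinates $(\Re u, \Im u, \Re v, \Im v)$; restricting along $u = v = z$ yields a real-analytic function of $(\Re z, \Im z)$.

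I do not anticipate a genuine obstacle here: the argument is essentially bookkeeping, gluing together the Pochhammer estimate \eqref{eq:EstimatePochhammerSymbols}, the sub-factorial description of $\complete{\mathcal{A}}_\hbar(C_{n+1}^+)$, and Weierstrass' theorem. The one point that deserves a line of care is the lower bound $m > 0$ on $|\hat y|$ off the diagonal, which is precisely what makes $\hat{\basis{f}}_{I, J, \gamma}$ defined and bounded on all compacta of $C_{n+1}^+ \times C_{n+1}^+$; everything else runs parallel to the single-variable estimates already established for the seminorms $\norm{\argument}_{m, \ell, \delta_w}$.
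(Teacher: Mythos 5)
Your proof is correct and is exactly the ``more direct way'' the authors mention in the paragraph immediately following their proof but choose not to write out. The paper's proof and yours start from the same key estimate on $|\hat{\basis{f}}_{P,Q,\alpha}(u,v)|$ (your $R^\gamma/\gamma!$ bound is literally their displayed bound, with $M^2/m$ standing in for $\supnorm{u}\supnorm{v}/|\hat y(u,v)|$), but they diverge after that. The paper bounds the partial sums of $\hat a$ by the seminorm $\norm{a}_{0,0,R}$, which simultaneously proves continuity of the off-diagonal evaluation functionals $\delta_{(u,v)}\colon a \mapsto \hat a(u,v)$ on $\complete{\mathcal{A}}_\hbar(C_{n+1}^+)$ and gives the locally uniform convergence needed for Weierstrass; the pointwise definition of $\hat a$ via the extended functional is then what makes $\Delta^*\hat a = a$ immediate. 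You instead invoke the sub-factorial characterization of the completion from Theorem~\ref{theorem:AlgebraUpstairs}, \refitem{item:FrakAIsSubfactorialStuff}, dominate the series by $\sum_\gamma (\gamma+1)^{2n} R^\gamma/(\gamma!)^{1-\epsilon}$, and recover $\Delta^*\hat a = a$ by restricting the locally uniformly convergent Schauder expansion to the diagonal. Both are clean; what you lose relative to the paper is exactly what the authors point out --- your route does not produce the continuity of $\delta_{(u,v)}$ as a functional on $\complete{\mathcal{A}}_\hbar(C_{n+1}^+)$, a fact the paper wants later (it underlies the remark that the topology of $\complete{\mathcal{A}}_\hbar(C_{n+1}^+)$ is finer than the locally uniform topology of $\HolAntiHol(C_{n+1}^+\times C_{n+1}^+)$, and it is reused in the proof of Theorem~\ref{theorem:DiscAlgebra}, \refitem{item:DiscFunctionsRealAnalytic}). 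For the proposition as stated, however, that extra information is not needed, and your argument is complete.

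One small point of presentation: the Cauchy--Schwarz step showing $\hat y \ne 0$ on $C_{n+1}^+\times C_{n+1}^+$ is stated in the paper without proof just before the proposition; you are right to flag it as the thing that keeps the $\hat{\basis{f}}_{I,J,\gamma}$ defined, and the estimate
\[
    |\hat y(u,v)| \ge |u^0||v^0| - \sum_{i=1}^n |u^i||v^i| > 0
\]
is exactly what you have in mind.
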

\begin{proof}
    First we note that for a basis vector we get the estimate
    \[
    \left|
        \hat{\basis{f}}_{P, Q, \alpha}(u, v)
    \right|
    \le
    \frac{1}{\alpha!}
    (n+1)^{2\alpha}
    \mathsf{c}(u, v)^\alpha
    \frac{\supnorm{u}^\alpha \supnorm{v}^\alpha}
    {|\hat{y}(u, v)|^\alpha},
    \]
    where $\mathsf{c}(u, v)$ is the constant in the estimate
    \eqref{eq:EstimatePochhammerSymbols} of the Pochhammer symbol
    $\Pochhammer{\frac{\hat{y}(u, v)}{2\hbar}}_\gamma$. Since this
    constant can be chosen to be locally uniform, for every compact
    subset $K \subseteq C_{n+1}^+ \times C_{n+1}^+$ we can define
    \[
    R = 
    \max_{(u, v) \in K}
    (n+1)^2
    \mathsf{c}(u, v)
    \frac{\supnorm{u}\supnorm{v}}{|\hat{y}(u, v)|}.
    \]
    On $\mathcal{A}_\hbar(C_{n+1}^+)$ we can define the evaluation
    functional $\delta_{(u, v)}$ of $a$ as $\delta_{(u, v)}(a) =
    \hat{a}(u, v)$ for $u, v \in C_{n+1}^+$. We claim that this is
    continuous. Indeed, writing $a = \sum_{(P, Q, \alpha)} a_{P, Q,
      \alpha} \basis{f}_{P, Q, \alpha}$ we estimate
    \[
    \left|\delta_{(u, v)}(a)\right|
    \le
    \sum_{(P, Q, \alpha)}
    |a_{P, Q, \alpha}|
    \left|
        \hat{\basis{f}}_{P, Q, \alpha}(u, v)
    \right|
    \le
    \sum_{\alpha = 0}^\infty
    h_{0, 0, \alpha}(a)
    \frac{R^\alpha}{\alpha!}
    =
    \norm{a}_{0, 0, R}
    \tag{$*$}
    \]
    for all $(u, v) \in K$. Thus $\delta_{(u, v)}$ is continuous and
    extends continuously to $\complete{\mathcal{A}}_\hbar(C_{n+1}^+)$
    and hence we can define $\hat{a}$ pointwise as $\hat{a}(u, v) =
    \delta_{(u, v)}(a)$. Since $a = \sum_{(P, Q, \alpha)} a_{P, Q,
      \alpha} \basis{f}_{P, Q, \alpha}$ converges for every $a \in
    \complete{\mathcal{A}}_\hbar(C_{n+1}^+)$ we have, thanks to ($*$),
    a locally uniform approximation of $\hat{a}$ by elements of
    $\HolAntiHol(C_{n+1}^+ \times C_{n+1}^+)$. Hence also $\hat{a} \in
    \HolAntiHol(C_{n+1}^+ \times C_{n+1}^+)$.
\end{proof}

Note that we get a rather strong regularity property for the functions
$a \in \complete{\mathcal{A}}_\hbar(C_{n+1}^+)$: not every
real-analytic function on $C_{n+1}^+$ has such an extension. In
general, only an extension in a neighborhood of the diagonal is
possible. Moreover, from the last estimate ($*$) in the proof we note
that the topology of $\complete{\mathcal{A}}_\hbar(C_{n+1}^+)$ is
finer than the canonical topology of $\HolAntiHol(C_{n+1}^+ \times
C_{n+1}^+)$, i.e.\ the locally uniform one. Thus we get a nontrivial
refinement of Theorem~\ref{theorem:AlgebraUpstairs},
\refitem{item:CompletionDiscIsContinuousFun}. Finally, the
characterization of the growth as in
Theorem~\ref{theorem:AlgebraUpstairs},
\refitem{item:FrakAIsSubfactorialStuff}, gives another (and more
direct) way to prove the proposition, however, the continuity
statement of the evaluation functionals $\delta_{(u, v)}$ requires the
above argument.

%
%

\subsection{The algebra on the disk}
\label{subsec:AlgebraOnDisc}

In a last step we have to pass from $C_{n+1}^+$ to the disk
$\mathbb{D}_n$. In the formal power series setting the idea of
\cite{bordemann.brischle.emmrich.waldmann:1996a,
  bordemann.brischle.emmrich.waldmann:1996b} was to show that the
classical vanishing ideal of the hypersurface $y = 1$ coincides with
the two-sided ideal generated by $y - 1$ with respect to $\tildewick$,
simply because $\tildewick$ with a radial function is just the
pointwise product.

Before we can set $y = 1$ in our situation, we have to check a few
compatibilities: since we have now only a \emph{subalgebra} of
$C^\infty(C_{n+1}^+)[[\lambda]]$ it is not clear whether the ideal
generated by $y -1$ is still the vanishing ideal or not. Here we will
need that $\hbar$ is an allowed value:
\begin{lemma}
    \label{lemma:VanishingIdeal}%
    Let $\hbar$ be an allowed value.  In
    $\mathcal{A}_\hbar(C_{n+1}^+)$ the vanishing ideal of the
    hypersurface $y = 1$ coincides with the ideal generated by $y -
    1$.
\end{lemma}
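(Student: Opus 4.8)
The plan is to prove the two inclusions separately, after first simplifying the $\tildewick$-ideal. Since $y$, hence $y-1$, is radial and all elements of $\mathcal{A}_\hbar(C_{n+1}^+)$ are $\mathrm{U}(1)$-invariant, the $\tildewick$-product with $y-1$ is just the pointwise product; combined with associativity this gives $a\tildewick(y-1)\tildewick b=(y-1)\cdot(a\tildewick b)$ for all $a,b$. Hence the two-sided $\tildewick$-ideal generated by $y-1$ is exactly the set $(y-1)\cdot\mathcal{A}_\hbar(C_{n+1}^+)$ of ordinary products — this does lie in $\mathcal{A}_\hbar(C_{n+1}^+)$ because $(y-1)\cdot c=(y-1)\tildewick c$, once one checks $y-1\in\mathcal{A}_\hbar(C_{n+1}^+)$, which holds since $y-1=2\hbar\bigl(\basis{f}_{0,0,1}-\sum_{i=1}^n\basis{f}_{\varepsilon_i,\varepsilon_i,1}\bigr)-\basis{f}_{0,0,0}$, writing $\varepsilon_i\in\mathbb{N}_0^n$ for the $i$-th unit multiindex. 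Every such product visibly vanishes on $\{y=1\}$, so this ideal is contained in the vanishing ideal, and the real content is the reverse inclusion: every $a\in\mathcal{A}_\hbar(C_{n+1}^+)$ vanishing on $\{y=1\}$ is of the form $(y-1)\cdot b$ with $b\in\mathcal{A}_\hbar(C_{n+1}^+)$.

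For this I would first put the elements of $\mathcal{A}_\hbar(C_{n+1}^+)$ into a shape adapted to $y$. By \eqref{eq:Syr} the basis function $\basis{f}_{P,Q,\gamma}$ is a combinatorial constant times $\Pochhammer{\frac{y}{2\hbar}}_\gamma$ times the homogeneous (degree zero, i.e.\ pulled back from $\mathbb{D}_n$) function $u_{P,Q,\gamma}:=\basis{e}_{P,Q,\gamma}/y^\gamma$. Set $W_\gamma=\spann\{u_{P,Q,\gamma}\mid |P|,|Q|\le\gamma\}$. The two facts I need, both elementary, are: (i) $W_\gamma\subseteq W_{\gamma+1}$ — multiply numerator and denominator of $u_{P,Q,\gamma}$ by $y$ and use $y=\basis{e}_{0,0,1}-\sum_{i=1}^n\basis{e}_{\varepsilon_i,\varepsilon_i,1}$, which expresses $u_{P,Q,\gamma}$ through $u_{P,Q,\gamma+1}$ and the $u_{P+\varepsilon_i,Q+\varepsilon_i,\gamma+1}$; and (ii), via the linear independence of Lemma~\ref{lemma:LinearIndependent}, every $a\in\mathcal{A}_\hbar(C_{n+1}^+)$ is \emph{uniquely} a finite sum $a=\sum_{\gamma=0}^N\Pochhammer{\frac{y}{2\hbar}}_\gamma g_\gamma$ with $g_\gamma\in W_\gamma$, while conversely any such sum lies in $\mathcal{A}_\hbar(C_{n+1}^+)$. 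In other words, an element of the algebra is a polynomial in $y$ whose coefficients in this ``Pochhammer basis'' lie in the \emph{increasing} chain $W_0\subseteq W_1\subseteq\cdots$.

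Now given $a=\sum_{\gamma=0}^N\Pochhammer{\frac{y}{2\hbar}}_\gamma g_\gamma$ with $g_N\ne0$ and $a|_{y=1}=0$ (the case $a=0$ being trivial), I would induct on $N$. Restricting to the level set $\{y=1\}$, which maps onto $\mathbb{D}_n$, the functions $g_\gamma$ satisfy $\sum_{\gamma=0}^N\Pochhammer{\frac{1}{2\hbar}}_\gamma g_\gamma=0$ on $\mathbb{D}_n$. This is precisely where the hypothesis that $\hbar$ be \emph{allowed} enters: $\Pochhammer{\frac{1}{2\hbar}}_N\ne0$, so $g_N$ is a linear combination of $g_0,\dots,g_{N-1}$ and hence lies in $W_{N-1}$; in particular $\Pochhammer{\frac{y}{2\hbar}}_{N-1}g_N\in\mathcal{A}_\hbar(C_{n+1}^+)$. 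Using the Pochhammer identity $(y-1)\Pochhammer{\frac{y}{2\hbar}}_{N-1}=2\hbar\Pochhammer{\frac{y}{2\hbar}}_N-\bigl(2\hbar(N-1)+1\bigr)\Pochhammer{\frac{y}{2\hbar}}_{N-1}$, I subtract $\tfrac1{2\hbar}(y-1)\Pochhammer{\frac{y}{2\hbar}}_{N-1}g_N$ from $a$; the remainder $a'$ still lies in $\mathcal{A}_\hbar(C_{n+1}^+)$ (its $\Pochhammer{\frac{y}{2\hbar}}_{N-1}$-coefficient is $g_{N-1}+\mathrm{const}\cdot g_N\in W_{N-1}$, the lower coefficients unchanged), has degree $\le N-1$, and still vanishes on $\{y=1\}$ since $y-1$ does. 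By induction $a'=(y-1)\cdot b'$ with $b'\in\mathcal{A}_\hbar(C_{n+1}^+)$, so $a=(y-1)\cdot\bigl(\tfrac1{2\hbar}\Pochhammer{\frac{y}{2\hbar}}_{N-1}g_N+b'\bigr)$. The base case $N=0$ is immediate: then $a=g_0$ is constant along the $y$-direction and vanishes at $y=1$, so $a=0$.

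The main obstacle is the structural step: one must recognize that the $W_\gamma$ form an \emph{increasing} chain — this is exactly what allows the quotient polynomial to be reassembled inside $\mathcal{A}_\hbar(C_{n+1}^+)$ — and that $\Pochhammer{\frac{y}{2\hbar}}_\gamma W_\gamma$ is precisely the $\gamma$-th layer of the algebra. Once that is in place, the only non-formal input is the observation that $a|_{y=1}=0$ forces the leading coefficient $g_N$ down into $W_{N-1}$, which is where allowedness of $\hbar$ is used and without which the statement would genuinely fail: for a forbidden $\hbar$ the factor $\Pochhammer{\frac{1}{2\hbar}}_\gamma$ vanishes for all large $\gamma$, so the restriction map to $\{y=1\}$ acquires a much larger kernel than $(y-1)\cdot\mathcal{A}_\hbar(C_{n+1}^+)$.
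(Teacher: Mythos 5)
Your proof is correct, and it takes a genuinely different route than the paper's. Both arguments begin by noting that $\tildewick$-multiplication with the radial (hence central) function $y-1$ is pointwise multiplication, so the $\tildewick$-ideal generated by $y-1$ equals $(y-1)\cdot\mathcal{A}_\hbar(C_{n+1}^+)$, and both invoke allowedness of $\hbar$ exactly once, to control the Pochhammer constants $\Pochhammer{\frac{1}{2\hbar}}_\gamma$. From there they diverge. The paper proceeds by a dimension count on the filtered pieces: it replaces each factor $\Pochhammer{\frac{y}{2\hbar}}_\alpha / y^\alpha$ in $a\in\mathcal{A}_\hbar^{(\gamma)}(C_{n+1}^+)$ by the nonzero constant $\Pochhammer{\frac{1}{2\hbar}}_\alpha$ to get an auxiliary $\mathrm{U}(1)$-invariant polynomial $\tilde a$ (allowedness makes $a\mapsto\tilde a$ a linear isomorphism onto $\mathrm{U}(1)$-invariant polynomials of degree $\le2\gamma$), divides $\tilde a$ by $y-1$ in the polynomial ring, and then concludes by observing that the vanishing ideal intersected with $\mathcal{A}_\hbar^{(\gamma)}$ and the space $(y-1)\mathcal{A}_\hbar^{(\gamma-1)}$ are both $n_{\gamma-1}$-dimensional with one contained in the other. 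You instead stay inside the algebra and run an explicit division algorithm: you decompose $a=\sum_\gamma\Pochhammer{\frac{y}{2\hbar}}_\gamma g_\gamma$ with $g_\gamma$ in the homogeneous layer $W_\gamma$, isolate the structural fact — implicit but never stated in the paper — that the layers nest, $W_\gamma\subseteq W_{\gamma+1}$, use allowedness to solve for the leading coefficient $g_N$ and drop it into $W_{N-1}$, and strip off a $(y-1)$-multiple by the Pochhammer three-term recursion to reduce $N$. Your version is somewhat longer but constructive — it actually exhibits the quotient $b$ — and makes transparent exactly where the Pochhammer recursion and the nesting of the $W_\gamma$ enter; the paper's is terser because the polynomial ring absorbs the combinatorics and the filtration does the bookkeeping.
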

\begin{proof}
    First we note that since $y-1$ is a radial function, the ideal
    generated by $y-1$ with respect to $\tildewick$ is just the
    classical ideal generated by $y-1$. Hence we only have to care
    about the pointwise product. Clearly, the ideal generated by $y-1$
    is contained in the vanishing ideal. To show the reverse inclusion
    we argue as follows: Let $\gamma$ be given and let $a \in
    \mathcal{A}^{(\gamma)}_\hbar(C_{n+1}^+)$ be in the vanishing
    ideal. Then $a = \sum_{(P, Q, \alpha)} a_{P, Q, \alpha}
    \basis{f}_{P, Q, \alpha}$ satisfies
    \[
    \sum_{(P, Q, \alpha)}
    a_{P, Q, \alpha}
    \frac{1}{P!(\alpha - |P|)!Q!(\alpha - |Q|)!}
    \Pochhammer{\frac{1}{2\hbar}}_\alpha
    \basis{e}_{P, Q, \alpha}(w)
    =
    0
    \]
    for every $w \in C_{n+1}^+$ with $y(w) = 1$. By assumption
    $\Pochhammer{\frac{1}{2\hbar}}_\alpha \ne 0$. This shows that the
    \emph{polynomial}
    \[
    \tilde{a}
    =
    \sum_{(P, Q, \alpha)}
    a_{P, Q, \alpha}
    \frac{1}{P!(\alpha - |P|)!Q!(\alpha - |Q|)!}
    \Pochhammer{\frac{1}{2\hbar}}_\alpha
    \basis{e}_{P, Q, \alpha}
    \]
    vanishes on the hypersurface $y = 1$. Since $\tilde{a}$ is a
    $\mathrm{U}(1)$-invariant polynomial of degree at most $2\gamma$
    we find a polynomial $\tilde{b}$ of degree at most $2(\gamma - 1)$
    with $\tilde{a} = (y-1)\tilde{b}$. Moreover, $\tilde{b}$ is
    \emph{unique} and still $\mathrm{U}(1)$-invariant. Denote the
    dimension of the space of $\mathrm{U}(1)$-invariant polynomials of
    degree at most $2\gamma$ by $n_\gamma$. Then this shows that the
    vanishing ideal intersected with
    $\mathcal{A}^{(\gamma)}_\hbar(C_{n+1}^+)$ is
    $n_{\gamma-1}$-dimensional. But also
    $\mathcal{A}^{(\gamma)}_\hbar(C_{n+1}^+)$ is
    $n_\gamma$-dimensional and the map $b \mapsto (y-1)b$ is still
    injective for $b \in \mathcal{A}^{(\gamma -
      1)}_\hbar(C_{n+1}^+)$. Hence the space of elements of the form
    $(y-1)b$ with $b \in \mathcal{A}^{(\gamma - 1)}_\hbar(C_{n+1}^+)$
    is $n_{\gamma - 1}$-dimensional, too. Since the latter is
    contained in the former, both have to coincide. Note however, that
    the map $a \mapsto \tilde{a}$ is not at all an algebra morphism,
    we only use it to compare the sizes of the ideals.
\end{proof}

This shows now that the ideal generated by $y-1$ is already a
\emph{closed} ideal in $\mathcal{A}_\hbar(C_{n+1}^+)$ since all the
evaluation functionals $\delta_w$ with $w$ in the hypersurface $y = 1$
are continuous. Hence the intersection of their kernels is closed in
the (noncomplete) algebra $\mathcal{A}_\hbar(C_{n+1}^+)$. Moreover,
after completion to $\complete{\mathcal{A}}_\hbar(C_{n+1}^+)$ the
closure of the vanishing ideal is still just the vanishing ideal,
i.e.\
\begin{equation}
    \label{eq:ClosureVanishingIdeal}
    \mathcal{J}_{y=1}
    =
    \left(
        (y - 1)\tildewick \mathcal{A}_\hbar(C_{n+1}^+)
    \right)^\cl
    =
    \left(
        \bigcap_{w} \ker \delta_w
    \right)^\cl
    =
    \bigcap_{w} \ker \complete{\delta_w}
    \subseteq
    \complete{\mathcal{A}}_\hbar(C_{n+1}^+) ,
\end{equation}
where $w$ runs through the points of the $y = 1$ hypersurface and
$\complete{\delta_w}$ denotes the extension of $\delta_w$ to the
completion. Thus the closure is still an ideal with respect to
$\tildewick$ and we can now divide by this ideal.  Since
$\complete{\mathcal{A}}_\hbar(C_{n+1}^+)$ has the interpretation of a
space of functions on $C_{n+1}^+$, the quotient by the vanishing ideal
has the interpretation of a space of functions on $\mathbb{D}_n$. This
will now be the motivation for the following definition:
\begin{definition}[The algebra
    $\complete{\mathcal{A}}_\hbar(\mathbb{D}_n)$] 
    \label{definition:AlgebraOnTheDisc}%
    Let $\hbar$ be an allowed value.  The quotient Fréchet algebra
    $\complete{\mathcal{A}}_\hbar(C_{n+1}^+) \big/ \mathcal{J}_{y=1}$
    is denoted by $\complete{\mathcal{A}}_\hbar(\mathbb{D}_n)$ and its
    multiplication will be denoted by $\stardisk$.
\end{definition}
Analogously, we set $\mathcal{A}_\hbar(\mathbb{D}_n) =
\mathcal{A}_\hbar(C_{n+1}^+) \big/ (y-1)\tildewick
\mathcal{A}_\hbar(C_{n+1}^+)$. Then the completion of
$\mathcal{A}_\hbar(\mathbb{D}_n)$ is indeed
$\complete{\mathcal{A}}_\hbar(\mathbb{D}_n)$ justifying our notation.

Since the ideal generated by $y-1$ is precisely the vanishing ideal,
we can just set $y$ equal to $1$ in all our above formulas to get the
corresponding formulas for the algebra on the disk. In particular, the
basis vectors $\basis{f}_{P, Q, \alpha}$ behave as follows: recall
that the canonical coordinates on the disk $\mathbb{D}_n$ are given by
$v^i = \frac{z^i}{z^0}$ for $i = 1, \ldots, n$. Then the function on
$\mathbb{D}_n$ corresponding to the equivalence class of
$\basis{f}_{P, Q, \alpha}$ is explicitly given by
\begin{equation}
    \label{eq:ClassOfBasisFunction}
    [\basis{f}_{P, Q, \alpha}](v)
    =
    \frac{1}{P!(\alpha - |P|)!Q!(\alpha - |Q|)!}
    \Pochhammer{\frac{1}{2\hbar}}_\alpha
    \frac{v^P \cc{v}^Q}{(1 - |v|^2)^\alpha},
\end{equation}
and their product is given by the very same formula
\eqref{eq:StructureConstantsTildeWick}. However, now these functions
are no longer linearly independent as we have used the relation $y =
1$. It was this class of functions for which the star product on the
disk was given in \cite[Sect.~4]{cahen.gutt.rawnsley:1994a}.
\begin{lemma}
    \label{lemma:BasisOnTheDisc}%
    Let $\hbar$ be an allowed value. Then the functions
    \begin{equation}
        \label{eq:BasisOnTheDisc}
        \basis{f}_{P, Q}(v)
        =
        [\basis{f}_{P, Q, \alpha}](v)
        =
        \frac{1}{P!(\alpha - |P|)!Q!(\alpha - |Q|)!}
        \Pochhammer{\frac{1}{2\hbar}}_\alpha
        \frac{v^P\cc{v}^Q}{(1 - |v|^2)^\alpha}
    \end{equation}
    for $P, Q \in \mathbb{N}_0^n$ and $\alpha = \max(|P|, |Q|)$ form a
    vector space basis of $\mathcal{A}_\hbar(\mathbb{D}_n)$.
\end{lemma}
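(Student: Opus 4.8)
The plan is to split the statement into spanning and linear independence, with the computational heart being an explicit rewriting identity for the functions $[\basis{f}_{P,Q,\alpha}]$. First I would fix notation: write $\alpha_0(P,Q) = \max(|P|,|Q|)$ for the minimal admissible value of $\alpha$, and recall from \eqref{eq:ClassOfBasisFunction} that, as a function on $\mathbb{D}_n$, the class $[\basis{f}_{P,Q,\alpha}]$ equals the scalar $\frac{1}{P!(\alpha-|P|)!Q!(\alpha-|Q|)!}\Pochhammer{\frac{1}{2\hbar}}_\alpha$ times $\frac{v^P\cc v^Q}{(1-|v|^2)^\alpha}$, the scalar being \emph{nonzero} precisely because $\hbar$ is an allowed value. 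Since $\mathcal{A}_\hbar(\mathbb{D}_n)$ is by construction spanned by all classes $[\basis{f}_{P,Q,\alpha}]$ over all index triples, it is enough to express each such class, for $\alpha \ge \alpha_0(P,Q)$, as a finite linear combination of the $\basis{f}_{P',Q'}$.

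The key identity is $\frac{1}{(1-|v|^2)^r} = \bigl(1 + \frac{|v|^2}{1-|v|^2}\bigr)^r$, which upon binomial expansion together with the multinomial expansion of $(|v|^2)^j = (\sum_{i=1}^n v^i\cc v^i)^j$ gives, for $r = \alpha - \alpha_0(P,Q) \ge 0$,
\[
\frac{v^P\cc v^Q}{(1-|v|^2)^\alpha}
=
\sum_{j=0}^{r}\binom{r}{j}
\sum_{\substack{M \in \mathbb{N}_0^n\\ |M| = j}}
\binom{j}{M}
\frac{v^{P+M}\cc v^{Q+M}}{(1-|v|^2)^{\alpha_0(P,Q)+j}}.
\]
Since $\alpha_0(P,Q) + j = \max(|P+M|,|Q+M|) = \alpha_0(P+M,Q+M)$ whenever $|M| = j$, every summand on the right is a nonzero scalar multiple of $\basis{f}_{P+M,Q+M}$ (again using that $\hbar$ is allowed, so $\Pochhammer{\frac{1}{2\hbar}}_{\alpha_0+j}\ne 0$). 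Hence the $\basis{f}_{P,Q}$ span $\mathcal{A}_\hbar(\mathbb{D}_n)$; moreover every $\basis{f}_{P+M,Q+M}$ occurring has $\max(|P+M|,|Q+M|)\le\alpha$, so already the $\basis{f}_{P',Q'}$ with $\max(|P'|,|Q'|)\le\gamma$ span the image $\bar{\mathcal{A}}^{(\gamma)}$ of $\mathcal{A}_\hbar^{(\gamma)}(C_{n+1}^+)$ in $\mathcal{A}_\hbar(\mathbb{D}_n)$.

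For linear independence I would use a dimension count. In the proof of Lemma~\ref{lemma:VanishingIdeal} it is shown that the vanishing ideal intersected with the $n_\gamma$-dimensional space $\mathcal{A}_\hbar^{(\gamma)}(C_{n+1}^+)$ has dimension $n_{\gamma-1}$, where $n_\gamma$ denotes the dimension of the space of $\mathrm{U}(1)$-invariant polynomials of degree at most $2\gamma$; hence $\dim\bar{\mathcal{A}}^{(\gamma)} = n_\gamma - n_{\gamma-1} = \binom{\gamma+n}{n}^2$. On the other hand there are exactly $\binom{\gamma+n}{n}^2$ pairs $(P,Q)\in(\mathbb{N}_0^n)^2$ with $\max(|P|,|Q|)\le\gamma$. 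A spanning set whose cardinality equals the dimension is a basis, so $\{\basis{f}_{P,Q} : \max(|P|,|Q|)\le\gamma\}$ is a basis of $\bar{\mathcal{A}}^{(\gamma)}$; letting $\gamma\to\infty$ and using $\mathcal{A}_\hbar(\mathbb{D}_n) = \bigcup_\gamma\bar{\mathcal{A}}^{(\gamma)}$ (Corollary~\ref{corollary:FilteredAlgebra}) yields the claim. Alternatively, independence can be proved directly: a vanishing finite combination $\sum_{P,Q}\lambda_{P,Q}\basis{f}_{P,Q} = 0$ on the open ball, after clearing denominators by $(1-|v|^2)^N$, becomes a polynomial identity in the independent variables $v^i,\cc v^i$; comparing coefficients of $v^A\cc v^B$ and ordering the pairs $(P,Q)$ with fixed difference $P-Q$ by $|Q|$ produces a triangular system that forces every $\lambda_{P,Q}=0$.

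The main obstacle I anticipate is bookkeeping rather than any conceptual difficulty: one must verify carefully that $\alpha_0(P,Q)+j$ is exactly the minimal admissible index of the pair $(P+M,Q+M)$, so that the right-hand side of the identity really consists of the chosen basis representatives $\basis{f}_{P',Q'}$ and not of other representatives $[\basis{f}_{P',Q',\alpha'}]$ with $\alpha'>\max(|P'|,|Q'|)$; and one must track at every step the hypothesis that $\hbar$ is an allowed value, which is what makes all Pochhammer prefactors $\Pochhammer{\frac{1}{2\hbar}}_\alpha$ nonzero — without it the functions $[\basis{f}_{P,Q,\alpha}]$ degenerate and neither spanning nor independence holds. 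A minor point is that the dimension count quotes an intermediate computation from the proof of Lemma~\ref{lemma:VanishingIdeal}; should a self-contained argument be preferred, the direct triangular-system proof of independence is the safer route.
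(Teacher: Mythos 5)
Your proof is correct and the spanning half matches the paper almost verbatim: the paper uses the same identity $\frac{1}{1-|v|^2} = 1 + \sum_i \frac{v^i\cc{v}^i}{1-|v|^2}$, expanded binomially, to write $[\basis{f}_{P,Q,\alpha}]$ in the span of the $\basis{f}_{P',Q'}$ with $\max(|P'|,|Q'|)\le\alpha$. Where you genuinely diverge is the independence half. The paper's argument is asymptotic: as $|v|\to 1$ the order of the singularity $(1-|v|^2)^{-\alpha}$ of $\basis{f}_{P,Q}$ recovers $\alpha=\max(|P|,|Q|)$, so in any vanishing combination the top-$\alpha$ layer must vanish by itself, and independence of the monomials $v^P\cc{v}^Q$ at fixed $\alpha$ finishes it by downward induction. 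Your primary route is a dimension count that leans on an intermediate computation inside the proof of Lemma~\ref{lemma:VanishingIdeal} (dimension of the vanishing ideal intersected with $\mathcal{A}^{(\gamma)}_\hbar(C_{n+1}^+)$ is $n_{\gamma-1}$), which is perfectly valid but couples this lemma to the internals of another proof rather than just its statement; your fallback route (clear denominators, compare coefficients of $v^A\cc{v}^B$, triangularize along fixed $P-Q$ ordered by $|Q|$) is self-contained, essentially equivalent in content to the paper's asymptotic argument but purely algebraic — arguably more robust since it does not depend on an informal ``recover $\alpha$ from asymptotics'' step, at the cost of more bookkeeping. Both of your anticipated obstacles are nonissues: $\alpha_0(P,Q)+j=\max(|P+M|,|Q+M|)$ is immediate from $|P+M|=|P|+|M|$, and your tracking of the allowed-value hypothesis through the Pochhammer prefactors matches exactly how the paper uses it.
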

\begin{proof}
    First we note the obvious relation $\frac{1}{1 - |v|^2} = 1 +
    \sum_{i=1}^n \frac{v^i \cc{v}^i}{1 - |v|^2}$ which gives for all
    $\alpha \ge 1$ the relation
    \[
    \frac{1}{\left(1 - |v|^2\right)^\beta}
    \frac{v^R \cc{v}^S}{(1 - |v|^2)^{\max(|R|, |S|)}}
    =
    \sum_{|I| = 0}^\beta
    \binom{\beta}{|I|}
    \frac{|I|!}{I!}
    \frac{v^{R+I} \cc{v}^{S+I}}
    {(1 - |v|^2)^{|I| + \max(|R|, |S|)}}.
    \tag{$*$}
    \]
    Clearly, each term on the right hand side is in the linear span of
    the functions $\basis{f}_{P, Q}$ showing that the linear span of
    the $\basis{f}_{P, Q}$ is all of
    $\mathcal{A}_\hbar(\mathbb{D}_n)$. To show their linear
    independence we have to use that $\hbar$ is an allowed value: we
    note that the asymptotic behavior of $\basis{f}_{P, Q}$ for $|v|
    \longrightarrow 1$ allows to recover $\alpha = \max(|P|,
    |Q|)$. Then the linear independence of the monomials $v^P
    \cc{v}^Q$ for fixed $\max(|P|, |Q|)$ gives immediately the linear
    independence of the $\basis{f}_{P, Q}$.
\end{proof}
Note finally that the non-allowed values of $\hbar$ would yield a
vanishing Pochhammer symbol in \eqref{eq:ClassOfBasisFunction} for all
large enough $\alpha$ and hence a finite-dimensional quotient, a case
which was discussed in detail in
\cite{bordemann.brischle.emmrich.waldmann:1996b}.

We summarize now some first properties of the algebra on the disk:
\begin{theorem}[The algebra on the disk]
    \label{theorem:DiscAlgebra}%
    Let $\hbar$ be an allowed value.
    \begin{theoremlist}
    \item \label{item:DiscAlgebraFrechet} The algebra
        $\complete{\mathcal{A}}_\hbar(\mathbb{D}_n)$ on the disk is a
        unital Fréchet algebra with respect to $\stardisk$.
    \item \label{item:AdiskInAhatdisk}
        $\mathcal{A}_\hbar(\mathbb{D}_n)$ is a dense subalgebra of
        $\complete{\mathcal{A}}_\hbar(\mathbb{D}_n)$.
    \item \label{item:DiscPointsContinuous} The evaluation functionals
        $\delta_v\colon \complete{\mathcal{A}}_\hbar(\mathbb{D}_n)
        \longrightarrow \mathbb{C}$ defined by
        \begin{equation}
            \label{eq:EvaluationOnDisc}
            \delta_v ([a]) = \delta_w(a),
        \end{equation}
        where $w \in C_{n+1}^+$ is a preimage of $v \in \mathbb{D}_n$
        with $y(w) = 1$, are continuous linear functionals.
    \item \label{item:DiscFunctionsRealAnalytic} The elements $[a] \in
        \complete{\mathcal{A}}_\hbar(\mathbb{D}_n)$ can be identified
        with certain real-analytic functions on $\mathbb{D}_n$ having
        an extension to $\mathbb{D}_n \times \mathbb{D}_n$ begin
        holomorphic in the first and anti-holomorphic in the second
        variables. The topology of
        $\complete{\mathcal{A}}_\hbar(\mathbb{D}_n)$ is finer than the
        locally uniform topology of $\HolAntiHol(\mathbb{D}_n \times
        \mathbb{D}_n)$.
    \item \label{item:SubalgebraOfDisc}
        $\mathcal{A}_\hbar(\mathbb{D}_n)$ inherits the filtration of
        $\mathcal{A}_\hbar(C_{n+1}^+)$ from
        Corollary~\ref{corollary:FilteredAlgebra}.
    \item \label{item:SchauderBasisForDisc} The functions
        $\basis{f}_{P, Q} \in
        \complete{\mathcal{A}}_\hbar(\mathbb{D}_n)$ form an
        unconditional Schauder basis.
    \item \label{item:SubfactorialGroth} A formal series $\sum_{P, Q}
        a_{P, Q} \basis{f}_{P, Q}$ belongs to
        $\complete{\mathcal{A}}_\hbar(\mathbb{D}_n)$ iff the
        coefficients $a_{P, Q}$ have sub-factorial growth with respect
        to $\max(|P|, |Q|)$.
    \item \label{item:SubfacSeminormsForDisc} An equivalent system of
        seminorms for $\complete{\mathcal{A}}_\hbar(\mathbb{D}_n)$ is
        given by
        \begin{equation}
            \label{eq:SubfacSeminormsDisc}
            \norm{[a]}_\epsilon = \sup_{P, Q}
            \frac{|a_{P, Q}|}{(\max(|R|, |S|)!)^\epsilon},
        \end{equation}
        with $0 < \epsilon < 1$. It follows that
        $\complete{\mathcal{A}}_\hbar(\mathbb{D}_n)$ is isomorphic to
        a Köthe space of sub-factorial growth. It is strongly nuclear
        and the Schauder basis $\{\basis{f}_{R, S}\}$ is absolute.
    \end{theoremlist}
\end{theorem}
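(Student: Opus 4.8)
Parts \refitem{item:DiscAlgebraFrechet}, \refitem{item:AdiskInAhatdisk}, \refitem{item:DiscPointsContinuous} and \refitem{item:SubalgebraOfDisc} will come directly from the quotient construction. Since $\mathcal{J}_{y=1}$ is a closed two-sided ideal of $\complete{\mathcal{A}}_\hbar(C_{n+1}^+)$ --- closed by \eqref{eq:ClosureVanishingIdeal} together with the continuity of the $\complete{\delta_w}$ --- the quotient $\complete{\mathcal{A}}_\hbar(\mathbb{D}_n)$ is again a Fréchet algebra, and it is unital because $\basis{f}_{0,0,0} = \Unit$ does not lie in $\mathcal{J}_{y=1}$; density of $\mathcal{A}_\hbar(\mathbb{D}_n)$ is then the image of density under the open quotient map. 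For \refitem{item:DiscPointsContinuous} one first notes that all elements of $\mathcal{A}_\hbar(C_{n+1}^+)$, hence of its completion, are $\mathrm{U}(1)$-invariant, so $\delta_w$ with $y(w)=1$ depends only on $\pi(w)\in\mathbb{D}_n$; since $\mathcal{J}_{y=1}\subseteq\ker\complete{\delta_w}$ the functional $\delta_v$ is well defined, and it is continuous because $\delta_v\circ\pi=\complete{\delta_w}$ is. Part \refitem{item:SubalgebraOfDisc} is immediate from Corollary~\ref{corollary:FilteredAlgebra} upon applying $\pi$.

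For \refitem{item:DiscFunctionsRealAnalytic} I would push forward Proposition~\ref{proposition:HolAntiHolExtension}. Given $[a]$ with representative $a$, set $\hat{[a]}(v,v'):=\hat a(w,w')$ where $w,w'$ are lifts of $v,v'\in\mathbb{D}_n$ normalised by $\hat y(w,w')=1$; a computation with the basis functions shows that $\hat{\basis{f}}_{P,Q,\alpha}$ restricted to $\{\hat y=1\}$ is invariant under the residual $\mathbb{C}^\times\times\mathbb{C}^\times$ (the Pochhammer factor freezes to $\Pochhammer{\frac{1}{2\hbar}}_\alpha$ and the rest is homogeneous of degree zero), so $\hat a$ descends to a well-defined function on $\mathbb{D}_n\times\mathbb{D}_n$, holomorphic in $v$ and anti-holomorphic in $v'$, with $\Delta^*\hat{[a]}=[a]$. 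Independence of the representative follows because any $j\in\mathcal{J}_{y=1}$ produces this way a holomorphic function on $\mathbb{D}_n\times\cc{\mathbb{D}_n}$ vanishing on the totally real diagonal, hence identically zero. The estimate $(*)$ in the proof of Proposition~\ref{proposition:HolAntiHolExtension} then bounds $|\hat{[a]}(v,v')|$ locally uniformly by a seminorm $\norm{a}_{0,0,R}$ of a representative, hence by the corresponding quotient seminorm, so the topology of $\complete{\mathcal{A}}_\hbar(\mathbb{D}_n)$ is finer than the locally uniform one.

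The substance of the theorem lies in \refitem{item:SchauderBasisForDisc}--\refitem{item:SubfacSeminormsForDisc}, which I would obtain by realising $\complete{\mathcal{A}}_\hbar(\mathbb{D}_n)$ as a complemented coordinate subspace of $\complete{\mathcal{A}}_\hbar(C_{n+1}^+)$. Let $W\subseteq\mathcal{A}_\hbar(C_{n+1}^+)$ be the span of the $\basis{f}_{P,Q,\alpha}$ with $\alpha=\max(|P|,|Q|)$ and $\mathcal{J}=(y-1)\tildewick\mathcal{A}_\hbar(C_{n+1}^+)$. The relation $(*)$ from the proof of Lemma~\ref{lemma:BasisOnTheDisc} writes each $\basis{f}_{P,Q,\alpha}$ with $\alpha>\max(|P|,|Q|)$, modulo $\mathcal{J}$, as a finite combination of generators of $W$, so $\mathcal{A}_\hbar(C_{n+1}^+)=W+\mathcal{J}$, and comparing with the dimension count in the proof of Lemma~\ref{lemma:VanishingIdeal} on each filtration step $\mathcal{A}^{(\gamma)}_\hbar(C_{n+1}^+)$ shows the sum is direct. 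Writing $q$ for the projection onto $W$ and, for $M=\max(|P|,|Q|)$, $b_{P,Q}:=e^{(P,Q,M)}\circ q$ for the resulting coefficient functionals, one reads off from the expansion $[\basis{f}_{I,J,\gamma}]=\sum_{|K|\le\gamma-\max(|I|,|J|)}c_{I,J,\gamma,K}\,\basis{f}_{I+K,J+K}$ (derived from \eqref{eq:StructureConstantsTildeWick}) that $b_{P,Q}(a)=\sum_{K\le\min(P,Q)}\sum_{\gamma\ge M}a_{P-K,Q-K,\gamma}\,c_{P-K,Q-K,\gamma,K}$. The key estimate to establish is
\[
\sum_{K\le\min(P,Q)}\ \sum_{\gamma\ge M}(\gamma!)^{\epsilon'}\,\bigl|c_{P-K,Q-K,\gamma,K}\bigr|
\ \le\ C_{\epsilon,\epsilon'}\,(M!)^{\epsilon}
\qquad(0<\epsilon'<\epsilon<1);
\]
one inserts the closed formula for $c$, bounds the Pochhammer quotient $\Pochhammer{\frac{1}{2\hbar}}_\gamma/\Pochhammer{\frac{1}{2\hbar}}_M$ above by $\mathrm{poly}(\gamma)\cdot\gamma!/M!$ and $\Pochhammer{\frac{1}{2\hbar}}_M$ below by $\mathsf{a}\,M!$ via \eqref{eq:EstimatePochhammerSymbols}, using that $\hbar$ is an allowed value (so $\mathsf{a}>0$), and then carries out the remaining multinomial bookkeeping, exploiting the freedom to take $\epsilon'$ much smaller than $\epsilon$ so that the harmless exponential and power factors in $M$ are absorbed into the gain $(M!)^{\epsilon-\epsilon'}$. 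Granting this, $\norm{q(a)}_\epsilon\le C_{\epsilon,\epsilon'}\norm{a}_{\epsilon'}$ for the seminorms of Theorem~\ref{theorem:AlgebraUpstairs}, \refitem{item:FrakAIsSubfactorialStuff}, so $q$ extends to a continuous projection $\complete{q}\colon\complete{\mathcal{A}}_\hbar(C_{n+1}^+)\to\complete{W}$, with $\complete{W}$ the closed span of the $\basis{f}_{P,Q,\max(|P|,|Q|)}$; since $\complete{q}$ annihilates $\mathcal{J}_{y=1}$ it descends to $\bar q\colon\complete{\mathcal{A}}_\hbar(\mathbb{D}_n)\to\complete{W}$, and on the dense subalgebra one checks that $\bar q$ and $\pi|_{\complete{W}}$ are mutually inverse, whence $\complete{\mathcal{A}}_\hbar(\mathbb{D}_n)\cong\complete{W}$. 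By Theorem~\ref{theorem:AlgebraUpstairs}, \refitem{item:AlgebraObenIsKoetheSpace}, $\complete{W}$ is the coordinate subspace of that Köthe space supported on the index set $\{(P,Q,\max(|P|,|Q|))\}$, i.e.\ after relabelling the Köthe space of sequences of sub-factorial growth in $\max(|P|,|Q|)$ with equivalent seminorms $\norm{[a]}_\epsilon=\sup_{P,Q}|a_{P,Q}|/(\max(|P|,|Q|)!)^\epsilon$; the canonical basis vector of $\complete{W}$ labelled by $(P,Q,\max(|P|,|Q|))$ is $\basis{f}_{P,Q,\max(|P|,|Q|)}$, which $\pi$ maps to $\basis{f}_{P,Q}$, so $\{\basis{f}_{P,Q}\}$ is an absolute (hence unconditional) Schauder basis and the sub-factorial characterisation, the seminorm description, strong nuclearity and the remaining assertions follow from the general facts about such Köthe spaces recalled in Appendix~\ref{sec:SubfactorialGrowth}. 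The one real difficulty in the whole argument is the displayed estimate: without the allowedness of $\hbar$ the lower Pochhammer bound degenerates, and without decoupling the two seminorm indices the exponential-in-$M$ losses coming from the multinomial coefficients cannot be compensated.
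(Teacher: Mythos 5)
Your proposal is correct and follows the same essential route as the paper, but it repackages the substance of parts \refitem{item:SchauderBasisForDisc}--\refitem{item:SubfacSeminormsForDisc} in a somewhat cleaner structural form. The paper computes the coefficient functionals $[a]\mapsto a_{R,S}$ directly from the relation~($*$) of Lemma~\ref{lemma:BasisOnTheDisc}, proves the estimate (its~($\star$)) which in content is identical to your displayed key inequality, deduces the Schauder basis property, and then argues the converse characterization and the equivalence of seminorms as separate steps (the converse by lifting $a_{P,Q}$ to the representative supported on $\alpha=\max(|P|,|Q|)$, and the seminorm equivalence by the open mapping theorem). You instead realize $\complete{\mathcal{A}}_\hbar(\mathbb{D}_n)$ as a continuously complemented coordinate subspace $\complete{W}\subseteq\complete{\mathcal{A}}_\hbar(C_{n+1}^+)$, which bundles the Schauder basis, the sub-factorial characterization, the Köthe-space identification and the nuclearity into a single isomorphism; the hard analytical content (continuity of the projection $q$) is exactly the paper's estimate~($\star$) in disguise, and your observation that $\epsilon'$ must be strictly smaller than $\epsilon$ to absorb the exponential-in-$M$ factors, and that allowedness of $\hbar$ is what keeps the lower Pochhammer bound nondegenerate, matches the role these play in the paper. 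Two small slips: the expansion $[\basis{f}_{I,J,\gamma}]=\sum_K c_{I,J,\gamma,K}\basis{f}_{I+K,J+K}$ comes from the relation~($*$) in Lemma~\ref{lemma:BasisOnTheDisc}, not directly from \eqref{eq:StructureConstantsTildeWick}; and on $\{\hat y=1\}$ the basis functions $\hat{\basis{f}}_{P,Q,\alpha}$ are invariant only under the diagonal $\mathbb{C}^\times$ stabilizing that hypersurface, not under the full $\mathbb{C}^\times\times\mathbb{C}^\times$ — but that one-parameter invariance is precisely what is needed for the descent to $\mathbb{D}_n\times\mathbb{D}_n$, so the argument goes through. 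Your well-definedness argument via vanishing of holomorphic--antiholomorphic functions on the totally real diagonal is a valid alternative to the paper's use of the explicit section $\varphi$ in~($**$).
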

\begin{proof}
    The first part is clear by abstract arguments: a Fréchet algebra
    modulo a closed two-sided ideal is again a Fréchet algebra. Note
    that it is important that we are in a Fréchet situation, otherwise
    quotients by closed subspaces might not be complete again. The
    second is also clear from general arguments on the compatibility
    of quotients and closures.  For $w \in C_{n+1}^+$ with $y(w) = 1$
    the evaluation functional $\delta_w$ is well-defined on the
    quotient since we divide by the intersection of all the kernels of
    these functionals. By the universal property of the locally convex
    quotient topology the resulting functional on
    $\complete{\mathcal{A}}_\hbar(\mathbb{D}_n)$ is still
    continuous. For the fourth part we note that the elements of
    $\mathcal{A}_\hbar(\mathbb{D}_n)$ have this property as they are
    linear combinations of the functions
    \eqref{eq:ClassOfBasisFunction} for which we have the extension,
    explicitly given by
    \[
    [\basis{f}_{P, Q, \alpha}](v, u)
    = 
    \frac{1}{P!(\alpha - |P|)!Q!(\alpha - |Q|)!}
    \Pochhammer{\frac{1}{2\hbar}}_\alpha
    \frac{v^P \cc{u}^Q}{(1 - v\cc{u})^\alpha}.
    \tag{$*$}
    \]
    First let $(z, w) \in C_{n+1}^+ \times C_{n+1}^+$ satisfy
    $\hat{y}(z, w) = 1$.  We claim that $[a] \mapsto \hat{a}(z, w)$ is
    well-defined for $[a] \in \mathcal{A}_\hbar(\mathbb{D}_n)$.
    Indeed, let $b \in \mathcal{A}_\hbar(C_{n+1}^+)$ be given. Then
    $\widehat{(y-1)b}(z, w) = (\hat{y}(z, w) - 1)\hat{b}(z, w) = 0$. Since
    we divide by the ideal generated by $y-1$, the evaluation is
    well-defined.  We have a section $\varphi$ of the projection $\pi
    \times \pi\colon C_{n+1}^+ \times C_{n+1}^+ \longrightarrow
    \mathbb{D}_n \times \mathbb{D}_n$ whose image lies in the
    hypersurface of $\hat{y} = 1$: there are many choices, one
    convenient possibility is
    \[
    \varphi(v, u)
    =
    \left(
        \left(\frac{1}{1-v\cc{u}}, \frac{v}{1 - v \cc{u}}\right),
        (1, u)
    \right).
    \tag{$**$}
    \]
    Moreover, we have the following consistency $[\basis{f}_{P, Q,
      \alpha}](v, u) = \hat{\basis{f}}_{P, Q, \alpha}(\varphi(v, u))$,
    which is checked immediately from ($*$) and ($**$). Thus we get
    for $a \in \mathcal{A}_\hbar(C_{n+1}^+)$ the relation
    \[
    [a](v, u) = \hat{a}(\varphi(v, u)).
    \]
    Now let $K \subseteq \mathbb{D}_n \times \mathbb{D}_n$ be a
    compact subset. Then $\varphi(K)$ is compact and contained in the
    hypersurface of $\hat{y} = 1$. For $(v, u) \in K$ we get the
    estimate
    \[
    \left|[a](v, u)\right|
    =
    \left|a(\varphi(v, u))\right|
    \le
    \norm{a}_{0,0, R},
    \]
    where $R > 0$ is the parameter corresponding to the compact subset
    $\varphi(K)$ as in the proof of
    Proposition~\ref{proposition:HolAntiHolExtension}.  Since this
    holds for all representatives $a$ of $[a]$, we get the estimate
    also for the corresponding seminorm on the quotient
    $\mathcal{A}_\hbar(\mathbb{D}_n)$. This shows that the evaluation
    at $(v, u)$ is continuous with a locally uniform estimate
    concerning the point $(v, u)$. Using the second part, the fourth
    part follows. The filtration property is clear. Now consider the
    vector space basis $\basis{f}_{P, Q}$ from
    Lemma~\ref{lemma:BasisOnTheDisc}. We have to show that the
    evaluation functionals with respect to this basis are
    continuous. Thus let $a = \sum_{(P, Q, \alpha)} a_{P, Q, \alpha}
    \basis{f}_{P, Q, \alpha} \in
    \complete{\mathcal{A}}_\hbar(C_{n+1}^+)$ be given. For its
    equivalence class $[a] \in
    \complete{\mathcal{A}}_\hbar(\mathbb{D}_n)$ we get by a
    straightforward computation
    \begin{align*}
        [a]
        &=
        \sum_{R, S}
        \sum_{(P, Q, \alpha)} a_{P, Q, \alpha}
        \sum_{|I| = 0}^{\alpha - \max(|P|, |Q|)}
        \frac{
          (P + I)!(\max(0, |Q| - |P|))!
          (Q+I)! (\max(0, |P| - |Q|))!
        }
        {
          P!(\alpha - |P|)!
          Q!(\alpha - |Q|)!
        }
        \\
        &\quad
        \frac{\Pochhammer{\frac{1}{2\hbar}}_\alpha}
        {\Pochhammer{\frac{1}{2\hbar}}_{|I| + \max(|P|, |Q|)}}
        \frac{(\alpha - \max(|P|, |Q|))!}
        {(\alpha - \max(|P|, |Q|) - |I|)! I!}
        \delta_{P + I, R}
        \delta_{Q + I, S}
        \basis{f}_{R, S},
    \end{align*}
    using ($*$) from the proof of Lemma~\ref{lemma:BasisOnTheDisc}.
    For a given pair $(R, S)$, only those $P$, $Q$ can contribute
    where we have an $I$ with $P + I = R$ and $Q + I = S$. Moreover,
    $\alpha$ has to satisfy $\alpha \ge \max(|R|, |S|)$. Finally, $I
    \le \min(R, S)$ has to hold as well. For abbreviation we set $M =
    \max(|R|, |S|)$ and $m = \min(|R|, |S|)$.  This allows to compute
    the coefficient $a_{R, S}$ of the basis vector $\basis{f}_{R, S}$
    as follows
    \begin{align*}
        a_{R, S}
        &=
        \sum_{\alpha \ge M}
        \sum_{I \le \min(R, S)}
        a_{R - I, S - I, \alpha}
        \binom{R}{I}\binom{S}{I}
        \frac{I!(M - m)!I!}
        {(\alpha - M + |I|)!(\alpha - m + |I|)!}
        \frac{\Pochhammer{\frac{1}{2\hbar}}_\alpha}
        {\Pochhammer{\frac{1}{2\hbar}}_{M}}
        \frac{(\alpha + |I| - M)!}{(\alpha - M)!I!} \\
        &=
        \sum_{\alpha \ge M}
        \sum_{I \le \min(R, S)}
        a_{R - I, S - I, \alpha}
        \binom{R}{I}\binom{S}{I}
        \frac{I! (M- m)! \alpha!}
        {(\alpha - m + |I|)! M! (\alpha - M)!}
        \frac{
          \frac{1}{\alpha!}\Pochhammer{\frac{1}{2\hbar}}_\alpha
        }
        {
          \frac{1}{M!}\Pochhammer{\frac{1}{2\hbar}}_M
        }.
    \end{align*}
    We claim that this converges for all $a \in
    \complete{\mathcal{A}}_\hbar(C_{n+1}^+)$. Indeed, we know that
    $a_{P, Q, \alpha}$ has sub-factorial growth in $\alpha$. Hence let
    $\epsilon > 0$ and thus $|a_{R - I, S - I, \alpha}| \le c_1
    (\alpha!)^\epsilon$ for some appropriate $c_1$. Using the
    estimates for the Pochhammer symbols according to
    \eqref{eq:EstimatePochhammerSymbols} we get
    \begin{align*}
        |a_{R, S}|
        &\le
        \sum_{\alpha = M}^\infty
        \sum_{I \le \min(R, S)}
        c_1 (\alpha!)^\epsilon
        2^{M + m}
        \frac{I!(M - m)!\alpha!}
        {(\alpha - m + |I|)!M!(\alpha - M)!}
        c_2^\alpha c_3^M \\
        &=
        \sum_{\alpha = M}^\infty
        \sum_{I \le \min(R, S)}
        c_1 (\alpha!)^\epsilon
        2^{M + m}
        \frac{I!}{|I|!}
        \frac{1}{\binom{\alpha - m + |I|}{|I|}}
        \frac{1}{\binom{M}{m}}
        \binom{\alpha}{m}
        \frac{1}{(\alpha - M)!}
        c_2^\alpha c_3^M \\
        &\le
        \sum_{\alpha = M}^\infty
        \sum_{I \le \min(R, S)}
        c_1 (\alpha!)^\epsilon
        c_4^M c_5^\alpha
        \frac{1}{(\alpha - M)!} \\
        &\le
        c_1 c_6^M
        (M!)^\epsilon
        \sum_{\alpha = M}^\infty
        \binom{\alpha}{M}^\epsilon 
        \frac{c_5^\alpha}{(\alpha - M)!^{1-\epsilon}} \\
        &\le
        c_7 c_8^M (M!)^\epsilon,
        \tag{$\star$}
    \end{align*}
    with corresponding appropriate constants $c_1, \ldots, c_8$.  This
    shows two things: first the series converges (absolutely) and
    hence the linear functional $a \mapsto a_{R, S}$ is a weakly$^*$
    convergent series of continuous linear functionals on
    $\complete{\mathcal{A}}_\hbar(C_{n+1}^+)$. Second, the resulting
    values $a_{R, S}$ have a sub-factorial growth in $M = \max(|R|,
    |S|)$. Since we are in a Fréchet situation, the resulting linear
    functionals are continuous. By the very definition of the quotient
    topology, also the induced functionals $[a] \mapsto a_{R, S}$ are
    continuous on $\complete{\mathcal{A}}_\hbar(\mathbb{D}_n)$. Now
    let $a_{P, Q}$ have sub-factorial growth with respect to
    $\max(|P|, |Q|)$ then set $a_{(P, Q, \alpha)} = a_{P, Q}$ for
    $\alpha = \max(|P|, |Q|)$ and $0$ else. By
    Theorem~\ref{theorem:AlgebraUpstairs},
    \refitem{item:FrakAIsSubfactorialStuff} and
    Theorem~\ref{theorem:Omega},
    \refitem{item:UnconditionalSchauderBasisSecondVersion} we have an
    unconditionally convergent $a = \sum_{(P, Q, \alpha)} a_{P, Q,
      \alpha} \basis{f}_{P, Q, \alpha}$ in
    $\complete{\mathcal{A}}_\hbar(C_{n+1}^+)$. But this shows that
    also $[a] = \sum_{P, Q} a_{P, Q} \basis{f}_{P, Q}$ converges
    unconditionally, proving the sixth part. The next part was already
    shown. Since we can take for $[a] = \sum_{P, Q} a_{P, Q}
    \basis{f}_{P, Q}$ a representative $a = \sum_{(P, Q, \alpha)}
    a_{P, Q, \alpha} \basis{f}_{P, Q, \alpha}$ with $a_{P, Q, \alpha}
    = a_{P, Q}$ for $\alpha = \max(|P|, |Q|)$ and zero elsewhere, the
    estimate $\norm{[a]}_\epsilon \le \norm{a}_\epsilon$ is
    immediate. This shows that the seminorms of
    \eqref{eq:SubfacSeminormsDisc} are continuous seminorms on
    $\complete{\mathcal{A}}_\hbar(\mathbb{D}_n)$. Thanks to the
    seventh part, they make
    $\complete{\mathcal{A}}_\hbar(\mathbb{D}_n)$ a Fréchet space as
    well implying that the two topologies have to coincide. Then the
    last part is an easy consequence as we have argued already several
    times.
\end{proof}

Note that even though we have a vector space basis also for
$\mathcal{A}_\hbar(\mathbb{D}_n)$ the corresponding structure
constants are not as easily described as the ones of
$\mathcal{A}_\hbar(C_{n+1}^+)$. The reason is the quotient procedure
by $\mathcal{J}_{y=1}$. In particular, using the explicit formula for
the product of $\basis{f}_{P, Q}$ with $\basis{f}_{R, S}$ according to
\eqref{eq:StructureConstantsTildeWick} by evaluating $\basis{f}_{P, Q,
  \max(|P|, |Q|)} \tildewick \basis{f}_{R, S, \max(|R|, |S|)}$ for $y
= 1$ gives a \emph{redundant} description which has first to be
expressed again in terms of the linearly independent $\basis{f}_{I,
  J}$ alone and not in terms of the $[\basis{f}_{I, J, \gamma}]$.

Note also that parts of the last statement can be obtained more
directly: in particular, $\complete{\mathcal{A}}_\hbar(\mathbb{D}_n)$
is a quotient of a nuclear space by a closed subspace and hence
nuclear itself. However, the above Schauder basis allows to give an
explicit description of the Fréchet space structure of
$\complete{\mathcal{A}}_\hbar(\mathbb{D}_n)$ as a Köthe space of
sub-factorial growth.

%
%

\section{Further properties of $\mathcal{A}_\hbar(\mathbb{D}_n)$}
\label{sec:FurtherProperties}

In this concluding section we collect some further properties of the
algebra on the Poincaré disk $\mathbb{D}_n$: we discuss the symmetry
inherited from the classical symmetry \eqref{eq:DiscAsSymmetricSpace},
the dependence on the deformation parameter $\hbar$, and the
$^*$-involution.

%
%

\subsection{The $\mathrm{SU}(1, n)$-symmetry}
\label{subsec:SUEinsnSymmetrie}

Recall that on the level of formal star products $\tildewick$ was
invariant under the canonical $\mathrm{SU}(1, n)$-action, allowing
even an equivariant quantum momentum map, see
Remark~\ref{remark:SymmetryOfTildewick}. In fact, the momentum map is
part of $\mathcal{A}_\hbar(C_{n+1}^+)$:
\begin{lemma}
    \label{lemma:MomentumMapInA}%
    For all $\xi \ni \mathfrak{su}(1, n)$ we have $J_\xi \in
    \mathcal{A}_\hbar^1(C_{n+1}^+)$.
\end{lemma}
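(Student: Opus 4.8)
The plan is to determine $\mathcal{A}^1_\hbar(C_{n+1}^+)$ completely explicitly and then to observe that $J_\xi$ lies in it by inspection.

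First I would unwind the definitions. By \eqref{eq:AgammaDef} we have $\mathcal{A}^1_\hbar(C_{n+1}^+) = \bigoplus_{I,J}\mathbb{C}\basis{f}_{I,J,1}$, the sum running over all index triples $(I,J,1)$, i.e.\ over all multiindices $I,J$ with $|I|,|J|\le 1$. The key simplification is that for $\alpha=1$ the Pochhammer symbol degenerates, $\Pochhammer{\frac{y}{2\hbar}}_1 = \frac{y}{2\hbar}$, so that the factor $y^{-\alpha}$ in \eqref{eq:fPQalphaDef} is cancelled and
\[
\basis{f}_{P,Q,1}
=
\frac{1}{2\hbar\,P!(1-|P|)!\,Q!(1-|Q|)!}\,\basis{e}_{P,Q,1}
\]
is simply a nonzero scalar multiple of the monomial $\basis{e}_{P,Q,1}(z) = (z^0)^{1-|P|}z^P(\cc{z}^0)^{1-|Q|}\cc{z}^Q$. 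As $(P,Q)$ runs over all pairs with $|P|,|Q|\le 1$ (with $P$, respectively $Q$, being either the zero multiindex or a unit multiindex), these monomials run through precisely the monomials $z^i\cc{z}^j$ with $i,j\in\{0,\dots,n\}$. Hence
\[
\mathcal{A}^1_\hbar(C_{n+1}^+)
=
\mathbb{C}\textrm{-}\spann\left\{z^i\cc{z}^j \;\big|\; i,j=0,\dots,n\right\},
\]
which is exactly the space of $\mathrm{U}(1)$-invariant polynomials homogeneous of bidegree $(1,1)$ in $(z,\cc{z})$.

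Then I would invoke \eqref{eq:SUEinsnMomentumMap}: $J_\xi(z) = \frac{\I}{2}\sum_{i,j,k=0}^n\cc{z}^k\metric^{ki}\xi^{ij}z^j$, and since $\metric$ is diagonal this is a finite linear combination of the monomials $\cc{z}^i z^j$, $i,j\in\{0,\dots,n\}$, hence an element of $\mathcal{A}^1_\hbar(C_{n+1}^+)$ by the previous step; one can even read off its coordinates $a_{I,J,1}$ in the basis $\{\basis{f}_{I,J,1}\}$ directly from this formula. I do not expect a genuine obstacle here; the only point that deserves a moment's care is the claim that the $\basis{f}_{P,Q,1}$ exhaust all bidegree-$(1,1)$ monomials, and this follows immediately from the index-triple constraint $|P|,|Q|\le\alpha=1$ combined with the degeneration of the Pochhammer symbol at $\alpha=1$.
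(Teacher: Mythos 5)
Your proof is correct and takes essentially the same route as the paper. The paper's one-line proof ("Since $Sy = y$ this follows immediately from the definition of $J$") relies on exactly the observation you spell out: $S$ acts as the identity on the bidegree-$(1,1)$ polynomials (equivalently, $\basis{f}_{P,Q,1}$ is a nonzero scalar multiple of $\basis{e}_{P,Q,1}$), so $\mathcal{A}^1_\hbar(C_{n+1}^+)$ is literally the span of the monomials $z^i\cc{z}^j$, which visibly contains $J_\xi$; you have simply made the Pochhammer computation behind "$Sy=y$" explicit.
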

\begin{proof}
    Since $Sy = y$ this follows immediately from the definition of $J$
    in \eqref{eq:SUEinsnMomentumMap}.
\end{proof}
Thus on $\mathcal{A}_\hbar(C_{n+1}^+)$ we have an \emph{inner} action
of $\mathfrak{su}(1, n)$ via the inner derivations
$\frac{1}{\I\hbar}[J_\xi, \argument]_{\tildewick}$. From the
filtration property \eqref{eq:Filtered} we cannot expect the spaces
$\mathcal{A}_\hbar^{(\gamma)}(C_{n+1}^+)$ to be
invariant. Nevertheless, the strong invariance of $\tildewick$
together with the fact that $\Lie_\xi y = 0$ for all $\xi \in
\mathfrak{su}(1, n)$ shows that even the spaces
$\mathcal{A}_\hbar^\gamma(C_{n+1}^+)$ are invariant. We have
\begin{equation}
    \label{eq:LieXiAgammaInv}
    \frac{1}{\I\hbar}[J_\xi, \argument]_{\tildewick}
    \mathcal{A}_\hbar^\gamma(C_{n+1}^+)
    =
    \Lie_\xi
    \mathcal{A}_\hbar^\gamma(C_{n+1}^+)
    \subseteq
    \mathcal{A}_\hbar^\gamma(C_{n+1}^+)
\end{equation}
for all $\gamma$ and all $\xi \in \mathfrak{su}(1, n)$.  This
observation is one of the motivations for our choice of the basis of
the $\basis{f}_{I, J, \gamma}$. The next lemma gives an integrated
version of this:
\begin{lemma}
    \label{lemma:fIJgammaRepresentativeFunctions}%
    Let $\hbar$ be an allowed value and $\gamma \in \mathbb{N}_0$.
    \begin{lemmalist}
    \item \label{item:AgammaInvariant}
        $\mathcal{A}_\hbar^\gamma(C_{n+1}^+)$ is invariant under
        $\mathrm{SU}(1, n)$ and thus defines a finite-dimensional
        representation.
    \item \label{item:BoundsOnRep} For $U \in \mathrm{SU}(1, n)$ the
        representation on $\mathcal{A}_\hbar^\gamma(C_{n+1}^+)$ has
        matrix coefficients
        \begin{equation}
            \label{eq:MatrixCoeffsSUEinsn}
            U^*\basis{f}_{I, J, \gamma}
            =
            \sum_{K, L} M_{IJ}^{KL}(U, \gamma)
            \basis{f}_{K, L, \gamma},
        \end{equation}
        which satisfy an estimate of the form
        \begin{equation}
            \label{eq:EstimateMIJLKAgamma}
            \left|
                M_{IJ}^{KL}(U, \gamma)
            \right|
            \le
            (n+1)^{2\gamma} \supnorm{U}^{2\gamma}
        \end{equation}
        for all multiindices and $U \in \mathrm{SU}(1, n)$ where
        $\supnorm{\argument}$ denotes the supremum norm of the matrix
        elements.
    \end{lemmalist}
\end{lemma}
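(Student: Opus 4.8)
The plan is to exploit the reason the basis $\{\basis{f}_{I,J,\gamma}\}$ was chosen: up to a single $\mathrm{SU}(1,n)$-invariant factor, each $\basis{f}_{P,Q,\alpha}$ is a ``divided power'' monomial in the homogeneous coordinates $z^0,\dots,z^n$. Indeed, to an index triple $(P,Q,\alpha)$ associate the $(n{+}1)$-tuples $P'=(\alpha-|P|,P_1,\dots,P_n)$ and $Q'=(\alpha-|Q|,Q_1,\dots,Q_n)$ in $\mathbb{N}_0^{n+1}$; then $|P'|=|Q'|=\alpha$, $P'!=(\alpha-|P|)!\,P!$, $z^{P'}=(z^0)^{\alpha-|P|}z^P$, and \eqref{eq:fPQalphaDef} becomes
\[
    \basis{f}_{P,Q,\alpha}
    =
    \lambda_\alpha \; \frac{z^{P'}}{P'!} \; \frac{\cc{z}^{Q'}}{Q'!},
    \qquad
    \lambda_\alpha := \Pochhammer{\frac{y}{2\hbar}}_\alpha\, y^{-\alpha}.
\]
The map $(P,Q)\mapsto(P',Q')$ is a bijection onto the pairs of length-$(n{+}1)$ multiindices of total length $\gamma$, and identifies $\mathcal{A}_\hbar^\gamma(C_{n+1}^+)$ with $\lambda_\gamma$ times the linear span of all $z^{K'}\cc{z}^{L'}$ with $|K'|=|L'|=\gamma$. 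Since $U^*y=y$ for $U\in\mathrm{SU}(1,n)$, the factor $\lambda_\gamma$ is $\mathrm{SU}(1,n)$-invariant, and the linear $\mathrm{SU}(1,n)$-action on $\mathbb{C}^{n+1}$ preserves the bidegree of a polynomial in $z$ and $\cc{z}$; hence $\mathcal{A}_\hbar^\gamma(C_{n+1}^+)$ is $\mathrm{SU}(1,n)$-invariant and finite-dimensional (of dimension $\binom{\gamma+n}{n}^2$), which gives \refitem{item:AgammaInvariant}. One could alternatively integrate the infinitesimal statement \eqref{eq:LieXiAgammaInv}, but the polynomial picture is cleaner and also carries the estimate of \refitem{item:BoundsOnRep}.

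For \refitem{item:BoundsOnRep} I would first reduce the matrix coefficients to a purely polynomial computation. Because $\lambda_\gamma$ is invariant and the $\basis{f}_{K,L,\gamma}$ are linearly independent for $\hbar\neq0$ (Lemma~\ref{lemma:LinearIndependent}), the $M_{IJ}^{KL}(U,\gamma)$ of \eqref{eq:MatrixCoeffsSUEinsn} coincide with the coefficients in the polynomial identity $U^*\bigl(\frac{z^{I'}}{I'!}\frac{\cc{z}^{J'}}{J'!}\bigr)=\sum_{K',L'}M_{IJ}^{KL}(U,\gamma)\frac{z^{K'}}{K'!}\frac{\cc{z}^{L'}}{L'!}$. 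As $U^*$ is an algebra homomorphism acting separately on the holomorphic and anti-holomorphic factors, this factorises as $M_{IJ}^{KL}(U,\gamma)=m_{I'}^{K'}(U)\,\cc{m_{J'}^{L'}(U)}$, where $m_{P'}^{K'}(U)$ is the matrix coefficient, in the divided-power basis $\{z^{P'}/P'!\}_{|P'|=\gamma}$, of the induced action of $\mathrm{SU}(1,n)$ on homogeneous polynomials of degree $\gamma$ in $z^0,\dots,z^n$, and the linear action sends $z^i$ to $\sum_j A_{ij}z^j$ for a matrix $A$ with $\supnorm{A}=\supnorm{U}$ (whether $A=U$ or $A=U^{-1}$ is a matter of convention, but $\supnorm{U^{-1}}=\supnorm{U}$ on $\mathrm{SU}(1,n)$ because $U^{-1}=\metric\,U^\dagger\metric$). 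It then remains to prove $|m_{P'}^{K'}(U)|\le(n+1)^\gamma\supnorm{U}^\gamma$ for $|P'|=|K'|=\gamma$. Expanding $U^*z^{P'}=\prod_{i=0}^n\bigl(\sum_j A_{ij}z^j\bigr)^{P'_i}$ by the multinomial theorem and dividing by $P'!$ gives
\[
    m_{P'}^{K'}(U)
    =
    \sum_{\substack{N^{(0)},\dots,N^{(n)}\in\mathbb{N}_0^{n+1}\\ |N^{(i)}|=P'_i,\ \sum_i N^{(i)}=K'}}
    \frac{K'!}{\prod_i N^{(i)}!}\;\prod_{i,j}(A_{ij})^{N^{(i)}_j}.
\]
Here $\sum_{i,j}N^{(i)}_j=\sum_i|N^{(i)}|=\gamma$, so $\prod_{i,j}|A_{ij}|^{N^{(i)}_j}\le\supnorm{U}^\gamma$, while $\frac{K'!}{\prod_i N^{(i)}!}=\prod_{j=0}^n\binom{K'_j}{N^{(0)}_j,\dots,N^{(n)}_j}$ is a product of multinomial coefficients; dropping the constraints $|N^{(i)}|=P'_i$, the sum over the remaining collections of these products is bounded by $\prod_{j=0}^n(n+1)^{K'_j}=(n+1)^{|K'|}=(n+1)^\gamma$. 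Hence $|m_{P'}^{K'}(U)|\le(n+1)^\gamma\supnorm{U}^\gamma$, and multiplying the holomorphic and anti-holomorphic estimates yields \eqref{eq:EstimateMIJLKAgamma}.

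The only genuinely delicate point, and the step I would take the most care over, is the factorial bookkeeping: it is precisely the divided-power normalisation built into $\basis{f}_{P,Q,\alpha}$ that collapses all the multinomial prefactors into the single clean sum bounded by $(n+1)^\gamma$, whereas in the naive monomial basis the ratio $K'!/P'!$ would be of order $\gamma!$ and out of control. A secondary point is keeping the representation-versus-action conventions straight; this is harmless here only because $\supnorm{U^{-1}}=\supnorm{U}$ on $\mathrm{SU}(1,n)$.
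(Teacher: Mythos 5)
Your proof is correct and follows essentially the same route as the paper, which observes that $U^*$ preserves the degree $\gamma$ while the $y$-dependent prefactor is $\mathrm{SU}(1,n)$-invariant, and then obtains the bound by "expanding the summations in $U^*\basis{e}_{I,J,\gamma}$ by means of the multinomial theorem". You have simply carried out in full detail the multinomial-theorem expansion and the factorial bookkeeping that the paper leaves to the reader, including the factorisation into holomorphic and anti-holomorphic matrix coefficients and the observation $\supnorm{U^{-1}}=\supnorm{U}$.
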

\begin{proof}
    The first statement is an easy consequence of the fact that the
    monomial $\basis{e}_{I, J, \gamma}$ is transformed into a linear
    combination of $\basis{e}_{K, L, \gamma}$ for all allowed $K, L$
    but the same $\gamma$. Since the function $y$ is invariant, the
    claim follows at once. For the second part, we consider again the
    monomials $\basis{e}_{I, J, \gamma}$. Expanding the summations in
    $U^*\basis{e}_{I, J, \gamma}$ by means of the multinomial theorem
    gives immediately the result, since the additional prefactor in
    $\basis{f}_{I, J, \gamma}$ only depends on $y$ and is not changed
    by the invariance of $y$.
\end{proof}

With other words, the functions in $\mathcal{A}_\hbar(C_{n+1}^+)$
consist of \emph{representative functions}, i.e.\ functions which have
a finite-dimensional orbit under the group action of $\mathrm{SU}(1,
n)$. Since we already know, see
Remark~\ref{remark:SymmetryOfTildewick}, that $\mathrm{SU}(1, n)$ acts
by \emph{automorphisms} of $\tildewick$ on the whole space
$C^\infty(C_{n+1}^+)[[\lambda]]$ we get an action by automorphisms on
$\mathcal{A}_\hbar(C_{n+1}^+)$, too.

In a next step we show that the action of $\mathrm{SU}(1, n)$ is
continuous. The key observation is contained in the following lemma:
\begin{lemma}
    \label{lemma:ContinuityOfSUEinsn}%
    Let $m \in \mathbb{N}_0$. Then there exists a constant $c_m > 1$
    such that for all $U \in \mathrm{U}(1, n)$, all $a \in
    \mathcal{A}_\hbar(C_{n+1}^+)$, all $\gamma \in \mathbb{N}_0$, and
    all $\ell = 0, \ldots 2^m - 1$ we have
    \begin{equation}
        \label{eq:ContinuityEstimateSUEinsn}
        \norm{U^*a}_{m, \ell, \gamma}
        \le
        c_m^\gamma
        \norm{U}_\infty^{2\gamma}
        \norm{a}_{m, \ell, \gamma}.
    \end{equation}
\end{lemma}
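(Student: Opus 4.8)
The plan is to prove the estimate \eqref{eq:ContinuityEstimateSUEinsn} by induction on $m$, running the induction over all $\gamma$ and $\ell$ simultaneously. The case $m=0$ is the base of the induction and is essentially a consequence of Lemma~\ref{lemma:fIJgammaRepresentativeFunctions}, \refitem{item:BoundsOnRep}: writing $a = \sum_{(I,J,\gamma)} a_{I,J,\gamma} \basis{f}_{I,J,\gamma}$ and using $U^*\basis{f}_{I,J,\gamma} = \sum_{K,L} M^{KL}_{IJ}(U,\gamma) \basis{f}_{K,L,\gamma}$, the coefficient of $\basis{f}_{K,L,\gamma}$ in $U^*a$ is $\sum_{I,J} M^{KL}_{IJ}(U,\gamma) a_{I,J,\gamma}$. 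Its absolute value is bounded, via \eqref{eq:EstimateMIJLKAgamma}, by $(n+1)^{2\gamma}\supnorm{U}^{2\gamma} \sum_{I,J} |a_{I,J,\gamma}|$, and since for fixed $\gamma$ there are at most $(\gamma+1)^{2n}$ admissible pairs $(I,J)$ we obtain, after summing over $K,L$ and recalling $h_{0,0,\gamma}(a) = \sum_{I,J} h_{0,0,(I,J,\gamma)}(a) = \sum_{I,J}|a_{I,J,\gamma}|$ (in the notation of \eqref{eq:Dischmellgamma}), an estimate of the shape $h_{0,0,\gamma}(U^*a) \le (\gamma+1)^{4n}(n+1)^{2\gamma}\supnorm{U}^{2\gamma} h_{0,0,\gamma}(a)$. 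Absorbing the polynomial prefactor $(\gamma+1)^{4n}$ into $c_0^\gamma$ for a suitable constant $c_0 > 1$ (since $(\gamma+1)^{4n} \le c_0^\gamma$ for large $c_0$, uniformly in $\gamma$) gives the $m=0$ case with the $\gamma$-combined seminorm $\norm{\argument}_{0,0,\gamma}$.

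For the inductive step, suppose \eqref{eq:ContinuityEstimateSUEinsn} holds for $m$ with constant $c_m$; I would establish it for $m+1$. Recall the recursion underlying $h_{m+1,2\ell,\gamma}$ together with Lemma~\ref{lemma:hmlgammaDiscStuff}, \refitem{item:EstimateSumhmellSquares}, which gives the two-sided control
\[
\sum_{\alpha=0}^\gamma h_{m,\ell,\alpha}(a)^2 \le (\gamma+1)^{2n} h_{m+1, 2\ell,\gamma}(a)
\]
and, conversely, the definition gives $h_{m+1,2\ell,\gamma}(a) = \sum_{I,J}\sum_{(P,Q,\alpha)} h_{m,\ell,(P,Q,\alpha)}(a)^2 C^{(I,J,\gamma)}_{(P,Q,\alpha),\boldsymbol\cdot}$, where by Lemma~\ref{lemma:ConstantsForTheDisc} the sum ranges only over $\alpha \le \gamma$ and the constant sums are bounded by $(\gamma+1)^{4n+1}4^\gamma$. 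Hence $h_{m+1,2\ell,\gamma}(a) \le (\gamma+1)^{4n+1}4^\gamma \sum_{\alpha=0}^\gamma h_{m,\ell,\alpha}(a)^2$ (using \refitem{item:SumPShmellPAaSquares} to pass from the $(I,J,\gamma)$-indexed quantities to the $\gamma$-combined ones). Applying the inductive hypothesis at level $m$ to each $h_{m,\ell,\alpha}(U^*a) = \norm{U^*a}_{m,\ell,\alpha}^{2^m} \le c_m^{2^m\alpha}\supnorm{U}^{2^{m+1}\alpha} h_{m,\ell,\alpha}(a)$ and using $\alpha \le \gamma$, the factor $c_m^{2^m\alpha}\supnorm{U}^{2^{m+1}\alpha}$ (after squaring) is bounded by $c_m^{2^{m+1}\gamma}\supnorm{U}^{2^{m+2}\gamma}$ (note $\supnorm{U}\ge 1$ for $U\in\mathrm{U}(1,n)$, since the columns have pseudo-norm $\pm1$). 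Collecting the geometric prefactors $(\gamma+1)^{4n+1}4^\gamma$ and the polynomial ones into a new constant $c_{m+1}^{2^{m+1}\gamma}$ and taking the $2^{m+1}$-st root delivers $\norm{U^*a}_{m+1,2\ell,\gamma} \le c_{m+1}^\gamma \supnorm{U}^{2\gamma}\norm{a}_{m+1,2\ell,\gamma}$; the odd case $2\ell+1$ is identical using the $\boldsymbol\cdot$-on-the-left constants. Finally, one converts the estimate for the $\gamma$-combined seminorms $\norm{\argument}_{m,\ell,\gamma}$ back to the individual $\norm{\argument}_{m,\ell,(I,J,\gamma)}$ (which is what the statement literally asserts) using $h_{m,\ell,(I,J,\gamma)}(a) \le h_{m,\ell,\gamma}(a)$; since $\supnorm{U}$ only enters through powers this introduces no new difficulty.

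The main obstacle, and the only point requiring genuine care, is the bookkeeping of the constants: one must verify that the accumulated polynomial-in-$\gamma$ factors $(\gamma+1)^{k}$ arising at each of the $m$ steps — together with the $4^\gamma$ from the structure-constant bounds and the representation bound $(n+1)^{2\gamma}$ — can all be absorbed into a single exponential $c_m^\gamma$ with a constant $c_m$ that depends on $m$ but not on $\gamma$ or $U$. This works because $(\gamma+1)^k \le C^\gamma$ for a constant $C$ depending only on $k$, uniformly over $\gamma \in \mathbb{N}_0$, and the number and degree of such polynomial factors introduced up to level $m$ is a fixed function of $m$ and $n$. A secondary, purely notational, nuisance is the need to track the base-point dependence of the seminorm index ($(I,J,\gamma)$ versus the combined $\gamma$) cleanly through the recursion; this is handled once and for all by Lemma~\ref{lemma:hmlgammaDiscStuff}. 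Since the paper uses this lemma precisely to set up the $\mathrm{SU}(1,n)$-continuity, I expect the intended proof to follow exactly this combined-seminorm induction, possibly suppressing the explicit form of $c_m$.
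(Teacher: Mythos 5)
Your proposal follows the same route as the paper: induction on $m$ using the $\gamma$-combined quantities $h_{m,\ell,\gamma}$, the matrix-coefficient bound from Lemma~\ref{lemma:fIJgammaRepresentativeFunctions}, the structure-constant sum bound from Lemma~\ref{lemma:ConstantsForTheDisc}, and the two inequalities of Lemma~\ref{lemma:hmlgammaDiscStuff}, absorbing all polynomial-in-$\gamma$ factors into $c_m^\gamma$. The only slip is a harmless double-counting in the base case where you pick up an extra $(\gamma+1)^{2n}$ (the sum $\sum_{I,J}|a_{I,J,\gamma}|$ is already $h_{0,0,\gamma}(a)$; only the outer sum over $(K,L)$ contributes a counting factor), but since it is polynomial in $\gamma$ it is absorbed into $c_0^\gamma$ anyway.
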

\begin{proof}
    As usual we prove this by induction on $m$. For $m = 0$ we have
    \[
    h_{0, 0, \gamma}(U^*a)
    =
    \sum_{I, J} h_{0, 0, (I, J, \gamma)}(U^*a)
    \le
    \sum_{I, J}
    \left|
        \sum_{K, L}
        a_{K, L, \gamma} M_{IJ}^{KL}(U, \gamma)
    \right|
    \stackrel{\eqref{eq:EstimateMIJLKAgamma}}{\le}
    h_{0, 0, \gamma}(a)
    \sum_{I, J} (n+1)^{2\gamma} \supnorm{U}^{2\gamma}.
    \]
    Since the remaining sum has at most $(n+1)^{2\gamma}$ terms, we
    get \eqref{eq:ContinuityEstimateSUEinsn} by taking $c_0 =
    (n+1)^4$. By induction, we estimate
    \begin{align*}
        h_{m+1, 2\ell, \gamma}(U^*a)
        &=
        \sum_{I, J} \sum_{(P, Q, \alpha)}
        h_{m, \ell, (P, Q, \alpha)}(U^*a)^2
        C^{(I, J, \gamma)}_{(P, Q, \alpha), \boldsymbol{\cdot}} \\
        &\stackrel{\mathclap{\eqref{eq:EstimateDiscConstants}}}{\le}
        \quad
        (\gamma + 1)^{4n+1} 4^\gamma
        \sum_{(P, Q, \alpha)}
        h_{m, \ell, (P, Q, \alpha)}(U^*a)^2 \\
        &\stackrel{\mathclap{\eqref{eq:SumPQhmlPAaSquares}}}{\le}
        \quad
        (\gamma + 1)^{4n+1} 4^\gamma
        \sum_{\alpha = 0}^\gamma
        h_{m, \ell, \alpha}(U^*a)^2 \\
        &\stackrel{\mathclap{\textrm{Ind.}}}{\le}
        \quad
        (\gamma + 1)^{4n+1} 4^\gamma
        \sum_{\alpha = 0}^\gamma
        c_m^{2^{m+1}\alpha} \supnorm{U}^{2^{m+2}\alpha}
        h_{m, \ell, \alpha}(a)^2 \\
        &\stackrel{\mathclap{\eqref{eq:EstimateSumhmellSquares}}}{\le}
        \quad
        (\gamma + 1)^{6n+1} 4^\gamma
        c_m^{2^{m+1}\gamma} \supnorm{U}^{2^{m+2}\gamma}
        h_{m+1, 2\ell, \gamma}(a),
    \end{align*}
    where we use $\supnorm{U} \ge 1$ for $U \in \mathrm{SU}(1, n)$ and
    $c_m > 1$ in the last step. Defining $c_{m+1}$ appropriately and
    taking now the $2^{m+1}$-th root gives the claim. The case $2\ell
    + 1$ is analogous. Since there are only finitely many $\ell$, we
    can take $c_{m+1}$ independently of $\ell$.
\end{proof}

Since the action of $\mathrm{SU}(1, n)$ preserves the filtration by
$\gamma$ and since each $\mathcal{A}_\hbar^\gamma(C_{n+1}^+)$ is
finite-dimensional, it is rather obvious that the action is via
continuous maps with respect to the \emph{Cartesian product topology},
i.e.\ the one determined by the seminorms $\norm{\argument}_{m, \ell,
  \gamma}$. From this point of view,
Lemma~\ref{lemma:ContinuityOfSUEinsn} is not surprising. However, the
precise estimate \eqref{eq:ContinuityEstimateSUEinsn} becomes crucial
for the following result:
\begin{lemma}
    \label{lemma:SUEinsnStetigerAuto}%
    For every $U \in \mathrm{SU}(1, n)$ the automorphism $U^*\colon
    \mathcal{A}_\hbar(C_{n+1}^+) \longrightarrow
    \mathcal{A}_\hbar(C_{n+1}^+)$ is continuous. More precisely, for
    all $R > 0$ we have
    \begin{equation}
        \label{eq:EstimateContinuityOfU}
        \norm{U^*a}_{m, \ell, R}
        \le
        \norm{a}_{m, \ell, R'}
    \end{equation}
    with $R' = c_m^{2^m} \supnorm{U}^{2^{m+1}} R$.
\end{lemma}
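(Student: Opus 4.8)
The plan is to obtain \eqref{eq:EstimateContinuityOfU} essentially for free from Lemma~\ref{lemma:ContinuityOfSUEinsn}, which already contains all the real work (the inductive estimate on the matrix coefficients together with the recursion for the $h_{m,\ell,\gamma}$). Recall that $\norm{a}_{m,\ell,\gamma}^{2^m} = h_{m,\ell,\gamma}(a)$ and that $\norm{a}_{m,\ell,R}^{2^m} = \sum_{\gamma \ge 0} \frac{R^\gamma}{\gamma!} h_{m,\ell,\gamma}(a)$ by \eqref{eq:NewSeminormsWithR}. Raising the estimate \eqref{eq:ContinuityEstimateSUEinsn} of Lemma~\ref{lemma:ContinuityOfSUEinsn} to the $2^m$-th power therefore converts it into
\[
h_{m,\ell,\gamma}(U^*a) \le \bigl(c_m^{2^m}\supnorm{U}^{2^{m+1}}\bigr)^\gamma\, h_{m,\ell,\gamma}(a),
\]
valid for every $\gamma \in \mathbb{N}_0$, every $\ell = 0, \ldots, 2^m-1$, and every $a \in \mathcal{A}_\hbar(C_{n+1}^+)$.

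Next I would plug this termwise into the series defining $\norm{U^*a}_{m,\ell,R}$ and observe that the geometric factor $(c_m^{2^m}\supnorm{U}^{2^{m+1}})^\gamma$ is exactly what one absorbs by replacing $R$ with $R' = c_m^{2^m}\supnorm{U}^{2^{m+1}} R$:
\[
\norm{U^*a}_{m,\ell,R}^{2^m}
= \sum_{\gamma=0}^\infty \frac{R^\gamma}{\gamma!}\, h_{m,\ell,\gamma}(U^*a)
\le \sum_{\gamma=0}^\infty \frac{(R')^\gamma}{\gamma!}\, h_{m,\ell,\gamma}(a)
= \norm{a}_{m,\ell,R'}^{2^m}.
\]
Taking $2^m$-th roots yields \eqref{eq:EstimateContinuityOfU}. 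Since $a$ lies in the (uncompleted) algebra $\mathcal{A}_\hbar(C_{n+1}^+)$, all the sums here are finite, so there is no convergence subtlety to check.

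Finally I would deduce the continuity of $U^*$ from the two families of estimates. By Lemma~\ref{lemma:EquivalentSeminormSystem} together with the construction in Subsection~\ref{subsec:EvaluationFunctionalsContinuous}, the locally convex topology of $\mathcal{A}_\hbar(C_{n+1}^+)$ is defined by the seminorms $\norm{\argument}_{m,\ell,\gamma}$ together with the seminorms $\norm{\argument}_{m,\ell,R}$ for $R > 0$ (equivalently the $\norm{\argument}_{m,\ell,\delta_w}$). Now \eqref{eq:ContinuityEstimateSUEinsn} bounds $\norm{U^*a}_{m,\ell,\gamma}$ by $c_m^\gamma \supnorm{U}^{2\gamma}\norm{a}_{m,\ell,\gamma}$ and \eqref{eq:EstimateContinuityOfU} bounds $\norm{U^*a}_{m,\ell,R}$ by $\norm{a}_{m,\ell,R'}$, so every defining seminorm of $U^*a$ is dominated by a defining seminorm of $a$; hence $U^*$ is continuous. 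The only genuinely hard step of the whole statement is the inductive estimate already performed in Lemma~\ref{lemma:ContinuityOfSUEinsn}; the part described here is the elementary observation that, on the level of the $h_{m,\ell,\gamma}$, the $\mathrm{SU}(1,n)$-action contributes only an exponential-in-$\gamma$ distortion, which the weights $R^\gamma/\gamma!$ of the $R$-seminorms comfortably accommodate by a rescaling of $R$.
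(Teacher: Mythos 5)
Your argument is correct and coincides with the paper's own proof: both raise the inequality of Lemma~\ref{lemma:ContinuityOfSUEinsn} to the $2^m$-th power, insert it termwise into the series defining $\norm{\argument}_{m,\ell,R}$, absorb the resulting geometric factor into the rescaled parameter $R'$, and then take $2^m$-th roots. The only cosmetic difference is that you also explicitly re-verify the $\gamma$-seminorm estimate, whereas the paper observes via Lemma~\ref{lemma:EquivalentSeminormSystem} that the $R$-seminorms alone already determine the topology, which suffices.
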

\begin{proof}
    Since the seminorms $\norm{\argument}_{m, \ell, R}$ specify the
    topology of $\mathcal{A}_\hbar(C_{n+1}^+)$ by
    Lemma~\ref{lemma:EquivalentSeminormSystem}, we only have to check
    the estimate. We have
    \[
    \sum_{\gamma = 0}^\infty
    \frac{R^\gamma}{\gamma!} h_{m, \ell, \gamma}(U^*a)
    \;
    \stackrel{\eqref{eq:ContinuityEstimateSUEinsn}}{\le}
    \;
    \sum_{\gamma = 0}^\infty
    \frac{
      \left(
          R c_m^{2^m} \supnorm{U}^{2^{m+1}}
      \right)^\gamma
    }
    {\gamma!}
    h_{m, \ell, \gamma}(a).
    \]
    Then taking the $2^m$-th root gives
    \eqref{eq:EstimateContinuityOfU}.
\end{proof}
Thus the action extends uniquely to an action by automorphisms on the
completion $\complete{\mathcal{A}}_\hbar(C_{n+1}^+)$. It turns out to
be a continuous action:
\begin{proposition}
    \label{proposition:ActionIsContinuous}%
    The action of $\mathrm{SU}(1, n)$ on
    $\complete{\mathcal{A}}_\hbar(C_{n+1}^+)$ yields a continuous map
    \begin{equation}
        \label{eq:ActionIsContinuous}
        \mathrm{SU}(1, n) \times
        \complete{\mathcal{A}}_\hbar(C_{n+1}^+)
        \longrightarrow
        \complete{\mathcal{A}}_\hbar(C_{n+1}^+).
    \end{equation}
\end{proposition}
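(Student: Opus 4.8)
The plan is to combine the equicontinuity of the family $\{U^*\}_{U\in\mathrm{SU}(1,n)}$ of automorphisms, which is already essentially contained in the explicit estimate of Lemma~\ref{lemma:SUEinsnStetigerAuto}, with the separate continuity of the orbit maps $U \mapsto U^*a$, and then to deduce joint continuity by a standard $3\epsilon$-argument. Throughout one uses that, by Lemma~\ref{lemma:EquivalentSeminormSystem}, the topology of $\complete{\mathcal{A}}_\hbar(C_{n+1}^+)$ is generated by the seminorms $\norm{\argument}_{m, \ell, R}$.

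First I would record the equicontinuity. Lemma~\ref{lemma:SUEinsnStetigerAuto} gives $\norm{U^*a}_{m, \ell, R} \le \norm{a}_{m, \ell, R'}$ with $R' = c_m^{2^m}\supnorm{U}^{2^{m+1}}R$, and here $R'$ depends on $U$ only through $\supnorm{U}$, which is bounded on every neighbourhood of a given $U_0$. Fixing $U_0$ and restricting to $\{U : \supnorm{U} \le L\}$ with $L = \supnorm{U_0} + 1$, the bound becomes uniform in $U$: for all such $U$ and all $a$,
\[
\norm{U^*a}_{m, \ell, R} \le \norm{a}_{m, \ell, R_L}, \qquad R_L = c_m^{2^m} L^{2^{m+1}} R,
\]
so the family $\{U^* : \supnorm{U} \le L\}$ is equicontinuous.

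Next I would show that for fixed $a$ the orbit map $U \mapsto U^*a$ is continuous. For $a = \basis{f}_{I, J, \gamma}$ this is immediate from Lemma~\ref{lemma:fIJgammaRepresentativeFunctions}: $U^*\basis{f}_{I, J, \gamma} = \sum_{K, L} M_{IJ}^{KL}(U, \gamma)\basis{f}_{K, L, \gamma}$ is a finite sum whose coefficients are polynomial functions of the matrix entries of $U$ and their complex conjugates (they arise from expanding $U^*\basis{e}_{I, J, \gamma}$ by the multinomial theorem), hence continuous in $U$; since the image lies in the finite-dimensional invariant subspace $\mathcal{A}_\hbar^\gamma(C_{n+1}^+)$, on which the subspace topology is the unique Hausdorff one, the map is continuous into $\complete{\mathcal{A}}_\hbar(C_{n+1}^+)$. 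By linearity the orbit map is continuous for every $a$ in the dense subalgebra $\mathcal{A}_\hbar(C_{n+1}^+)$. For general $a \in \complete{\mathcal{A}}_\hbar(C_{n+1}^+)$, fix a defining seminorm $\norm{\argument}_{m, \ell, R}$ and $\epsilon > 0$, choose $b \in \mathcal{A}_\hbar(C_{n+1}^+)$ with $\norm{a - b}_{m, \ell, R_L} < \epsilon/3$, and use the decomposition $U^*a - U_0^*a = U^*(a - b) + (U^*b - U_0^*b) + U_0^*(b - a)$: the outer terms are each $\le \norm{a - b}_{m, \ell, R_L} < \epsilon/3$ by the equicontinuity estimate, while the middle term is $< \epsilon/3$ for $U$ in a suitable neighbourhood of $U_0$ by the dense case.

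Finally I would assemble joint continuity. Given $(U_0, a_0)$, a defining seminorm $\norm{\argument}_{m, \ell, R}$, and $\epsilon > 0$, put $L = \supnorm{U_0} + 1$ and $R_L = c_m^{2^m}L^{2^{m+1}}R$; the decomposition $U^*a - U_0^*a_0 = U^*(a - a_0) + (U^*a_0 - U_0^*a_0)$ together with the equicontinuity estimate yields $\norm{U^*a - U_0^*a_0}_{m, \ell, R} \le \norm{a - a_0}_{m, \ell, R_L} + \norm{U^*a_0 - U_0^*a_0}_{m, \ell, R}$ for all $U$ with $\supnorm{U} \le L$. Choosing a neighbourhood $V$ of $a_0$ on which $\norm{a - a_0}_{m, \ell, R_L} < \epsilon/2$, and (by the previous step) a neighbourhood $W \subseteq \{\supnorm{U} \le L\}$ of $U_0$ on which $\norm{U^*a_0 - U_0^*a_0}_{m, \ell, R} < \epsilon/2$, one gets $\norm{U^*a - U_0^*a_0}_{m, \ell, R} < \epsilon$ on $W \times V$; since these seminorms generate the topology, this is the claim. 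The only point that needs care is that the estimates be genuinely uniform over a neighbourhood of $U_0$ rather than merely for $U$ fixed, but this is exactly what the explicit dependence $R' = c_m^{2^m}\supnorm{U}^{2^{m+1}}R$ in Lemma~\ref{lemma:SUEinsnStetigerAuto} supplies, so no genuinely new estimate is required — the work has already been done in that lemma.
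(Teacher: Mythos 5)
Your proof is correct, and it relies on the same two essential ingredients as the paper's — the decomposition $U^*a - U_0^*a_0 = U^*(a - a_0) + (U^*a_0 - U_0^*a_0)$, with the first term controlled by the equicontinuity from Lemma~\ref{lemma:SUEinsnStetigerAuto} and the second by continuity of the orbit map at fixed $a_0$ — but you handle the orbit map by a different route. The paper argues at the level of the defining series: it notes that the action is obviously strongly continuous with respect to each single seminorm $\norm{\argument}_{m, \ell, \gamma}$ (the Cartesian product topology on finite-dimensional pieces), uses the bound \eqref{eq:ContinuityEstimateSUEinsn} to dominate the full series $\sum_\gamma \frac{R^\gamma}{\gamma!} h_{m, \ell, \gamma}(U_j^*a - U^*a)$ by the $j$-independent convergent series $\sum_\gamma \frac{(\tilde c R)^\gamma}{\gamma!} h_{m, \ell, \gamma}(a)$, and then exchanges the limit with the sum. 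You instead invoke density of $\mathcal{A}_\hbar(C_{n+1}^+)$ and run a standard $3\epsilon$-argument: reduce to $a$ a finite linear combination of the $\basis{f}_{I, J, \gamma}$, where the orbit lies in a finite-dimensional invariant subspace with polynomial (hence continuous) matrix coefficients, and absorb the approximation error on both sides via the uniform equicontinuity bound $\norm{U^*\cdot}_{m,\ell,R} \le \norm{\cdot}_{m,\ell,R_L}$ valid on $\{\supnorm{U} \le L\}$. Both are valid; the paper's dominated-convergence argument is more hands-on and avoids explicitly invoking density, while your version is the more textbook equicontinuity-plus-density pattern and makes the structure of the argument somewhat more transparent. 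One small stylistic difference: the paper works with sequences, justified by noting that $\complete{\mathcal{A}}_\hbar(C_{n+1}^+)$ is Fréchet (hence metrizable); your $\epsilon$-$\delta$ argument with seminorms and neighbourhoods does not need metrizability, so it is marginally more general.
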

\begin{proof}
    First we note that a sequence $U_j \in \mathrm{SU}(1, n)$
    converges to $U \in \mathrm{SU}(1, n)$ iff $\supnorm{U_j - U}
    \longrightarrow 0$, even though $U_j - U$ is of course not in
    $\mathrm{SU}(1, n)$ in general but just in $M_{n+1}(\mathbb{C})$.
    Moreover, since $\complete{\mathcal{A}}_\hbar(C_{n+1}^+)$ is a
    Fréchet space it suffices to consider sequences instead of
    nets. Thus let $U_j \longrightarrow U$ and $a_j \longrightarrow a$
    be convergent sequences in $\mathrm{SU}(1, n)$ and
    $\complete{\mathcal{A}}_\hbar(C_{n+1}^+)$, respectively.  Since
    $\supnorm{\argument}$ is continuous we have a constant $c > 0$
    with $\supnorm{U_j}, \supnorm{U} \le c$ for all $j$. Now let $m$,
    $\ell$, and $R$ be given and define $R' = c_m^{2^m} c^{2^{m+1}} R$
    such that we can apply Lemma~\ref{lemma:SUEinsnStetigerAuto} for
    all $U_j$ and $U$ simultaneously. Then we first note the estimate
    \[
    \norm{U_j^* a_j - U^* a}_{m, \ell, R}
    \le
    \norm{U_j^* a_j - U_j^* a}_{m, \ell, R}
    +
    \norm{U_j^* a - U^* a}_{m, \ell, R}
    \le
    \norm{a_j - a}_{m, \ell, R'}
    +
    \norm{U_j^* a - U^* a}_{m, \ell, R}.
    \]
    Now for $j \longrightarrow \infty$, the first contribution
    converges to $0$. Hence we only have to take care of the second.
    First we note that from $(\alpha + \beta)^k \le 2^{k-1}(\alpha^k +
    \beta^k)$ for $\alpha, \beta \ge 0$ and all positive $k$ we get
    \[
    h_{m, \ell, \gamma}(U_j^* a - U^* a)
    =
    \norm{U_j^*a - U^*a}_{m, \ell, \gamma}^{2^m}
    \le
    2^{2^m -1}
    \left(
        \norm{U_j^* a}_{m, \ell, \gamma}^{2^m}
        +
        \norm{U^* a}_{m, \ell, \gamma}^{2^m}
    \right)
    \le
    \tilde{c}^\gamma \norm{a}_{m, \ell, \gamma}^{2^m},
    \]
    with $\tilde{c} > 0$ build out of the constants in
    \eqref{eq:ContinuityEstimateSUEinsn} and the above $c$ estimating
    the sup-norms of $U_j$ and $U$. Thus the series $\sum_{\gamma =
      0}^\infty \frac{R^\gamma}{\gamma!} h_{m, \ell, \gamma}(U_j^* a -
    U^*a)$ can be dominated by the series $\sum_{\gamma = 0}^\infty
    \frac{(\tilde{c}R)^\gamma}{\gamma!} h_{m, \ell, \gamma}(a)$. Since
    clearly the group action is strongly continuous with respect to
    the single (semi-) norm $\norm{\argument}_{m, \ell, \gamma}$, we
    have $h_{m, \ell, \gamma}(U_j^*a - U^*a) \longrightarrow 0$ for $j
    \longrightarrow \infty$.  Both arguments together allow us to
    exchange the limit $j \longrightarrow \infty$ with the summation
    over $\gamma$ and we get $\norm{U_j^*a - U^*a}_{m, \ell, \gamma}
    \longrightarrow 0$ as wanted. This establishes the sequential
    continuity of \eqref{eq:ActionIsContinuous} at the point $(U, a)$
    which is all we have to prove.
\end{proof}

Note that though the seminorms $\norm{\argument}_{m, \ell, \gamma}$
just specify the Cartesian product topology for which the group action
is continuous for rather obvious reasons, the continuity with respect
to the topology of $\complete{\mathcal{A}}_\hbar(C_{n+1}^+)$ uses the
specific form of the $h_{m, \ell, \gamma}$ and is therefore
nontrivial. Having the continuity it is now fairly easy to see that
the action is even smooth: we can rely on some general arguments for
this. The following lemma should be well-known.
\begin{lemma}
    \label{lemma:CinftyAction}%
    Let $G$ be a connected Lie group acting on a sequentially complete
    Hausdorff locally convex space $V$ by continuous endomorphism such
    that the action map $G \times V \longrightarrow V$ is
    continuous. Suppose that for every $\xi \in \mathfrak{g}$ we are
    given a continuous operator $L_\xi\colon V \longrightarrow V$ such
    that $\xi \mapsto L_\xi$ is linear.  Finally, suppose $V_0
    \subseteq V$ is a dense subspace invariant under $G$ and under all
    the operators $L_\xi$ such that for $v \in V_0$ one has
    \begin{equation}
        \label{eq:DiffbarOnVnull}
        L_\xi v
        = 
        \lim_{t \longrightarrow 0}
        \frac{\exp(t\xi)v - v}{t}
    \end{equation}
    in the topology of $V$. Then the action is smooth and the smooth
    topology of $V$ coincides with the original one.
\end{lemma}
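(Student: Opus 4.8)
The plan is to reduce the statement to the single assertion that every orbit map $\gamma_v\colon G\to V$, $g\mapsto g\cdot v$, is smooth; granting this, $V^\infty=V$ and the comparison of topologies is essentially free. Indeed, the smooth topology on $V^\infty$ is generated by the seminorms $v\mapsto\sup_{g\in K}q\bigl(D\gamma_v(g)\bigr)$, where $K\subseteq G$ is compact, $q$ a continuous seminorm on $V$, and $D$ a left-invariant differential operator on $G$. Once one knows (see below) that $D\gamma_v(g)=g\cdot(D'v)$ with $D'$ a finite sum of products of the operators $L_\xi$, hence a continuous operator on $V$, and that the family of linear maps $\{v\mapsto g\cdot v \mid g\in K\}$ is equicontinuous---which follows from continuity of the action by a tube-lemma argument applied to $(g,w)\mapsto q(g\cdot w)$ near $K\times\{0\}$---each such seminorm is dominated by a single continuous seminorm of $V$ applied to $v$. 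Thus the smooth topology is coarser than, and therefore equal to, the original one.

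It remains to prove that $\gamma_v$ is smooth for each $v\in V$. The first step is to promote the differentiability hypothesis from $V_0$ to all of $V$. For $v\in V_0$, using that $\exp(s\xi)$ is a continuous endomorphism one gets $\frac{d}{ds}\exp(s\xi)v=\exp(s\xi)L_\xi v$, whose right-hand side is continuous in $s$ by continuity of the action; since $V$ is sequentially complete, the $V$-valued Riemann integral of a continuous function over a compact interval exists and the fundamental theorem of calculus yields $\exp(t\xi)v-v=\int_0^t\exp(s\xi)L_\xi v\,ds$. Now let $v\in V$ be arbitrary and pick $v_n\to v$ with $v_n\in V_0$. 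By continuity of $L_\xi$ together with the equicontinuity of $\{\exp(s\xi)\mid s\in[0,t]\}$ noted above, $\exp(s\xi)L_\xi v_n\to\exp(s\xi)L_\xi v$ uniformly for $s\in[0,t]$, so the integral identity passes to the limit, while its left-hand side converges by continuity. Hence $\exp(t\xi)v-v=\int_0^t\exp(s\xi)L_\xi v\,ds$, and therefore $\frac{d}{dt}\big|_{t=0}\exp(t\xi)v=L_\xi v$, for all $v\in V$ and $\xi\in\mathfrak g$.

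Next I iterate. Applying the previous step to $L_\xi v$ shows that $t\mapsto\exp(t\xi)v$ is $C^2$ with second derivative $\exp(t\xi)L_\xi^2 v$, and by induction $C^\infty$. More generally, writing $\xi^L$ for the left-invariant vector field on $G$ with $\xi^L\big|_e=\xi$, a direct computation gives $\xi^L\gamma_v=\gamma_{L_\xi v}$, hence $\xi_1^L\cdots\xi_k^L\gamma_v=\gamma_{L_{\xi_1}\cdots L_{\xi_k}v}$; since $\mathfrak g$ is finite-dimensional the $L_\xi$ are automatically continuous, so all these functions are continuous on $G$. For the action map $\alpha\colon G\times V\to V$ itself, differentiating in the $G$-slot inserts the continuous operators $L_\xi$ and differentiating in the $V$-slot is trivial, so every mixed iterated difference-quotient derivative of $\alpha$ exists and, being of the form $(g,v)\mapsto g\cdot\bigl(L_{\xi_1}\cdots L_{\xi_k}v\bigr)=\alpha\bigl(g,L_{\xi_1}\cdots L_{\xi_k}v\bigr)$, is jointly continuous. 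Invoking the standard fact that a map into a sequentially complete locally convex space all of whose iterated directional derivatives along a smooth frame exist and are continuous is automatically $C^\infty$---the locally convex $C^1$-criterion via the integral form of the mean value theorem, iterated and read off in a chart---one concludes that $\alpha$ is smooth, i.e.\ $V^\infty=V$; combined with the first paragraph this proves the lemma.

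The step I expect to cost the most effort is this last invocation: converting the mere existence and joint continuity of all iterated difference-quotient derivatives of $\alpha$ into genuine $C^\infty$-smoothness for a chosen convenient notion of differentiability (Michal--Bastiani), which rests on the locally convex $C^1$-criterion and hence on the sequential completeness of $V$ and on a careful bookkeeping of charts and of the left-invariant frame. The only other point needing explicit attention is the equicontinuity of $\{v\mapsto g\cdot v\mid g\in K\}$ used twice above; it is elementary but it is precisely what makes both the passage of the integral formula from $V_0$ to $V$ and the comparison of the two topologies go through.
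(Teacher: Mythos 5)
Your proof follows essentially the same strategy as the paper's: extend the derivative formula from $V_0$ to all of $V$ using density, continuity of $L_\xi$, and a Riemann-integral representation; then iterate to get $C^\infty$; then observe that the smooth-topology seminorms $v\mapsto\mathsf{p}(L_{\xi_1}\cdots L_{\xi_n}v)$ are continuous because the $L_\xi$ are, forcing the two topologies to coincide. The only difference is in how the density step is packaged: the paper derives the inequality $\mathsf{p}\bigl(\frac{\exp(t\xi)v-v}{t}-L_\xi v\bigr)\le\int_0^1\mathsf{q}(\exp(ts\xi)v-v)\,ds$ for $v\in V_0$ and extends it by density (both sides being continuous in $v$), whereas you pass the integral identity itself to the limit via the equicontinuity of $\{\exp(s\xi)\}_{s\in[0,t]}$ — both are correct and neither buys anything the other doesn't; and both proofs share the same hand-wavy spot, the final promotion of iterated directional derivatives to full Michal--Bastiani $C^\infty$, which you at least flag explicitly.
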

\begin{proof}
    The crucial point is that we assume that $L_\xi$ is
    \emph{continuous}. Let $\mathsf{p}$ be a continuous seminorm on
    $V$ and let $\xi \in \mathfrak{g}$. Since $L_\xi$ is continuous we
    get a continuous seminorm $\mathsf{q}$ such that $\mathsf{p}(L_\xi
    v) \le \mathsf{q}(v)$ for all $v \in V$. Moreover, we have a
    continuous function $t \mapsto \exp(t\xi)v$. For $v \in V_0$ we
    get for $t \ne 0$
    \begin{align*}
        \mathsf{p}
        \left(
            \frac{\exp(t\xi)v - v}{t} - L_\xi v
        \right)
        &=
        \mathsf{p}
        \left(
            \int_0^1
            \frac{\D}{\D s} \exp(ts\xi)v \D s
            - L_\xi v
        \right) \\
        &=
        \mathsf{p}
        \left(
            \int_0^1
            \frac{\D}{\D \epsilon}\At{\epsilon = 0}
            \exp(t\epsilon\xi) \exp(ts\xi) v \D s
            - L_\xi v
        \right) \\
        &=
        \mathsf{p}
        \left(
            \int_0^1
            L_\xi
            \exp(t\epsilon\xi) \exp(ts\xi) v \D s
            - L_\xi v
        \right) \\
        &=
        \mathsf{p}
        \left(
            L_\xi
            \int_0^1
            \left(
                \exp(ts\xi) v - v
            \right)
            \D s
        \right) \\
        &\le
        \int_0^1
        \mathsf{q}
        \left(
            \exp(ts\xi) v - v
        \right)
        \D s,
    \end{align*}
    where we have used the invariance of $V_0$ and the existence of
    the derivative on $V_0$. Note that a naive Riemann integral will
    do the job for the above estimates, hence sequential completeness
    is all we have to require here. Since now $L_\xi$ is continuous
    and hence both sides are a continuous expression in $v$, the
    estimate is still true for \emph{all} vectors $v \in V$ by $V_0$
    being dense, i.e.\ we get
    \[
    \mathsf{p}
    \left(
        \frac{\exp(t\xi)v - v}{t} - L_\xi v
    \right)
    \le
    \int_0^1
    \mathsf{q}
    \left(
        \exp(ts\xi) v - v
    \right)
    \D s.
    \]
    The continuity of the integrand allows to take the limit $t
    \longrightarrow 0$ yielding $0$ on the right hand side. Thus
    \eqref{eq:DiffbarOnVnull} holds for \emph{all} $v \in V$.  Using
    the action property we see that $\frac{\D}{\D t} \exp(t\xi)v =
    L_\xi \exp(t\xi)v = \exp(t\xi) L_\xi v$ holds for all $t$. Hence
    all first directional derivatives exist and are continuous in a
    neighborhood of $e \in G$ which implies that the action is $C^1$
    everywhere.  Since the operators $L_\xi$ are defined on all of $V$
    we can iterate this argument now to conclude that the action is
    $C^\infty$. Finally, the additional seminorms of the
    $C^\infty$-topology on $V$ are given by $v \mapsto
    \mathsf{p}\left(L_{\xi_1} \cdots L_{\xi_n} v\right)$ where $n \in
    \mathbb{N}_0$ and $\xi_1, \ldots, \xi_n \in \mathfrak{g}$. Since
    all the operators $L_\xi$ are continuous, the resulting system of
    seminorms consists of continuous seminorms with respect to the
    original topology. Hence they coincide.
\end{proof}

The following lemma shows that for the $\mathrm{SU}(1, n)$-action on
$\complete{\mathcal{A}}_\hbar(C_{n+1}^+)$ we are indeed in this
situation:
\begin{lemma}
    \label{lemma:ActionIsSmoothUpstairs}%
    The dense subspace $\mathcal{A}_\hbar(C_{n+1}^+)$ of
    $\complete{\mathcal{A}}_\hbar(C_{n+1}^+)$ satisfies the conditions
    of Lemma~\ref{lemma:CinftyAction} where $L_\xi =
    \frac{\I}{\hbar}[J_\xi, \argument]_{\tildewick}$ for $\xi \in
    \mathfrak{su}(1, n)$.
\end{lemma}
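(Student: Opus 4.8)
The plan is to verify, one by one, the hypotheses of Lemma~\ref{lemma:CinftyAction} with $G = \mathrm{SU}(1, n)$, $V = \complete{\mathcal{A}}_\hbar(C_{n+1}^+)$, $V_0 = \mathcal{A}_\hbar(C_{n+1}^+)$ and $L_\xi = \frac{\I}{\hbar}[J_\xi, \argument]_{\tildewick}$. That $V$ is a Fréchet (hence sequentially complete Hausdorff locally convex) space on which the connected Lie group $\mathrm{SU}(1, n)$ acts by continuous endomorphisms with a continuous action map is exactly Proposition~\ref{proposition:ActionIsContinuous} together with Theorem~\ref{theorem:AlgebraUpstairs}. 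The subspace $V_0$ is dense in $V$ by Theorem~\ref{theorem:AlgebraUpstairs}, and it is $\mathrm{SU}(1, n)$-invariant since, by Lemma~\ref{lemma:fIJgammaRepresentativeFunctions}, each $\mathcal{A}_\hbar^\gamma(C_{n+1}^+)$ is a finite-dimensional invariant subspace and $\mathcal{A}_\hbar(C_{n+1}^+) = \bigoplus_{\gamma} \mathcal{A}_\hbar^\gamma(C_{n+1}^+)$ as vector spaces. Linearity of $\xi \mapsto J_\xi$, and hence of $\xi \mapsto L_\xi$, is immediate from \eqref{eq:SUEinsnMomentumMap}. Thus it remains to show that $L_\xi$ is a well-defined \emph{continuous} operator on $V$ mapping $V_0$ into $V_0$, and that \eqref{eq:DiffbarOnVnull} holds for $v \in V_0$.

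For the continuity I would use that $J_\xi \in \mathcal{A}_\hbar^1(C_{n+1}^+) \subseteq \mathcal{A}_\hbar(C_{n+1}^+)$ by Lemma~\ref{lemma:MomentumMapInA}, so that by Lemma~\ref{lemma:EquivalentSeminormSystem} the numbers $\norm{J_\xi}_{m, \ell, (I,J,\gamma)}$ and $\norm{J_\xi}_{m, \ell, R}$ are all finite. Applying the multiplicativity estimates \eqref{eq:ProductContinuous} and \eqref{eq:OmegaSeminormProduct} (the latter to the seminorms $\norm{\argument}_{m,\ell,R}$ of Lemma~\ref{lemma:EquivalentSeminormSystem}, which are of the form $\norm{\argument}_{m,\ell,\omega}$ with $|\omega_{(I,J,\gamma)}| = R^\gamma/\gamma!$) to the \emph{fixed} factor $J_\xi$ gives, for every $a \in \mathcal{A}_\hbar(C_{n+1}^+)$,
\begin{align*}
    \norm{J_\xi \tildewick a}_{m, \ell, R}
    &\le
    \norm{J_\xi}_{m+1, \ell, R}\,\norm{a}_{m+1, 2^m+\ell, R},
    \\
    \norm{a \tildewick J_\xi}_{m, \ell, R}
    &\le
    \norm{a}_{m+1, \ell, R}\,\norm{J_\xi}_{m+1, 2^m+\ell, R},
\end{align*}
together with the analogous bounds on the $\norm{\argument}_{m,\ell,(I,J,\gamma)}$. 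Hence left and right $\tildewick$-multiplication by $J_\xi$, and therefore $L_\xi = \frac{\I}{\hbar}(J_\xi \tildewick \argument - \argument \tildewick J_\xi)$, are continuous on $\mathcal{A}_\hbar(C_{n+1}^+)$ for the defining system of seminorms, and they take values in $\mathcal{A}_\hbar(C_{n+1}^+)$ since it is a $\tildewick$-subalgebra. As $V$ is complete, $L_\xi$ extends uniquely to a continuous endomorphism of $V$, which by continuity coincides with the commutator with $J_\xi$ in the Fréchet algebra $\complete{\mathcal{A}}_\hbar(C_{n+1}^+)$. Invariance of $V_0$ under $L_\xi$ is then immediate from \eqref{eq:LieXiAgammaInv}, which states precisely that $L_\xi$ preserves each $\mathcal{A}_\hbar^\gamma(C_{n+1}^+)$.

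Finally, for \eqref{eq:DiffbarOnVnull} I would argue on the finite-dimensional pieces. Fix $\xi$ and $v \in \mathcal{A}_\hbar^{(\gamma)}(C_{n+1}^+) = \bigoplus_{\alpha=0}^\gamma \mathcal{A}_\hbar^\alpha(C_{n+1}^+)$, a finite-dimensional subspace invariant under both $\mathrm{SU}(1, n)$ and $L_\xi$. By \eqref{eq:MatrixCoeffsSUEinsn} the curve $t \mapsto \exp(t\xi)^* v$ stays in this subspace and its coordinates depend polynomially on the entries of $\exp(t\xi)$, hence real-analytically on $t$; its derivative at $t = 0$ equals $L_\xi v$, because on $\mathcal{A}_\hbar^\gamma(C_{n+1}^+)$ the strong invariance of $\tildewick$, in the form \eqref{eq:JxiMomentumMap}/\eqref{eq:LieXiAgammaInv}, identifies $L_\xi$ with the infinitesimal generator $\frac{\D}{\D t}\At{t=0}\exp(t\xi)^*$. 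Since on a finite-dimensional space all Hausdorff locally convex topologies coincide, this limit holds in particular in the topology of $V$, and because $V_0 = \bigcup_\gamma \mathcal{A}_\hbar^{(\gamma)}(C_{n+1}^+)$ it holds for every $v \in V_0$; Lemma~\ref{lemma:CinftyAction} then applies and yields the claim. The one genuinely delicate ingredient is the continuity of $L_\xi$ with respect to the \emph{Fréchet} topology of $\complete{\mathcal{A}}_\hbar(C_{n+1}^+)$ rather than the much coarser Cartesian product topology (for which everything is essentially trivial, the action respecting the filtration by finite-dimensional pieces); this is exactly what the multiplicativity estimates above deliver, once $J_\xi$ is known to lie in $\mathcal{A}_\hbar(C_{n+1}^+)$ and thus to have finite seminorms.
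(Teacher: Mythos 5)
Your proposal is correct and follows essentially the same route as the paper's proof: reduce the differentiability check to the finite-dimensional invariant subspaces $\mathcal{A}_\hbar^{(\gamma)}(C_{n+1}^+)$, where only one Hausdorff locally convex topology exists, and obtain continuity of $L_\xi$ from the continuity of $\tildewick$ once $J_\xi \in \mathcal{A}_\hbar(C_{n+1}^+)$ is known. The only difference is one of detail: where the paper just cites continuity of the product, you unpack it via the explicit multiplicativity estimates for the seminorms $\norm{\argument}_{m,\ell,R}$ applied to the fixed factor $J_\xi$, which makes the argument more self-contained but does not change its substance.
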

\begin{proof}
    First we note that $\mathcal{A}_\hbar(C_{n+1}^+)$ is invariant
    under the action of $\mathrm{SU}(1, n)$ as well as under taking
    commutators with $J_\xi$ as we have seen in
    \eqref{eq:LieXiAgammaInv}. For $a \in
    \mathcal{A}_\hbar(C_{n+1}^+)$ we find a $\gamma$ with $a \in
    \mathcal{A}_\hbar^\gamma(C_{n+1}^+)$. But this is a
    \emph{finite-dimensional} subspace and hence there is only one
    Hausdorff locally convex topology. For this topology it is clear
    that the action is differentiable with derivatives given by
    \eqref{eq:DiffbarOnVnull} and $L_\xi = \frac{\I}{\hbar}[J_\xi,
    \argument]_{\tildewick}$. Indeed, this is just a trivial
    computation. The claim follows since the product $\tildewick$ is
    continuous and hence also the commutators with $J_\xi$ are
    continuous.
\end{proof}

We can now formulate the main result of this subsection:
\begin{theorem}
    \label{theorem:SUEinsnSymmetrie}%
    Let $\hbar$ be an allowed value.
    \begin{theoremlist}
    \item \label{item:SUEinsnGlatteWirkung} The $\mathrm{SU}(1,
        n)$-action on $\complete{\mathcal{A}}_\hbar(C_{n+1}^+)$ by
        automorphisms of $\tildewick$ is smooth and the smooth
        topology with respect to this action coincides with the
        original one.
    \item \label{item:SUEinsnGlattAufDisc} The action of
        $\mathrm{SU}(1, n)$ descends to a smooth action on
        $\complete{\mathcal{A}}_\hbar(\mathbb{D}_n)$ by automorphisms
        and the smooth topology coincides with the original one.
    \item \label{item:DerivedActionInner} In both cases the
        corresponding Lie algebra action of $\mathfrak{su}(1, n)$ is
        inner via the equivariant momentum map $J$.
    \item \label{item:ActionIsStarAction} If in addition $\hbar$ is
        real then $\mathrm{SU}(1, n)$ acts by $^*$-automorphisms.
    \end{theoremlist}
\end{theorem}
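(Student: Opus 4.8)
The plan is to assemble the four statements from the lemmas already established, since essentially all of the analytic work has been carried out; the theorem is mostly an application of the abstract criterion in Lemma~\ref{lemma:CinftyAction}.

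For part \refitem{item:SUEinsnGlatteWirkung} I would apply Lemma~\ref{lemma:CinftyAction} with $G = \mathrm{SU}(1, n)$, $V = \complete{\mathcal{A}}_\hbar(C_{n+1}^+)$, $V_0 = \mathcal{A}_\hbar(C_{n+1}^+)$, and $L_\xi = \frac{\I}{\hbar}[J_\xi, \argument]_{\tildewick}$ for $\xi \in \mathfrak{su}(1, n)$. Its hypotheses are verified as follows: $V$ is Fréchet, hence sequentially complete and Hausdorff; $\mathrm{SU}(1, n)$ acts by continuous automorphisms of $\tildewick$ (Lemma~\ref{lemma:SUEinsnStetigerAuto}) and the action map is continuous (Proposition~\ref{proposition:ActionIsContinuous}); each $L_\xi$ is continuous because $J_\xi \in \mathcal{A}_\hbar^1(C_{n+1}^+) \subseteq \complete{\mathcal{A}}_\hbar(C_{n+1}^+)$ by Lemma~\ref{lemma:MomentumMapInA} and $\tildewick$ is continuous on the completion; and $\mathcal{A}_\hbar(C_{n+1}^+)$ is a dense subspace invariant under $G$ and under all $L_\xi$ on which the differentiation formula \eqref{eq:DiffbarOnVnull} holds, which is exactly Lemma~\ref{lemma:ActionIsSmoothUpstairs}. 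Lemma~\ref{lemma:CinftyAction} then yields that the action is smooth and that the smooth topology equals the original one.

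For part \refitem{item:SUEinsnGlattAufDisc} the key point is that the closed ideal $\mathcal{J}_{y=1}$ is $\mathrm{SU}(1, n)$-invariant: since $y$ is invariant under the linear action, the hypersurface $y = 1$ is preserved and $U^*$ merely permutes the evaluation functionals $\delta_w$ with $y(w) = 1$; by the description \eqref{eq:ClosureVanishingIdeal} this gives $U^* \mathcal{J}_{y=1} = \mathcal{J}_{y=1}$. Hence the action descends to an action by automorphisms of $\stardisk$ on $\complete{\mathcal{A}}_\hbar(\mathbb{D}_n)$, continuous by the universal property of the quotient topology. Smoothness downstairs follows by applying Lemma~\ref{lemma:CinftyAction} once more, now with $V = \complete{\mathcal{A}}_\hbar(\mathbb{D}_n)$, $V_0 = \mathcal{A}_\hbar(\mathbb{D}_n)$, and $L_\xi = \frac{\I}{\hbar}[[J_\xi], \argument]_{\stardisk}$ the descended operators: these are again everywhere-defined and continuous on the Fréchet space, $V_0$ is dense and invariant, and \eqref{eq:DiffbarOnVnull} passes to the quotient since it holds representative-wise upstairs. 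This also gives the coincidence of the smooth and original topologies on the disk. Part \refitem{item:DerivedActionInner} is then immediate: upstairs the generator of $t \mapsto \exp(t\xi)^* a$ is $L_\xi a = \frac{\I}{\hbar}[J_\xi, a]_{\tildewick}$ with $J_\xi$ an element of the algebra, i.e.\ the Lie algebra action is inner via the quantum momentum map; downstairs the same holds with $J_\xi$ replaced by its class $[J_\xi]$, the equivariant momentum map of the disk. For part \refitem{item:ActionIsStarAction}, for real $\hbar$ both $\starwick$ and $\tildewick$ are Hermitian (as already used in Proposition~\ref{proposition:StructureConstantsAbove}), and pullback by a linear automorphism commutes with pointwise complex conjugation, $\cc{U^*a} = U^*\cc{a}$; combined with $U^*(a \tildewick b) = (U^*a) \tildewick (U^*b)$ this makes each $U^*$ a $^*$-automorphism, and the property descends to $\stardisk$ since the $^*$-involution on $\complete{\mathcal{A}}_\hbar(\mathbb{D}_n)$ is induced from the one on $\complete{\mathcal{A}}_\hbar(C_{n+1}^+)$.

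The only point requiring a little care is the coincidence of the smooth and original topologies in part \refitem{item:SUEinsnGlattAufDisc}, because a priori passing to a quotient could enlarge the smooth topology; but since the descended operators $L_\xi$ are again everywhere-defined continuous operators on the Fréchet space $\complete{\mathcal{A}}_\hbar(\mathbb{D}_n)$, the hypotheses of Lemma~\ref{lemma:CinftyAction} are met verbatim and the difficulty disappears. Everything else is bookkeeping with the already proved lemmas.
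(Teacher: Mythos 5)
Your proof is correct and takes essentially the same route as the paper: part \textit{i.)} via Lemma~\ref{lemma:CinftyAction} together with Lemma~\ref{lemma:ActionIsSmoothUpstairs}, part \textit{ii.)} by noting the invariance of $y$ (hence of $\mathcal{J}_{y=1}$) and descending to the quotient, and parts \textit{iii.)}--\textit{iv.)} as direct consequences. You are somewhat more explicit than the paper in re-invoking Lemma~\ref{lemma:CinftyAction} on $\complete{\mathcal{A}}_\hbar(\mathbb{D}_n)$ with the descended inner derivations to get coincidence of the smooth and original topologies downstairs, whereas the paper compresses this to ``the smoothness property simply descends''; this is a useful clarification but not a different argument.
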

\begin{proof}
    The first part is an immediate consequence of
    Lemma~\ref{lemma:CinftyAction} and
    Lemma~\ref{lemma:ActionIsSmoothUpstairs}. For the second, we note
    that $y$ is invariant and hence the vanishing ideal of $y=1$ is
    invariant as well. Thus the maps $U^*$ descend to well-defined
    automorphisms of $\complete{\mathcal{A}}_\hbar(\mathbb{D}_n)$. By
    the very definition of the locally convex quotient topology the
    maps $U^*$ are continuous as well. Moreover, it is clear that the
    action is continuous in $U$ as well. By the usual argument the
    continuity of the action follows. The smoothness property simply
    descends to the quotient and since the momentum map also descends,
    we get the second part.  The last two parts are clear.
\end{proof}
\begin{remark}
    \label{remark:InnerAction}%
    It is tempting to try to use the inner Lie algebra action via
    $\frac{\I}{\hbar}[J_\xi, \argument]_{\tildewick}$ to obtain also
    an \emph{inner} action of $\mathrm{SU}(1, n)$ by exponentiating
    $J_\xi$. This $\tildewick$-exponential would indeed yield an inner
    action and hence the continuity and smoothness statements would
    follow trivially: the action would be even analytic with an entire
    extension. However, we expect that the algebra is \emph{not}
    locally multiplicatively convex and hence the existence of an
    exponential is far from being obvious. We leave it as an open
    problem whether one can actually exponentiate $J_\xi$.
\end{remark}

%
%

\subsection{The dependence on $\hbar$}
\label{subsec:DependenceOnhbar}

We have already seen that the structure constants with respect to the
basis $\basis{f}_{P, Q, \alpha}$ are independent of $\hbar$. However,
since we have the concrete interpretation of $a \in
\complete{\mathcal{A}}_\hbar(C_{n+1}^+)$ as \emph{functions} on
$C_{n+1}^+$, the coefficients $a_{P, Q, \alpha}$ depend on the choice
of $\hbar$ since the functions $\basis{f}_{P, Q, \alpha}$ do so, see
\eqref{eq:fPQalphaDef}. Hence the algebra
$\complete{\mathcal{A}}_\hbar(C_{n+1}^+)$ including its topology
depend on $\hbar$, a priori. To emphasize the dependence on $\hbar$ we
denote the coefficients of $a \in
\complete{\mathcal{A}}_\hbar(C_{n+1}^+)$ by $a_{P, Q, \alpha}(\hbar)$
and write $\basis{f}_{P, Q, \alpha}(\hbar)$ in this subsection.  We
also decorate the seminorms $\norm{\argument}_{m, \ell, (I, J,
  \gamma), \hbar}$ with an additional $\hbar$.

In order to understand this dependence we introduce the following
auxiliary algebra $\mathfrak{A}(C_{n+1}^+)$ being the span of basis
vectors $\basis{F}_{P, Q, \alpha}$ for all index triples $(P, Q,
\alpha)$ endowed with the product $\star$ specified by the structure
constants $C^{(I, J, \gamma)}_{(P, Q, \alpha), (R, S, \beta)}$ as in
\eqref{eq:StructureConstantsTildeWick}. Thanks to the precise form of
the structure constants we have no dependence on $\hbar$ in the
definition of $\mathfrak{A}(C_{n+1}^+)$. Repeating our general
construction we get a definition of a locally convex topology on
$\mathfrak{A}(C_{n+1}^+)$ first based on the $h_{m, \ell, (I, J,
  \gamma)}$, yielding the Cartesian product topology. In a second step
we use the definition of the seminorms $\norm{\argument}_{m, \ell, R}$
as in \eqref{eq:NewSeminormsWithR} also for $\mathfrak{A}(C_{n+1}^+)$
for which the completion $\complete{\mathfrak{A}}(C_{n+1}^+)$ becomes
a Fréchet algebra, still independent on any choice of
$\hbar$. Equivalently, we can use the seminorms
$\norm{\argument}_\epsilon$ build as in
\eqref{eq:YetAnotherEquivalentSystemOfSeminorms}, making
$\complete{\mathfrak{A}}(C_{n+1}^+)$ isomorphic to a Köthe space of
sub-factorial growth.
\begin{lemma}
    \label{lemma:PhihbarIso}%
    Let $\hbar, \hbar' \ne 0$.
    \begin{lemmalist}
    \item \label{item:PhihbarIso} The map
        \begin{equation}
            \label{eq:PhihbarMap}
            \Phi_\hbar\colon
            \complete{\mathcal{A}}_\hbar(C_{n+1}^+)
            \ni
            \sum_{P, Q, \alpha} a_{P, Q, \alpha}(\hbar)
            \basis{f}_{P, Q, \alpha}(\hbar)
            \; \mapsto \;
            \sum_{(P, Q, \alpha)}
            a_{P, Q, \alpha}(\hbar) \basis{F}_{P, Q, \alpha}
            \in \complete{\mathfrak{A}}(C_{n+1}^+)
        \end{equation}
        is a well-defined isomorphism of Fréchet algebras mapping
        $\mathcal{A}_\hbar(C_{n+1}^+)$ to $\mathfrak{A}(C_{n+1}^+)$.
    \item \label{item:TwohbarPhis} Suppose $t = \frac{\hbar}{\hbar'} >
        0$. Then
        \begin{equation}
            \label{eq:PhiPhiInverseScaling}
            \Phi_{\hbar'}^{-1} \circ \Phi_\hbar
            =
            R_{\sqrt{t}}^*
        \end{equation}
        where $R_t\colon C_{n+1}^+ \ni z \mapsto tz \in C_{n+1}^+$ is
        the rescaling by $t$.
    \end{lemmalist}
\end{lemma}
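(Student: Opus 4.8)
The plan is to reduce \eqref{eq:PhiPhiInverseScaling} to the pointwise identity $R_{\sqrt{t}}^*\,\basis{f}_{P, Q, \alpha}(\hbar) = \basis{f}_{P, Q, \alpha}(\hbar')$ on the basis vectors, and then to verify this identity by a homogeneity computation using the explicit form \eqref{eq:fPQalphaDef}.

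First I would invoke part \refitem{item:PhihbarIso}: the composition $\Phi_{\hbar'}^{-1} \circ \Phi_\hbar$ is a continuous isomorphism $\complete{\mathcal{A}}_\hbar(C_{n+1}^+) \to \complete{\mathcal{A}}_{\hbar'}(C_{n+1}^+)$ which on the dense subalgebra $\mathcal{A}_\hbar(C_{n+1}^+)$ simply replaces each basis vector $\basis{f}_{P, Q, \alpha}(\hbar)$ by $\basis{f}_{P, Q, \alpha}(\hbar')$ while keeping the coefficients. Since $t > 0$, the rescaling $R_{\sqrt{t}}$ maps $C_{n+1}^+$ into itself, so for every $w \in C_{n+1}^+$ both $a \mapsto \bigl(\Phi_{\hbar'}^{-1}\Phi_\hbar(a)\bigr)(w)$ and $a \mapsto (R_{\sqrt{t}}^* a)(w) = a(\sqrt{t}\,w) = \delta_{\sqrt{t}w}(a)$ are continuous linear functionals on $\complete{\mathcal{A}}_\hbar(C_{n+1}^+)$, the second one being continuous by Theorem~\ref{theorem:AlgebraUpstairs}, \refitem{item:DeltasContinuous}. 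As $\mathcal{A}_\hbar(C_{n+1}^+)$ is dense, it therefore suffices to check that these two functionals agree on the basis vectors $\basis{f}_{P, Q, \alpha}(\hbar)$, i.e.\ to establish the pointwise identity above.

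For that, I would observe that $\basis{e}_{P, Q, \alpha}$ is homogeneous of degree $\alpha$ in the holomorphic and of degree $\alpha$ in the anti-holomorphic variables, so, $t$ being positive, $\basis{e}_{P, Q, \alpha}(\sqrt{t}\,z) = (\sqrt{t})^{2\alpha}\,\basis{e}_{P, Q, \alpha}(z) = t^\alpha\,\basis{e}_{P, Q, \alpha}(z)$; similarly $y(\sqrt{t}\,z) = t\,y(z)$. Substituting this into \eqref{eq:fPQalphaDef}, the powers $t^\alpha$ coming from $\basis{e}_{P, Q, \alpha}$ and from $y^\alpha$ cancel, while the argument of the Pochhammer symbol transforms into $\frac{t\,y(z)}{2\hbar} = \frac{y(z)}{2\hbar'}$ precisely because $t = \hbar/\hbar'$. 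This produces exactly $\basis{f}_{P, Q, \alpha}(\hbar')$, which together with the reduction above gives \eqref{eq:PhiPhiInverseScaling} on all of $\complete{\mathcal{A}}_\hbar(C_{n+1}^+)$.

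I do not expect a genuine obstacle here; the only points requiring a little care are the bookkeeping of the total homogeneity degree $2\alpha$ of $\basis{e}_{P, Q, \alpha}$ (degree $\alpha$ holomorphically and $\alpha$ anti-holomorphically, hence the factor $(\sqrt{t})^{2\alpha} = t^\alpha$ under $R_{\sqrt{t}}$), the use of the hypothesis $t > 0$ so that $\sqrt{t}$ is real and $R_{\sqrt{t}}$ both commutes with complex conjugation and preserves $C_{n+1}^+$, and the density/continuity step that passes the identity from $\mathcal{A}_\hbar(C_{n+1}^+)$ to its completion, which is handled by the continuity of $\Phi_{\hbar'}^{-1}\circ\Phi_\hbar$ and of the evaluation functionals.
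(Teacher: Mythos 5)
Your argument for part \refitem{item:TwohbarPhis} is correct and follows essentially the same route as the paper: the core is the homogeneity observation $R_{\sqrt{t}}^* y = t\,y$ and degree-$2\alpha$ homogeneity of $\basis{e}_{P,Q,\alpha}$, which together cancel in $\basis{e}_{P,Q,\alpha}/y^\alpha$ and shift the Pochhammer argument from $y/2\hbar$ to $y/2\hbar'$, giving $R_{\sqrt{t}}^*\basis{f}_{P,Q,\alpha}(\hbar) = \basis{f}_{P,Q,\alpha}(\hbar')$; the extension to the completion by density plus continuity of the evaluation functionals is the same idea, though you phrase it cleanly as equality of two continuous functionals $\delta_w \circ \Phi_{\hbar'}^{-1}\Phi_\hbar$ and $\delta_{\sqrt{t}w}$, whereas the paper expands $a$ in the Schauder basis and identifies the resulting series pointwise.

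Note, however, that your proposal does not actually establish part \refitem{item:PhihbarIso}, which you invoke at the outset. That part is not vacuous: one must observe that the structure constants $C^{(I,J,\gamma)}_{(P,Q,\alpha),(R,S,\beta)}$ were deliberately made independent of $\hbar$ (so $\Phi_\hbar$ is an algebra homomorphism) and that the seminorms $\norm{\argument}_{m,\ell,(I,J,\gamma),\hbar}$ on $\mathcal{A}_\hbar(C_{n+1}^+)$ transplant to the corresponding seminorms on $\mathfrak{A}(C_{n+1}^+)$ under $\Phi_\hbar$ (so $\Phi_\hbar$ and its inverse are continuous, hence extend to the completions). Since you rely on both the continuity of $\Phi_{\hbar'}^{-1}\circ\Phi_\hbar$ and the identity $\Phi_{\hbar'}^{-1}\Phi_\hbar(\basis{f}_{P,Q,\alpha}(\hbar)) = \basis{f}_{P,Q,\alpha}(\hbar')$ on the dense subalgebra, you should spell out this piece of the argument rather than leaving it as an unproven invocation.
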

\begin{proof}
    First we consider $a \in \mathcal{A}_\hbar(C_{n+1}^+)$ for which
    the above map is clearly a well-defined linear map as all involved
    summations are finite. It clearly gives a bijection onto
    $\mathfrak{A}(C_{n+1}^+)$. Since the structure constants of the
    $\basis{f}_{P, Q, \alpha}(\hbar)$ do \emph{not} depend on $\hbar$,
    $\Phi_\hbar$ is an algebra homomorphism. Finally, by the very
    construction of the seminorms we have $\norm{a}_{m, \ell, (I, J,
      \gamma), \hbar} = \norm{\Phi_{\hbar}(a)}_{m, \ell, (I, J,
      \gamma)}$. From this it is clear that also the seminorms
    $\norm{\argument}_{m, \ell, R, \hbar}$ of
    $\mathcal{A}_\hbar(C_{n+1}^+)$ correspond to the seminorms
    $\norm{\argument}_{m, \ell, R}$ of $\mathfrak{A}(C_{n+1}^+)$ under
    $\Phi_\hbar$, showing that $\Phi_\hbar$ as well as its inverse
    are continuous. The first part follows. For the second part we
    note that $\left(\Phi_{\hbar'}^{-1} \circ \Phi_\hbar\right)
    (\basis{f}_{P, Q, \alpha}(\hbar)) = \basis{f}_{P, Q,
      \alpha}(\hbar')$ by the very definition. For positive $t =
    \frac{\hbar}{\hbar'}$ we see immediately that $R^*_{\sqrt{t}} y =
    t y$. Since the functions $\frac{e_{P, Q, \alpha}}{y^\alpha}$ are
    constant along the complex rays in $C_{n+1}^+$ this gives
    \[
    R^*_{\sqrt{t}} \basis{f}_{P, Q, \alpha}(\hbar)
    =
    \basis{f}_{P, Q, \alpha}(\hbar').
    \]
    Since $\Phi_{\hbar'}^{-1} \circ \Phi_\hbar\colon
    \complete{\mathcal{A}}_\hbar(C_{n+1}^+) \longrightarrow
    \complete{\mathcal{A}}_{\hbar'}(C_{n+1}^+)$ is an isomorphism of
    Fréchet algebras by the first part, we get
    \begin{align*}
        \left(\Phi_{\hbar'}^{-1} \circ \Phi_{\hbar}\right)(a)
        &=
        \left(\Phi_{\hbar'}^{-1} \circ \Phi_{\hbar}\right)\left(
            \sum_{(P, Q, \alpha)}
            a_{P, Q, \alpha}(\hbar)
            \basis{f}_{P, Q, \alpha}(\hbar)
        \right) \\
        &=
        \sum_{(P, Q, \alpha)}
        a_{P, Q, \alpha}(\hbar)
        \left(\Phi_{\hbar'}^{-1} \circ \Phi_{\hbar}\right)\left(
            \basis{f}_{P, Q, \alpha}(\hbar)
        \right) \\
        &=
        \sum_{(P, Q, \alpha)}
        a_{P, Q, \alpha}(\hbar)
        R_{\sqrt{t}}^*
        \basis{f}_{P, Q, \alpha}(\hbar)
    \end{align*}
    for $a \in \complete{\mathcal{A}}_\hbar(C_{n+1}^+)$. Now for
    finite sums, i.e. $a \in \mathcal{A}_\hbar(C_{n+1}^+)$ this
    clearly coincides with $R_{\sqrt{t}}^* a$. Since in general the
    series converges in the topology of
    $\complete{\mathcal{A}}_\hbar(C_{n+1}^+)$ and since all evaluation
    functionals at points in $C_{n+1}^+$ are continuous, the last
    series also coincides with $R_{\sqrt{t}}^* a$ defined
    \emph{pointwise}. Hence we have shown two things: first
    $R_{\sqrt{t}}^*$ maps $\complete{\mathcal{A}}_\hbar(C_{n+1}^+)$
    continuously into $\complete{\mathcal{A}}_{\hbar'}(C_{n+1}^+)$,
    and, second, \eqref{eq:PhiPhiInverseScaling} holds.
\end{proof}
\begin{corollary}
    \label{corollary:AhbarAllIso}%
    For $\hbar \ne 0$ all the Fréchet algebras
    $\complete{\mathcal{A}}_\hbar(C_{n+1}^+)$ are isomorphic.
\end{corollary}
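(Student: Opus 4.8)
The statement is an immediate consequence of Lemma~\ref{lemma:PhihbarIso}. The plan is simply to observe that the auxiliary Fréchet algebra $\complete{\mathfrak{A}}(C_{n+1}^+)$ carries no reference to $\hbar$ at all: its basis vectors $\basis{F}_{P, Q, \alpha}$, its structure constants $C^{(I, J, \gamma)}_{(P, Q, \alpha), (R, S, \beta)}$, and hence the whole recursive system of seminorms $\norm{\argument}_{m, \ell, R}$ (equivalently $\norm{\argument}_\epsilon$) used to define its topology are all independent of the deformation parameter. By Lemma~\ref{lemma:PhihbarIso}~\refitem{item:PhihbarIso}, for every $\hbar \ne 0$ the map $\Phi_\hbar$ is an isomorphism of Fréchet algebras
\[
\Phi_\hbar\colon
\complete{\mathcal{A}}_\hbar(C_{n+1}^+)
\;\xrightarrow{\;\cong\;}\;
\complete{\mathfrak{A}}(C_{n+1}^+).
\]
Therefore, given any two nonzero values $\hbar, \hbar'$, the composition
\[
\Phi_{\hbar'}^{-1} \circ \Phi_\hbar\colon
\complete{\mathcal{A}}_\hbar(C_{n+1}^+)
\;\xrightarrow{\;\cong\;}\;
\complete{\mathcal{A}}_{\hbar'}(C_{n+1}^+)
\]
is again an isomorphism of Fréchet algebras, which is what had to be shown. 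One may add the remark that, when moreover $t = \hbar/\hbar' > 0$, Lemma~\ref{lemma:PhihbarIso}~\refitem{item:TwohbarPhis} identifies this abstract isomorphism with the concrete pullback $R_{\sqrt{t}}^*$ along the radial rescaling $z \mapsto \sqrt{t}\,z$ of the cone; for general (possibly complex) $\hbar, \hbar'$ no such geometric realization is needed, the abstract composition suffices.

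There is essentially no obstacle here, since all the substantive work — the well-definedness of $\Phi_\hbar$, the matching of seminorms $\norm{a}_{m, \ell, (I, J, \gamma), \hbar} = \norm{\Phi_\hbar(a)}_{m, \ell, (I, J, \gamma)}$, and hence bicontinuity — has already been carried out in the proof of Lemma~\ref{lemma:PhihbarIso}. The only point worth stressing is conceptual rather than technical: although the \emph{topology} on each $\complete{\mathcal{A}}_\hbar(C_{n+1}^+)$ a priori depends on $\hbar$ through the basis functions $\basis{f}_{P, Q, \alpha}(\hbar)$, the dependence is entirely absorbed into the identification of coefficients, so that after transporting to $\complete{\mathfrak{A}}(C_{n+1}^+)$ the family becomes constant. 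This is precisely the payoff of having chosen the rescaled basis in \eqref{eq:fPQalphaDef} so as to make the structure constants $\hbar$-independent.
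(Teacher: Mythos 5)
Your proof is correct and coincides with the paper's intended argument: Corollary~\ref{corollary:AhbarAllIso} is stated with no proof precisely because it is the immediate composition $\Phi_{\hbar'}^{-1}\circ\Phi_\hbar$ of the isomorphisms from Lemma~\ref{lemma:PhihbarIso}~\refitem{item:PhihbarIso}. Your additional remark about the concrete realization via $R_{\sqrt{t}}^*$ for $t>0$ correctly identifies the role of part~\refitem{item:TwohbarPhis} as supplementary geometric information rather than a needed ingredient.
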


However, the isomorphism is nontrivial in the following sense: since
we know that $\complete{\mathcal{A}}_\hbar(C_{n+1}^+) \subseteq
C^\omega(C_{n+1}^+)$ is also a subspace of the real-analytic functions
on $C_{n+1}^+$, we can directly compare the elements as
functions. Here we have the following result:
\begin{lemma}
    \label{lemma:AhbarNotEqual}%
    For $\hbar \ne \hbar'$ the subspaces
    $\complete{\mathcal{A}}_\hbar(C_{n+1}^+)$ and
    $\complete{\mathcal{A}}_{\hbar'}(C_{n+1}^+)$ of
    $C^\omega(C_{n+1}^+)$ are different.
\end{lemma}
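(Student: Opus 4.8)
The plan is to exhibit a single function that lies in $\complete{\mathcal A}_\hbar(C_{n+1}^+)$ but not in $\complete{\mathcal A}_{\hbar'}(C_{n+1}^+)$, and to reduce the whole statement to one very explicit claim. Note first that $\basis e_{0,0,1} = z^0\cc z^0 = 2\hbar\,\basis f_{0,0,1}(\hbar) = 2\hbar'\,\basis f_{0,0,1}(\hbar')$ lies in every $\complete{\mathcal A}_\kappa(C_{n+1}^+)$. A short computation with Lemma~\ref{lemma:eWicke} and \eqref{eq:Syr} (essentially the one carried out in the proof of Proposition~\ref{proposition:StructureConstantsAbove}, using $S\basis e_{0,0,1} = \basis e_{0,0,1}$ and $S\basis e_{0,0,2} = \basis e_{0,0,2} + 2\kappa\,\basis e_{0,0,2}/y$) gives, for the $\tildewick$-product built with deformation parameter $\kappa$,
\[
\basis e_{0,0,1} \tildewick \basis e_{0,0,1}
= \basis e_{0,0,2} + 2\kappa\,\frac{\basis e_{0,0,2}}{y} - 2\kappa\,\basis e_{0,0,1}
\in \complete{\mathcal A}_\kappa(C_{n+1}^+).
\]
If the two subspaces of $C^\omega(C_{n+1}^+)$ coincided as sets, then both $\basis e_{0,0,1}\tildewick\basis e_{0,0,1}$ (parameter $\hbar$) and $\basis e_{0,0,1}\tildewick\basis e_{0,0,1}$ (parameter $\hbar'$) would lie in the common space; subtracting them and dividing by $2(\hbar-\hbar')\ne 0$ (and adding the multiple of $\basis e_{0,0,1}$, which is in the space) would give $\basis e_{0,0,2}/y = (z^0\cc z^0)^2/y \in \complete{\mathcal A}_{\hbar'}(C_{n+1}^+)$. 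So it suffices to prove the claim: for every allowed $\kappa$, the rational function $\basis e_{0,0,2}/y$ does \emph{not} belong to $\complete{\mathcal A}_\kappa(C_{n+1}^+)$.

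To prove the claim I would use the explicit description of the completion from Theorem~\ref{theorem:AlgebraUpstairs}, \refitem{item:FrakAIsSubfactorialStuff}: membership of $\basis e_{0,0,2}/y$ would mean it equals an unconditionally convergent series $\sum_{P,Q,\alpha} d_{P,Q,\alpha}\basis f_{P,Q,\alpha}(\kappa)$ with coefficients of sub-factorial growth in $\alpha$. One then computes these coefficients. The cleanest route is via the holomorphic–antiholomorphic extension $\hat a \in \HolAntiHol(C_{n+1}^+\times C_{n+1}^+)$ of Proposition~\ref{proposition:HolAntiHolExtension}, analytically continued past the boundary to $\{\hat y\ne 0\}$: there $\hat{\basis f}_{P,Q,\alpha}(\kappa) = \frac{\hat e_{P,Q,\alpha}}{\mathrm{pref}}\cdot\frac{\prod_{k=1}^{\alpha-1}(\hat y+2\kappa k)}{\hat y^{\alpha-1}}$ has a pole of order $\alpha-1$ along $\{\hat y=0\}$, with leading Laurent coefficient $\frac{(\alpha-1)!}{2\kappa}(2\kappa)^{\alpha-1}\hat e_{P,Q,\alpha}/(y^{\alpha-1}\,\mathrm{pref})$ and sub-leading coefficients carrying higher powers of $1/\kappa$. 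Since the continuation of $\basis e_{0,0,2}/y$ has only a \emph{simple} pole along $\{\hat y=0\}$, writing it as $\sum d_{P,Q,\alpha}\hat{\basis f}_{P,Q,\alpha}(\kappa)$ forces all Laurent orders $\le -2$ to cancel; matching these cancellations order by order (equivalently, carrying out the iterated reduction of $\basis e_{0,0,2}/y$ by means of the identity $\frac{z^0\cc z^0}{y} = 1 + \sum_{i=1}^n \frac{z^i\cc z^i}{y}$, the $C_{n+1}^+$-analogue of the relation in the proof of Lemma~\ref{lemma:BasisOnTheDisc}) produces $d_{P,Q,\alpha}$ which, up to combinatorial prefactors, grow like a fixed positive power of $(2\kappa)^{-1}$ raised to $\alpha$ — i.e.\ geometrically and with a sign pattern that prevents sub-factorial reshuffling, contradicting Theorem~\ref{theorem:AlgebraUpstairs}. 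Here the allowed-value hypothesis \eqref{eq:Allowedhbar} is used to guarantee the relevant Pochhammer symbols do not vanish, so that the reduction never terminates and the coefficients genuinely have unbounded $\alpha$-support.

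The hard part will be this last step. Because an element of the completion with unbounded $\alpha$-support carries an \emph{essential} singularity along $\{\hat y=0\}$, the matching of polar data is a matching of asymptotic expansions rather than of genuine Laurent series, and one must track the $\kappa$-dependent combinatorial coefficients (products $\prod_{k}(\hat y+2\kappa k)$ against the $1/(\alpha!)^2$-weights of \eqref{eq:fPQalphaDef}) precisely enough to see that no sub-factorial sequence in the $\kappa$-basis can reproduce a continuation with merely a simple pole. After passing to the radial and first transversal coordinate (as in the proof of Lemma~\ref{lemma:EquivalentSeminormSystem}) this becomes a quantitative statement about the connection coefficients between the shifted-factorial systems $\{\prod_{k=0}^{\alpha-1}(y+2\hbar k)\}$ and $\{\prod_{k=0}^{\alpha-1}(y+2\hbar' k)\}$, weighted by $1/(\alpha!)^2$; the input one needs is an estimate showing these weighted connection coefficients outgrow every sub-factorial sequence as soon as $\hbar\ne\hbar'$. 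Establishing that estimate is the technical core of the proof.
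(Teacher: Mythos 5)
Your reduction is correct and clean: the computation $\basis e_{0,0,1}\tildewick\basis e_{0,0,1} = \basis e_{0,0,2} + 2\kappa\,\basis e_{0,0,2}/y - 2\kappa\,\basis e_{0,0,1}$ is right, and subtracting the two parameter values indeed shows that if the two completions coincided as subspaces of $C^\omega(C_{n+1}^+)$ then $\basis e_{0,0,2}/y$ would lie in $\complete{\mathcal A}_{\hbar'}(C_{n+1}^+)$. However, you do not prove the claim you reduce to; you explicitly defer it (``the hard part will be this last step'', ``establishing that estimate is the technical core''), so the proposal is a plan rather than a proof. Worse, the route you sketch is on shaky foundations. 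You want to compare Laurent data of $\hat a$ and of $\basis e_{0,0,2}/y$ along $\{\hat y = 0\}$, but the extension $\hat a$ from Proposition~\ref{proposition:HolAntiHolExtension} lives on $C_{n+1}^+\times C_{n+1}^+$, and on that set $\hat y$ never vanishes (a Cauchy--Schwarz estimate gives $|u^0\cc v^0| > |\sum_i u^i\cc v^i|$ for $u,v\in C_{n+1}^+$). So there is no punctured neighbourhood of $\{\hat y = 0\}$ in the domain and no genuine Laurent expansion to match --- you acknowledge this by retreating to ``asymptotic expansions'', but matching asymptotic data is exactly the kind of argument that needs careful justification and is not supplied. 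Your closing sentence also conflates the one-parameter reduced claim with the original two-parameter one.

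The paper's proof goes a different, much shorter way: it observes that for any $w$ with $y(w)\ne 0$ the evaluation $\delta_w$ is continuous on $\complete{\mathcal A}_{\hbar'}(C_{n+1}^+)$, and then compares the vanishing ideals of the hypersurface $\{y=-2\hbar\}$. At $y=-2\hbar$ one has $\Pochhammer{\tfrac{y}{2\hbar}}_\gamma = \Pochhammer{-1}_\gamma = 0$ for all $\gamma\ge 2$, so every $\basis f_{I,J,\gamma}(\hbar)$ with $\gamma\ge 2$ restricts to zero there and the vanishing ideal has finite codimension (spanned by $1$ and the $\basis f_{I,J,1}(\hbar)$). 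For $\hbar'\ne\hbar$ the relevant Pochhammer symbol is $\Pochhammer{-\hbar/\hbar'}_\gamma$, which is nonzero for all $\gamma$ in the generic case (and vanishes only from a later $\gamma$ on in the exceptional cases $\hbar/\hbar'\in\mathbb{N}_{\ge 2}$), so the codimension is strictly larger. This distinguishes the two spaces immediately. Note that this same observation settles your reduced claim in one line, making the entire Laurent machinery unnecessary: restricting $\basis e_{0,0,2}/y$ to $\{y=-2\kappa\}$ yields the nonzero quartic $-(z^0\cc z^0)^2/(2\kappa)$, whereas every element of $\complete{\mathcal A}_\kappa(C_{n+1}^+)$ restricts there to a linear combination of $1$ and the quadratics $z^i\cc z^j$. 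So if you want to keep your reduction, the way to finish is to evaluate at a point with $y<0$, not to analyse singular behaviour near $y=0$.
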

\begin{proof}
    Here the argument is similar as for the proof of
    Lemma~\ref{lemma:VanishingIdeal}. First we note that we can
    evaluate $a \in \complete{\mathcal{A}}_\hbar(C_{n+1}^+)$ at every
    point $z \in \mathbb{C}^{n+1}$ in a continuous way: indeed, the
    sub-factorial growth of the coefficients $a_{I, J, \gamma}$ with
    respect to the Schauder basis $\basis{f}_{I, J, \gamma}$ together
    with the factorial decrease of $\basis{f}_{I, J, \gamma}(z)$
    according to \eqref{eq:BasisAtwEvaluation} shows the continuity of
    all evaluation functionals. Thus the vanishing ideal of the
    hypersurface $y = -2\hbar$ is a closed subspace in
    $\complete{\mathcal{A}}_{\hbar'}(C_{n+1}^+)$ for all nonzero
    $\hbar$, $\hbar'$. Now it is easy to see that for $\hbar = \hbar'$
    this subspace has finite co-dimension. In fact, the quotient is
    spanned by the classes of $\basis{f}_{0, 0, 0} = 1$ and the
    $\basis{f}_{i, j, 1}$ with $i, j = 1, \ldots, n$. For all other
    $\hbar'$ the co-dimension is strictly bigger: either infinite in
    the generic case, or, for particular values of $\hbar'$, finite
    but larger. Hence we conclude that the two subspaces
    $\complete{\mathcal{A}}_\hbar(C_{n+1}^+)$ and
    $\complete{\mathcal{A}}_{\hbar'}(C_{n+1}^+)$ are different for
    $\hbar \ne \hbar'$.
\end{proof}

Things change when we pass to the disk: here the Pochhammer symbol
becomes just a numerical constant (depending on $\hbar$) but the span
of the basis vectors $\basis{f}_{PQ}$ is independent of $\hbar$:
\begin{lemma}
    \label{lemma:OnDiscSameSpace}%
    Let $\hbar, \hbar'$ be allowed values. Then the subspaces
    $\complete{\mathcal{A}}_\hbar(\mathbb{D}_n)$ and
    $\complete{\mathcal{A}}_{\hbar'}(\mathbb{D}_n)$ of
    $C^\omega(\mathbb{D}_n)$ coincide. Moreover, the Fréchet topology
    of $\complete{\mathcal{A}}_\hbar(\mathbb{D}_n)$ is independent of
    $\hbar$.
\end{lemma}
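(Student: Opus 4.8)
The plan is to exploit the description, established in Theorem~\ref{theorem:DiscAlgebra}, of $\complete{\mathcal{A}}_\hbar(\mathbb{D}_n)$ (for allowed $\hbar$) as the set of functions possessing a Schauder expansion in the $\basis{f}_{P, Q}$ whose coefficients grow sub-factorially in $\max(|P|, |Q|)$, together with the fact that the $\hbar$-dependence of $\basis{f}_{P, Q}$ is concentrated entirely in the scalar Pochhammer factor $\Pochhammer{\frac{1}{2\hbar}}_\alpha$, $\alpha = \max(|P|, |Q|)$. So I would first write $\basis{f}_{P, Q} = \Pochhammer{\frac{1}{2\hbar}}_\alpha \basis{g}_{P, Q}$ with the $\hbar$-independent function $\basis{g}_{P, Q}(v) = \frac{v^P \cc{v}^Q}{P!(\alpha - |P|)!Q!(\alpha - |Q|)!(1 - |v|^2)^\alpha}$, and denote by $\basis{f}^\hbar_{P, Q}$, $\basis{f}^{\hbar'}_{P, Q}$ the basis vectors for the two parameters, so that $\basis{f}^\hbar_{P, Q} = \vartheta_\alpha \basis{f}^{\hbar'}_{P, Q}$ with $\vartheta_\alpha = \Pochhammer{\frac{1}{2\hbar}}_\alpha \big/ \Pochhammer{\frac{1}{2\hbar'}}_\alpha$, which is well defined and nonzero because $\hbar, \hbar'$ are allowed.

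The key estimate is a two-sided exponential bound on $\vartheta_\alpha$. Applying \eqref{eq:EstimatePochhammerSymbols} to $z = \frac{1}{2\hbar}$ and $z = \frac{1}{2\hbar'}$, and using that $-\frac{1}{2\hbar}, -\frac{1}{2\hbar'} \notin \mathbb{N}_0$ so that the \emph{lower} constants there are strictly positive, one obtains constants $0 < q_1 \le q_2$ and $0 < C_1 \le C_2$ with $C_1 q_1^\alpha \le |\vartheta_\alpha| \le C_2 q_2^\alpha$ for all $\alpha$; the analogous bounds hold for $|\vartheta_\alpha^{-1}|$ after exchanging the roles of $\hbar$ and $\hbar'$. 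Since for any fixed $q > 0$ and any $\delta > 0$ one has $q^\alpha \le c_\delta (\alpha!)^\delta$, multiplication by $(\vartheta_\alpha)$ or by its reciprocal preserves sub-factorial growth. Hence, given $u = \sum_{P, Q} a_{P, Q} \basis{f}^\hbar_{P, Q} \in \complete{\mathcal{A}}_\hbar(\mathbb{D}_n)$ with $(a_{P, Q})$ sub-factorial, the coefficients $a'_{P, Q} := \vartheta_\alpha a_{P, Q}$ are again sub-factorial, so by Theorem~\ref{theorem:DiscAlgebra}, \refitem{item:SubfactorialGroth}, the series $\sum_{P, Q} a'_{P, Q} \basis{f}^{\hbar'}_{P, Q}$ converges in $\complete{\mathcal{A}}_{\hbar'}(\mathbb{D}_n)$. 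But this series has the very same terms as the original one, $a'_{P, Q} \basis{f}^{\hbar'}_{P, Q} = a_{P, Q} \basis{f}^\hbar_{P, Q}$, and both topologies are finer than the locally uniform one on $\HolAntiHol(\mathbb{D}_n \times \mathbb{D}_n)$ by Theorem~\ref{theorem:DiscAlgebra}, \refitem{item:DiscFunctionsRealAnalytic}; therefore the two sums agree as functions and $u \in \complete{\mathcal{A}}_{\hbar'}(\mathbb{D}_n)$. By symmetry the two subspaces of $C^\omega(\mathbb{D}_n)$ coincide.

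For the topology I would use the equivalent seminorm systems $\{\norm{\argument}_\epsilon\}$ from \eqref{eq:SubfacSeminormsDisc}. With $u$ as above, its $\hbar'$-coefficients are $(\vartheta_\alpha a_{P, Q})$, so choosing $c_{\epsilon/2}$ with $q_2^\alpha \le c_{\epsilon/2}(\alpha!)^{\epsilon/2}$ gives
\[
\norm{u}^{\hbar'}_\epsilon
=
\sup_{P, Q} \frac{|\vartheta_\alpha a_{P, Q}|}{(\alpha!)^\epsilon}
\le
C_2 c_{\epsilon/2}
\sup_{P, Q} \frac{|a_{P, Q}|}{(\alpha!)^{\epsilon/2}}
=
C_2 c_{\epsilon/2}\, \norm{u}^\hbar_{\epsilon/2},
\]
so every $\norm{\argument}^{\hbar'}_\epsilon$ is continuous for the $\hbar$-topology; exchanging $\hbar$ and $\hbar'$ yields the converse, and the two Fréchet topologies coincide. (Alternatively one simply observes that the identity is a continuous linear bijection between two Fréchet spaces and invokes the open mapping theorem.)

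The only genuinely delicate point, and the reason allowedness is indispensable, is the \emph{lower} exponential bound $|\vartheta_\alpha| \ge C_1 q_1^\alpha$: it rests on the positivity of the lower constant in \eqref{eq:EstimatePochhammerSymbols}, which fails precisely when $-\frac{1}{2\hbar} \in \mathbb{N}_0$, i.e.\ for a non-allowed value, in which case the relevant Pochhammer symbol vanishes for large $\alpha$ and the quotient collapses, exactly as in Lemma~\ref{lemma:VanishingIdeal}. Everything else is the routine ``sub-factorial times exponential is sub-factorial'' bookkeeping.
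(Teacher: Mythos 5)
Your proof is correct and follows essentially the same route as the paper's: both identify the ratio $\vartheta_\alpha = \Pochhammer{\frac{1}{2\hbar}}_\alpha/\Pochhammer{\frac{1}{2\hbar'}}_\alpha$ as the only source of $\hbar$-dependence in the basis, bound it exponentially from above and below via \eqref{eq:EstimatePochhammerSymbols} (using allowedness for the lower bound), and observe that multiplication by an exponentially bounded sequence preserves sub-factorial growth to transfer both membership and the $\norm{\argument}_\epsilon$-seminorm estimates. Your extra remark that the two Fréchet topologies refine the locally uniform one on $\HolAntiHol(\mathbb{D}_n \times \mathbb{D}_n)$, guaranteeing that the two series represent the same function in $C^\omega(\mathbb{D}_n)$, is a detail the paper leaves implicit but is a welcome clarification, not a different method.
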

\begin{proof}
    First we note that the only difference between $\basis{f}_{P,
      Q}(\hbar)$ and $\basis{f}_{P, Q}(\hbar')$ are the prefactors
    given by the Pochhammer symbols
    $\Pochhammer{\frac{1}{2\hbar}}_{\max(|P|, |Q|)}$ and
    $\Pochhammer{\frac{1}{2\hbar'}}_{\max(|P|, |Q|)}$. Since both are
    non-vanishing their quotient can be estimated with exponential
    bounds. But then it is clear that for $a = \sum_{P, Q} a_{P,
      Q}(\hbar) \basis{f}_{P, Q}(\hbar)$ with coefficients $a_{P,
      Q}(\hbar)$ having sub-factorial growth, the coefficients
    \[
    a_{P, Q}(\hbar')
    =
    \frac{\Pochhammer{\frac{1}{2\hbar}}_{\max(|P|, |Q|)}}
    {\Pochhammer{\frac{1}{2\hbar'}}_{\max(|P|, |Q|)}}
    \:
    a_{P, Q}(\hbar)
    \tag{$*$}
    \]
    have still sub-factorial growth. More precisely, for $\epsilon' >
    0$ and every $\epsilon' > \epsilon > 0$ we have
    \begin{align*}
        \norm{a}_{\epsilon'}^{\hbar'}
        &= \sup_{P, Q}
        \frac{|a_{P, Q}(\hbar')|}{(\max(|P|, |Q|)!)^{\epsilon'}} \\
        &=
        \sup_{P, Q}
        \frac{\Pochhammer{\frac{1}{\hbar}}_{\max(|P|, |Q|)}}
        {\Pochhammer{\frac{1}{\hbar'}}_{\max(|P|, |Q|)}}
        \frac{|a_{P, Q}(\hbar)|}{(\max(|P|, |Q|)!)^{\epsilon'}} \\
        &\le
        \sup_{P, Q}
        \frac{\mathsf{c}(\hbar)^{\max(|P|, |Q|)}}
        {\mathsf{a}(\hbar')\mathsf{b}(\hbar')^{\max(|P|, |Q|)}}
        \frac{|a_{P, Q}(\hbar)|}{(\max(|P|, |Q|)!)^{\epsilon'}} \\
        &\le
        \sup_{P, Q}
        \frac{1}{(\max(|P|, |Q|)!)^{\epsilon' - \epsilon}}
        \frac{\mathsf{c}(\hbar)^{\max(|P|, |Q|)}}
        {\mathsf{a}(\hbar')\mathsf{b}(\hbar')^{\max(|P|, |Q|)}}
        \;
        \sup_{P, Q}
        \frac{|a_{P, Q}(\hbar)|}{(\max(|P|, |Q|)!)^{\epsilon}} \\
        &=
        c(\hbar, \hbar') \norm{a}_\epsilon^\hbar,
        \tag{$**$}
    \end{align*}
    where $\mathsf{a}(\hbar')$, $\mathsf{b}(\hbar')$, and
    $\mathsf{c}(\hbar)$ are the constants from the standard estimate
    of the Pochhammer symbols
    \eqref{eq:EstimatePochhammerSymbols}. Since in
    \eqref{eq:EstimatePochhammerSymbols} we can chose the parameters
    to be locally uniform, we can arrange the constant $c(\hbar,
    \hbar')$ to be locally uniform in $\hbar$ and $\hbar'$. Clearly,
    the above estimate shows that the topologies coincide.
\end{proof}

This observation is crucial to make the following questions
meaningful: for given $a, b \in
\complete{\mathcal{A}}_\hbar(\mathbb{D}_n)$ we can ask how $a
\stardisk b$ will depend on $\hbar$ with respect to the
($\hbar$-independent) topology of
$\complete{\mathcal{A}}_\hbar(\mathbb{D}_n)$. We start with the
following technical lemma:
\begin{lemma}
    \label{lemma:KoefficientsHolomorphic}%
    Let $a \in \complete{\mathcal{A}}_\hbar(\mathbb{D}_n)$. Then the
    coefficients $a_{P, Q}(\hbar)$ of $a$ with respect to the Schauder
    basis $\basis{f}_{P, Q}(\hbar)$ are holomorphic with possible
    (simple) poles at the non-allowed values of $\hbar$.
\end{lemma}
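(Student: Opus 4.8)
The plan is to reduce the statement entirely to the explicit coefficient transformation law already recorded inside the proof of Lemma~\ref{lemma:OnDiscSameSpace}, and then to read off holomorphy and the pole structure from an elementary rational function of $\hbar$.

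First I would fix once and for all a reference allowed value $\hbar_0$. By Lemma~\ref{lemma:OnDiscSameSpace} the underlying subspace $\complete{\mathcal{A}}_\hbar(\mathbb{D}_n) \subseteq C^\omega(\mathbb{D}_n)$ and its Fréchet topology are the same for every allowed $\hbar$; hence the given function $a$ lies in all of these algebras simultaneously, and for each allowed $\hbar$ it has a well-defined expansion $a = \sum_{P, Q} a_{P, Q}(\hbar) \basis{f}_{P, Q}(\hbar)$ with respect to the absolute, unconditional Schauder basis from Theorem~\ref{theorem:DiscAlgebra}, \refitem{item:SchauderBasisForDisc}. Since, with $\alpha = \max(|P|, |Q|)$, the two basis vectors differ only by the nonzero scalar prefactor
\begin{equation*}
    \basis{f}_{P, Q}(\hbar)
    =
    \frac{\Pochhammer{\frac{1}{2\hbar}}_\alpha}{\Pochhammer{\frac{1}{2\hbar_0}}_\alpha}
    \basis{f}_{P, Q}(\hbar_0)
\end{equation*}
as functions on $\mathbb{D}_n$, uniqueness of the Schauder expansion (the rescaled series still converging to $a$, exactly as in the proof of Lemma~\ref{lemma:OnDiscSameSpace}) yields
\begin{equation*}
    a_{P, Q}(\hbar)
    =
    \frac{\Pochhammer{\frac{1}{2\hbar_0}}_\alpha}{\Pochhammer{\frac{1}{2\hbar}}_\alpha}
    \, a_{P, Q}(\hbar_0).
\end{equation*}
Thus the whole $\hbar$-dependence of $a_{P,Q}(\hbar)$ is concentrated in the single explicit factor $1/\Pochhammer{\frac{1}{2\hbar}}_\alpha$, the number $a_{P, Q}(\hbar_0)$ being a fixed constant.

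Next I would make this factor manifestly rational. Writing $\Pochhammer{\frac{1}{2\hbar}}_\alpha = \prod_{k=0}^{\alpha-1}\big(\frac{1}{2\hbar} + k\big) = (2\hbar)^{-\alpha}\prod_{k=1}^{\alpha-1}(1 + 2k\hbar)$, the $k=0$ factor being $1$, gives
\begin{equation*}
    a_{P, Q}(\hbar)
    =
    \Pochhammer{\frac{1}{2\hbar_0}}_\alpha \, a_{P, Q}(\hbar_0)
    \cdot
    \frac{(2\hbar)^\alpha}{\prod_{k=1}^{\alpha-1}(1 + 2k\hbar)}.
\end{equation*}
The right-hand side is a rational function of $\hbar$: its numerator is a polynomial, and the zeros of its denominator are the simple points $\hbar = -\frac{1}{2k}$, $k = 1, \ldots, \alpha-1$, that is, $2\hbar \in \{-1, -\frac12, \ldots, -\frac{1}{\alpha-1}\}$, all of which belong to the non-allowed set \eqref{eq:Allowedhbar}. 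At each of these points the numerator $(2\hbar)^\alpha$ is nonzero, so the singularities are at most simple poles; at $\hbar = 0$, also non-allowed, the expression vanishes to order $\alpha$, which is still compatible with the claim. Hence $\hbar \mapsto a_{P, Q}(\hbar)$ is holomorphic on the set of allowed values and extends rationally — in particular meromorphically — with at most simple poles located among the non-allowed values, as asserted.

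I do not anticipate a genuine obstacle: the content is carried entirely by Lemma~\ref{lemma:OnDiscSameSpace}. The only point requiring a word of care is that the coefficient identity above is valid on the completion and not merely on the dense subalgebra $\mathcal{A}_\hbar(\mathbb{D}_n)$, i.e.\ that the diagonally rescaled expansion of $a$ still converges in $\complete{\mathcal{A}}_{\hbar_0}(\mathbb{D}_n)$. This follows from absoluteness of the Schauder basis (Theorem~\ref{theorem:DiscAlgebra}, \refitem{item:SubfacSeminormsForDisc}) together with the at most exponential growth in $\alpha$ of the Pochhammer quotient $\Pochhammer{\frac{1}{2\hbar}}_\alpha / \Pochhammer{\frac{1}{2\hbar_0}}_\alpha$ coming from \eqref{eq:EstimatePochhammerSymbols}, precisely the estimate already used in the proof of Lemma~\ref{lemma:OnDiscSameSpace}.
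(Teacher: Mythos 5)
Your argument is correct and follows the same route as the paper: both reduce the claim to the identity $a_{P,Q}(\hbar) = \bigl(\Pochhammer{\tfrac{1}{2\hbar_0}}_\alpha / \Pochhammer{\tfrac{1}{2\hbar}}_\alpha\bigr)\, a_{P,Q}(\hbar_0)$ extracted from the proof of Lemma~\ref{lemma:OnDiscSameSpace}, so that the whole $\hbar$-dependence sits in the inverse Pochhammer symbol. The paper states this in one sentence and leaves the pole analysis implicit, whereas you spell out the factorization $1/\Pochhammer{\tfrac{1}{2\hbar}}_\alpha = (2\hbar)^\alpha / \prod_{k=1}^{\alpha-1}(1+2k\hbar)$ and verify directly that the singularities are simple poles at the non-allowed values; this is a welcome but not essentially different elaboration.
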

\begin{proof}
    Indeed, let $\hbar_0$ be a fixed allowed value. Then by ($*$) of
    the previous proof $a_{P, Q}(\hbar)$ is, up to a constant, the
    inverse of the Pochhammer symbol
    $\Pochhammer{\frac{1}{2\hbar}}_{\max(|P|, |Q|)}$.
\end{proof}

Moreover, ($**$) in the proof of Lemma~\ref{lemma:OnDiscSameSpace}
shows that the sub-factorial growth of $a_{P, Q}(\hbar)$ is locally
uniform in $\hbar$, i.e.\ for all $\hbar$ in some compact subset $K$
within the allowed values we get for all $\epsilon > 0$ a constant
$c_K$ with
\begin{equation}
    \label{eq:LocallyUniformSubFacInhbar}
    |a_{P, Q}(\hbar)| \le c_K (\max(|P|, |Q|)!)^\epsilon.
\end{equation}
For the next lemma it is again crucial that the topology of
$\complete{\mathcal{A}}_\hbar(\mathbb{D}_n)$ is independent of
$\hbar$. An analogous statement for the Schauder basis upstairs does
not even make sense according to Lemma~\ref{lemma:AhbarNotEqual}.
\begin{lemma}
    \label{lemma:SchauderBasisHolomorphic}%
    For all $P, Q$ the map $\mathbb{C} \ni \hbar \mapsto \basis{f}_{P,
      Q}(\hbar) \in \complete{\mathcal{A}}_\hbar(\mathbb{D}_n)$ is
    holomorphic on $\mathbb{C} \setminus \{0\}$.
\end{lemma}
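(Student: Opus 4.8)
The plan is to reduce the vector-valued holomorphy to an essentially trivial scalar statement, using two facts already at hand. First, by \eqref{eq:BasisOnTheDisc} the function $\basis{f}_{P, Q}(\hbar)$ on $\mathbb{D}_n$ depends on $\hbar$ only through the scalar prefactor $\Pochhammer{\frac{1}{2\hbar}}_\alpha$ with $\alpha = \max(|P|, |Q|)$; everything else in that expression is $\hbar$-independent. Second, by Lemma~\ref{lemma:OnDiscSameSpace} the underlying Fréchet space, together with its topology, does not depend on $\hbar$, so I may fix once and for all an allowed reference value $\hbar_0$ and regard every $\complete{\mathcal{A}}_\hbar(\mathbb{D}_n)$ as the single fixed Fréchet space $F = \complete{\mathcal{A}}_{\hbar_0}(\mathbb{D}_n)$.

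With this reduction, the first step is to record the identity, holding as elements of $F$,
\[
\basis{f}_{P, Q}(\hbar)
=
\lambda_{P, Q}(\hbar)\,\basis{f}_{P, Q}(\hbar_0),
\qquad
\lambda_{P, Q}(\hbar)
=
\frac{\Pochhammer{\frac{1}{2\hbar}}_\alpha}{\Pochhammer{\frac{1}{2\hbar_0}}_\alpha},
\]
which follows directly from \eqref{eq:BasisOnTheDisc}; for non-allowed $\hbar \ne 0$ the right hand side still makes sense and may be taken as the definition of $\basis{f}_{P, Q}(\hbar)$. The denominator is a nonzero constant since $\hbar_0$ is allowed, and writing $\Pochhammer{\frac{1}{2\hbar}}_\alpha = (2\hbar)^{-\alpha}\prod_{k=0}^{\alpha - 1}(1 + 2k\hbar)$ exhibits $\lambda_{P, Q}$ as a rational function of $\hbar$ which is holomorphic on $\mathbb{C} \setminus \{0\}$.

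The second step is the general remark that, for a fixed vector $f_0$ in a locally convex space $F$ and a scalar function $\lambda$ holomorphic on an open set $\Omega$, the map $\hbar \mapsto \lambda(\hbar)\, f_0$ is holomorphic from $\Omega$ into $F$: for $\hbar_1 \in \Omega$ and any continuous seminorm $\mathsf{p}$ on $F$,
\[
\mathsf{p}\left(
  \frac{\lambda(\hbar) - \lambda(\hbar_1)}{\hbar - \hbar_1}\, f_0
  - \lambda'(\hbar_1)\, f_0
\right)
=
\left|
  \frac{\lambda(\hbar) - \lambda(\hbar_1)}{\hbar - \hbar_1}
  - \lambda'(\hbar_1)
\right|\,
\mathsf{p}(f_0)
\longrightarrow 0
\]
as $\hbar \to \hbar_1$, so the complex derivative $\frac{\D}{\D\hbar}\bigl(\lambda(\hbar)\, f_0\bigr) = \lambda'(\hbar)\, f_0$ exists in $F$ at every point of $\Omega$. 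Applying this with $f_0 = \basis{f}_{P, Q}(\hbar_0)$, $\lambda = \lambda_{P, Q}$ and $\Omega = \mathbb{C} \setminus \{0\}$ yields the assertion, with explicit derivative $\frac{\D}{\D\hbar}\basis{f}_{P, Q}(\hbar) = \lambda_{P, Q}'(\hbar)\,\basis{f}_{P, Q}(\hbar_0)$.

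I do not expect a genuine obstacle. The only point requiring care — and the reason the statement is isolated as a lemma — is that it presupposes a canonical identification of all the spaces $\complete{\mathcal{A}}_\hbar(\mathbb{D}_n)$, together with their topologies, with one fixed Fréchet space; this is precisely Lemma~\ref{lemma:OnDiscSameSpace}, and without it the map whose holomorphy is asserted would not even have a well-defined target. By contrast, Lemma~\ref{lemma:AhbarNotEqual} shows that no such identification is available upstairs on $C_{n+1}^+$, so the analogous question there would not make sense.
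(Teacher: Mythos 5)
Your proof is correct and takes essentially the same approach as the paper: both observe that the only $\hbar$-dependence of $\basis{f}_{P,Q}(\hbar)$ sits in the scalar Pochhammer prefactor, so the map is a holomorphic scalar function times an $\hbar$-independent vector, and both rely on Lemma~\ref{lemma:OnDiscSameSpace} to fix the target space. The paper's proof is simply a one-line version of yours; your extra step of normalizing by a reference $\hbar_0$ to get the quotient $\lambda_{P,Q}(\hbar)$ is a harmless cosmetic difference.
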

\begin{proof}
    This is trivial as $\basis{f}_{P, Q}(\hbar)$ is the holomorphic
    Pochhammer symbol $\Pochhammer{\frac{1}{2\hbar}}_{\max(|P|, |Q|)}$
    times a fixed vector in
    $\complete{\mathcal{A}}_\hbar(\mathbb{D}_n)$.
\end{proof}
\begin{lemma}
    \label{lemma:HolLiftToA}%
    Let $a \in \complete{\mathcal{A}}_\hbar(\mathbb{D}_n)$. Then
    \begin{equation}
        \label{eq:liftaToA}
        A(\hbar)
        =
        \sum_{P, Q} a_{P, Q}(\hbar) \basis{F}_{P, Q, \max(|P|, |Q|)}
    \end{equation}
    defines a holomorphic map $A$ from the allowed values of $\hbar$
    to $\mathfrak{A}(C_{n+1}^+)$.
\end{lemma}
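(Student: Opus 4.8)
The plan is to first check that $A(\hbar)$ genuinely lies in $\complete{\mathfrak{A}}(C_{n+1}^+)$ for every allowed $\hbar$, and then to prove holomorphy by a single uniform Cauchy estimate, using crucially that the target Fréchet space does \emph{not} move with $\hbar$.

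For the first point: since $a\in\complete{\mathcal{A}}_\hbar(\mathbb{D}_n)$, its coefficients $a_{P,Q}(\hbar)$ with respect to the basis $\basis{f}_{P,Q}(\hbar)$ have sub-factorial growth in $\max(|P|,|Q|)$ by Theorem~\ref{theorem:DiscAlgebra}~\refitem{item:SubfactorialGroth}. Assigning to $\basis{F}_{I,J,\gamma}$ the coefficient $a_{I,J}(\hbar)$ when $\gamma=\max(|I|,|J|)$ and $0$ otherwise produces a coefficient family with sub-factorial growth in $\gamma$; since the construction of $\complete{\mathfrak{A}}(C_{n+1}^+)$ is word for word that of $\complete{\mathcal{A}}_\hbar(C_{n+1}^+)$ (only the $\basis{f}$'s are replaced by the formal $\basis{F}$'s), the characterisation of Theorem~\ref{theorem:AlgebraUpstairs}~\refitem{item:FrakAIsSubfactorialStuff} applies verbatim and shows that \eqref{eq:liftaToA} converges in $\complete{\mathfrak{A}}(C_{n+1}^+)$. (This $A(\hbar)$ is nothing but $\Phi_\hbar$ applied to the distinguished representative of $a$ used in the proof of Theorem~\ref{theorem:DiscAlgebra}~\refitem{item:SchauderBasisForDisc}.) So $A$ is at least a well-defined map into a \emph{fixed} Fréchet space, and the question of its holomorphy makes sense.

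For holomorphy, I would fix an allowed value $\hbar_0$, choose $r>0$ so small that the closed disk $K=\{|\hbar-\hbar_0|\le 2r\}$ contains only allowed values, and pick an auxiliary exponent $0<\epsilon'<\epsilon<1$. By Lemma~\ref{lemma:KoefficientsHolomorphic} each $a_{P,Q}$ is holomorphic on $K$, and by the locally uniform sub-factorial bound \eqref{eq:LocallyUniformSubFacInhbar}, $M_{P,Q}:=\sup_{K}|a_{P,Q}|\le c_K(\gamma_{P,Q}!)^{\epsilon'}$ with $\gamma_{P,Q}:=\max(|P|,|Q|)$. Cauchy's estimate then gives $|a_{P,Q}'(\hbar_0)|\le M_{P,Q}/(2r)$, so $A'(\hbar_0):=\sum_{P,Q}a_{P,Q}'(\hbar_0)\basis{F}_{P,Q,\gamma_{P,Q}}$ again has sub-factorial coefficients and lies in $\complete{\mathfrak{A}}(C_{n+1}^+)$. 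Using the equivalent seminorm system $\{\norm{\argument}_\epsilon\}$ from \eqref{eq:YetAnotherEquivalentSystemOfSeminorms} for $\complete{\mathfrak{A}}(C_{n+1}^+)$, for which $\norm{\basis{F}_{P,Q,\gamma}}_\epsilon=(\gamma!)^{-\epsilon}$, one has for $0<|\hbar-\hbar_0|\le r$
\[
\left\|\frac{A(\hbar)-A(\hbar_0)}{\hbar-\hbar_0}-A'(\hbar_0)\right\|_\epsilon
=\sup_{P,Q}\frac{1}{(\gamma_{P,Q}!)^\epsilon}
\left|\frac{a_{P,Q}(\hbar)-a_{P,Q}(\hbar_0)}{\hbar-\hbar_0}-a_{P,Q}'(\hbar_0)\right|.
\]
The elementary Taylor-remainder bound on the circle $|w-\hbar_0|=2r$ gives, for every $(P,Q)$,
\[
\left|\frac{a_{P,Q}(\hbar)-a_{P,Q}(\hbar_0)}{\hbar-\hbar_0}-a_{P,Q}'(\hbar_0)\right|
\le\frac{M_{P,Q}}{2r^2}\,|\hbar-\hbar_0|
\le\frac{c_K}{2r^2}\,(\gamma_{P,Q}!)^{\epsilon'}\,|\hbar-\hbar_0|,
\]
so the right-hand side above is at most $\tfrac{c_K}{2r^2}|\hbar-\hbar_0|\sup_{P,Q}(\gamma_{P,Q}!)^{\epsilon'-\epsilon}\le\tfrac{c_K}{2r^2}|\hbar-\hbar_0|$, which tends to $0$. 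Hence $A$ is complex-differentiable at $\hbar_0$ with derivative $A'(\hbar_0)$; as $\hbar_0$ was arbitrary, $A$ is holomorphic on the allowed values of $\hbar$.

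The only point that is not completely routine is the uniformity over the infinitely many pairs $(P,Q)$ in the last step: it is exactly the \emph{locally uniform} sub-factorial estimate \eqref{eq:LocallyUniformSubFacInhbar}, together with the freedom to take $\epsilon'$ strictly below $\epsilon$ (so that $(\gamma_{P,Q}!)^{\epsilon'-\epsilon}\le 1$), that lets one absorb the growth of $M_{P,Q}$ and control the entire family of scalar difference quotients by a single $\hbar$-independent seminorm. Alternatively, the same estimate shows that the series \eqref{eq:liftaToA} of holomorphic $\complete{\mathfrak{A}}(C_{n+1}^+)$-valued functions converges locally uniformly with respect to each $\norm{\argument}_\epsilon$, and one may conclude by the Weierstrass theorem for holomorphic maps into a sequentially complete locally convex space.
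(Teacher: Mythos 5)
Your proof is correct, and you have actually supplied two arguments: the main one and an alternative in the last sentence. The alternative — absolute, locally uniform convergence of the series of holomorphic term functions with respect to each $\norm{\argument}_\epsilon$, then Weierstrass — is exactly the paper's one-line proof. Your main argument is a more hands-on version of the same idea: instead of invoking the vector-valued Weierstrass theorem, you verify complex differentiability at an arbitrary allowed $\hbar_0$ directly, by defining the candidate derivative $A'(\hbar_0)$ as the series of scalar derivatives, checking via Cauchy's estimate that it still has sub-factorial coefficients and so lies in $\complete{\mathfrak{A}}(C_{n+1}^+)$, and then controlling all the scalar difference-quotient remainders uniformly over $(P,Q)$ by means of the locally uniform bound \eqref{eq:LocallyUniformSubFacInhbar} together with the slack $(\gamma!)^{\epsilon'-\epsilon}\le 1$. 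The two routes use identical inputs (Lemma~\ref{lemma:KoefficientsHolomorphic}, the locally uniform sub-factorial estimate, and $\norm{\basis{F}_{P,Q,\gamma}}_\epsilon=(\gamma!)^{-\epsilon}$); yours is more elementary and self-contained at the cost of a longer estimate, while the paper's is shorter but leans on the general theory of holomorphic maps into sequentially complete locally convex spaces. Your preliminary verification that $A(\hbar)\in\complete{\mathfrak{A}}(C_{n+1}^+)$ is a useful addition that the paper treats as implicit.
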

\begin{proof}
    Indeed, each $\hbar \mapsto a_{P, Q}(\hbar) \basis{F}_{P, Q,
      \max(|P|, |Q|)}$ is holomorphic by
    Lemma~\ref{lemma:KoefficientsHolomorphic}. Moreover,
    $\norm{\basis{F}_{P, Q, \max(|P|, |Q|)}}_\epsilon =
    \frac{1}{(\max(|P|, |Q|)!)^\epsilon}$. But then the locally
    uniform estimate \eqref{eq:LocallyUniformSubFacInhbar} shows that
    the series \eqref{eq:liftaToA} converges absolutely and locally
    uniformly with respect to each seminorm
    $\norm{\argument}_\epsilon$. Thus the resulting map is again
    holomorphic.
\end{proof}

We will need a refinement of our consideration in
Theorem~\ref{theorem:DiscAlgebra}: Let $A = \sum_{(P, Q, \alpha)}
A_{P, Q, \alpha} \basis{F}_{P, Q, \alpha} \in \mathfrak{A}(C_{n+1}^+)$
be given and consider $a(\hbar) = [\Phi_\hbar^{-1}(A)] \in
\complete{\mathcal{A}}_\hbar(\mathbb{D}_n)$. The $\hbar$-dependence of
$a$ comes from the map $\Phi_\hbar$ and from the quotient
procedure. We are then interested in the growth properties of the
coefficients $a_{R, S}$ of $a$.
\begin{lemma}
    \label{lemma:MapAtoAdiscCoeffFuns}%
    For all $\epsilon' > 0$ and all compact subsets $K$ of allowed
    values of $\hbar$ there exists an $\epsilon > 0$ and a constant
    $c_K$ such that
    \begin{equation}
        \label{eq:EstimateForCoefficientsaRS}
        |a_{R, S}(\hbar)|
        \le
        c_K \norm{A}_\epsilon (\max(|R|, |S|)!)^{\epsilon'}
    \end{equation}
    for all $R, S \in \mathbb{N}_0^n$ and all $\hbar \in K$.
\end{lemma}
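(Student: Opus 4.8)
The plan is to reduce the statement to the coefficient computation already performed in the proof of Theorem~\ref{theorem:DiscAlgebra} and then repeat the chain of estimates there, this time tracking the dependence on $\norm{A}_\epsilon$ and on $\hbar$. By Lemma~\ref{lemma:PhihbarIso} the element $\Phi_\hbar^{-1}(A) \in \complete{\mathcal{A}}_\hbar(C_{n+1}^+)$ has coefficients $A_{P, Q, \alpha}$ with respect to the basis $\basis{f}_{P, Q, \alpha}(\hbar)$, and these are \emph{independent} of $\hbar$. Hence the explicit formula for the coefficient of a class $[a]$ in the Schauder basis $\basis{f}_{R, S}(\hbar)$ obtained in the proof of Theorem~\ref{theorem:DiscAlgebra} applies verbatim with $a_{P, Q, \alpha}$ replaced by $A_{P, Q, \alpha}$; writing $M = \max(|R|, |S|)$ and $m = \min(|R|, |S|)$ this reads
\[
a_{R, S}(\hbar)
=
\sum_{\alpha \ge M} \sum_{I \le \min(R, S)}
A_{R - I, S - I, \alpha}
\binom{R}{I}\binom{S}{I}
\frac{I!\,(M - m)!\,\alpha!}{(\alpha - m + |I|)!\,M!\,(\alpha - M)!}
\,
\frac{\frac{1}{\alpha!}\Pochhammer{\frac{1}{2\hbar}}_\alpha}{\frac{1}{M!}\Pochhammer{\frac{1}{2\hbar}}_M}.
\]
Since $A \in \mathfrak{A}(C_{n+1}^+)$ is a finite sum these series terminate, so convergence is not the issue; the content is the form of the constant.

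Next I would fix $\epsilon' > 0$ and pick $\epsilon$ with $0 < \epsilon < \min(\epsilon', 1)$. From the definition \eqref{eq:YetAnotherEquivalentSystemOfSeminorms} of the seminorm $\norm{\argument}_\epsilon$ on $\mathfrak{A}(C_{n+1}^+)$ one has $|A_{R - I, S - I, \alpha}| \le \norm{A}_\epsilon\,(\alpha!)^\epsilon$. For the Pochhammer ratio I would use \eqref{eq:EstimatePochhammerSymbols}: the constants $\mathsf{a}, \mathsf{b}, \mathsf{c}$ there can be chosen locally uniformly in $z$, and $\mathsf{a} > 0$ precisely when $-z \notin \mathbb{N}_0$; since $K$ is a compact set of allowed values, this provides constants $\mathsf{a}_K > 0$ and $\mathsf{b}_K, \mathsf{c}_K > 0$ with $\frac{1}{\alpha!}|\Pochhammer{\frac{1}{2\hbar}}_\alpha| \le \mathsf{c}_K^\alpha$ and $\frac{1}{M!}|\Pochhammer{\frac{1}{2\hbar}}_M| \ge \mathsf{a}_K \mathsf{b}_K^M$ for all $\hbar \in K$, so the ratio is bounded by $\mathsf{c}_K^\alpha/(\mathsf{a}_K \mathsf{b}_K^M)$ uniformly in $\hbar \in K$.

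Substituting these two bounds into the displayed formula and then running exactly the string of inequalities that bounds $|a_{R,S}|$ in the proof of Theorem~\ref{theorem:DiscAlgebra} (using $\binom{R}{I}, \binom{S}{I} \le 2^M$, $I!/|I|! \le 1$, $1 \le \binom{\alpha - m + |I|}{|I|}$, and summing the resulting convergent series, which converges because $\epsilon < 1$) yields $|a_{R, S}(\hbar)| \le \norm{A}_\epsilon\, c'\, c_8^{\,M} (M!)^\epsilon$, where $c'$ is a numerical constant absorbing $1/\mathsf{a}_K$ and the value of the summed series, and $c_8$ depends only on $n$, $\mathsf{b}_K$, $\mathsf{c}_K$, $\epsilon$; in particular neither depends on $A$ and both are uniform in $\hbar \in K$. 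Finally $c_8^{\,M} \le C_K (M!)^{\epsilon' - \epsilon}$ for a suitable $C_K$, since an exponential grows sub-factorially, so $|a_{R, S}(\hbar)| \le c_K \norm{A}_\epsilon (M!)^{\epsilon'}$ with $c_K = c' C_K$, which is the assertion.

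The only genuinely delicate point is the uniformity over the compact set $K$ of allowed values; everything else is a rerun of estimates already present in the paper. This uniformity rests entirely on the remark following \eqref{eq:EstimatePochhammerSymbols} that the constants in the two-sided Pochhammer bound may be chosen locally uniformly in the argument, together with $\mathsf{a} > 0$ on allowed values: compactness then upgrades ``locally uniform'' to ``uniform'' and supplies the uniform positive lower bound $\mathsf{a}_K$.
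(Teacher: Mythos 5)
Your proposal is correct and follows essentially the same route as the paper: both reduce to the chain of estimates labelled $(\star)$ in the proof of Theorem~\ref{theorem:DiscAlgebra}, reinterpret $c_1$ as $\norm{A}_\epsilon$, and observe via \eqref{eq:EstimatePochhammerSymbols} that the Pochhammer constants $\mathsf{a}, \mathsf{b}, \mathsf{c}$ can be chosen locally uniformly on the compact set $K$ of allowed values (with $\mathsf{a} > 0$ precisely because $K$ avoids the non-allowed values), before absorbing $c_8^M$ into $(M!)^{\epsilon'-\epsilon}$. The paper states this more tersely, merely pointing at $(\star)$ and the locally uniform constants, while you spell out the final step explicitly; the content is the same.
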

\begin{proof}
    The main point is that $c_K$ can be chosen locally uniform in
    $\hbar$. In the proof of Theorem~\ref{theorem:DiscAlgebra} in
    ($\star$) we had an estimate for $a_{R, S}$ involving several
    constants: fixing $\epsilon' > 0$, we chose a $0 < \epsilon <
    \epsilon'$ and set $c_1 = \norm{A}_\epsilon$ yielding the estimate
    $|A_{P, Q, \alpha}| \le c_1 (\alpha!)^\epsilon$ needed. Moreover,
    we note that in ($\star$) we can choose $c_2$ and $c_3$ locally
    uniformly in $\hbar$ thanks to
    \eqref{eq:EstimatePochhammerSymbols}. Then ($\star$) will yield an
    estimate $|a_{R, S}(\hbar)| \le c_7 c_8^M (M!)^\epsilon$ with $c_7
    = \norm{A}_\epsilon$ times a numerical constant being locally
    uniform in $\hbar$ and $c_8$ being locally uniform in $\hbar$ as
    well. From this we deduce the claim.
\end{proof}

With other words, the continuity of the map $A \mapsto a(\hbar)$,
which was clear before, can be sharpened to the estimate
\begin{equation}
    \label{eq:ContinuityAtoa}
    \norm{a(\hbar)}^\hbar_{\epsilon'} \le c \norm{A}_\epsilon
\end{equation}
with a \emph{locally uniform} constant $c$ concerning the dependence
on $\hbar$.

Finally, assume that $\hbar \mapsto A(\hbar)$ is itself a holomorphic
function with values in $\mathfrak{A}(C_{n+1}^+)$. Then clearly all
the coefficients $A_{P, Q, \alpha}(\hbar)$ are scalar holomorphic
functions since we have a Schauder basis. Moreover, $\hbar \mapsto
\norm{A(\hbar)}_\epsilon$ is a continuous function for all $\epsilon$
and hence locally bounded. From the construction of the $a_{R, S}$ we
see that we have absolute and locally uniform convergence of
holomorphic functions, implying that also the scalar function $\hbar
\mapsto a_{R, S}(\hbar)$ is holomorphic. This will eventually lead to
the following result:
\begin{theorem}[Holomorphic deformation]
    \label{theorem:HolomorphicProduct}%
    Let $a, b \in \complete{\mathcal{A}}_\hbar(\mathbb{D}_n)$ be
    given. Then the product $a \stardisk b$ depends holomorphically on
    $\hbar$ for all allowed values of $\hbar$.
\end{theorem}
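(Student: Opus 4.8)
The key point to keep in mind is that ``holomorphic in $\hbar$'' only makes sense here because we have produced genuinely $\hbar$-independent structures: the underlying Fréchet space on the disk and its topology are independent of $\hbar$ by Lemma~\ref{lemma:OnDiscSameSpace}, and upstairs the auxiliary algebra $\complete{\mathfrak{A}}(C_{n+1}^+)$, together with its product $\star$ and its Fréchet topology, carries no dependence on $\hbar$ at all. The plan is therefore to transport the problem into $\complete{\mathfrak{A}}(C_{n+1}^+)$, perform the multiplication there, and transport the result back to $\complete{\mathcal{A}}_\hbar(\mathbb{D}_n)$, checking holomorphy at each stage.

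Concretely, I would argue as follows. Given $a, b \in \complete{\mathcal{A}}_\hbar(\mathbb{D}_n)$, use the standard representatives from the proof of Theorem~\ref{theorem:DiscAlgebra}, i.e.\ write $a = [\Phi_\hbar^{-1}(A(\hbar))]$ with
\[
A(\hbar) = \sum_{R, S} a_{R, S}(\hbar)\, \basis{F}_{R, S, \max(|R|, |S|)} \in \complete{\mathfrak{A}}(C_{n+1}^+),
\]
and likewise $b = [\Phi_\hbar^{-1}(B(\hbar))]$. By Lemma~\ref{lemma:KoefficientsHolomorphic} each coefficient $a_{R,S}(\hbar)$ (and $b_{R,S}(\hbar)$) is holomorphic on the allowed values of $\hbar$, and by Lemma~\ref{lemma:HolLiftToA} the maps $\hbar \mapsto A(\hbar)$ and $\hbar \mapsto B(\hbar)$ are holomorphic with values in $\complete{\mathfrak{A}}(C_{n+1}^+)$. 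Now set $C(\hbar) = A(\hbar) \star B(\hbar)$. Since the product $\star$ is jointly continuous (Theorem~\ref{theorem:Omega}, cf.\ Lemma~\ref{lemma:OmegaSeminorms}) and the composition of two Fréchet-space-valued holomorphic maps with a continuous bilinear map is again holomorphic, $\hbar \mapsto C(\hbar)$ is holomorphic into $\complete{\mathfrak{A}}(C_{n+1}^+)$. Finally, because $\Phi_\hbar$ is an isomorphism of Fréchet algebras (Lemma~\ref{lemma:PhihbarIso}) and the quotient map is an algebra homomorphism, one has $a \stardisk b = [\Phi_\hbar^{-1}(C(\hbar))]$, so it only remains to show that applying the passage ``$A \mapsto [\Phi_\hbar^{-1}(A)]$'' to the holomorphic family $C(\hbar)$ yields a holomorphic family in $\complete{\mathcal{A}}_\hbar(\mathbb{D}_n)$.

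For this last step I would invoke Lemma~\ref{lemma:MapAtoAdiscCoeffFuns} (whose proof applies verbatim to $\complete{\mathfrak{A}}(C_{n+1}^+)$, since it uses only the sub-factorial bound on the coefficients) together with the locally uniform estimate \eqref{eq:ContinuityAtoa}: the coefficients $(a \stardisk b)_{R, S}(\hbar)$ are, by the construction recalled there, absolutely and locally uniformly convergent series of scalar holomorphic functions of $\hbar$, hence holomorphic; and the estimate $\norm{[\Phi_\hbar^{-1}(C(\hbar))]}^\hbar_{\epsilon'} \le c\, \norm{C(\hbar)}_\epsilon$, with both $c$ and $\norm{C(\hbar)}_\epsilon$ locally bounded in $\hbar$, shows that the Schauder expansion $\sum_{R, S} (a \stardisk b)_{R, S}(\hbar)\, \basis{f}_{R, S}(\hbar)$ converges locally uniformly in $\hbar$ with respect to every seminorm $\norm{\argument}_\epsilon$ of the $\hbar$-independent topology of $\complete{\mathcal{A}}_\hbar(\mathbb{D}_n)$. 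A locally uniform limit of holomorphic functions with values in a Fréchet space is holomorphic, which gives the theorem. The only genuinely technical ingredient is this locally uniform control of the coefficients, which is precisely what Lemma~\ref{lemma:MapAtoAdiscCoeffFuns} supplies, and it is needed exactly because the Schauder basis $\basis{f}_{R, S}(\hbar)$ — hence the coefficient functionals themselves — vary with $\hbar$; everything else is a formal assembly of the results already established in this subsection.
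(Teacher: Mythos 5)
Your proposal is correct and takes essentially the same approach as the paper's own proof: lift $a,b$ to holomorphic $\mathfrak{A}(C_{n+1}^+)$-valued maps via Lemma~\ref{lemma:HolLiftToA}, multiply in the $\hbar$-independent algebra using continuity of $\star$, and project back to $\complete{\mathcal{A}}_\hbar(\mathbb{D}_n)$ with the locally uniform control supplied by Lemma~\ref{lemma:MapAtoAdiscCoeffFuns} and Lemma~\ref{lemma:SchauderBasisHolomorphic} to deduce holomorphy of the Schauder expansion. The only cosmetic difference is that you make explicit the general principle that a continuous bilinear map preserves holomorphy and that locally uniform limits of holomorphic Fréchet-space-valued maps are holomorphic, which the paper uses tacitly.
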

\begin{proof}
    First we note that by Lemma~\ref{lemma:HolLiftToA} the maps $\hbar
    \mapsto A(\hbar), B(\hbar) \in \mathfrak{A}(C_{n+1}^+)$ are
    holomorphic where $A(\hbar) = \sum_{P, Q} a_{P, Q}(\hbar)
    \basis{F}_{P, Q, \max(|P|, |Q|)}$ and analogously for
    $B(\hbar)$. Since the product $\star$ of $\mathfrak{A}(C_{n+1}^+)$
    is continuous (and independent of $\hbar$), also $A(\hbar) \star
    B(\hbar)$ is holomorphic and we have $a \stardisk b =
    [\Phi_\hbar^{-1}(A(\hbar) \star B(\hbar))]$. Thus the coefficients
    $(a \stardisk b)_{R, S}(\hbar)$ of
    \[
    a \stardisk b
    =
    \sum_{R, S} (a \stardisk b)_{R, S}(\hbar) \basis{f}_{R, S}(\hbar)
    \tag{$*$}
    \]
    are scalar holomorphic functions as we just argued. By
    Lemma~\ref{lemma:SchauderBasisHolomorphic} also $\basis{f}_{R,
      S}(\hbar)$ depends holomorphically on $\hbar$. Finally,
    Lemma~\ref{lemma:MapAtoAdiscCoeffFuns} together with
    $\norm{\basis{f}_{R, S}}_\epsilon = \frac{1}{(\max(|R|,
      |S|)!)^\epsilon}$ shows that the convergence of the series ($*$)
    is not only absolute with respect to the seminorms
    $\norm{\argument}_\epsilon$ but even locally uniform in
    $\hbar$. Hence the result is again holomorphic, proving the claim.
\end{proof}
\begin{remark}
    \label{remark:Pols}%
    We are thus in the situation of a \emph{holomorphic} deformation
    in the sense of \cite{pflaum.schottenloher:1998a} with the crucial
    difference that $\hbar = 0$ is \emph{not} in the domain of
    definition. In fact, the singularities at the non-allowed values
    on the negative axis, i.e.\ the zeros of the Pochhammer symbols,
    accumulate at $0$. Hence there is no chance to extend the domain
    such that $\hbar = 0$ is included. As an alternative to the above
    argument, a more direct estimate of $a \stardisk b$ can be used to
    show the locally uniform convergence in $\hbar$ by expanding
    everything in terms of the coefficients with respect to the
    Schauder basis.
\end{remark}

%
%

\subsection{The $^*$-involution for real $\hbar$}
\label{subsec:StarInvolutionRealHbar}

We consider now the particular case of real $\hbar$. From
Proposition~\ref{proposition:StructureConstantsAbove},
\refitem{item:AlgebraForRealHbar}, we know that
$\mathcal{A}_\hbar(C_{n+1}^+)$ is a $^*$-algebra with respect to the
pointwise complex conjugation as $^*$-involution. In fact, the complex
conjugation is continuous and extends therefore to a $^*$-involution
of $\complete{\mathcal{A}}_\hbar(C_{n+1}^+)$. We arrive at the
following statement:
\begin{proposition}
    \label{proposition:DiscAlgebraStarAlgebra}%
    Let $\hbar$ be real and an allowed value.
    \begin{propositionlist}
    \item \label{item:CCStetig} The complex conjugation extends to a
        continuous $^*$-involution of
        $\complete{\mathcal{A}}_\hbar(C_{n+1}^+)$ and for all $0 <
        \epsilon < 1$ and $a \in
        \complete{\mathcal{A}}_\hbar(\mathbb{C}_{n+1})$ we have
        \begin{equation}
            \label{eq:EpsilonSeminormCCStable}
            \norm{\cc{a}}_\epsilon = \norm{a}_\epsilon.
        \end{equation}
    \item \label{item:IdealIsStarIdeal} The ideal $\mathcal{J}_{y=1}
        \subseteq \complete{\mathcal{A}}_\hbar(C_{n+1}^+)$ is a
        $^*$-ideal.
    \item \label{item:CCforDiscAlgebra} The induced continuous
        $^*$-involution on $\complete{\mathcal{A}}_\hbar(\mathbb{D}_n)
        = \complete{\mathcal{A}}_\hbar(C_{n+1}^+) \big/
        \mathcal{J}_{y=1}$ is the pointwise complex conjugation once
        we identify elements of
        $\complete{\mathcal{A}}_\hbar(\mathbb{D}_n)$ with functions on
        $\mathbb{D}_n$. For all $0 < \epsilon < 1$ and $[a] \in
        \complete{\mathcal{A}}_\hbar(\mathbb{D}_n)$ we have
        \begin{equation}
            \label{eq:EstimateOnDiscForCC}
            \norm{\cc{[a]}}_\epsilon = \norm{[a]}_\epsilon.
        \end{equation}
    \end{propositionlist}
\end{proposition}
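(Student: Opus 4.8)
The plan is to reduce everything to the symmetry $P \leftrightarrow Q$ of the basis already recorded in Proposition~\ref{proposition:StructureConstantsAbove},\refitem{item:AlgebraForRealHbar}, together with the continuity of the evaluation functionals. For the first part I would start from the identity $\cc{\basis{f}_{P, Q, \alpha}} = \basis{f}_{Q, P, \alpha}$ in \eqref{eq:ccfPQalpha}, valid for real $\hbar$ since then $y$ and the Pochhammer prefactor $\Pochhammer{\frac{y}{2\hbar}}_\alpha$ are real-valued. Writing $a = \sum_{(P, Q, \alpha)} a_{P, Q, \alpha} \basis{f}_{P, Q, \alpha} \in \mathcal{A}_\hbar(C_{n+1}^+)$, this gives $\cc{a} = \sum_{(P, Q, \alpha)} \cc{a_{Q, P, \alpha}} \basis{f}_{P, Q, \alpha}$, so the coefficients of $\cc{a}$ are obtained from those of $a$ by swapping the first two multiindices and conjugating the scalar, leaving $\gamma$ untouched. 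Since the seminorm $\norm{\argument}_\epsilon$ from Theorem~\ref{theorem:AlgebraUpstairs},\refitem{item:FrakAIsSubfactorialStuff}, only involves $|a_{I, J, \gamma}|$ and $\gamma!$, the substitution $(I, J) \mapsto (J, I)$ is a bijection of the index set preserving $\gamma$, whence $\norm{\cc{a}}_\epsilon = \norm{a}_\epsilon$. Thus complex conjugation is an isometry for each defining seminorm on the dense subalgebra $\mathcal{A}_\hbar(C_{n+1}^+)$, extends uniquely to a continuous antilinear map on $\complete{\mathcal{A}}_\hbar(C_{n+1}^+)$, and the extension is again involutive and antimultiplicative by density and continuity of $\tildewick$; since it agrees with pointwise complex conjugation on the dense subspace and the evaluation functionals $\complete{\delta_w}$ are continuous, it \emph{is} the pointwise complex conjugation on all of $\complete{\mathcal{A}}_\hbar(C_{n+1}^+)$.

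For the second part I would use the description $\mathcal{J}_{y=1} = \bigcap_w \ker \complete{\delta_w}$ from \eqref{eq:ClosureVanishingIdeal}, the intersection being over $w \in C_{n+1}^+$ with $y(w) = 1$. Because $\complete{\delta_w}$ is evaluation at $w$ of a genuine function, $\complete{\delta_w}(\cc{a}) = \cc{\complete{\delta_w}(a)}$, so $a \in \mathcal{J}_{y=1}$ forces $\cc{a} \in \mathcal{J}_{y=1}$; alternatively, $y - 1$ is real and central for $\tildewick$, so $(y-1)\tildewick \mathcal{A}_\hbar(C_{n+1}^+)$ is stable under $\cc{\argument}$ and hence so is its closure. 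For the third part, the involution induced on $\complete{\mathcal{A}}_\hbar(\mathbb{D}_n) = \complete{\mathcal{A}}_\hbar(C_{n+1}^+)\big/\mathcal{J}_{y=1}$ is $\cc{[a]} = [\cc{a}]$, well-defined by \refitem{item:IdealIsStarIdeal} and continuous by the universal property of the quotient topology. Identifying $[a]$ with a function on $\mathbb{D}_n$ via $\delta_v([a]) = \delta_w(a)$ for a lift $w$ with $y(w) = 1$, one gets $\cc{[a]}(v) = [\cc{a}](v) = \delta_w(\cc{a}) = \cc{\delta_w(a)} = \cc{[a](v)}$, so the induced involution is the pointwise complex conjugation on $\mathbb{D}_n$.

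For the remaining seminorm identity on the disk I would choose, exactly as in the proof of Theorem~\ref{theorem:DiscAlgebra}, the representative $a = \sum_{(P, Q, \alpha)} a_{P, Q, \alpha}\basis{f}_{P, Q, \alpha}$ of $[a]$ with $a_{P, Q, \alpha} = a_{P, Q}$ for $\alpha = \max(|P|, |Q|)$ and zero otherwise; for this special shape one has the honest equality $\norm{[a]}_\epsilon = \norm{a}_\epsilon$, since the only nonzero coefficients sit at $\gamma = \max(|P|,|Q|)$. Applying the coefficient formula $(\cc{a})_{I, J, \gamma} = \cc{a_{J, I, \gamma}}$ from the first part shows that $\cc{a}$ is again a representative of $\cc{[a]}$ of the same special shape, with coefficients $\cc{a_{Q, P}}$ at $\gamma = \max(|Q|, |P|)$. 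Hence $\norm{\cc{[a]}}_\epsilon = \norm{\cc{a}}_\epsilon = \norm{a}_\epsilon = \norm{[a]}_\epsilon$, using \refitem{item:CCStetig} for the middle equality.

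None of these steps is a serious obstacle; the only point requiring care is to keep the two pictures of complex conjugation consistently aligned — the algebraic one acting on Schauder coefficients by $(I, J) \mapsto (J, I)$ and the analytic one acting pointwise on functions — which is precisely what \eqref{eq:ccfPQalpha} together with the continuity of all evaluation functionals provides, and likewise to track that passing to the quotient by the closed $^*$-ideal $\mathcal{J}_{y=1}$ preserves both the involution and the seminorm comparison on the special representatives.
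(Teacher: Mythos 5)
Your proposal is correct and follows essentially the same route as the paper's (much terser) proof: part \refitem{item:CCStetig} rests on the coefficient relation $(\cc{a})_{P,Q,\alpha} = \cc{a_{Q,P,\alpha}}$ coming from \eqref{eq:ccfPQalpha} together with the $(P,Q)\mapsto(Q,P)$ symmetry of the seminorms \eqref{eq:YetAnotherEquivalentSystemOfSeminorms}, part \refitem{item:IdealIsStarIdeal} on the reality of the evaluation functionals in the description \eqref{eq:ClosureVanishingIdeal}, and part \refitem{item:CCforDiscAlgebra} is transferred to the quotient by the special representative from Theorem~\ref{theorem:DiscAlgebra}. You merely spell out the density, continuity, and quotient-topology arguments that the paper compresses into ``is obvious'' and ``clear by construction.''
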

\begin{proof}
    For the first part it suffices to show
    \eqref{eq:EpsilonSeminormCCStable}. From \eqref{eq:ccfPQalpha} we
    conclude $(\cc{a})_{P, Q, \alpha} = \cc{a_{Q, P, \alpha}}$ and
    hence \eqref{eq:EpsilonSeminormCCStable} is obvious. For any $w
    \in C_{n+1}^+$ the evaluation functional $\delta_w$ is real in the
    sense that $\cc{\delta_w(a)} = \delta_w(\cc{a})$. Hence $\ker
    \delta_w$ is stable under complex conjugation and thus also
    $\mathcal{J}_{y=1}$, proving the second part. The third part is
    then clear by construction.
\end{proof}

%
%

\subsection{Positivity of $\delta$-functionals and their GNS construction}
\label{subsec:PositivityAndGNS}

Let $\hbar > 0$ be positive. Then we consider the evaluation
functionals on the disk for the algebra
$\complete{\mathcal{A}}_\hbar(\mathbb{D}_n)$. Since the group
$\mathrm{SU}(1, n)$ acts transitively on $\mathbb{D}_n$, it will be
sufficient to consider one point only, all others will be obtained by
pulling back the evaluation functional via the $^*$-automorphisms
$U^*$ of $\complete{\mathcal{A}}_\hbar(\mathbb{D}_n)$. Hence we can
concentrate on the point $v = 0 \in \mathbb{D}_n$.
\begin{lemma}
    \label{lemma:DeltaIsPositive}%
    Let $\hbar > 0$.
    \begin{lemmalist}
    \item \label{item:DeltaPositive} The evaluation functional
        $\delta_0 \colon \complete{\mathcal{A}}_\hbar(\mathbb{D}_n)
        \longrightarrow \mathbb{C}$ is positive.
    \item \label{item:GelfandIdealOfDelta} The Gel'fand ideal of
        $\delta_0$ is given by
        \begin{equation}
            \label{eq:GelfandDeltaNull}
            \mathcal{J}_0
            =
            \left\{
                a = \sum\nolimits_{P, Q} a_{P, Q} \basis{f}_{P, Q}
                \in
                \complete{\mathcal{A}}_\hbar(\mathbb{D}_n)
                \; \bigg| \;
                a_{0, Q} = 0
                \;
                \textrm{for all}
                \;
                Q \in \mathbb{N}_0^n
            \right\}.
        \end{equation}
    \end{lemmalist}
\end{lemma}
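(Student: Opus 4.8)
The plan is to prove positivity by computing $\delta_0(\cc a \stardisk a)$ explicitly in terms of the Schauder coefficients of $a$ and observing that the result is a sum of manifestly nonnegative terms. First I would record the two elementary facts that reduce the problem to a single combinatorial calculation. Since $0 \in \mathbb{D}_n$ has the preimage $(1, 0, \ldots, 0)$ in the hypersurface $y = 1$, the functional $\delta_0$ is continuous by Theorem~\ref{theorem:DiscAlgebra}, \refitem{item:DiscPointsContinuous}, and from the explicit form \eqref{eq:ClassOfBasisFunction} of the basis functions one reads off that $\delta_0(\basis{f}_{P,Q})$ equals $1$ for $P = Q = 0$ and vanishes otherwise; thus $\delta_0$ simply extracts the coefficient of $\basis{f}_{0,0} = 1$. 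Moreover, by \eqref{eq:ccfPQalpha} the $^*$-involution acts on the basis by $\cc{\basis{f}_{P,Q}} = \basis{f}_{Q,P}$, i.e.\ $(\cc a)_{R,S} = \cc{a_{S,R}}$. Hence it suffices to determine, for a finite combination $a = \sum_{P,Q} a_{P,Q}\basis{f}_{P,Q} \in \mathcal{A}_\hbar(\mathbb{D}_n)$, the value $\delta_0(\basis{f}_{R,S}\stardisk\basis{f}_{I,J})$ for all index pairs, and afterwards to pass to the completion by density together with the continuity of $\stardisk$, of the $^*$-involution, and of $\delta_0$.

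The heart of the argument is this calculation, carried out with the explicit product formula for $\tildewick$ displayed in the proof of Proposition~\ref{proposition:StructureConstantsAbove} combined with \eqref{eq:ClassOfBasisFunction}. Writing $\basis{f}_{R,S} = [\basis{f}_{R,S,\max(|R|,|S|)}]$ and $\basis{f}_{I,J} = [\basis{f}_{I,J,\max(|I|,|J|)}]$ and evaluating the product at $v = 0$, a term $\basis{f}_{R+I-K,\,S+J-K,\,\gamma}$ of the expansion contributes only if $R+I-K = 0$ and $S+J-K = 0$; since $0 \le K \le \min(R,J)$ componentwise, the first equation forces $I = 0$ and $K = R$, after which the second forces $S = 0$ and $J = R$, while the summation index $k$ collapses to $0$. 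A short check of the remaining binomial and Pochhammer prefactors then shows that $\delta_0(\basis{f}_{R,S}\stardisk\basis{f}_{I,J})$ vanishes unless $I = 0$, $S = 0$ and $J = R$, in which case it equals $\Pochhammer{\frac{1}{2\hbar}}_{|R|}\big/\bigl(R!\,(|R|!)^2\bigr)$. Substituting $(\cc a)_{R,S} = \cc{a_{S,R}}$ and summing, every cross term drops out and one obtains, for $a \in \mathcal{A}_\hbar(\mathbb{D}_n)$,
\[
  \delta_0(\cc a \stardisk a)
  =
  \sum_{R \in \mathbb{N}_0^n}
  \frac{\Pochhammer{\frac{1}{2\hbar}}_{|R|}}{R!\,(|R|!)^2}\,
  \bigl|a_{0, R}\bigr|^2 .
\]
For $\hbar > 0$ each Pochhammer symbol $\Pochhammer{\frac{1}{2\hbar}}_{|R|}$ is a product of positive numbers, so every coefficient in this sum is strictly positive.

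To extend this to an arbitrary $a \in \complete{\mathcal{A}}_\hbar(\mathbb{D}_n)$ I would approximate $a$ by the partial sums of its unconditionally convergent Schauder expansion; by continuity of $\stardisk$, of complex conjugation, and of $\delta_0$, the quantity $\delta_0(\cc a \stardisk a)$ is the limit of the corresponding finite sums, which by monotone convergence equals $\sum_R \frac{(\frac{1}{2\hbar})_{|R|}}{R!(|R|!)^2}|a_{0,R}|^2$ (and in particular this series converges). This gives $\delta_0(\cc a \stardisk a) \ge 0$, proving part \refitem{item:DeltaPositive}. For part \refitem{item:GelfandIdealOfDelta}, the Gel'fand ideal is by definition $\mathcal{J}_0 = \{a : \delta_0(\cc a \stardisk a) = 0\}$; from the displayed formula and the strict positivity of all its coefficients, $\delta_0(\cc a \stardisk a) = 0$ if and only if $a_{0,R} = 0$ for every $R \in \mathbb{N}_0^n$, which is precisely \eqref{eq:GelfandDeltaNull}. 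That this set is a left ideal is the usual consequence of the Cauchy--Schwarz inequality for the positive sesquilinear form $(b,a) \mapsto \delta_0(\cc b \stardisk a)$.

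The step I expect to be the main obstacle is the bookkeeping in the key computation: one must apply the product formula for $\tildewick$ on $C_{n+1}^+$, pass to the quotient $\complete{\mathcal{A}}_\hbar(\mathbb{D}_n)$ in which the functions $\basis{f}_{P,Q,\alpha}$ with different $\alpha$ are no longer linearly independent (cf.\ the remark following Theorem~\ref{theorem:DiscAlgebra} about the redundant structure constants of $\stardisk$), and then evaluate carefully at $v = 0$ using \eqref{eq:ClassOfBasisFunction}; doing this via the explicit product formula rather than via the structure constants of $\stardisk$ directly is what keeps the combinatorics tractable. I would also flag that one cannot shortcut the argument by invoking positivity of the Wick product $\starwick$ on $\mathbb{C}^{n+1}$: because the pseudo-Kähler metric $\metric$ is indefinite, the evaluation functionals of $\starwick$ are \emph{not} all positive, and it is exactly the passage to the reduced algebra on the cone, and then on the disk, that restores positivity, so the computation must be made intrinsically in $\complete{\mathcal{A}}_\hbar(\mathbb{D}_n)$.
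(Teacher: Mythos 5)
Your proof is correct and follows essentially the same route as the paper: compute $\delta_0$ on a $\tildewick$-product of basis vectors upstairs via the explicit product formula of Proposition~\ref{proposition:StructureConstantsAbove}, observe that evaluation at $w = (1,0,\ldots,0)$ forces the index constraints that collapse the double sum to a single positive one, and then extend by density and continuity. The constraint analysis ($R+I-K=0$ with $K \le \min(R,J)$ forcing $I=0$, $K=R$, then $S=0$, $J=R$, $k=0$) is precisely the paper's argument modulo relabeling.

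One small discrepancy worth noting: you retain the factor $\frac{1}{K!} = \frac{1}{R!}$ coming from the prefactor $\frac{(-1)^k}{k!K!}$ in the $\tildewick$-expansion, arriving at $\delta_0(\cc a \stardisk a) = \sum_R \frac{1}{R!(|R|!)^2}\Pochhammer{\frac{1}{2\hbar}}_{|R|}|a_{0,R}|^2$, whereas the paper's displayed formula (and the subsequent inner product \eqref{eq:GNSInnerProduct}) reads $\sum_Q \frac{1}{|Q|!|Q|!}\Pochhammer{\frac{1}{2\hbar}}_{|Q|}\cc{a_{0,Q}}b_{0,Q}$, dropping this $\frac{1}{Q!}$. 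Your version appears to be the correct one; in any case the omitted factor is strictly positive, so positivity, the Cauchy--Schwarz argument, and the identification of the Gel'fand ideal are unaffected.
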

\begin{proof}
    First we note that $\delta_0(a)$ can be obtained by evaluating a
    representative of $a$ in $\complete{\mathcal{A}}_\hbar(C_{n+1}^+)$
    at the point $w = (1, 0, \ldots, 0) = \pi^{-1}(0) \in
    C_{n+1}^+$. For a general index triple $(I, J, \gamma)$ we have
    \[
    \delta_w\left(
        \basis{f}_{I, J, \gamma}
    \right)
    =
    \frac{1}{I!J!(\gamma - |I|)!(\gamma - |J|)!}
    \Pochhammer{\frac{1}{2\hbar}}_\gamma
    \delta_{I, 0} \delta_{J, 0}
    =
    \frac{1}{\gamma!\gamma!}
    \Pochhammer{\frac{1}{2\hbar}}_\gamma
    \delta_{I, 0} \delta_{J, 0}.
    \tag{$*$}
    \]
    Now let $P$, $Q$, $R$, and $S$ be given and set $\alpha =
    \max(|Q|, |P|)$ and $\beta = \max(|R|, |S|)$. Then we have
    \begin{align*}
        &\delta_0\left(
            \cc{\basis{f}_{P, Q}} \stardisk \basis{f}_{R, S}
        \right) \\
        &\quad=
        \delta_0\left(
            \basis{f}_{Q, P} \stardisk \basis{f}_{R, S}
        \right) \\
        &\quad=
        \delta_w\left(
            \basis{f}_{Q, P, \alpha}
            \tildewick
            \basis{f}_{R, S, \beta}
        \right) \\
        &\quad=
        \sum_{k=0}^{\min(\alpha - |Q|, \beta - |S|)}
        \sum_{K=0}^{\min(Q, S)}
        \frac{(-1)^k}{k!K!}
        \delta_w\left(
            \basis{f}_{
              Q + R - K,
              P + S - K,
              \alpha + \beta - k - |K|
            }
        \right) \\
        &\qquad
        \binom{Q + R - K}{R}
        \binom{P + S - K}{P}        
        \binom{\alpha + \beta - k - |Q| - |R|}{\beta - |R|}
        \binom{\alpha + \beta - k - |P| - |S|}{\alpha - |P|}.
        \tag{$**$}
    \end{align*}
    In order to get a nonzero contribution in ($**$) we need $Q + R -
    K = 0 = P + S - K$ according to ($*$). Since $K$ has range $0,
    \ldots, \min(Q, S)$ this implies $K = Q = S$ and $R = P =
    0$. Hence $\alpha = |Q|$ and $\beta = |S|$ which implies that in
    ($**$) only $k = 0$ occurs. For these values, all the binomial
    coefficients in ($**$) are $1$. Then remaining evaluation is
    $\delta_w(\basis{f}_{0, 0, |Q|}) = \frac{1}{|Q|!|Q|!}
    \Pochhammer{\frac{1}{2\hbar}}_{|Q|}$.  This allows to compute
    \[
    \delta_0(\cc{a} \stardisk b)
    =
    \sum_{P, Q, R, S}
    \cc{a_{P, Q}} b_{R, S}
    \delta_0\left(
        \cc{\basis{f}_{P, Q}} \stardisk \basis{f}_{R, S}
    \right)
    =
    \sum_{Q} \frac{\cc{a_{0, Q}}b_{0, Q}}{|Q|!|Q|!}
    \Pochhammer{\frac{1}{2\hbar}}_{|Q|}.
    \]
    Since for $\hbar > 0$ the Pochhammer symbol
    $\Pochhammer{\frac{1}{2\hbar}}_{|Q|}$ is also positive, we see
    that $\delta_0$ is a positive functional by setting $a =
    b$. Moreover, \eqref{eq:GelfandDeltaNull} is clear from the last
    formula.
\end{proof}
\begin{corollary}
    \label{corollary:AllDeltaPositive}%
    Let $\hbar > 0$.  For all points $v \in \mathbb{D}_n$, the
    evaluation functional $\delta_v \colon
    \complete{\mathcal{A}}_\hbar(\mathbb{D}_n) \longrightarrow
    \mathbb{C}$ is positive.
\end{corollary}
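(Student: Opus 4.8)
The plan is to reduce the general statement to the positivity of $\delta_0$ established in Lemma~\ref{lemma:DeltaIsPositive}, exploiting the transitivity of the $\mathrm{SU}(1,n)$-action together with the fact, recorded in Theorem~\ref{theorem:SUEinsnSymmetrie}, that this action is by $^*$-automorphisms.

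Concretely, I would first recall that for every $U \in \mathrm{SU}(1,n)$ the induced map $U^*\colon \complete{\mathcal{A}}_\hbar(\mathbb{D}_n) \longrightarrow \complete{\mathcal{A}}_\hbar(\mathbb{D}_n)$ is a continuous $^*$-automorphism, see Theorem~\ref{theorem:SUEinsnSymmetrie}~\refitem{item:SUEinsnGlattAufDisc} and \refitem{item:ActionIsStarAction}, which under the identification of elements of $\complete{\mathcal{A}}_\hbar(\mathbb{D}_n)$ with functions on $\mathbb{D}_n$ is the pullback by the classical action of $U$ on $\mathbb{D}_n$. Since $\mathrm{SU}(1,n)$ acts transitively on $\mathbb{D}_n$, see \eqref{eq:DiscAsSymmetricSpace}, for a given point $v \in \mathbb{D}_n$ there is a $U \in \mathrm{SU}(1,n)$ whose classical action carries $0$ to $v$. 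For the corresponding pullback we then have $(U^* a)(0) = a(v)$ for all $a$, which upon recalling that $\delta_v$ is well defined and continuous, see Theorem~\ref{theorem:DiscAlgebra}~\refitem{item:DiscPointsContinuous}, says precisely
\[
\delta_v = \delta_0 \circ U^*
\]
as continuous linear functionals on $\complete{\mathcal{A}}_\hbar(\mathbb{D}_n)$.

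It then remains to combine this with the multiplicativity and $^*$-compatibility of $U^*$: for any $a \in \complete{\mathcal{A}}_\hbar(\mathbb{D}_n)$ one has
\[
\delta_v\bigl(\cc{a} \stardisk a\bigr)
= \delta_0\bigl(U^*(\cc{a} \stardisk a)\bigr)
= \delta_0\bigl(\cc{(U^* a)} \stardisk (U^* a)\bigr)
\ge 0
\]
by Lemma~\ref{lemma:DeltaIsPositive}~\refitem{item:DeltaPositive} applied to the element $U^* a$. Hence $\delta_v$ is positive, which is the assertion. There is essentially no obstacle here beyond bookkeeping: the only mild point is to fix the convention for the direction of the $\mathrm{SU}(1,n)$-action so that $\delta_v$ and $\delta_0$ are related by a pullback as above, and this is automatic once the action on functions is written as a pullback and transitivity is invoked.
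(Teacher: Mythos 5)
Your proof is correct and is exactly the argument the paper has in mind: the paper states in the paragraph preceding Lemma~\ref{lemma:DeltaIsPositive} that, by transitivity of the $\mathrm{SU}(1,n)$-action on $\mathbb{D}_n$ and the fact that this action is by continuous $^*$-automorphisms, it suffices to treat the base point $v=0$, and the corollary is then left as an immediate consequence. You have filled in the bookkeeping (identifying $\delta_v = \delta_0 \circ U^*$ and unwinding the $^*$-automorphism property) precisely as intended.
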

\begin{lemma}
    \label{lemma:GNSPreHilbertSpace}%
    Let $\hbar > 0$.
    \begin{lemmalist}
    \item \label{item:GelfandIdealClosed} The Gel'fand ideal
        $\mathcal{J}_0$ of $\delta_0$ is a closed subspace.
    \item \label{item:GelfandIdealComplement} The Gel'fand ideal
        $\mathcal{J}_0$ has a closed complementary subspace
        $\mathcal{J}_0 \oplus \mathfrak{D}_\hbar =
        \complete{\mathcal{A}}_\hbar(\mathbb{D}_n)$ explicitly given
        by
        \begin{equation}
            \label{eq:ComplementGelfand}
            \mathfrak{D}_\hbar =
            \left\{
                \psi = \sum\nolimits_Q
                \psi_Q \basis{f}_{0, Q}
                \; \bigg| \;
                (\psi_Q)_{Q \in \mathbb{N}_0^n}
                \;
                \textrm{has sub-factorial growth}
            \right\}
            \subseteq
            \complete{\mathcal{A}}_\hbar(\mathbb{D}_n).
        \end{equation}
    \item \label{item:GNSPreHilbertSpace} The GNS pre Hilbert space is
        canonically isomorphic to $\mathfrak{D}_\hbar$ with the inner
        product
        \begin{equation}
            \label{eq:GNSInnerProduct}
            \SP{\psi, \phi}_\hbar
            =
            \sum_{Q} \frac{\cc{\psi_Q}\phi_Q}{|Q|!|Q|!}
            \Pochhammer{\frac{1}{2\hbar}}_{|Q|}.
        \end{equation}
    \item \label{item:DhbarFinerTopology} The nuclear Fréchet topology
        of $\mathfrak{D}_\hbar$ is finer than the pre Hilbert space
        topology induced by $\SP{\argument, \argument}_\hbar$.
    \end{lemmalist}
\end{lemma}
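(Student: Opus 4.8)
The plan is to read off all four statements from the explicit description of the Gel'fand ideal in \eqref{eq:GelfandDeltaNull} together with the structural facts about $\complete{\mathcal{A}}_\hbar(\mathbb{D}_n)$ collected in Theorem~\ref{theorem:DiscAlgebra}. First, for \refitem{item:GelfandIdealClosed}, note that $\mathcal{J}_0$ is precisely the intersection $\bigcap_{Q \in \mathbb{N}_0^n} \ker\left([a] \mapsto a_{0, Q}\right)$ of the kernels of the coefficient functionals with respect to the absolute Schauder basis $\{\basis{f}_{P, Q}\}$ of $\complete{\mathcal{A}}_\hbar(\mathbb{D}_n)$. Since that basis is an (unconditional, in fact absolute) Schauder basis by Theorem~\ref{theorem:DiscAlgebra}, \refitem{item:SchauderBasisForDisc}, all coefficient functionals are continuous and $\mathcal{J}_0$ is closed.

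For \refitem{item:GelfandIdealComplement} I would argue as follows. By Theorem~\ref{theorem:DiscAlgebra}, \refitem{item:SubfactorialGroth}, every $a \in \complete{\mathcal{A}}_\hbar(\mathbb{D}_n)$ is an unconditionally convergent series $\sum_{P, Q} a_{P, Q} \basis{f}_{P, Q}$ with $(a_{P, Q})$ of sub-factorial growth in $\max(|P|, |Q|)$. Splitting the index set according to $P = 0$ versus $P \ne 0$ gives $a = \psi + j$ with $\psi = \sum_Q a_{0, Q} \basis{f}_{0, Q}$ and $j = \sum_{P \ne 0, Q} a_{P, Q}\basis{f}_{P, Q}$; here $\psi \in \mathfrak{D}_\hbar$ because for $P = 0$ one has $\max(|P|, |Q|) = |Q|$, so $(a_{0, Q})_Q$ inherits sub-factorial growth in $|Q|$, and $j \in \mathcal{J}_0$ by construction. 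The intersection $\mathfrak{D}_\hbar \cap \mathcal{J}_0$ is trivially $\{0\}$ (an element of $\mathfrak{D}_\hbar$ has coefficient $\psi_Q$ at index $(0, Q)$), and $\mathfrak{D}_\hbar = \bigcap_{P \ne 0, Q} \ker\left([a]\mapsto a_{P,Q}\right)$ is closed for the same reason as $\mathcal{J}_0$. As $\complete{\mathcal{A}}_\hbar(\mathbb{D}_n)$ is Fréchet, the algebraic decomposition into two closed subspaces is automatically topological (closed graph theorem for the projections, or directly: the coordinate projections of an absolute basis are continuous). Finally, via $\psi \mapsto (\psi_Q)_Q$ the space $\mathfrak{D}_\hbar$ is isomorphic to the Köthe space of sub-factorial sequences indexed by $\mathbb{N}_0^n$; in particular it is a strongly nuclear Fréchet space, cf.\ Appendix~\ref{sec:SubfactorialGrowth}, which is what the phrase ``nuclear Fréchet topology of $\mathfrak{D}_\hbar$'' in \refitem{item:DhbarFinerTopology} refers to.

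For \refitem{item:GNSPreHilbertSpace}: by definition the GNS pre Hilbert space of $\delta_0$ is $\complete{\mathcal{A}}_\hbar(\mathbb{D}_n) \big/ \mathcal{J}_0$ equipped with the inner product induced by $([a], [b]) \mapsto \delta_0(\cc{a} \stardisk b)$ (well-defined and positive semidefinite by Lemma~\ref{lemma:DeltaIsPositive}). By \refitem{item:GelfandIdealComplement} the canonical projection restricts to a linear isomorphism $\mathfrak{D}_\hbar \to \complete{\mathcal{A}}_\hbar(\mathbb{D}_n) \big/ \mathcal{J}_0$; transporting the inner product through it and inserting $\psi = \phi = \sum_Q \psi_Q \basis{f}_{0, Q}$ into the formula $\delta_0(\cc{a}\stardisk b) = \sum_Q \frac{\cc{a_{0, Q}} b_{0, Q}}{|Q|!|Q|!} \Pochhammer{\frac{1}{2\hbar}}_{|Q|}$ established in the proof of Lemma~\ref{lemma:DeltaIsPositive} yields exactly \eqref{eq:GNSInnerProduct}. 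For $\hbar > 0$ every $\Pochhammer{\frac{1}{2\hbar}}_{|Q|}$ is strictly positive, so this is a genuine (positive definite) inner product on $\mathfrak{D}_\hbar$.

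For \refitem{item:DhbarFinerTopology} I would show that the norm $\norm{\psi}_\hbar = \SP{\psi, \psi}_\hbar^{1/2}$ is dominated by one of the seminorms $\norm{\psi}_\epsilon$ of \eqref{eq:SubfacSeminormsDisc}, which on $\mathfrak{D}_\hbar$ read $\norm{\psi}_\epsilon = \sup_Q |\psi_Q| / (|Q|!)^\epsilon$. Fix $0 < \epsilon < \frac12$. Using $|\psi_Q| \le \norm{\psi}_\epsilon (|Q|!)^\epsilon$ and the upper Pochhammer bound $\Pochhammer{\frac{1}{2\hbar}}_{|Q|} \le |Q|!\, \mathsf{c}^{|Q|}$ from \eqref{eq:EstimatePochhammerSymbols} (with $z = \frac{1}{2\hbar}$), each term of \eqref{eq:GNSInnerProduct} with $\phi = \psi$ is bounded by $\norm{\psi}_\epsilon^2\, (|Q|!)^{2\epsilon - 1} \mathsf{c}^{|Q|}$. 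Summing over the at most $(\ell+1)^{n-1}$ multiindices $Q$ with $|Q| = \ell$ and then over $\ell$, the series $\sum_{\ell = 0}^\infty (\ell + 1)^{n-1} (\ell!)^{2\epsilon - 1} \mathsf{c}^\ell$ converges since $2\epsilon - 1 < 0$ makes $(\ell!)^{2\epsilon - 1}$ decay super-exponentially. Hence $\norm{\psi}_\hbar \le c_\epsilon \norm{\psi}_\epsilon$, so the Fréchet topology is finer than the pre Hilbert space topology. There is no genuine obstacle in this lemma; the only points requiring a little care are the continuity of the projection onto $\mathfrak{D}_\hbar$ in \refitem{item:GelfandIdealComplement} (handled by the absolute Schauder basis, or the closed graph theorem) and the elementary factorial/Pochhammer estimate in \refitem{item:DhbarFinerTopology}.
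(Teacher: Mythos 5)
Your proof is correct and follows essentially the same route as the paper's: the splitting of the absolute Schauder basis by $P = 0$ versus $P \ne 0$ for part \textit{ii.)}, the appeal to the computation in Lemma~\ref{lemma:DeltaIsPositive} for part \textit{iii.)}, and the Köthe-space description of $\mathfrak{D}_\hbar$. The only difference is that for parts \textit{i.)} and \textit{iv.)} you argue by explicit computation (closedness via the coefficient functionals and the characterization \eqref{eq:GelfandDeltaNull}, and the domination $\norm{\argument}_\hbar \le c_\epsilon\norm{\argument}_\epsilon$ via the Pochhammer estimate \eqref{eq:EstimatePochhammerSymbols}), whereas the paper argues both abstractly from the continuity of $\stardisk$, the complex conjugation, and $\delta_0$ --- both variants are valid, yours being slightly more concrete.
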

\begin{proof}
    The first part is clear since $\stardisk$, the complex
    conjugation, and the evaluation functional $\delta_0$ are
    continuous. Having an absolute basis it is clear that we can split
    the total space into a direct sum of two closed subspaces by
    splitting the set of basis vectors and taking their closures of
    their spans, respectively. Thus the second statement follows from
    the characterization of the Gel'fand ideal according to
    Lemma~\ref{lemma:DeltaIsPositive},
    \refitem{item:GelfandIdealOfDelta}. The third part is clear from
    the computation in the proof of the previous lemma. For the last
    part, we first recall that $\mathfrak{D}_\hbar$ is necessarily
    nuclear, being a closed subspace of a nuclear space. In fact, it
    is again isomorphic to a Köthe space of sub-factorial
    growth. Since $\stardisk$, the complex conjugation, and the
    evaluation functional $\delta_0$ is continuous, also the inner
    product is continuous, proving that the pre Hilbert space topology
    is coarser.
\end{proof}

Clearly, the Hilbert space completion $\mathfrak{H}_\hbar$ of
$\mathfrak{D}_\hbar$ can be described as the $\ell^2$-space of
sequences indexed by multiindices $Q \in \mathbb{N}_0^n$ with a
weighted counting measure with weights given by $\frac{1}{|Q|!|Q|!}
\Pochhammer{\frac{1}{2\hbar}}_{|Q|}$. This is clear from
\eqref{eq:GNSInnerProduct}. More surprising is the fact that we can
view elements in the Hilbert space $\mathfrak{H}_\hbar$ still as
\emph{functions} on $\mathbb{D}_n$. We collect this with some other
first properties of $\mathfrak{H}_\hbar$ in the following lemma:
\begin{lemma}
    \label{lemma:ReproducingKernel}%
    Let $\hbar > 0$.
    \begin{lemmalist}
    \item \label{item:HilbertBasis} The vectors $\basis{f}_{0, Q} \in
        \mathfrak{D}_\hbar$ are pairwise orthogonal and yield a
        Hilbert basis
        \begin{equation}
            \label{eq:HilbertBasis}
            \basis{E}_Q
            =
            \frac{|Q|!}{\sqrt{\Pochhammer{\frac{1}{2\hbar}}_{|Q|}}}
            \basis{f}_{0, Q}
        \end{equation}
        after normalization, where $Q \in \mathbb{N}_0^n$.
    \item \label{item:EvaluationContinuous} The evaluation functional
        at $w \in \mathbb{D}_n$ extends to a continuous linear
        functional on $\mathfrak{H}_\hbar$. More precisely, for every
        compact $K \subseteq \mathbb{D}_n$ there is a constant $c_K >
        0$ with
        \begin{equation}
            \label{eq:deltavContinuousInLzwei}
            |\psi(w)|
            \le
            c_K
            \norm{\psi}_{\mathfrak{H}_\hbar}
        \end{equation}
        for all $\psi \in \mathfrak{H}_\hbar$ and $w \in K$.
    \item \label{item:ReproducingKernel} For $w \in \mathbb{D}_n$ the
        evaluation functional $\delta_w$ can be written as $\psi(w) =
        \SP{\basis{E}_w, \psi}_{\mathfrak{H}_\hbar}$ with
        \begin{equation}
            \label{eq:Ew}
            \basis{E}_w
            =
            \sum_{Q}
            \frac{w^Q}{(1 -|w|^2)^{|Q|}} \frac{|Q|!}{Q!} 
            \basis{f}_{0, Q}
            \in \mathfrak{D}_\hbar.
        \end{equation}
        The series converges absolutely in the nuclear Fréchet
        topology of $\mathfrak{D}_\hbar$ and in the Hilbert space
        topology of $\mathfrak{H}_\hbar$.
    \end{lemmalist}
\end{lemma}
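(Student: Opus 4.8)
The plan is to dispatch \refitem{item:HilbertBasis} first, then to construct and analyse $\basis{E}_w$, and finally to read off \refitem{item:EvaluationContinuous} and the reproducing property \refitem{item:ReproducingKernel}.

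\refitem{item:HilbertBasis} is a diagonal computation. From the explicit inner product \eqref{eq:GNSInnerProduct} one gets $\SP{\basis{f}_{0, Q}, \basis{f}_{0, Q'}}_\hbar = \delta_{Q, Q'} \frac{1}{|Q|!|Q|!} \Pochhammer{\frac{1}{2\hbar}}_{|Q|}$, so the $\basis{f}_{0, Q}$ are pairwise orthogonal and the normalisation \eqref{eq:HilbertBasis} produces unit vectors. For totality I would invoke that $\{\basis{f}_{0, Q}\}$ is an absolute Schauder basis of $\mathfrak{D}_\hbar$: it is the coordinate subfamily of the absolute Schauder basis of $\complete{\mathcal{A}}_\hbar(\mathbb{D}_n)$ spanning the Köthe coordinate subspace $\mathfrak{D}_\hbar$, cf.\ Lemma~\ref{lemma:GNSPreHilbertSpace}~\refitem{item:GelfandIdealComplement}. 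Hence finite linear combinations of the $\basis{f}_{0, Q}$ are dense in $\mathfrak{D}_\hbar$, and through the continuous inclusion $\mathfrak{D}_\hbar \hookrightarrow \mathfrak{H}_\hbar$ of Lemma~\ref{lemma:GNSPreHilbertSpace}~\refitem{item:DhbarFinerTopology} they are dense in $\mathfrak{H}_\hbar$; so $\{\basis{E}_Q\}$ is a complete orthonormal system.

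Next I would pin down $\basis{E}_w$. Writing $\basis{f}_{0, Q}(w) = \frac{1}{|Q|!\,Q!} \Pochhammer{\frac{1}{2\hbar}}_{|Q|} \frac{\cc{w}^Q}{(1 - |w|^2)^{|Q|}}$ from \eqref{eq:BasisOnTheDisc}, the identity $\psi(w) = \sum_Q \psi_Q \basis{f}_{0, Q}(w)$ --- valid for $\psi \in \mathfrak{D}_\hbar$ since $\delta_w$ is continuous on $\complete{\mathcal{A}}_\hbar(\mathbb{D}_n)$ and the Schauder expansion converges there --- compared term by term with \eqref{eq:GNSInnerProduct} forces $(\basis{E}_w)_Q = \frac{|Q|!}{Q!} \frac{w^Q}{(1 - |w|^2)^{|Q|}}$, which is precisely \eqref{eq:Ew}. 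Since $\frac{|Q|!}{Q!}$ is a multinomial coefficient with $\frac{|Q|!}{Q!} \le n^{|Q|}$ and $|w^Q| \le \supnorm{w}^{|Q|}$, one has $|(\basis{E}_w)_Q| \le \big(\frac{n \supnorm{w}}{1 - |w|^2}\big)^{|Q|}$, an exponential and hence sub-factorial bound in $|Q|$ (the relevant index $\max(0, |Q|)$); by Theorem~\ref{theorem:DiscAlgebra}~\refitem{item:SubfactorialGroth} this places $\basis{E}_w$ in $\mathfrak{D}_\hbar$, and absoluteness of the Schauder basis makes \eqref{eq:Ew} converge absolutely in the Fréchet topology of $\mathfrak{D}_\hbar$, hence also in $\mathfrak{H}_\hbar$. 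To get the quantitative bound behind \eqref{eq:deltavContinuousInLzwei} I would substitute \eqref{eq:Ew} into \eqref{eq:GNSInnerProduct}, write $\frac{1}{(Q!)^2} = \frac{1}{(|Q|!)^2}\big(\frac{|Q|!}{Q!}\big)^2$, and use $\sum_{|Q| = k} \big(\frac{k!}{Q!}\big)^2 |w^Q|^2 \le n^k \sum_{|Q| = k} \frac{k!}{Q!} |w^Q|^2 = n^k |w|^{2k}$ together with $\Pochhammer{\frac{1}{2\hbar}}_k \le k!\, \mathsf{c}^k$ from \eqref{eq:EstimatePochhammerSymbols}; this collapses the $Q$-summation and leaves a convergent exponential series, giving $\norm{\basis{E}_w}_{\mathfrak{H}_\hbar} \le \exp\!\big(\frac{\mathsf{c}\, n\, |w|^2}{2(1 - |w|^2)^2}\big)$, which is bounded on any compact $K \subseteq \mathbb{D}_n$ since $\frac{|w|^2}{(1-|w|^2)^2}$ is and $\mathsf{c}$ is a fixed number.

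Finally \refitem{item:EvaluationContinuous} and \refitem{item:ReproducingKernel} follow together. For $\psi \in \mathfrak{D}_\hbar$ the term-by-term matching gives $\psi(w) = \SP{\basis{E}_w, \psi}_\hbar$, whence $|\psi(w)| \le \norm{\basis{E}_w}_{\mathfrak{H}_\hbar} \norm{\psi}_{\mathfrak{H}_\hbar} \le c_K \norm{\psi}_{\mathfrak{H}_\hbar}$ on $K$ with the bound just established; thus $\delta_w$ extends continuously to $\mathfrak{H}_\hbar$ with the same estimate, and since $\delta_w$ and $\SP{\basis{E}_w, \argument}_\hbar$ are then both continuous on $\mathfrak{H}_\hbar$ and agree on the dense subspace $\mathfrak{D}_\hbar$, the reproducing identity holds on all of $\mathfrak{H}_\hbar$. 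The main obstacle is the Hilbert-norm bound for $\basis{E}_w$: everything else is either a diagonal reading of \eqref{eq:GNSInnerProduct} or an appeal to the Köthe-space and absolute-Schauder-basis machinery already in place, but finiteness of $\norm{\basis{E}_w}_{\mathfrak{H}_\hbar}$ for \emph{all} $w$ in the open ball (not merely near the centre) genuinely needs the concentration estimate $\big(\frac{k!}{Q!}\big)^2 \le n^k \frac{k!}{Q!}$ --- the naive bound $(Q!)^{-2} \le (Q!)^{-1}$ would converge only for $|w|^2 < (1-|w|^2)^2$ --- together with the factorial decay of the weights $\frac{1}{(|Q|!)^2}\Pochhammer{\frac{1}{2\hbar}}_{|Q|}$.
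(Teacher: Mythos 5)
Your proposal is correct and follows essentially the same route as the paper: orthogonality of the $\basis{f}_{0,Q}$ is read off from \eqref{eq:GNSInnerProduct}, continuity of $\delta_w$ on $\mathfrak{H}_\hbar$ is obtained by Cauchy--Schwarz after splitting off the Hilbert-norm weight, and the reproducing vector $\basis{E}_w$ is identified by matching coefficients against \eqref{eq:GNSInnerProduct} and then placed in $\mathfrak{D}_\hbar$ by its exponential (hence sub-factorial) growth. The one organisational difference is that you construct $\basis{E}_w$ first and then deduce \refitem{item:EvaluationContinuous} from $\norm{\basis{E}_w}_{\mathfrak{H}_\hbar} < \infty$, whereas the paper establishes the Cauchy--Schwarz bound $|\psi(w)|^2 \le \norm{\psi}_{\mathfrak{H}_\hbar}^2 f(w)$ first and only afterwards identifies $\basis{E}_w$ via Riesz representation; this is cosmetic. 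What you add that the paper leaves implicit is the verification that $f(w) = \sum_Q \frac{1}{(Q!)^2}\Pochhammer{\frac{1}{2\hbar}}_{|Q|}\frac{|w^Q|^2}{(1-|w|^2)^{2|Q|}}$ actually converges on the whole disk: your observation that one must write $\frac{1}{(Q!)^2}\Pochhammer{\frac{1}{2\hbar}}_{|Q|} \le \frac{\mathsf{c}^{|Q|}}{|Q|!}\bigl(\tfrac{|Q|!}{Q!}\bigr)^2$ and then use $\bigl(\tfrac{k!}{Q!}\bigr)^2 \le n^k\tfrac{k!}{Q!}$ to collapse the inner sum to $n^k|w|^{2k}$ — yielding an exponential rather than geometric series — is exactly the detail hiding behind the paper's ``clearly converges,'' and your remark that the cruder bound $(Q!)^{-2} \le (Q!)^{-1}$ would restrict $w$ to a neighbourhood of the origin is accurate.
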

\begin{proof}
    The first statement is clear from the explicit form of the inner
    product as in \eqref{eq:GNSInnerProduct}. For the second part we
    use the Cauchy-Schwarz inequality to obtain
    \begin{align*}
        |\psi(w)|^2
        &=
        \left|
            \sum\nolimits_Q \psi_Q \basis{f}_{0, Q}(w)
        \right|^2 \\
        &=
        \left|
            \sum\nolimits_Q \psi_Q
            \frac{1}{|Q|!Q!} \Pochhammer{\frac{1}{2\hbar}}_{|Q|}
            \frac{w^Q}{(1 - |w|^2)^{|Q|}}
        \right|^2 \\
        &\le
        \left(
            \sum\nolimits_Q
            \frac{|\psi_Q|^2}{|Q|!|Q|!}
            \Pochhammer{\frac{1}{2\hbar}}_{|Q|}
        \right)
        \left(
            \sum\nolimits_Q
            \frac{1}{Q!Q!}
            \Pochhammer{\frac{1}{2\hbar}}_{|Q|}
            \frac{|w^Q|^2}{(1 - |w|^2)^{2|Q|}}
        \right) \\
        &=
        \norm{\psi}_{\mathfrak{H}_\hbar}^2
        f(w),
    \end{align*}
    with a function $f$ determined by the second series. Now clearly
    the series $f$ converges for all $w \in \mathbb{D}_n$ by the
    standard estimate for the Pochhammer symbols
    \eqref{eq:EstimatePochhammerSymbols}. It even converges locally
    uniformly and thus defines a continuous function $f$. Taking
    $c_K^2 = \max_{w \in K} f(w)$ proves the second part. From this
    continuity statement it is clear that there exists a uniquely
    determined vector $\basis{E}_w \in \mathfrak{H}_\hbar$ with
    $\psi(w) = \SP{\basis{E}_w, \psi}_{\mathfrak{H}_\hbar}$. Using the
    explicit formula \eqref{eq:GNSInnerProduct} for the inner product
    as well as the explicit formula for $\psi(w)$ in terms of the
    series expansion using the functions $\basis{f}_{0, Q}$ gives
    immediately \eqref{eq:Ew}. Since the coefficients with respect to
    the $\basis{f}_{0, Q}$ have exponential and thus sub-factorial
    growth, we get $\basis{E}_w \in \mathfrak{D}_\hbar$ as claimed.
\end{proof}

From the general GNS construction we know that
$\complete{\mathcal{A}}_\hbar(\mathbb{D}_n)$ acts on the GNS pre
Hilbert space $\complete{\mathcal{A}}_\hbar(\mathbb{D}_n) \big/
\mathcal{J}_0$ in the canonical way. The projection onto
$\mathfrak{D}_\hbar$ and the injection of $\mathfrak{D}_\hbar$ into
$\complete{\mathcal{A}}_\hbar(\mathbb{D}_n)$ provide isomorphisms of
the GNS pre Hilbert space with $\mathfrak{D}_\hbar$. Using those, we
can compute the GNS representation explicitly:
\begin{proposition}
    \label{proposition:GNSRep}%
    Let $\hbar > 0$.  Let $a = \sum_{P, Q} a_{P, Q} \basis{f}_{P, Q}
    \in \complete{\mathcal{A}}_\hbar(\mathbb{D}_n)$ and $\psi = \sum_Q
    \psi_Q \basis{f}_{0, Q} \in \mathfrak{D}_\hbar$ be given. Then the
    GNS representation is determined by
    \begin{align}
        &\pi_0(a)\psi
        =
        \pr_{\mathfrak{D}_\hbar}
        \left(a \stardisk \iota(\psi)\right) \\
        &=
        \sum_{Q, S} \sum_{P \le S} a_{P, Q} \psi_S
        \frac{1}{P!(\alpha - |Q|)!}
        \binom{Q + S - P}{Q}
        \binom{\alpha + |S| - |P|}{|S|}
        \frac{
          \frac{1}{(\alpha + |S| - |P|)!}
          \Pochhammer{\frac{1}{2\hbar}}_{\alpha + |S| - |P|}
        }
        {
          \frac{1}{(|Q| + |S| - |P|)!}
          \Pochhammer{\frac{1}{2\hbar}}_{|Q| + |S| - |P|}
        }
        \basis{f}_{0, Q + S - P},
        \label{eq:GNSrepExplicitly}
    \end{align}
    where $\alpha = \max(|P|, |Q|)$ as usual. The series converges
    in the Fréchet topology of $\mathfrak{D}_\hbar$.
\end{proposition}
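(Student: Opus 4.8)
The plan is to compute $\pi_0(a)\psi$ directly from its definition as $\pr_{\mathfrak{D}_\hbar}(a \stardisk \iota(\psi))$, reducing everything to the explicit product formula \eqref{eq:StructureConstantsTildeWick} and then projecting onto the span of the $\basis{f}_{0, Q}$. First I would recall that by the general GNS construction (see the remarks preceding the proposition) the GNS pre Hilbert space $\complete{\mathcal{A}}_\hbar(\mathbb{D}_n)\big/\mathcal{J}_0$ is identified with $\mathfrak{D}_\hbar$ via the splitting $\complete{\mathcal{A}}_\hbar(\mathbb{D}_n) = \mathcal{J}_0 \oplus \mathfrak{D}_\hbar$ of Lemma~\ref{lemma:GNSPreHilbertSpace}, \refitem{item:GelfandIdealComplement}, with $\iota$ the inclusion of $\mathfrak{D}_\hbar$ and $\pr_{\mathfrak{D}_\hbar}$ the projection killing all basis vectors $\basis{f}_{P, Q}$ with $P \ne 0$. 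So the formula $\pi_0(a)\psi = \pr_{\mathfrak{D}_\hbar}(a \stardisk \iota(\psi))$ is immediate; the content is to evaluate the right hand side.

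Next I would expand $a \stardisk \iota(\psi) = \sum_{P, Q}\sum_S a_{P, Q}\psi_S\, \basis{f}_{P, Q}\stardisk\basis{f}_{0, S}$ and insert the product of two basis vectors on the disk. Here a subtlety, already flagged in the text after Theorem~\ref{theorem:DiscAlgebra}, is that evaluating $\basis{f}_{P, Q, \alpha}\tildewick\basis{f}_{0, S, |S|}$ at $y = 1$ via \eqref{eq:StructureConstantsTildeWick} gives an expression in the \emph{redundant} functions $[\basis{f}_{I, J, \gamma}]$, which must then be re-expressed in the linearly independent basis $\basis{f}_{I, J}$ using the identity ($*$) from the proof of Lemma~\ref{lemma:BasisOnTheDisc}. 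Rather than carry out the full reduction to the basis $\{\basis{f}_{I, J}\}$ and then project, it is cleaner to project \emph{first}: $\pr_{\mathfrak{D}_\hbar}$ only keeps the terms whose first multiindex is $0$, i.e. in \eqref{eq:StructureConstantsTildeWick} (applied with the roles giving $\basis{f}_{P + R - K, Q + S - K, \ldots}$ where the second factor is $\basis{f}_{0, S}$, so $R = 0$) one needs $P - K = 0$, forcing $K = P$ and $P \le S$; this collapses the double sum over $k, K$ to $K = P$, $k = 0$ because $\min(\alpha - |P|, |S| - |S|) = 0$. Substituting $K = P$, $k = 0$ into the four binomial coefficients and the Pochhammer prefactors, and bookkeeping the index $\gamma = \alpha + |S| - |P|$ of the resulting basis vector $\basis{f}_{0, Q + S - P}$ against its canonical index $|Q + S - P| = |Q| + |S| - |P|$, should produce exactly the coefficient displayed in \eqref{eq:GNSrepExplicitly}, with the ratio of Pochhammer symbols arising from converting $[\basis{f}_{0, Q+S-P, \gamma}]$ to $\basis{f}_{0, Q+S-P}$ via ($*$) of Lemma~\ref{lemma:BasisOnTheDisc}.

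Finally I would address convergence: the series defining $\pi_0(a)\psi$ converges in the Fréchet topology of $\mathfrak{D}_\hbar$ because $a \stardisk \iota(\psi)$ converges in $\complete{\mathcal{A}}_\hbar(\mathbb{D}_n)$ by continuity of $\stardisk$ (Theorem~\ref{theorem:DiscAlgebra}, \refitem{item:DiscAlgebraFrechet}), and $\pr_{\mathfrak{D}_\hbar}$ is continuous (Lemma~\ref{lemma:GNSPreHilbertSpace}, \refitem{item:GelfandIdealComplement}); since $\{\basis{f}_{0, Q}\}$ is an absolute Schauder basis of $\mathfrak{D}_\hbar$, the rearrangement into the double sum over $Q$ and $S$ is legitimate. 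I expect the main obstacle to be purely combinatorial: correctly tracking the several factorials and binomial coefficients through the substitution $K = P$, $k = 0$ and through the basis change ($*$), so that the prefactor matches \eqref{eq:GNSrepExplicitly} on the nose — in particular getting the arguments $\alpha + |S| - |P|$ versus $|Q| + |S| - |P|$ of the two Pochhammer symbols in the numerator and denominator right, since these differ precisely by $\alpha - |Q| = \max(0, |P| - |Q|)$ and encode the conversion between the two descriptions of the basis. No genuinely new analytic input is needed beyond what is already established.
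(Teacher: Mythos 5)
Your proposal is correct and follows essentially the same route as the paper's proof: fix $k=0$ (since $\beta-|S|=0$), note that projecting onto $\mathfrak{D}_\hbar$ kills all terms except $K=P$ (hence $P\le S$) because $\pr_{\mathfrak{D}_\hbar}[\basis{f}_{I,J,\gamma}]$ vanishes unless $I=0$, use the expansion ($*$) of Lemma~\ref{lemma:BasisOnTheDisc} to get the Pochhammer ratio converting $[\basis{f}_{0,Q+S-P,\gamma}]$ to $\basis{f}_{0,Q+S-P}$, and deduce convergence from continuity of $\stardisk$, $\iota$, $\pr_{\mathfrak{D}_\hbar}$ and absoluteness of the Schauder basis. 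The paper in fact also "projects first" in the sense you describe (it substitutes the structure constants, then immediately observes that only the $I=0$ component of $[\basis{f}_{P-K,Q+S-K,\cdot}]$ survives), so there is no real methodological difference.
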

\begin{proof}
    First we note that the projection $\pr_{\mathfrak{D}_\hbar} \colon
    \complete{\mathcal{A}}_\hbar(\mathbb{D}_n) \longrightarrow
    \mathfrak{D}_\hbar$ is continuous, this is clear from the explicit
    description of the topology by means of the seminorms
    $\norm{\argument}_\epsilon$. Similarly, the injection $\iota$ is
    continuous. Together with the continuity of the product
    $\stardisk$ and the (absolute) convergence of the series $a =
    \sum_{P, Q} a_{P, Q} \basis{f}_{P, Q}$ and $\psi = \sum_S \psi_S
    \basis{f}_{0, S}$ it is clear that we can take all the summations
    in from of the algebraic manipulations and still have convergence
    in the Fréchet topology of $\mathfrak{D}_\hbar$. Thus the
    following computations are justified. We compute things upstairs
    on $C_{n+1}^+$ with $\alpha = \max(|P|, |Q|)$ and $\beta = \max(0,
    |S|) = |S|$ as usual and get from
    Proposition~\ref{proposition:StructureConstantsAbove}
    \begin{align*}
        a \stardisk \iota(\psi)
        &=
        \sum_{P, Q} \sum_S a_{P, Q} \psi_S
        \sum_{k=0}^{\min(\alpha - |P|, \beta - |S|)}
        \sum_{K = 0}^{\min(P, S)}
        \frac{(-1)^k}{k!K!}
        \left[
            \basis{f}_{P + 0 - K, Q + S - K, \alpha + \beta - k - |K|}
        \right] \\
        &\quad\quad
        \binom{P + 0 - K}{0}
        \binom{Q + S - K}{Q}
        \binom{\alpha + \beta - k - |P| - 0}{\beta - 0}
        \binom{\alpha + \beta - k - |Q| - |S|}{\alpha - |Q|} \\
        &=
        \sum_{P, Q} \sum_S a_{P, Q} \psi_S
        \sum_{K = 0}^{\min(P, S)}
        \frac{1}{K!}
        \left[
            \basis{f}_{P - K, Q + S - K, \alpha + |S| - |K|}
        \right]
        \binom{Q + S - K}{Q}
        \binom{\alpha - |P| + |S|}{|S|}.
    \end{align*}
    Next we have to project back to $\mathbb{D}_\hbar$. Hence we have
    to compute $\pr_{\mathfrak{D}_\hbar} [\basis{f}_{P - K, Q + S - K,
      \alpha + |S| - |K|}]$. In general, $[\basis{f}_{I, J, \gamma}]$
    is a nontrivial linear combination of the basis vectors
    $\basis{f}_{R, S}$ which we have computed in the proof of
    Theorem~\ref{theorem:DiscAlgebra}. However, since we are
    interested in the $\mathfrak{D}_\hbar$-component only, the
    computation simplifies drastically. In fact, from ($*$) in the
    proof of Lemma~\ref{lemma:BasisOnTheDisc} we see that the only
    nontrivial contribution in $\pr_{\mathfrak{D}_\hbar} [f_{I, J,
      \gamma}]$ arises for $I = 0$ and is given by
    \[
    \pr_{\mathfrak{D}_\hbar}\left[\basis{f}_{I, J, \gamma}\right]
    =
    \frac{1}{\gamma!J!(\gamma - |J|)!}
    \Pochhammer{\frac{1}{2\hbar}}_\gamma
    \frac{\cc{v}^J}{(1 - |v|^2)^{|J|}}
    \delta_{I, 0}
    =
    \frac{\Pochhammer{\frac{1}{2\hbar}}_\gamma}
    {\Pochhammer{\frac{1}{2\hbar}}_{|J|}}
    \frac{|J|!}{\gamma!(\gamma - |J|)!}
    \basis{f}_{0, J} \delta_{I, 0}.
    \]
    Inserting this in the above computation gives us first the
    condition $P - K = 0$ and hence $P \le S$. Then it results in
    \eqref{eq:GNSrepExplicitly}.
\end{proof}
\begin{remark}
    \label{remark:RelationToBerezinETC}%
    The above characterization of the GNS representation is of course
    not yet very illuminating but shows that one can efficiently
    compute the representation. It will be a future project to relate
    it to the more familiar construction of a Berezin-Toeplitz like
    quantization as this has been used by various people. In
    particular, a direct comparison with the results on
    \cite[Sect.~4]{cahen.gutt.rawnsley:1994a} should be within
    reach. Moreover, the vector states $\basis{E}_w$ should directly
    correspond to the coherent states used in
    \cite{cahen.gutt.rawnsley:1994a}. Finally, we believe that the
    above GNS representation will provide the bridge in order to
    compare the algebra $\complete{\mathcal{A}}_\hbar(\mathbb{D}_n)$
    to other (deformation) quantizations of the Poincaré disk which
    are based on more operator-algebraic approaches. Here in
    particular the approaches of
    \cite{bordemann.meinrenken.schlichenmaier:1991a,
      borthwick.lesniewski.upmeier:1993a,
      bieliavsky.detournay.spindel:2009a, bieliavsky:2002a} should be
    mentioned. As this certainly will require some more effort we
    postpone a detailed study to some future projects.
\end{remark}

%
%

\appendix

%
%

\section{Köthe spaces of sub-factorial growth}
\label{sec:SubfactorialGrowth}

In this appendix we collect some well-known and basic features of the
Köthe space of those sequences which have sub-factorial growth. For
details on Köthe spaces we refer e.g.\ to \cite{jarchow:1981a}, in
particular to the Sections~1.7.E, 3.6.D, and 21.6. First, let
\begin{equation}
    \label{eq:SubfactorialSpace}
    \Lambda
    =
    \left\{
        a = (a_n)_{n \in \mathbb{N}}
        \; \Bigg| \;
        p_\epsilon(a)
        =
        \norm{
          \left(
              \frac{a_n}{(n!)^\epsilon}
          \right)_{n \in \mathbb{N}}
        }_{\ell^1}
        <
        \infty
        \; \textrm{for all} \;
        0 < \epsilon < 1
    \right\}
\end{equation}
be the Köthe sequence space of sequences with sub-factorial growth,
endowed with its topology induced by the seminorms $p_\epsilon$ as
usual. Equivalently, we can use the system of seminorms
\begin{equation}
    \label{eq:EpsionSubFac}
    \norm{a}_\epsilon
    =
    \sup_{n \in \mathbb{N}} \frac{|a_n|}{(n!)^\epsilon},
\end{equation}
where again $0 < \epsilon < 1$. Indeed, the estimate
$\norm{a}_\epsilon \le p_\epsilon(a)$ is obvious. For the reverse
we note that
\begin{equation}
    \label{eq:SubFacReverseEstimateEllEins}
    p_\epsilon(a)
    =
    \sum_{n \in \mathbb{N}} \frac{|a_n|}{(n!)^\epsilon}
    \le
    \sup_{n \in \mathbb{N}} \frac{|a_n|}{(n!)^{\epsilon/2}}
    \sum_{n \in \mathbb{N}} \frac{1}{(n!)^{\epsilon/2}}.
\end{equation}
Since the last series converges, this gives the estimate
$p_\epsilon(a) \le c \norm{a}_{\epsilon/2}$ with $c$ being the value
of the above series.  Since clearly a countable subset of the
parameters $\epsilon$ will suffice to specify the topology, $\Lambda$
is a Fréchet space.

As for all Köthe spaces, the sequences $\basis{e}_k = (\delta_{nk})_{n
  \in \mathbb{N}}$ constitute an \emph{absolute} Schauder basis, see
e.g.~\cite[Theorem~24.8.8]{jarchow:1981a}.

Consider now $\epsilon > 0$ and the corresponding sequence
$\left(\frac{1}{(n!)^\epsilon}\right)_{n \in \mathbb{N}}$. Then the
sequence of quotients
$\left(\frac{(n!)^{\epsilon/2}}{(n!)^\epsilon}\right)_{n \in
  \mathbb{N}} = \left(\frac{1}{(n!)^{\epsilon/2}}\right)_{n \in
  \mathbb{N}}$ is $p$-summable for all $p > 0$. Thus, by the
Grothendieck-Pietch criterion, this implies that $\Lambda$ is
\emph{nuclear} and, in fact, even \emph{strongly nuclear} by
\cite[Prop.~21.8.2]{jarchow:1981a}

Sometimes we will meet ``sequences'' not indexed by $n \in \mathbb{N}$
but by multiindices. Thus consider
\begin{equation}
    \label{eq:GeneralKoethe}
    \Lambda_d
    =
    \left\{
        a = (a_N)_{N \in \mathbb{N}^d}
        \; \Bigg| \;
        p_\epsilon(a)
        =
        \norm{
          \left(
              \frac{a_N}{(|N|!)^\epsilon}
          \right)_{N \in \mathbb{N}^d}
        }_{\ell^1}
        <
        \infty
        \; \textrm{for all} \;
        0 < \epsilon < 1
    \right\}.
\end{equation}
Since the number of points $N$ with given $|N|$ grows polynomially we
see that the same arguments as for $\Lambda = \Lambda_1$ apply. We
have an equivalent system of seminorms
\begin{equation}
    \label{eq:EquivalentKoetheSeminorms}
    \norm{a}_\epsilon
    =
    \sup_{N \in \mathbb{N}^d} \frac{|a_n|}{(|N|!)^\epsilon}.
\end{equation}
By choosing an enumeration, $\Lambda_d$ can be viewed as a Köthe space
as well. However, it will be easier to use multiindices in many
situations.  The vectors $\basis{e}_{K} = (\delta_{KN})_{N \in
  \mathbb{N}^d}$ yield an absolute Schauder basis. Finally, the
summability properties of the functions $\frac{1}{(|N|!)^\epsilon}$
over $\mathbb{N}^d$ are the same and thus $\Lambda_d$ is strongly
nuclear, too.

There are several further generalizations. In particular, the index
triples $(I, J, \gamma)$ can be used to construct a Köthe space by
imposing sub-factorial growth with respect to $\gamma$. Since there
are only finitely many $|I|, |J| \le \gamma$ in an index triple and
their number grows polynomially in $\gamma$, also the index triples
give rise to a Köthe space as above.

We shall speak of \emph{Köthe spaces of sub-factorial growth} whenever
the reference to the index set and the relevant factorial is clear
from the context.

%
%

\addcontentsline{toc}{section}{References}

\begin{footnotesize}

\end{footnotesize}

%
%

\end{document}